\newtheorem{dfn}{Definition}[section]
\newtheorem{thm}[dfn]{Theorem}
\newtheorem{cor}[dfn]{Corollary}
\newtheorem{lem}[dfn]{Lemma}
\newtheorem{pro}[dfn]{Proposition}
\newtheorem{rmk}[dfn]{Remark}
\newtheorem{ex}[dfn]{Example}
\newcommand{\bb}[1]{\mathbb#1}
\newcommand{\fra}[1]{\mathfrak#1}
\newcommand{\ca}[1]{\mathcal#1}
\newcommand{\innpro}[3]{<#1,#2>_{#3}}
\newcommand{\Innpro}[3]{\ll#1,#2\gg_{#3}}
\begin{document}
\title{Naturality of FHT isomorphism}
\author{Doman Takata}
\maketitle

\begin{abstract}
Freed, Hopkins and Teleman constructed an isomorphism (we call it FHT isomorphism) between twisted equivariant $K$-theory of compact Lie group $G$ and the ``Verlinde ring'' of the loop group of $G$ (\cite{FHT1} \cite{FHT2} \cite{FHT3}). However, this isomorphism does not hold naturality with respect to group homomorphisms. We construct two ``quasi functors'' $t.e.K$ and $RL$ so that FHT isomorphism is natural transformation between two ``quasi functors'' for tori, that is, we construct two ``induced homomorphisms'' 
$f^\#:t.e.K(T',\tau)\to t.e.K(T,f^*\tau)$ and $f^!:RL(T',\tau)\to RL(T,f^*\tau)$ for $f:T\to T'$ whose tangent map is injective. In fact, we construct another quasi functor $char$ and verify that three quasi functors are naturally isomorphic.

Moreover, we extend the quasi functor $t.e.K$ and $char$ to compact connected Lie group with torsion-free $\pi_1$ and homomorphism $f:H\to G$ satisfying the decomposable condition, and verify that they are isomorphic. This is a generalization of naturality of $K^{\tau+rank(G)}_G(G)\cong char(G,\tau)$ verified in \cite{FHT1}.
%such that it can be written as a composition of $H\xrightarrow{i_1} H\times T \xrightarrow{q} G$ for a torus $T$ such that $rank(H)+rank(T)=rank(G)$. Where $i_1$ is the inclusion into the first factor and $q$ is locally injective. This is an extension of naturality of the computation of $K^{\tau+rank(G)}_G(G)$ verified in \cite{FHT1}.
\end{abstract}

\section*{Introduction}

Study of loop groups of compact Lie groups is extensively developed especially from the view point of representation theory. On the other hand, twisted $K$-theory was introduced by Donovan and Karoubi \cite{Karoubi}, and interest in it was rekindled by its appearance in string theory. Freed, Hopkins and Teleman connected them (\cite{FHT2}).
In this paper, we study functorial aspects of these two objects.

Let us start from the representation theory of loop groups. Positive energy representations are special cases, defined by $S^1$ symmetry of loop groups. $S^1$ acts on loop groups via transformation of the parameter. When we make $S^1$ act on loop groups, we write $S^1$ as $\bb{T}_{rot}$. Let $G$ be a compact connected Lie group with torsion-free $\pi_1$ and $\ca{H}$ be a separable complex Hilbert space as a representation space.
\begin{dfn}[\cite{PS}]A positive energy representation is a continuous projective representation $\rho:LG\to PU(\ca{H})$ satisfying the following conditions.

(1) The action of $\bb{T}_{rot}$ on $LG$ lifts to the associated central extension by $U(1)$
$$LG^\tau:=LG\times_{PU(\ca{H})}U(\ca{H}).$$
Therefore, we can define the new Lie group $LG^\tau\rtimes \bb{T}_{rot}$.

(2) $\rho$ lifts to $\widehat{\rho}:LG^\tau\rtimes \bb{T}_{rot}\to U(\ca{H})$.

(3) There are no vectors $v$ on which rotation  $e^{i\theta}\in\bb{T}_{rot}$ acts by multiplication by $e^{in\theta}$ with sufficient small $n$.
\end{dfn}
\begin{rmk}
A central extension $\tau$ is also called level. A representation is at level $\tau$ if the induced central extension $LG\times_{PU(\ca{H})}U(\ca{H})$ is isomorphic to $LG^\tau$.
\end{rmk}
The direct sum of two representations at level $\tau$ is also at level $\tau$. So finitely reducible positive energy representations at level $\tau$ form a semigroup under the direct sum.
\begin{dfn}
The representation group $R^\tau(LG)$ of $LG$ at level $\tau$ is the Grothendieck completion of this semigroup.
\end{dfn}

Our second main object is twisted equivariant $K$-theory.

%Twisted $K$-theory was introduced by Donovan and Karoubi to extend Thom isomorphism for non $K$-orientable vector bundle \cite{Karoubi}. Later, it was extended by Rosenberg \cite{AS}. It has an equivariant version just as untwisted cases.

Let $G$ be a compact Lie group, $X$ be a topological space. We suppose that $G$ acts on $X$ continuously. Let $\tau$ be a $G$-equivariant twisting over $X$ and $\{K^{\tau+k}_G(X)\}_{k\in\bb{Z}}$ be the $\tau$-twisted $K$-groups, which are defined in Section 1. 

%Let $G$ be a compact connected Lie group with 
Let $G$ have torsion-free $\pi_1$ and   $\tau$ be a ``positive'' central extension of $LG$, which is defined in Section 1.
A central extension $\tau$ defines a $G$-equivariant twisting over $G$ (\cite{AS}), where $G$ acts on itself by conjugation.
Freed, Hopkins and Teleman constructed an isomorphism between $R^{\tau}(LG)$ and $K^{\tau+\sigma+rank(G)}_G(G)$, where $\sigma$ is the special central extension of $LG$ called the spin extension.

\begin{thm}[\cite{FHT2}]\label{The theorem of FHT}
In this situation, we have an isomorphism
$$FHT_G:R^{\tau}(LG)\to K^{\tau+\sigma+rank(G)}_G(G).$$
We call it FHT isomorphism.
\end{thm}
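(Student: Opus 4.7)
The plan is to construct $FHT_G$ as an analytic index map built from a family of Dirac-type operators on loop group orbits, then prove it is an isomorphism by reducing to the case of a maximal torus where both sides admit explicit descriptions.

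For the construction, given a positive energy representation $V$ of $LG$ at level $\tau$, I would build a $G$-equivariant Fredholm family over $G$ (acting on itself by conjugation). Near each $g \in G$, the conjugacy class of $g$ is modeled by an orbit of $LG^\tau \rtimes \bb{T}_{rot}$, and there is a natural Dirac operator on such orbits coupled to $V$. The level shift by $\sigma$ is the spin correction needed to define the Dirac operator on the loop group (the spinors themselves form a projective representation at level $\sigma$), and the degree shift by $rank(G)$ comes from the dimension of a generic conjugacy class complement (or equivalently, the dimension of $T$). The $K$-theoretic index of this family defines an element of $K^{\tau+\sigma+rank(G)}_G(G)$. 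Additivity over direct sums and passage to Grothendieck completion then yield a group homomorphism $FHT_G$.

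For the isomorphism, I would reduce to $G = T$. On the representation side, restriction from $LG$ to $LT$ combined with highest-weight theory identifies $R^\tau(LG)$ with the affine-Weyl-invariant part of $R^{\tau}(LT)$ (up to a shift absorbing the roots of $G$). On the $K$-theory side, I would apply a Mackey-type decomposition along conjugacy classes, using the fact that $\pi_1(G)$ is torsion-free (so centralizers of topological generators of $T$ are connected), to reduce $K^{\tau+\sigma+rank(G)}_G(G)$ to Weyl invariants of $K^{\tau'}_T(T)$ for an appropriate twist $\tau'$. In the torus case both sides become computable: positive energy representations of $LT$ at level $\tau$ are classified by a finite set of weights modulo the level-$\tau$ lattice, and $K^*_T(T)$ can be computed directly using the decomposition of $T$ into cells or via Mayer--Vietoris. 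One then matches the two finite abelian groups, and checks that the Dirac family construction realizes this identification on generators (namely, that the index of the Dirac family attached to a highest-weight representation is supported at the orbit labelled by the same weight).

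The hard part, I expect, will be the analytic setup of the Dirac family across the singular locus of $G$: along walls of conjugacy classes the centralizer rank jumps, so the Hilbert spaces and Dirac operators vary discontinuously in any naive sense. Making this rigorous requires either Kasparov's bivariant framework, $C^*$-algebraic models of the loop group fixed-point algebras, or an unbounded Fredholm formalism that tolerates spectral flow across the singularities. A secondary obstacle is the careful bookkeeping of the spin shift $\sigma$ and the dimension shift $rank(G)$ under restriction to $T$, since both twist and degree must transform compatibly with the Weyl invariance argument; getting the signs and shifts correct is exactly where the spin extension $\sigma$ earns its place in the statement.
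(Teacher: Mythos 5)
This theorem is quoted from \cite{FHT2}; the paper under review does not reprove it, but its Introduction and Section 2.5 record the structure of the FHT argument, and your proposal matches that structure in outline: a Dirac family coupled to the positive energy representation produces the $K$-class (with $\sigma$ the spinor level and $rank(G)$ the degree shift), and the isomorphism is verified by computing both sides explicitly --- lowest-weight theory with a $\rho$-shift onto regular $W^e_{\mathit{aff}}(G)$-orbits on the representation side, a Mackey decomposition (or spectral sequence) on the $K$-theory side --- and matching generators.

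The one substantive divergence is in the construction, and it is exactly at the point you yourself flag as the hard part. You propose to build the Fredholm family directly over $G$, modeling each conjugacy class by a loop-group orbit, and you correctly observe that the Hilbert spaces and operators then jump across the singular locus where centralizer ranks change. FHT avoid this entirely: they present the conjugation groupoid $G//G$ by the gauge groupoid $\ca{A}_{S^1\times G}//LG$ via the holonomy map (a local equivalence, under which twisted $K$-theory is invariant), and the Dirac family is defined on the contractible affine space $\ca{A}_{S^1\times G}$ of connections, where the underlying Hilbert space $V\otimes\Delta(L\fra{g}^*)$ is \emph{fixed} and the operator depends affinely on the connection parameter. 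Continuity and $LG$-equivariance are then immediate, and no Kasparov-type machinery is needed to cross the walls. So your proposal is not wrong in strategy, but as written it leaves its central analytic difficulty unresolved; the missing idea is the groupoid presentation by the space of connections, which is precisely the device the paper highlights (``local equivalence'' and the holonomy map $hol:\ca{A}_{S^1\times T}//LT\to T//T$) and which converts the problematic family over $G$ into a tautologically continuous family over an affine space.
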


However, this isomorphism does not hold naturality. In this paper, we study naturality of FHT isomorphisms for tori.

Let us consider pull backs for two cases.
Let $f:T'\to T$ be a smooth group homomorphism between two tori $T$ and $T'$. We suppose that the tangent map $df$ is injective, we call such a map a local injection.

For the case of representation group, the pull back of finitely reducible representations along $f$ is not finitely reducible when $\dim(T)\neq \dim(T')$ (Corollary \ref{infinitely reducibility in Section 5}), that is, $f^*:R^\tau(LT)\to R^{f^*\tau}(LT')$ cannot be defined by use of the pull back of representations. %link
For the case of twisted equivariant $K$-theory, the $K$-theoretical pull back must be zero if $\dim(T)-\dim(T')$ is odd (Corollary \ref{zeroness of classical pull back}). %Link
%For the case of representation group, the map $f$ defines the associated homomorphism $Lf:LT\to LT'$ by $Lf(l):=f\circ l$ for $l\in LT$. Pull back $f^*\tau$ of a central extension $\tau$ of $LT'$ determines one of $LT$. $Lf$ seems to define the pull back of representations just like the case of compact Lie groups
%$$(Lf)^*:R^{\tau}(LT') \to R^{f^*\tau}(LT).$$
%However, it is not well-defined in general cases. $(Lf)^*$ does not preserve the finite reducibility. In fact, when 
%$T'=T\times T''$ for a torus $T''$ and $f$ is the inclusion into the first factor, the representation induced by $f$ is not finitely reducible. We verify it in Section 5.

%For the case of twisted equivariant $K$-theory, pull back $f^*\tau$ of a twisting $\tau$ is a $T$-equivariant twisting over $T$. $f$ defines the $K$-theoretical pull back
%$$f^*:K^{\tau+k}_{T'}(T')\to K^{f^*\tau+k}_T(T).$$
%This map must be zero-map if $dim(T')-dim(T)$ is odd. It can be verified easily from the description of $K^{\tau+k}_T(T)$ described in Theorem 0.9.%リンク

Motivated by these observations, we modify pull backs so that the map $FHT_T$ holds naturality. That is, we construct ``induced homomorphisms''
$$f^\#:K^{\tau+dim(T)}_{T}(T)\to K^{f^*\tau+dim(T')}_{T'}(T')$$
$$f^!:R^\tau(LT)\to R^{f^*\tau}(LT').$$
By use of these induced homomorphisms, we verify the following main theorem.

\begin{thm}\label{main corollary}
The following commutative diagram holds.
$$\begin{CD}
K^{\tau +dim(T)}_{T}(T) @>f^\#>> K^{f^*\tau+dim(T')}_{T'}(T') \\
@VFHT_{T}VV @VFHT_{T'}VV \\
R^{\tau}(LT) @>f^!>> R^{f^*\tau}(LT')
\end{CD}$$
\end{thm}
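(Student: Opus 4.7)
The strategy is to route the comparison through the third quasi functor $char$ announced in the abstract. Since by construction $char$ is naturally isomorphic to both $t.e.K$ and $RL$ over tori, one obtains a prism
$$\begin{CD}
K^{\tau+dim(T)}_{T}(T) @>f^\#>> K^{f^*\tau+dim(T')}_{T'}(T') \\
@VVV @VVV \\
char(T,\tau) @>f^\diamond>> char(T',f^*\tau) \\
@AAA @AAA \\
R^{\tau}(LT) @>f^!>> R^{f^*\tau}(LT')
\end{CD}$$
in which the outer vertical composites recover $FHT_T$ and $FHT_{T'}$, and the middle map $f^\diamond$ is the honest pullback on character-type data, namely restriction along the lattice inclusion dual to $df$ with the affine-Weyl normalization built in. The theorem then splits into commutativity of the upper and lower squares, each of which I would handle directly from the definitions of $f^\#$ and $f^!$.

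For the lower square, the very reason $f^!$ must be defined by a modified pullback, rather than the naive pullback of representations, is that on characters the naive restriction of a level-$\tau$ positive energy character is not a finite sum when $dim(T')<dim(T)$ (this is Corollary \ref{infinitely reducibility in Section 5}). The definition of $f^!$ should be set up precisely to extract the unique finite-multiplicity positive energy piece whose character equals $f^\diamond$ applied to the original character; with this description the lower square commutes by definition. For the upper square, the analogous point is that $K^{\tau+dim(T)}_T(T)$ admits a decomposition indexed by regular conjugacy classes whose coefficients are exactly character data, and $f^\#$ is constructed (by combining a Thom pushforward along the normal bundle of $f$ with the ordinary K-theoretic pullback) precisely so that on these character coefficients it realizes $f^\diamond$. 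In this way both squares commute essentially by design.

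The main obstacle will be the bookkeeping of conventions needed to make the two compatibility statements line up. One must match the spin correction built into the K-theoretic Thom class used for $f^\#$ with the spin extension $\sigma$ appearing in Theorem \ref{The theorem of FHT}, reconcile the parity-shifted degree shifts $dim(T)$ and $dim(T')$ on the K-theory side with the corresponding shifts on the representation side (which is where the zeroness result of Corollary \ref{zeroness of classical pull back} would have obstructed a naive pullback), and confirm that the Weyl-denominator-type normalizations implicit in the definitions of $f^\#$ and $f^!$ agree up to sign. Once these conventions are aligned, the commutativity of both squares is a concrete equality of character formulas on the weight lattices of $T$ and $T'$ via the embedding induced by $df$, and the main theorem follows by composing the two squares.
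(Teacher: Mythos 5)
Your proposal follows essentially the same route as the paper: the proof of this theorem is precisely the assembly of your prism through $char(T,\tau)$, with the upper and lower squares being Theorems \ref{main theorem for K for tori} and \ref{main theorem for R for tori} and the identification of the vertical composites with $FHT_T$, $FHT_{T'}$ being Theorem \ref{explicit description of FHT}. The only caveats are that the real work lives in those two squares (which the paper establishes in Sections 4 and 5 by decomposing $f$ into a finite covering, an inclusion into a direct factor, and another finite covering, rather than by a single Thom pushforward along a normal bundle of $f$), and that for tori the spin and Weyl-denominator normalizations you worry about are vacuous since $\rho=0$ and the Weyl group is trivial.
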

This theorem is verified by use of another objects $char(T,\tau)$, $char(T',f^*\tau)$ and homomorphism $char(f)$, which are defined later.

In fact, we verify the following two theorems. Vertical arrows are defined later.
\begin{thm}\label{main theorem for K for tori}
The following commutative diagram holds.
$$\begin{CD}
K^{\tau +dim(T)}_{T}(T) @>f^\#>> K^{f^*\tau+dim(T')}_{T'}(T') \\
@VM.d._TVV @VM.d._{T'}VV \\
char(T,\tau) @>char(f)>> char(T',f^*\tau)
\end{CD}$$
\end{thm}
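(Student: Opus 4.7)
The plan is to establish commutativity by reducing the computation to local data at the $\tau$-admissible (or ``regular'') points of $T$. Since $T$ acts on itself by conjugation trivially, the twisted equivariant $K$-theory $K^{\tau+\dim(T)}_T(T)$ should decompose, via the Mackey-type decomposition $M.d._T$, as a sum of local contributions supported near a finite set of such points, and $char(T,\tau)$ is precisely the receptacle for this local character data. Thus the commutativity of the square is essentially the statement that the modified pull back $f^\#$ on the $K$-theory side is implemented, at the level of local characters, by $char(f)$.

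First, I would unwind the three maps on a convenient set of generators of $K^{\tau+\dim(T)}_T(T)$. The natural candidates are push-forwards of Thom-type classes supported at individual $\tau$-admissible orbits; on each such generator $M.d._T$ produces a concrete character at a single point. I would give $char(f)$ explicitly as the map that restricts a local character datum on $T$ to $T'$ along $f$, tensored with a determinantal correction coming from the cokernel $\mathrm{coker}(df)$, and describe $f^\#$ as the $K$-theoretic analogue built from a pushforward along the immersion $f$ together with a Thom class for the normal bundle (this being the modification which avoids the parity vanishing recorded in Corollary \ref{zeroness of classical pull back}).

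Second, I would verify the diagram on each such generator. For a class supported near an admissible point $t\in T$, both compositions should produce data supported at the preimage $f^{-1}(t)\subset T'$ intersected with the $f^*\tau$-admissible locus, and the identification of the local terms should follow from the naturality of the Thom isomorphism combined with the compatibility between $K$-theoretic push-forward and character restriction for tori. Once the equality is checked at one such generator, additivity and equivariance propagate it to all of $K^{\tau+\dim(T)}_T(T)$.

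The main obstacle will be the bookkeeping of the twists and dimension shifts: the central extension $f^*\tau$ must be identified coherently with the one induced on $LT'$, the shift from $\dim(T)$ to $\dim(T')$ (which is exactly where the ``modification'' in $f^\#$ lives) must be absorbed correctly, and the Thom class for $\mathrm{coker}(df)$ on the $K$-theory side must be matched with the determinantal correction on the $char$ side. Getting these sign, parity, and grading conventions lined up, so that both sides agree not merely up to an overall ambiguity but on the nose, is where the real work of the proof lies.
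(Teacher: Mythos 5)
There is a genuine gap here: your proposal never engages with how $f^\#$ is actually defined, and the construction you sketch in its place does not work. In the paper, $f^\#$ is only defined through the canonical decomposition of the local injection $f$ (Theorem \ref{decomposition of local injection}) as a finite covering, followed by the inclusion $i_1$ into the first factor of a product $T'/\ker(f)\times (T'/\ker(f))^\perp$, followed by the finite covering $f\cdot j$; on the covering pieces $f^\#$ is the honest pull back $q^*\circ q^\natural$, and on the product piece it is $ind_{T_1\times T_2\to T_1}\circ i_1^\natural$, a family index map (integration over the complementary torus) composed with restriction of the group action. Your description of $f^\#$ as ``a pushforward along the immersion $f$ together with a Thom class for the normal bundle'' goes in the wrong direction: a push-forward along $f:T'\to T$ maps $K$-theory of $T'$ to $K$-theory of $T$, whereas $f^\#$ must map $K^{\tau+\dim(T)}_T(T)$ to $K^{f^*\tau+\dim(T')}_{T'}(T')$. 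Likewise, $char(f)$ carries no ``determinantal correction from $\mathrm{coker}(df)$'': it is the purely combinatorial operation of applying ${}^t df$ to a $\Pi_T$-orbit in $\Lambda_T^\tau$ and decomposing the image into $\Pi_{T'}$-orbits.

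Your localization picture is also off for tori: the Mackey decomposition does not produce classes ``supported near a finite set of admissible points of $T$.'' It produces the covering space $\Lambda_T^\tau\times_{\Pi_T}\mathfrak{t}\to T$, whose components each surject onto $T$, followed by a Thom isomorphism down to $K(\Lambda_T^\tau/\kappa^\tau(\Pi_T))$; the generators correspond to $\Pi_T$-orbits of characters, not to points of $T$. The actual proof proceeds by (i) proving the square for the direct-product inclusion $i_1$ and for a finite covering $q$ separately, each time lifting the relevant map to the covering spaces and invoking the naturality of the Mackey decomposition with respect to $F^*$ and $f^\natural$, the K\"unneth/family-index formalism, and the functoriality of push-forwards; and (ii) gluing these cases together using the partial functoriality $char(f)=char(q)\circ char(i_1)\circ char(f\cdot j)$ of Theorem \ref{partial functoriality}. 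Without the decomposition of $f$ and these two base cases, the ``check on generators'' step of your outline has no concrete content to check.
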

\begin{thm}\label{main theorem for R for tori}
The following commutative diagram holds.
$$\begin{CD}
R^{\tau}(LT) @>f^!>> R^{f^*\tau}(LT') \\
@Vl.w._TVV @Vl.w._{T'}VV \\
char(T,\tau) @>char(f)>> char(T',f^*\tau) 
\end{CD}$$
\end{thm}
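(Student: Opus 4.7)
The plan is to reduce the statement to an explicit computation on irreducible positive energy representations, exploiting the fact that, for tori, irreducible positive energy representations at a fixed level $\tau$ are classified by a finite set of ``lowest weights'' and that both $l.w._T$ and the construction of $char(T,\tau)$ are essentially bookkeeping devices for these weights. Since $R^\tau(LT)$ is the Grothendieck group of a semigroup generated by finitely reducible positive energy representations, every element is a finite $\bb{Z}$-linear combination of irreducibles, so additivity reduces the verification of the diagram to a check on a single irreducible $V_\lambda \in R^\tau(LT)$.

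First I would recall the explicit description of $l.w._T$: for $V_\lambda$ with lowest weight $\lambda$ (a character of a suitable extension of $T$ determined by $\tau$), $l.w._T(V_\lambda)$ is the corresponding element of $char(T,\tau)$, roughly the character of $V_\lambda$ restricted to its lowest-energy subspace. Next I would unpack the definition of $f^!$ constructed earlier in the paper. Since naive pullback produces an infinitely reducible representation (by the Corollary on infinite reducibility cited in the Introduction), $f^!$ must be defined not via pullback of representation spaces but by a weight-theoretic operation: intuitively, $f^!(V_\lambda)$ is obtained by decomposing $V_\lambda$ along the action of $T'$ through $f$ and picking out the appropriate finitely reducible summand, organized by the induced level $f^*\tau$. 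Likewise, $char(f)$ should be the dual map on characters induced by the inclusion of Lie algebras $df$.

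With both sides written as operations on weights, commutativity reduces to the assertion that ``take the lowest weight, then transport along $f$'' equals ``transport a representation along $f^!$, then take its lowest weight''. I would verify this by first showing that the lowest-energy subspace of $V_\lambda$ behaves well under the $T'$-action through $f$ (because $df$ is injective, so the kernel directions do not interfere with the positive energy condition), and then matching the two expressions character by character. Concretely, if $\lambda$ is encoded as an affine functional on $Lie(T)\oplus \bb{R}$, then $char(f)$ restricts $\lambda$ along $df\oplus \mathrm{id}$, while $f^!$ is built precisely so that its lowest weight is this restriction; this identification is essentially forced by compatibility with the adjoint pairing used to define the character group.

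The main obstacle will be the precise matching of normalizations: the level $\tau$ gets pulled back to $f^*\tau$, the spin correction $\sigma$ may contribute boundary terms, and the definition of $f^!$ likely involves choosing a splitting of $T$ as (image of $T'$) $\times$ (complement), under which the ``extra'' weights must be summed correctly into a single element of $R^{f^*\tau}(LT')$. Making sure this choice of splitting does not affect the final character, and that the resulting map is well-defined on the Grothendieck completion, is the technical heart of the argument; once that is established, the commutativity of the diagram on each irreducible $V_\lambda$ is a direct computation, and the general case follows by $\bb{Z}$-linearity.
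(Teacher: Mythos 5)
Your outline does track the paper's strategy at the level of skeleton: reduce to irreducible $V_{[\lambda]}$ by $\bb{Z}$-linearity, decompose $f$ (via the orthogonal completion torus) as a finite covering followed by an inclusion into a direct factor followed by a finite covering, define $f^!$ piece by piece, and match lowest weights against $char(f)$ using the partial functoriality $char(f)=char(q)\circ char(i_1)\circ char(f\cdot j)$. But the proposal has a genuine gap at its center: it never constructs $f^!$. Saying that ``$f^!$ is built precisely so that its lowest weight is this restriction'' and that ``this identification is essentially forced'' is circular here, because producing a well-defined homomorphism $f^!$ on actual representations and verifying its compatibility with $char(f)$ \emph{is} the content of the theorem. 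The two constructions that do this work are absent. For the inclusion $i_1:T_1\to T_1\times T_2$ one needs the explicit model $V_{[\lambda]}=\sum_{n\in\Pi_T}V_{\ca{H}}(U^\tau)\otimes\bb{C}_{\lambda+\kappa^\tau(n)}$, the Heisenberg factorization $V_{[\lambda]}\cong V_{[{}^tdi_1(\lambda)]}\otimes V_{[{}^tdi_2(\lambda)]}$, and the definition of $i_1^!V$ as the intertwiner (multiplicity) space ${\rm Hom}_{LT_2^{\tau_2}}\bigl(\sum_{[\lambda_2]}V_{[\lambda_2]},\,i_2^*V\bigr)$, computed by Schur's lemma to be $V_{[{}^tdi_1(\lambda)]}$. ``Picking out the appropriate finitely reducible summand'' is not a definition: $i_2^*V_{[\lambda]}$ is an infinite multiple of a single irreducible, so there is no distinguished summand, and the multiplicity space is exactly the device that replaces such a choice.

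The proposal also misdescribes the other half of the construction. For a finite covering $q$ the honest pullback $q^*$ \emph{is} finitely reducible (the Heisenberg parts of $LT^\tau$ and $LT'^{q^*\tau}$ agree), and $q^!=q^*$ decomposes as $\sum_{m\in\Pi_T/dq(\Pi_{T'})}V_{[{}^tdq(\lambda+\kappa^\tau(m))]}$; so the blanket claim that pullback fails and that $f^!$ must everywhere be ``weight-theoretic'' is wrong in this case, and this is precisely where $char(f)$ applied to one orbit produces a \emph{sum} of several $\Pi_{T'}$-orbits rather than the single ``restriction of $\lambda$ along $df$'' your proposal describes. Finally, the worry about the spin correction $\sigma$ is moot for tori ($\rho=0$ and every orbit is regular), whereas the real well-definedness issues --- that the $\Pi_{T'}$-orbit decomposition of ${}^tdf(\Pi_T.\lambda)$ is finite and independent of representatives, and that the three partial constructions compose correctly --- are handled in the paper by Lemma \ref{pull back formula of kappa} and Theorem \ref{partial functoriality}, neither of which your argument supplies a substitute for.
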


We can extend Theorem \ref{main theorem for K for tori} to $f:H\to G$ satisfying the decomposable condition (Section 2), where $H$ and $G$ can be not tori.
\begin{thm}\label{main theorem for K for general case}
The following commutative diagram holds.
$$\begin{CD}
K^{\tau +rank(G)}_{G}(G) @>f^\#>> K^{f^*\tau+rank(H)}_H(H) \\
@VM.d._GVV @VM.d._HVV \\
char(G,\tau) @>char(f)>> char(H,f^*\tau)
\end{CD}$$
\end{thm}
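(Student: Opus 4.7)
The plan is to reduce to the torus case handled by Theorem~\ref{main theorem for K for tori}. Fix maximal tori $T_H\subseteq H$ and $T_G\subseteq G$. The decomposable condition on $f:H\to G$ (Section~2) should in particular guarantee that $f$ restricts to a local injection $f_T:T_H\to T_G$ of tori, to which Theorem~\ref{main theorem for K for tori} applies at the restricted level $\tau|_{T_G}$.

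The first main step is to identify each of the four corners of the $(G,H)$-square as a Weyl-invariant subobject of the corresponding corner of the $(T_G,T_H)$-square, in such a way that each of the four arrows is induced from its torus-level analogue. Because $\pi_1(G)$ is torsion-free, $char(G,\tau)$ should be identified with the $W_G$-invariants of $char(T_G,\tau)$ (appropriately shifted), and the restriction $K^{\tau+rank(G)}_G(G)\to K^{\tau+rank(G)}_{T_G}(T_G)$ should be injective with image the Weyl invariants; the analogous statement holds for $H$. Consequently $M.d._G$ is the restriction of $M.d._{T_G}$ to invariants, and similarly for $H$. This is where the torsion-freeness of $\pi_1$ is used essentially.

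For the horizontal arrows, $char(f)$ is induced from $char(f_T)$ essentially by construction, since the character objects are defined through maximal tori and the decomposable $f$ intertwines the relevant Weyl actions on the invariant parts. The delicate part is to show that $f^\#$ is also induced from $f_T^\#$ under the decomposable condition, i.e.\ that restriction to maximal tori takes $f^\#$ to $f_T^\#$. Once both compatibilities are in place, commutativity of the outer $(G,H)$-square follows from commutativity of the inner $(T_G,T_H)$-square (which is Theorem~\ref{main theorem for K for tori}), combined with the injectivity of the vertical restrictions to eliminate ambiguity.

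The main obstacle I anticipate is precisely this compatibility $f^\#|_{T_G}=f_T^\#$: since $f^\#$ is engineered to repair the failure of naturality of the ordinary $K$-theoretic pullback, verifying that it reduces cleanly to the torus case requires a careful unpacking of its Mackey-type definition and a systematic use of the decomposable condition to match the Thom-class corrections that appear in both the $G$-level and $T_G$-level constructions. Everything else in the argument is either the torus theorem, standard Weyl-invariance bookkeeping, or the definitional compatibility of $char$ and $M.d.$ with passage to maximal tori.
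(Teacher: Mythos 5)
Your overall skeleton is the same as the paper's: restrict everything to maximal tori $S\subseteq H$, $T\subseteq G$ with $f(S)\subseteq T$, use the compatibility $M.d._T\circ k^\#=char(k)\circ M.d._G$ (and its analogue for $H$, both supplied by Freed--Hopkins--Teleman's equal-rank naturality theorem), note that $char(i)\circ char(f)=char(f|_S)\circ char(k)$ holds by the very definition of $char(f)$, and then conclude by chasing the cube and invoking injectivity of $char(i)$. That chase is exactly the computation the paper performs, so the reduction strategy is sound. (One small imprecision: you only need injectivity of $char(i)$ and of the restriction $k^\#$, not the stronger identification of the images with full Weyl invariants.)

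However, there is a genuine gap at the step you yourself flag as the ``main obstacle'': the compatibility $i^\#\circ f^\#=f|_S^\#\circ k^\#$. You treat $f^\#$ as a previously given map whose restriction to maximal tori must be computed, but at the group level $f^\#$ is not defined independently of this theorem --- its construction is part of the statement. The paper's resolution, which your proposal does not supply, is to use the decomposable condition to factor $f$ canonically as
$$H\xrightarrow{q}H/\ker(f)\xrightarrow{i_1}H/\ker(f)\times S^\perp\xrightarrow{f\cdot j}G,$$
where $S^\perp$ is the orthogonal completion torus, to \emph{define} $f^\#:=q^\#\circ i_1^\#\circ(f\cdot j)^\#$ with $q^\#=q^*\circ q^\natural$ and $(f\cdot j)^\#=(f\cdot j)^*\circ(f\cdot j)^\natural$ (torus-restriction is a finite covering in these cases) and $i_1^\#=ind_{H\times S^\perp\to H}\circ i_1^\natural$ (a family index map), and then to verify the restriction compatibility separately for each factor: for the coverings it is an elementary diagram of $(\cdot)^*$ and $(\cdot)^\natural$ maps, while for $i_1$ it requires the naturality of the K\"unneth isomorphism and family index map with respect to both $F^*$ and $f^\natural$. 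The corresponding factorization $char(f)=char(q)\circ char(i_1)\circ char(f\cdot j)$ (the ``functoriality we need'') is what makes the composite match $char(f)$. Without this decomposition and the piecewise verification, your ``careful unpacking of the Mackey-type definition'' remains a placeholder for the actual content of the proof.
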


Let us explain the details and the backgrounds.

Firstly, we explain the details on representation groups. Let us start from the representation theory of compact Lie groups (Cartan-Weyl theory). Before that, we deal with tori. Representation theory for tori is fundamental. Let $T$ be an $n$ dimensional torus. Let us recall an explicit description of representation ring of $T$
$$R(T)\cong\bb{Z}[X_1,X_2,\cdots,X_n,(X_1X_2\cdots X_n)^{-1}],$$
where $R(T)$ is the Grothendieck completion of the semiring which finite dimensional representations of $T$ form. $X_j$ corresponds to the projection onto the $j$'th factor $p_j:T\cong (U(1))^n\to U(1)$. This isomorphism is verified as follows.
By Schur's lemma, any irreducible unitary representations are $1$ dimensional. So we can identify the set of isomorphism classes of irreducible representations with the set of homomorphisms from $T$ to $U(1)$. We write it as ${\rm Hom}(T,U(1))$. We can rewrite it by taking tangent maps. Let $\Pi_T:=\ker(\exp:\fra{t}\to T)$ and $\Lambda_T:={\rm Hom}(\Pi_T,\bb{Z})$. We have a canonical isomorphism ${\rm Hom}(T,U(1))\cong \Lambda_T$ by taking tangent maps. 
Since $\Lambda_T$ is isomorphic to the $\bb{Z}$ free module generated by $X_1,X_2,\cdots, X_n$, the isomorphism $R(T)\cong\bb{Z}[X_1,X_2,\cdots,X_n,(X_1X_2\cdots X_n)^{-1}]$ holds. If we regard $R(T)$ just as a group, it is isomorphic to the $\bb{Z}$ free module generated by $\Lambda_T$. So we write it as $\bb{Z}[\Lambda_T]$ also.

%Finite dimensional representations make a semiring under the direct sum and the tensor product of representations. $R(T)$, the representation ring of $T$ is defined by the ring completion of this semiring. When we take an isomorphism $T\cong(U(1))^n$, we have an explicit description of it. It is a polynomial ring which allows negative powers. That is,
%$R(T)\cong\bb{Z}[X_1,X_2,\cdots,X_n,(X_1X_2\cdots X_n)^{-1}]$, where $X_j$ corresponds to the projection onto the $j$'th factor $p_j:T\cong (U(1))^n\to U(1)$.

Let $G$ be a compact, connected, simply connected and simple Lie group, and $T$ be a chosen maximal torus. Let us recall a very beautiful correspondence
\begin{center}
\{isomorphism classes of irreducible representations\} $\cong$ \{dominant weights\}. 
\end{center}
Since for any weight $\lambda$, there exists $w\in W(G)$ such that $w.\lambda$ is a dominant weight, we have an isomorphism 
$R(G)\cong R(T)^{W(G)}$, the invariant subring of $R(T)$. Moreover, we have an isomorphism as groups 
$$R(G)\cong \bb{Z}[\Lambda_T/W(G)],$$
 where the right hand side $\bb{Z}[\Lambda_T/W(G)]$ is the $\bb{Z}$ free module generated by the set $\Lambda_T/W(G)$.
Let us explain this theorem. An isomorphism class of irreducible representation of $G$ is completely determined by the restriction to $T$. We can define a lexicographic order by taking an isomorphism $T\cong(U(1))^n$.
Let $\rho:G\to U(V)$ be an irreducible representation. We can decompose $V$ as irreducible representation of $T$. This decomposition determines the $W(G)$-invariant weighted finite subset in $\Lambda_T$,
where $W(G)$ is the Weyl group of $G$. This finite subset is determined by the maximum element (Cartan-Weyl's highest weight theory), which is ``dominant''.
Moreover, if we are given a dominant weight, we can define an irreducible representation whose highest weight is the given one (Borel-Weil theory). 

An analogue of Cartan-Weyl theory for loop groups is studied in \cite{PS}. Let us state a difficulty when we try to follow the above construction of compact Lie group case. Let $T$ be a torus and $LT$ be the set of smooth maps from $S^1$ to $T$, which is the loop group of $T$. The group structure of $LT$ is defined by the multiplication of $T$ pointwisely. %Since $\bb{T}_{rot}$ acts on $LT$ via transformation of parameter, we obtain the new Lie group $LT\rtimes \bb{T}_{rot}$.
However, the above mechanism for compact Lie group does not hold in this case, since the size of maximal connected commutative group $LT_0$ is too large, where $LT_0$ is the identity component of $LT$. Precisely, it is too difficult to deal with all representations. In fact, 
it has not been known to classify all representations of $LT$. In this paper, we focus on positive energy representations. 

Positive energy representations at level $\tau$ has been classified (\cite{PS}).
In order to describe it for tori, we need to introduce some terminologies.

Let $\tau$ be a positive central extension of $LT$.
We can restrict it to the set of constant loops, which is naturaly identified with $T$.

$$\xymatrix{
1 \ar[r]  & U(1) \ar[r]^i & LT^\tau \ar[r]^p & LT \ar[r]  & 1 \\\
1 \ar[r] \ar@{=}[u] & U(1) \ar[r]^i  \ar@{=}[u] & T^\tau \ar[r]^p  \ar[u] & T \ar[r] \ar[u]& 1\ar@{=}[u]
}$$

$\Lambda^\tau_T:=\{\rho:T^\tau\to U(1)|\rho\circ i=id\}$ is the set of $\tau$-twisted irreducible representations of $T^\tau$. A central extension $\tau$ defines an action of $\Pi_T$ on $\Lambda^\tau_T$ (Section 1).

\begin{thm}[\cite{PS}]\label{computation of R(LT)}
Isomorphism classes of positive energy representations at level $\tau$ are parametrized by the set of $\Pi_T$-orbits in $\Lambda^\tau_T$. That is, we have an isomorphism
$$l.w._T:R^\tau(LT)\xrightarrow{\cong}\bb{Z}[\Lambda_T^\tau/\Pi_T],$$
where $\bb{Z}[\Lambda_T^\tau/\Pi_T]$ is the $\bb{Z}$ free module generated by the set $\Lambda^\tau_T/\Pi_T$.
\end{thm}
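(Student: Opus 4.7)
My plan is to exploit the polar decomposition of $LT$ into a constant, an oscillatory, and a winding part, reduce the classification to a Stone--von Neumann type uniqueness statement for the Heisenberg algebra of the oscillatory part, and read off the parameter set as the orbits of the winding action on $\Lambda_T^\tau$.

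First I would decompose $LT \cong T \cdot V \cdot \Pi_T$, where $V$ is the space of based loops of mean zero, identified via Fourier series with a dense subspace of a complex symplectic vector space polarized by positive versus negative Fourier modes, and $\Pi_T$ indexes the connected components of $LT$ by winding number. Positivity of $\tau$ translates into the statement that the induced central extension of $V$ is the standard Heisenberg extension for this polarization and that the infinitesimal generator of the $\bb{T}_{rot}$-action on $V^\tau$ is non-negative on the associated Fock space; restriction of $\tau$ to the constant loops recovers the twisted torus $T^\tau$ whose $\tau$-twisted irreducibles are parametrized by $\Lambda^\tau_T$.

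Given a positive energy representation $\ca{H}$ at level $\tau$, I would then consider the subspace $\ca{H}^{\rm low} \subset \ca{H}$ of vectors of minimal $\bb{T}_{rot}$-energy. Condition (3) forces $\ca{H}^{\rm low} \neq 0$; negative Fourier modes lower energy and so must annihilate $\ca{H}^{\rm low}$, which is therefore a representation of $T^\tau$ alone and decomposes as a direct sum of characters in $\Lambda^\tau_T$. The Stone--von Neumann theorem for the polarized Heisenberg algebra of $V$ then shows that $\ca{H}$ is recovered from $\ca{H}^{\rm low}$ as $\ca{H}^{\rm low}\otimes \mathrm{Fock}(V^+)$, so $\ca{H}$ is determined up to isomorphism by $\ca{H}^{\rm low}$ as a $T^\tau$-module. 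I would then define $l.w._T(\ca{H})$ to be the formal sum of the characters occurring in $\ca{H}^{\rm low}$, and invoke this uniqueness to conclude that $l.w._T$ is well-defined and injective on irreducibles.

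The $\Pi_T$ quotient appears because each element of $\Pi_T$, viewed as a winding loop, lifts to $LT^\tau$ and conjugation by its lift is an outer automorphism of $T^\tau$ that translates $\Lambda^\tau_T$ by the character determined by the commutator pairing of $\tau$. Inside an irreducible positive energy representation, the winding generators therefore permute the isotypic components of $\ca{H}^{\rm low}$, so only the $\Pi_T$-orbit of characters is an isomorphism invariant. Conversely, given an orbit, I would construct a positive energy representation by choosing a representative $\chi \in \Lambda^\tau_T$, forming the $T^\tau \cdot V \cdot \Pi_T$-module generated from $\chi \otimes \mathrm{Fock}(V^+)$, and extending to $LT^\tau \rtimes \bb{T}_{rot}$; irreducibility and positive energy follow from the choice of polarization. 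The main obstacle is the infinite-dimensional Stone--von Neumann step: one must verify that the polarization of $V$ dictated by the rotation action selects an essentially unique irreducible Fock representation of the Heisenberg algebra, and that the continuity and positivity hypotheses are exactly what is needed to force every positive energy representation to restrict on the $V$-subgroup to a direct sum of copies of this Fock representation. Everything downstream of this, including the $\Pi_T$-orbit bookkeeping and the construction of the inverse, is formal.
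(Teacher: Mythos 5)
Your proposal is correct and follows essentially the same route the paper takes: the paper defers the classification to Pressley--Segal but lays out exactly this machinery in Section 5.1 --- the factorization $LT\cong T\times\Pi_T\times U$, the uniqueness of the Heisenberg (Fock) representation of $U^\tau$, and the construction $V_{[\lambda]}=\sum_{n\in\Pi_T}V_{\mathcal H}(U^\tau)\otimes\bb{C}_{\lambda+\kappa^\tau(n)}$ with the $\Pi_T$-action on $\Lambda_T^\tau$ given by $\kappa^\tau$. The Stone--von Neumann step you flag as the main obstacle is precisely the input the paper also takes from \cite{PS}.
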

The notation $l.w.$ comes from the ``lowest weight''. It is known that we can take a maximal torus of $LT^\tau\rtimes\bb{T}_{rot}$ as $T^\tau\times\bb{T}_{rot}$ if $\tau$ is positive. 
By formal computation in $LT^\tau\rtimes \bb{T}_{rot}$, Weyl group of $LT^\tau\times \bb{T}_{rot}$ is isomorphic to $\Pi_T$. Therefore, we can regard the above theorem as an analogue of the isomorphism $R(G)\cong R(T)^{W(G)}\cong \bb{Z}[\Lambda_T/W(G)]$.

Secondly, we explain the details on twisted equivariant $K$-theory.

Twisted $K$-theory was introduced by Donovan and Karoubi in order to extend Thom isomorphism for non $K$-orientable vector bundle \cite{Karoubi}. Later, it was extended by Rosenberg \cite{AS}. It has an equivariant version just like untwisted cases.

Let us start from untwisted equivariant $K$-theory.
Equivariant $K$-theory is concerned with representation theory (\cite{Segal}). Throughout this paper, we deal with compactly supported $K$-theory.
Let $X$ be a topological space and $G$ be a Lie group acting on $X$. If $X$ is a compact Hausdorff space, isomorphism classes of $G$-equivariant vector bundles form  a semigroup under the direct sum, and equivariant $K$-group $K_G(X)$ is defined by the Grothendieck completion of this semigroup. Generally, $K_G(X)$ is the set of $G$-equivariant homotopy classes of $G$-equivariant family of Fredholm operators parametrized by $X$. When an action of $G$ is special, we can describe $K_G(X)$ easily. When it is trivial, $K_G(X)\cong K(X)\otimes R(G)$, and when it is free, $K_G(X)\cong K(X/G)$.

Our second main object is twisted version of it. We start from non-equivariant twisted $K$-theory.
A twisting over $X$ is an element of the group $H^3(X)$.
It classifies $PU(\ca{H})$ principal bundles over $X$, where $\ca{H}$ is an infinite dimensional separable complex Hilbert space, since the functor $H^3$ has the classifying space $K(\bb{Z},3)$, and the homotopy equivalences $PU(\ca{H})\cong K(\bb{Z},2)$ and $BPU(\ca{H})\cong K(\bb{Z},3)$ hold, where $K(\bb{Z},3)$ and $K(\bb{Z},2)$ are Eilenberg MacLane spaces. Therefore, if $\tau\in H^3(X)$ is given, we have an associated $PU(\ca{H})$ principal bundle $Q$. And it determines a $\bb{P}(\ca{H})$ bundle $P:=Q\times _{PU({\ca{H}})} \bb{P}(\ca{H})$. We call such $P$ a projective bundle, that is, it is a $\bb{P}(\ca{H})$ bundle whose structure group is $PU(\ca{H})$.

Then we have a classifying bundle $Fred(P)$ of twisted $K$-group $K^{\tau+0}(X)$, where $Fred(P)|_x$ is the set of Fredholm operators on $\ca{H}_x$, a chosen lift of the fiber $P|_x$.
Twisted $K$-theory is defined as follows.
\begin{dfn}
The space of sections is defined by
$$\Gamma_c(X,Fred(P)):=\{s:X\to Fred(P)|supp(s)\text{ is compact}\},$$
where $supp(s):=\{x\in X|s_x\text{ is not invertible}\}$ is the support of $s$.

Twisted $K$-theory is defined by
$$K^{\tau+0}(X):=\pi_0(\Gamma_c(X,Fred(P)))$$
$$K^{\tau+k}(X)=K^{\tau-k}(X):=K^{p^*\tau+0}(X\times \bb{R}^k),$$
where $p:X\times \bb{R}^k\to X$ is the projection onto the first factor.
\end{dfn}
\begin{rmk}
Twisted $K$-theory is a two periodic generalized cohomology theory (\cite{AS}).
\end{rmk}

Equivariant version of twisted $K$-theory is defined just like untwisted cases.
Let $G$ be a compact Lie group acting on a topological space $X$ continuously. If we replace ``projective bundle'' with ``$G$-equivariant projective bundle'', ``sections'' with ``$G$-equivariant sections'' and ``homotopy'' with ``$G$-equivariant homotopy'', we obtain the twisted equivariant $K$-theory  $K_{G}^{\tau+k}(X)$, where $\tau\in H^3_G(X)$ is a $G$-equivariant twisting. Since this is a generalized cohomology theory for $G$-spaces, we have some tools to compute. In fact, Freed, Hopkins and Teleman computed $K_{G}^{\tau+k}(G)$ using spectral sequences or a ``Mackey decomposition'' (\cite{FHT1}, \cite{FHT3}) for $\tau$ which is induced by a positive central extension of $LG$, where $G$ acts on itself by conjugation. For simplicity, we deal with tori here. While this action of $T$ on itself is trivial, an action on a projective bundle can be non-trivial.
\begin{thm}[\cite{FHT1}, \cite{FHT3}]\label{computation of K_T(T)}
The following isomorphism holds.
$$
K^{\tau+k}_T(T)\cong\begin{cases}
0 & (k\equiv dim(T)+1\mod2) \\
\bb{Z}[\Lambda_T^\tau/\Pi_T] & (k\equiv dim(T)\mod2).
\end{cases}$$
The action of $\Pi_T$ on $\Lambda^\tau_T$ coincides with the one defined by the central extension $\tau$. $\bb{Z}[\Lambda_T^\tau/\Pi_T]$ is the $\bb{Z}$ free module generated by the set $\Lambda_T^\tau/\Pi_T$.
\end{thm}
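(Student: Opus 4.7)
The plan is to cover $T$ by contractible open sets on which the twist trivializes to the central extension $T^\tau$ of Section 1, compute the twisted equivariant K-theory on each piece, and assemble via Mayer-Vietoris, picking up the $\Pi_T$-action as monodromy on intersections. Since $T$ is abelian, the conjugation action is trivial and all equivariant content resides in the twist $\tau$. Over any $T$-equivariantly contractible open $U \subseteq T$, the twist is trivializable and a choice of trivialization at a basepoint identifies it with $T^\tau \to T$; Thom isomorphism for compactly supported K-theory of $U \cong \bb{R}^{\dim U}$ with the trivial $T$-action then gives
$$K_T^{\tau + k}(U) \cong K_T^{\tau|_{pt} + k - \dim U}(pt) = R^{\tau|_{pt}}(T) \cong \bb{Z}[\Lambda_T^\tau]$$
in the correct parity and $0$ otherwise.

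For the one-dimensional case $T = S^1$, I would take a cover by two open arcs $U, V$. The intersection $U \cap V$ has two components, and on one of them both trivializations agree while on the other they differ by the monodromy of $\tau$ along the generator $\gamma \in \Pi_T$. The Mayer-Vietoris differential $\phi : \bb{Z}[\Lambda_T^\tau]^2 \to \bb{Z}[\Lambda_T^\tau]^2$ then takes the form $\phi(a, b) = (a - b, \, a - \gamma \cdot b)$, which has zero kernel (no finitely supported sum in $\bb{Z}[\Lambda_T^\tau]$ is invariant under the free $\Pi_T$-action, freeness being guaranteed by positivity of $\tau$) and cokernel $\bb{Z}[\Lambda_T^\tau / \Pi_T]$. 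Thus $K_T^{\tau + 1}(S^1) \cong \bb{Z}[\Lambda_T^\tau / \Pi_T]$ and $K_T^{\tau + 0}(S^1) = 0$. For general $n$-dimensional $T \cong (S^1)^n$, I would iterate via an $n$-fold Mayer-Vietoris for a cover by $2^n$ contractible cubes (or equivalently a Kunneth-type decomposition), each factor shifting parity by one and contributing a generator of $\Pi_T$ to the coinvariants, producing $\bb{Z}[\Lambda_T^\tau / \Pi_T]$ in parity $n$ and $0$ in parity $n+1$.

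The main obstacle is the identification of the monodromy cocycle in the second step with the prescribed $\Pi_T$-action on $\Lambda_T^\tau$ coming from the central extension of $LT$. This requires a careful cocycle-level comparison relating the equivariant Dixmier-Douady data of $\tau$ on $T$ to the commutator pairing that defines the central extension $LT^\tau$, and is precisely where the positivity hypothesis enters in an essential way: it both ensures freeness of the $\Pi_T$-action (so that the cokernel computation produces $\bb{Z}[\Lambda_T^\tau / \Pi_T]$ rather than a nontrivial extension) and guarantees that the projective bundle classifying $\tau$ admits trivializations inducing exactly the affine shift $\lambda \mapsto \lambda + \tau_*(\gamma)$ described in Section 1.
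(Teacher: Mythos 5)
Your argument is correct in outline, but it follows a genuinely different route from the one this paper actually writes down. The paper's own proof (Corollary \ref{computation of K using a Mackey decomposition}) goes through the Mackey decomposition of Theorem \ref{theorem Mackey decomposition}: it replaces $K^{\tau+k}_T(T)$ by the non-equivariant $K$-theory of the covering space $Y=\Lambda_T^\tau\times_{\Pi_T}\fra{t}\to T$, observes that positivity makes $\kappa^\tau$ injective so that $Y$ is a disjoint union of copies of $\fra{t}$ indexed by $\Lambda_T^\tau/\kappa^\tau(\Pi_T)$, and applies the Thom isomorphism $\pi_!$ once. Your Mayer--Vietoris assembly is instead the spectral-sequence computation of \cite{FHT1} (the paper alludes to both: ``using spectral sequences or a `Mackey decomposition'\thinspace''). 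What each buys: your route is more elementary and makes the parity shift and the appearance of coinvariants completely explicit in rank one (your computation of $\ker\phi$ and $\mathrm{coker}\,\phi$ is right, and freeness of the $\Pi_T$-action is indeed what kills the kernel); the Mackey route packages the entire local-to-global step into one covering space and, crucially for this paper, yields an isomorphism $M.d._T$ that can be proved \emph{natural} under $F^*$ and $f^\natural$ (Theorems \ref{naturality for F^*}, \ref{naturality for f^natural}), which the later sections depend on. Two caveats on your write-up. First, the parenthetical ``or equivalently a K\"unneth-type decomposition'' is not available in general: $\kappa^\tau$ need not be block-diagonal with respect to a splitting $T\cong(S^1)^n$, so the twist does not factor; the cubical Mayer--Vietoris (equivalently, $H^p(\Pi_T;\bb{Z}[\Lambda_T^\tau])$ with the free module $\bb{Z}[\Lambda_T^\tau]$ concentrating everything in top degree) is the version that works. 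Second, the identification of the monodromy with the $\kappa^\tau$-action is genuinely part of the statement and you leave it as an acknowledged obstacle; the paper does not prove it either, taking from \cite{FHT3} that ``$\kappa^\tau(n)$ represents how the character changes when one travels in $T$ along the geodesic loop $n$,'' so your proposal is at the same level of completeness as the text on this point, but it is the one step neither argument closes from first principles.
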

We write the isomorphism of the above theorem as
$$M.d._T:K^{\tau+dim(T)}_T(T)\to\bb{Z}[\Lambda^\tau_T/\Pi_T].$$
 This notation comes from a ``Mackey decomposition''.

Combining Theorem \ref{computation of R(LT)} with \ref{computation of K_T(T)},
 we know that $R^\tau(LT)$ is isomorphic to $K^{\tau+dim(T)}_T(T)$.

Freed, Hopkins and Teleman constructed an isomorphism 
$$FHT_T:R^\tau(LT)\to K^{\tau+dim(T)}_T(T)$$
by use of the ``Dirac family''.

To explain it, let us explain ``local equivalence'' of ``groupoids''.
We start from a classical isomorphism for un-twisted equivariant $K$-theory. Let $G$ act on $X$, $N$ be a normal subgroup of $G$, and $N$ act on $X$ freely. Then we have an isomorphism $K_G(X)\cong K_{G/N}(X/N)$ (\cite{Segal} Proposition 2.1). In \cite{FHT1}, they dealt with such a phenomenon more generally. They defined twisted $K$-theory for groupoids. A groupoid is a generalization of a space equipped with an action of a Lie group. 
$S//G$ is a groupoid defined by $S$ which $G$ acts on.
For example, $K(S//G)\cong K_G(S)$. They verified that twisted $K$-theory is a local equivalence invariant. Local equivalence is an equivalence relation between two groupoids. In fact, if two groupoids $S_1//G_1$ and $S_2//G_2$ are locally equivalent, two orbit spaces $S_1/G_1$ and $S_2/G_2$ are homeomorphic. For example, $X//G$ and $(X/N)//(G/N)$ are locally equivalent in the above situation. By this construction, infinite dimensional spaces or Lie groups can be analyzed in some cases.

In fact, they constructed a local equivalence $hol:\ca{A}_{S^1\times T}//LT\to T//T$, the holonomy map, where $\ca{A}_{S^1\times T}$ is the set of connections over the trivial bundle $S^1\times T$. Let us recall that $LT$ acts on $\ca{A}_{S^1\times T}$ by the gauge transformation.
Passing through this local equivalence, elements of $K^{\tau+k}_T(T)$ can be obtained
from $LT$-equivariant families of Fredholm operators parametrized by $\ca{A}_{S^1\times T}$. If we have an irreducible positive energy representation of $LT$, we obtain a ``Dirac family'' which is a family of Fredholm operators parametrized by $\ca{A}_{S^1\times T}$(\cite{FHT2}). Since positive energy representations are projective, we need to twist equivaritant $K$-theory. By extending linearly, we obtain a group homomorphism 
$$FHT_T:R^{\tau}(LT)\to K^{\tau+dim(T)}_T(T).$$
In \cite{FHT2}, they verified that $FHT_T$ is an isomorphism by use of an explicit description of the both sides, Theorem \ref{computation of R(LT)} and Theorem \ref{computation of K_T(T)}.

Let us compare the common right hand side $\bb{Z}[\Lambda_T^\tau/\Pi_T]$ of Theorem \ref{computation of R(LT)}, \ref{computation of K_T(T)} with $R(T)\cong\bb{Z}[\Lambda_T]$, where we can see algebraic similarity between them. So we can define an analogue of the induced homomorphism of representation ring of tori. 

Firstly, we rewrite induced homomorphisms of representation ring of tori. Let $T$ and $T'$ be tori and $f:T'\to T$ be a group homomorphism. $\Lambda_T$ and $\Lambda_{T'}$ are the character groups of $T$ and $T'$ respectively. Representation rings of $T$ and $T'$ are isomorphic to the group rings of $\Lambda_T$ and $\Lambda_{T'}$ respectively. That is, $R(T)\cong \bb{Z}[\Lambda_T]$ and $R(T')\cong \bb{Z}[\Lambda_{T'}]$. $f^*:R(T')\to R(T)$ is defined by the pull back of representations. When we regard $\Lambda_T$ as ${\rm Hom}(\Pi_T,\bb{Z})$, $f^*(\lambda)$ corresponds to ${}^tdf(\lambda)$ ($\lambda\in\Lambda_{T'}$).
That is, if we have a representation $\rho:T\to U(n)$, we can decompose it as $\bigoplus_{i=1}^n\lambda_i$ for some $\lambda_i\in\Lambda_T$, and its image under $f^*$ is given by 
$$f^*(\rho)=f^*(\bigoplus_{i=1}^n\lambda_i)=
\sum_{i=1}^nf^*(\lambda_i)=\sum_{i=1}^n{}^tdf(\lambda_i).$$
In other words, if we regard $R(T)$ as ordinary $K$-group $K(\Lambda_T)$, $f^*$ corresponds to $({}^tdf)_!$, the push-forward, where $\Lambda_T$ has the discrete topology.

Let us define the induced homomorphism $char(f):\bb{Z}[\Lambda^\tau_{T}/\Pi_{T}]\to \bb{Z}[\Lambda^{f^*\tau}_{T'}/\Pi_{T'}]$.

\begin{dfn}
When $f^*\tau$ is also positive, we define $char(f)([\lambda]_T)$ by
$$char(f)([\lambda]_{T}):=\sum_{i=1}^k[\mu_i]_{T'},$$
where ${}^tdf(\Pi_{T}.\lambda)=\coprod_{i=1}^k\Pi_{T'}.\mu_i$, $[\lambda]_{T}$ is the $\Pi_{T}$-orbit including $\lambda\in\Lambda^\tau_{T}$ and $[\mu]_{T'}$ is the $\Pi_{T'}$-orbit including $\mu\in\Lambda^{f^*\tau}_{T'}$.
\end{dfn}
\begin{rmk}Well-definedness is verified in Section 3.\end{rmk}

If we introduce notations $char(T,\tau):=\bb{Z}[\Lambda^\tau_{T}/\Pi_{T}]$ and $char(T',f^*\tau):=\bb{Z}[\Lambda^{f^*\tau}_{T'}/\Pi_{T'}]$, $char(f)$ is a homomorphism from $char(T,\tau)$ to $char(T',f^*\tau)$. The correspondence $char$ seems to be a functor. However, it is not true. That is, there exist homomorphisms $T\xrightarrow{f}T'\xrightarrow{g}T''$ such that $char(g\circ f)\neq char(f)\circ char(g)$. We call such a correspondence between two categories a ``quasi functor''.

\begin{dfn}
Let $\ca{C}_1$ and $\ca{C}_2$ be categories. $F:\ca{C}_1\to\ca{C}_2$ is a quasi functor if the following data are given.

(1) For any $O\in Obj(\ca{C}_1)$, $F(O)\in Obj(\ca{C}_2)$.

(2) For any $f\in Mor_{\ca{C}_1}(O_1,O_2)$, $F(f)\in Mor_{\ca{C}_2}(F(O_2),F(O_1))$ satisfying $F(id)=id$.

That is, a quasi functor is a ``functor'' which is allowed $F(f_1\circ f_2)\neq F(f_2)\circ F(f_1)$.
\end{dfn}

However, the quasi functor $char$ holds functoriality we need. We verify the following theorem (Theorem \ref{decomposition of local injection}).%link
\begin{thm}\label{decomposition of a loca injection in Introduction}
For any local injection $f:T'\to T$ and positive central extension $\tau$ of $LT$, we can define the ``orthogonal completion torus'' $(T'/\ker(f))^\perp\subseteq T$ satisfying the following condition. Let $j:(T'/\ker(f))^\perp\hookrightarrow T$ be the natural inclusion. $f\cdot j:T'/\ker(f)\times(T'/\ker(f))^\perp\to T$ is defined by $f\cdot j(t_1,t_2)=f(t_1)j(t_2)$.

(1) $f\cdot j:T'/\ker(f)\times (T'/\ker(f))^\perp\to T$ is a finite covering.

(2) $(f\cdot j)^*\tau$ can be written as $p_1^*\tau_1+p_2^*\tau_2$, where $\tau_1$, $\tau_2$ are positive central extensions of $L(T'/\ker(f))$ and $L(T'/\ker(f))^\perp$ respectively.

Moreover, we can decompose $f$ as follows.
$$T'\xrightarrow{q}T'/\ker(f)\xrightarrow{i_1}T'/\ker(f)\times (T'/\ker(f))^\perp\xrightarrow{f\cdot j}T,$$
where $i_1$ is the natural inclusion into the first factor.
\end{thm}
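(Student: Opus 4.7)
The plan is to build $(T'/\ker(f))^\perp\subseteq T$ using the inner product on $\fra{t}$ encoded by the positive central extension $\tau$, and then verify the three conclusions in turn. The main technical subtlety will be checking that the orthogonal complement subspace of $\fra{t}$ actually exponentiates to a closed subtorus, which is precisely where positivity of $\tau$ enters.

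First I would observe that, since $df$ is injective, $\ker(f)\subseteq T'$ is discrete, hence finite as $T'$ is compact, so $T'/\ker(f)$ is a torus of dimension $\dim(T')$ and $f$ descends to an immersion $\bar{f}:T'/\ker(f)\hookrightarrow T$ with injective differential. Next I would invoke the classification of central extensions of $LT$ by symmetric $\bb{Z}$-valued bilinear forms on the cocharacter lattice $\Pi_T$: let $B$ be the positive definite form corresponding to $\tau$, extend $B$ to a real inner product on $\fra{t}$, and let $\fra{v}\subseteq\fra{t}$ be the $B$-orthogonal complement of $W:=d\bar{f}(\fra{t}'/\ker(df))$. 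Because $B$ is integer-valued on $\Pi_T$ and $W$ is a rational subspace of $\fra{t}$ with respect to the $\bb{Q}$-structure $\Pi_T\otimes\bb{Q}$, the complement $\fra{v}$ is also rational, so $\exp(\fra{v})$ is a closed subtorus of $T$; set $(T'/\ker(f))^\perp:=\exp(\fra{v})$.

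Conclusion (1) then follows because the differential of $f\cdot j$ at the identity is the addition map $\fra{t}'/\ker(df)\oplus\fra{v}\to\fra{t}$, which is an isomorphism since the two summands are complementary in $\fra{t}$; so $f\cdot j$ is a local diffeomorphism between compact Lie groups of equal dimension, hence a finite covering. Conclusion (3) is immediate from the construction since $f(t)=\bar{f}(q(t))=(f\cdot j)(i_1(q(t)))$. For conclusion (2), I would use that central extensions of $L(T_1\times T_2)$ are classified by a triple consisting of bilinear forms on each of $\Pi_{T_1}$ and $\Pi_{T_2}$ together with a cross term $\Pi_{T_1}\times\Pi_{T_2}\to\bb{Z}$; the pullback $(f\cdot j)^*\tau$ corresponds to the pullback of $B$ along $d(f\cdot j)$, whose cross term equals the restriction of $B$ to $W\times\fra{v}$ and hence vanishes by construction. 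Thus the pullback splits as $p_1^*\tau_1+p_2^*\tau_2$, where each $\tau_i$ corresponds to the restriction of $B$ to the $i$-th factor, and positive definiteness of $B$ is inherited by each restriction, giving positivity of both $\tau_1$ and $\tau_2$.
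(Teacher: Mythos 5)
Your proposal is correct and follows essentially the same route as the paper: you build $(T'/\ker(f))^\perp$ by exponentiating the orthogonal complement of $df(\fra{t}')$ with respect to the bilinear form $\innpro{\cdot}{\cdot}{\tau}$ determined by $\kappa^\tau$, use integrality/rationality of that form on $\Pi_T$ to see the complement meets $\Pi_T$ in a full lattice (so the subtorus is closed), deduce (1) from the differential being an isomorphism, and deduce (2) from the vanishing of the cross term of the pulled-back form, exactly as in the paper's Lemma on $\kappa^{(f\cdot j)^*\tau}$. The only cosmetic differences are that the paper verifies the rank of $df(\fra{t}')^\perp\cap\Pi_T$ by an explicit kernel-intersection computation rather than appealing to rationality abstractly, and your identification of central extensions with bilinear forms is used at the same (informal) level of precision as the paper's own argument for (2).
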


``Functoriality we need'' is that $char(f)=char(q)\circ char(i_1)\circ char(f\cdot j)$ (Theorem \ref{partial functoriality}). From this theorem, it is sufficient to deal with only cases of direct products or finite coverings.\\

%Let us explain the outline of proof of our main theorem. We start from the explicit description of  $FHT_T$.
%\begin{thm}[\cite{FHT2}]\label{009}
%Let $T$ be a torus and $\tau$ be a positive central extension of $LT$. Then we have the following commutative diagram.
%$$\xymatrix{
%R^\tau(LT) \ar[rr]^{FHT_T} \ar[rd]^{l.w.} & & K_T^{\tau+dim(T)}(T) \ar[ld]^{M.d._T} \\
% & char(T,\tau) & 
%}$$
%\end{thm}

%The following two theorems are our main constructions. Let $T$, $T'$ be tori, $\tau$ be a positive central extension of $LT'$ and $f:T\to T'$ be a homomorphism such that $f^*\tau$ is also positive.

%\begin{thm}
%$$\begin{CD}
%K^{\tau +dim(T')}_{T'}(T') @>f^\#>> K^{f^*\tau+dim(T)}_T(T) \\
%@VM.d._{T'}VV @VM.d._TVV \\
%char(T',\tau) @>char(f)>> char(T,f^*\tau)
%\end{CD}$$
%\end{thm}
%\begin{rmk}
%It can be extended to $f:H\to G$ even if $H$ and $G$ not tori when the following conditions hold.

%(1) $H$, $G$ and $H/\ker(f)$ are compact connected Lie groups with torsion free $\pi_1$.

%(2) $f$ can be decompose $$H\xrightarrow{p}H/\ker(f)\xrightarrow{i_1}H/\ker(f)\times S^\perp\xrightarrow{q}G$$
%where $p$ is a natural projection, $i_1$ is the inclusion into the first factor, $q$ has the injective tangent map and $S^\perp$ is a torus associated with $\tau$ and $f$, whose dimension is $rank(G)-rank(H)$. We call $S^\perp$ the ``orthogonal completion torus'' in Section 3.
%\end{rmk}

%\begin{thm}
%$$\begin{CD}
%%R^{\tau}(LT') @>f^!>> R^{f^*\tau}(LT) \\
%@Vl.w._{T'}VV @Vl.w._TVV \\
%char(T',\tau) @>char(f)>> char(T,f^*\tau) 
%\end{CD}$$
%\end{thm}
%Theorem 0.2 is verified by the combining the above three theorems.

This paper consists of 5 sections.

In Section 1, we describe known results and definitions about twisted equivariant $K$-theory and positive energy representation of loop groups.

In Section 2, we define some categories and quasi functors and rewrite FHT isomorphism in our terminology. Then we verify our main theorem under the assumption that Theorem \ref{main theorem for K for tori} and \ref{main theorem for R for tori} hold.

In Section 3, we study the quasi functor $char$. We define the induced homomorphism $char(f)$, which is an analogue of pull back of representations of tori. The quasi functor $char$ is the most useful for computation. We deal with not only tori but also compact connected Lie groups with torsion-free $\pi_1$.

In Section 4, we study the quasi functor $t.e.K$, a modification of twisted equivariant $K$-theory. We define $f^\#$ so that it holds naturality with $char(f)$. We deal with not only tori but also compact connected Lie groups with torsion-free $\pi_1$.

In Section 5, we study the quasi functor $RL$, a modification of positive energy representation group of loop groups. We define $f^!$ so that it holds naturality with $char(f)$.

\tableofcontents

%知られた結果と定義%
\section{General definitions and known results}\label{General definition and theorems}
%Firstly, we explain definitions and known results about representation theory of loop groups and twisted equivariant $K$-theory.

%%%PERs

\subsection{Loop groups and their positive energy representations}\label{Positive energy representations of loop groups}
Representation group of $LT$ at level $\tau$ is one of our main objects, where $T$ is a torus.
In this section, we deal with not only tori but also general Lie group with torsion-free $\pi_1$. Firstly, we explain central extensions of loop groups.

\subsubsection{Definitions and some formulae about central extension}\label{definitions and some formulae about central extension}
Let $G$ be a compact connected Lie group with torsion-free $\pi_1$.
Let $LG$ be the group of smooth maps from $S^1$ to $G$, loop group of $G$. Its topology is defined so that $l_n$ converges to $l$ if and only if all derivatives $\frac{d^kl_n}{d\theta^k}$ converges to $\frac{d^kl}{d\theta^k}$ uniformly. This is one of the simplest infinite dimensional Fr\'echet Lie groups. $S^1$ acts on $LG$ via transformation of parameter, that is, $(\theta_0^*l)(\theta):=l(\theta-\theta_0)$ ($l\in LG$). When we make $S^1$ act on loop groups, we write $S^1$ as $\bb{T}_{rot}$.
So we can define the new Lie group $LG\rtimes \mathbb{T}_{rot}$.

The Lie algebra of $LG$ is the loop algebra $L\mathfrak{g}:=C^\infty(S^1,\mathfrak{g})$. Its addition, scaler multiplication and Lie bracket are induced by ones of $\mathfrak{g}$. We write the Lie algebra of $LG\rtimes \mathbb{T}_{rot}$ as $L\mathfrak{g}\oplus i\mathbb{R}_{rot}$. If $d$ is an infinitesimal generator of the action of $\mathbb{T}_{rot}$, $[d,\beta]=\frac{d\beta}{d\theta}$ for $\beta\in L\mathfrak{g}$. 

We need the following definition in order to define positeve energy representations of loop groups, because positive energy representations must be projective unless it is trivial.

\begin{dfn}[\cite{FHT2}]\label{admissible centaral extensions}
A central extension 
$$1\to U(1)\xrightarrow{i} LG^\tau\xrightarrow{p} LG\to1$$
is admissible if 

(1) The action of $\mathbb{T}_{rot}$ lifts to $LG^\tau$. Therefore, we can define the new Lie group $LG^\tau\rtimes \bb{T}_{rot}$.

(2) $LG^\tau\rtimes \mathbb{T}_{rot}$ acts on $L\mathfrak{g}^\tau\oplus i\mathbb{R}_{rot}$ via adjoint action. There exists an $LG^\tau\rtimes\mathbb{T}_{rot}$-invariant symmetric bilinear form $\Innpro{\cdot}{\cdot}{\tau}$ on $L\mathfrak{g}^\tau\oplus i\mathbb{R}_{rot}$ such that $\Innpro{K}{d}{\tau}=-1$, where $K$ is the infinitesimal generator of $i(U(1))$. If the restriction of $\Innpro{\cdot}{\cdot}{\tau}$ to $L\mathfrak{g}$ is positive definite, $\tau$ is called a positive central extension.
\end{dfn}
\begin{rmk}
Bilinear form $\Innpro{\cdot}{\cdot}{\tau}$ determines a splitting $L\fra{g}\to L\fra{g}^\tau\oplus i\bb{R}_{rot}$. So we can ``restrict'' $\Innpro{\cdot}{\cdot}{\tau}$ to $L\fra{g}$ (\cite{FHT2} Lemma 2.18).
\end{rmk}

\begin{dfn}\label{definition of pull back of central extension of loop groups}
Let $f:H\to G$ be a smooth group homomorphism, then the homomorphism $Lf:LH\to LG$ is defined by $l\mapsto f\circ l$. We can pull back the central extension $\tau$ of $LG$ along $Lf$ and we write it as $f^*\tau$. That is, the following commutative diagram holds.
$$\xymatrix{
U(1) \ar@{=}[d] \ar[r]^i & LG^\tau \ar[r]^p & LG \\
U(1) \ar[r]^i & LH^{f^*\tau} \ar[r]^p \ar[u]^{\widetilde{Lf}} & LH \ar[u]^{Lf}
}$$
\end{dfn}

\begin{lem}\label{admissibility of f^*tau}
If $\tau$ is admissible, so is $f^*\tau$.
\end{lem}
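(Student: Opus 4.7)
The plan is to transport both pieces of admissibility from $LG^\tau$ to $LH^{f^*\tau}$ through the canonical map $\widetilde{Lf}$ provided by Definition \ref{definition of pull back of central extension of loop groups}.

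First I would handle the $\mathbb{T}_{rot}$-lift. The key observation is that the rotation action commutes with post-composition: for $\theta_0 \in \mathbb{T}_{rot}$ and $h \in LH$, $Lf(\theta_0^\ast h) = \theta_0^\ast (Lf(h))$, so $Lf$ is $\mathbb{T}_{rot}$-equivariant. Realizing $LH^{f^\ast\tau}$ as the fibered product $LH \times_{LG} LG^\tau$, the diagonal of the given $\mathbb{T}_{rot}$-actions on $LH$ and on $LG^\tau$ (the latter existing by admissibility of $\tau$) preserves the fibered product, giving the required lift. Hence $LH^{f^\ast\tau} \rtimes \mathbb{T}_{rot}$ is defined.

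Next I would construct the invariant bilinear form. Differentiating $\widetilde{Lf}$ and taking the identity on $i\mathbb{R}_{rot}$ yields a Lie algebra homomorphism
$$\Phi : L\mathfrak{h}^{f^\ast\tau} \oplus i\mathbb{R}_{rot} \longrightarrow L\mathfrak{g}^\tau \oplus i\mathbb{R}_{rot}$$
which sends the central generator $K_H$ to $K_G$ and $d_H$ to $d_G$, and is equivariant for the group homomorphism $LH^{f^\ast\tau} \rtimes \mathbb{T}_{rot} \to LG^\tau \rtimes \mathbb{T}_{rot}$. Define
$$\Innpro{X}{Y}{f^\ast\tau} := \Innpro{\Phi(X)}{\Phi(Y)}{\tau}.$$
Symmetry is inherited from $\Innpro{\cdot}{\cdot}{\tau}$. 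Invariance under $LH^{f^\ast\tau} \rtimes \mathbb{T}_{rot}$ follows from equivariance of $\Phi$ together with invariance of $\Innpro{\cdot}{\cdot}{\tau}$ under $LG^\tau \rtimes \mathbb{T}_{rot}$. Finally, $\Innpro{K_H}{d_H}{f^\ast\tau} = \Innpro{K_G}{d_G}{\tau} = -1$, which gives the normalization condition.

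No single step is really hard; the only mild subtlety is being explicit about $\Phi$ on the Lie algebra extension $L\mathfrak{h}^{f^\ast\tau}$, which is determined by the fibered product description of $LH^{f^\ast\tau}$ and the requirement that $K_H \mapsto K_G$ (forced by $\widetilde{Lf} \circ i = i$ in Definition \ref{definition of pull back of central extension of loop groups}). Note that I am not claiming positivity is preserved: $\Phi$ need not be injective, so the pulled-back form may be degenerate, which matches the fact that the lemma asserts only admissibility.
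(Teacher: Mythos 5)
Your proposal is correct and follows essentially the same route as the paper: extend $\widetilde{Lf}$ by the identity on $\bb{T}_{rot}$, define $\Innpro{v}{w}{f^*\tau}:=\Innpro{d\widetilde{Lf}(v)}{d\widetilde{Lf}(w)}{\tau}$, and check $K\mapsto K$, $d\mapsto d$ to get the normalization. You merely spell out the rotation lift (via the fibered-product description) and the invariance check in more detail than the paper, which dismisses part (1) as clear from the definition; your closing caveat that positivity need not be preserved is also consistent with the paper treating positivity of $f^*\tau$ as a separate hypothesis elsewhere.
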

\begin{proof}
Let us notice that $\widetilde{Lf}$ can be extended to $LH^{f^*\tau}\rtimes \bb{T}_{rot}$ by $\widetilde{Lf}|_{\bb{T}_{rot}}:=id_{\bb{T}_{rot}}$. We use the same character for this extension.

(1) is clear from the definition.

(2) is verified by defining the bilinear form on $L\mathfrak{g}^\tau\oplus i\mathbb{R}_{rot}$ explicitly as follows. $d\widetilde{Lf}$ is the tangent map of $\widetilde{Lf}$.
$$\Innpro{v}{w}{f^*\tau}:=\Innpro{d\widetilde{Lf}(v)}{d\widetilde{Lf}(w)}{\tau}.$$
Since $\widetilde{Lf}|_{U(1)}$ is the identity, $d\widetilde{Lf}(K)=K$. Moreover,  $d\widetilde{Lf}(d)=d$ by the definition. Therefore, the above condition is satisfied.
\end{proof}
%from this remark, we can verify the following immediately
%\begin{cor}
%If $\tau$ is positive, so is $f^*\tau$.
%\end{cor}
\begin{dfn}
Let $\tau_1$ and $\tau_2$ be admissible central extensions of $LG$. Then we can define the tensor product between them by
$$LG^{\tau_1+\tau_2}:=LG^{\tau_1}\otimes LG^{\tau_2}=LG^{\tau_1}\times LG^{\tau_2}/LG.$$
\end{dfn}
\begin{lem}[\cite{FHT2} Section 2.2]
If $\tau_1$ and $\tau_2$ are admissible central extensions of $LG$, so is $\tau_1+\tau_2$.
Invariant bilinear form can be defined by $$\Innpro{\cdot}{\cdot}{\tau_1+\tau_2}=\Innpro{\cdot}{\cdot}{\tau_1}+\Innpro{\cdot}{\cdot}{\tau_2}.$$
\end{lem}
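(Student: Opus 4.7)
The plan is to construct the admissibility data for $\tau_1+\tau_2$ from the Baer sum description. Write $\widetilde{L\fra{g}}:=L\fra{g}^{\tau_1}\oplus_{L\fra{g}}L\fra{g}^{\tau_2}$ for the fibered product Lie algebra; then $L\fra{g}^{\tau_1+\tau_2}=\widetilde{L\fra{g}}/\Delta$, where $\Delta=i\bb{R}\cdot (K_1,-K_2)$ is the antidiagonal copy of the center, and the central generator $K$ of $\tau_1+\tau_2$ is represented by $(K_1,0)$, equivalently by $(0,K_2)$. The formula in the lemma is to be interpreted on the fibered product and then pushed down through this quotient.

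For condition (1), admissibility of each $\tau_i$ provides a $\bb{T}_{rot}$-action on $LG^{\tau_i}$ lifting rotation and fixing the central $U(1)$ pointwise. The diagonal action on $LG^{\tau_1}\times LG^{\tau_2}$ preserves the fibered product $LG^{\tau_1}\times_{LG}LG^{\tau_2}$ and fixes $\Delta$ pointwise, so it descends to $LG^{\tau_1+\tau_2}$ and defines $LG^{\tau_1+\tau_2}\rtimes\bb{T}_{rot}$.

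For condition (2), I would first define a symmetric bilinear form on $\widetilde{L\fra{g}}\oplus i\bb{R}_{rot}$ by summing the two forms $\Innpro{\cdot}{\cdot}{\tau_i}$ componentwise on the $L\fra{g}^{\tau_i}$-factors, with the shared $i\bb{R}_{rot}$-component distributed between the two pairings. Bilinearity, symmetry, and invariance under $\widetilde{L\fra{g}}\oplus i\bb{R}_{rot}$ (and hence, after descent, under $L\fra{g}^{\tau_1+\tau_2}\oplus i\bb{R}_{rot}$) are inherited termwise from the $\Innpro{\cdot}{\cdot}{\tau_i}$, and $\Innpro{K}{d}{\tau_1+\tau_2}=-1$ follows from $\Innpro{K_1}{d}{\tau_1}=-1$ by evaluating on the representative $(K_1,0)$.

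The hard part will be showing descent through $\Delta$: one needs $(K_1,-K_2)$ to pair trivially with every $(b_1,b_2)+sd\in\widetilde{L\fra{g}}\oplus i\bb{R}_{rot}$. The $d$-contribution cancels because $\Innpro{K_i}{d}{\tau_i}=-1$ for both $i$, reducing the problem to $\Innpro{K_1}{b_1}{\tau_1}=\Innpro{K_2}{b_2}{\tau_2}$ whenever $\pi_1(b_1)=\pi_2(b_2)$. Since $[d,K_i]=0$, $\bb{T}_{rot}$-invariance of $\Innpro{\cdot}{\cdot}{\tau_i}$ forces its pairing with $K_i$ to vanish on $[d,L\fra{g}^{\tau_i}]$, so the functional $\Innpro{K_i}{\cdot}{\tau_i}|_{L\fra{g}^{\tau_i}}$ factors through the finite-dimensional quotient by derivatives. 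After renormalizing each $\Innpro{\cdot}{\cdot}{\tau_i}$ (using the freedom remaining in the admissibility data) so that $\Innpro{K_i}{K_i}{\tau_i}=0$ and so that the two induced functionals on constant loops agree, the required identity holds, the form descends to $L\fra{g}^{\tau_1+\tau_2}\oplus i\bb{R}_{rot}$, and full invariance under $LG^{\tau_1+\tau_2}\rtimes\bb{T}_{rot}$ follows from equivariance of the surjection from the fibered product.
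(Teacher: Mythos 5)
The paper does not prove this lemma; it is quoted from \cite{FHT2}, Section 2.2, so your argument can only be judged on its own terms rather than against an in-paper proof. Your Baer-sum route is sound: condition (1) is correctly handled by letting the diagonal rotation action descend through the antidiagonal circle, and for condition (2) you rightly isolate the only genuine issue, namely that $(K_1,-K_2)$ must lie in the radical of the summed form on the fibered product before that form can descend to $L\fra{g}^{\tau_1+\tau_2}\oplus i\bb{R}_{rot}$. The weak point is the closing ``renormalization'': you assert, without justification, that the admissibility data can be adjusted so that $\Innpro{K_i}{K_i}{\tau_i}=0$ and the two functionals $\Innpro{K_i}{\cdot}{\tau_i}$ induce the same functional on $L\fra{g}$. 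For the positive extensions this paper actually works with, no such freedom is needed: invariance with $K_i$ in one slot shows that $\Innpro{K_i}{\cdot}{\tau_i}$ annihilates the derived subalgebra of $L\fra{g}^{\tau_i}\oplus i\bb{R}_{rot}$, hence all mean-zero loops, all of $[\fra{g},\fra{g}]$, and (because the Lie-algebra cocycle $\omega_i$ of $\tau_i$ is nonzero) $K_i$ itself; applying invariance once more in the form $\Innpro{[\beta,\gamma]}{v}{\tau_i}+\Innpro{\gamma}{[\beta,v]}{\tau_i}=0$ with $v$ a constant central element yields $\omega_i(\beta,\gamma)\Innpro{K_i}{v}{\tau_i}=0$, so $\Innpro{K_i}{\cdot}{\tau_i}$ vanishes identically on $L\fra{g}^{\tau_i}$ and your descent identity $\Innpro{K_1}{b_1}{\tau_1}=\Innpro{K_2}{b_2}{\tau_2}$ holds automatically. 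I would replace the renormalization step by this computation; for a general admissible but non-positive extension, where genuine freedom in $\Innpro{K_i}{\cdot}{\tau_i}$ on central constants can survive, your claim that it can always be normalized away is plausible but, as written, is asserted rather than proved and would require exhibiting an explicit invariant correction form.
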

\begin{rmk}\label{relation with affine Lie algebra}
Lie algebra $L\mathfrak{g}^\tau\oplus i\mathbb{R}_{rot}$ for simple $\mathfrak{g}$ and the ``universal central extension'' $\tau$ can be thought as a ``completion'' of the affine algebra $\widehat{\mathfrak{g}}$.
From this view point, bilinear form $\Innpro{\cdot}{\cdot}{\tau}$ can be thought as a lift of Killing form of $\mathfrak{g}$ to $\widehat{\fra{g}}$.
\end{rmk}

When $G$ is a torus $T$, admissibility implies the following lemma. However, we need some terminologies about tori or their representations.

$\Pi_T:=\ker(\exp:\mathfrak{t}\to T)\cong\pi_1(T)\cong H_1(T)$ is a lattice in $\mathfrak{t}$.

$\Lambda_T:={\rm Hom}(\Pi_T,\mathbb{Z})\cong {\rm Hom}(T,U(1))\cong H^1(T)$ is the character group. We can regard it as the set of irreducible representations of $T$ from Schur's lemma (any irreducible representation of commutative group is $1$ dimensional).

\begin{lem}[\cite{FHT2} Proposition 2.27]\label{factorization of LT}
$LT$ has a canonical decomposition $LT\cong T\times \Pi_T\times U$, where $T$ is the set of initial values of loops, $\Pi_T$ is the set of ``rotation numbers''(naturally isomorphic to $\pi_1(T)\cong\pi_0(LT)$) and 
$$U:=\exp\biggl\{\beta :S^1\to\mathfrak{t}\bigg|\int_{S^1}\beta(s)ds=0\biggr\}$$
is the set of derivatives of contractible loops whose initial values are $0$.

Let $\tau$ be an admissible central extension of $LT$, then the above decomposition is inherited partially, that is $LT^{\tau}\cong (T\times\Pi)^{\tau}\otimes U^{\tau}$.
\end{lem}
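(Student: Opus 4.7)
I would prove the lemma in two stages. First, for the algebraic factorization $LT \cong T \times \Pi_T \times U$, I would construct the isomorphism as a triple retraction. Evaluation at the basepoint gives a split surjection $\mathrm{ev}_0 : LT \to T$, with splitting by constant loops. The kernel of $\mathrm{ev}_0$ is the group of based loops, which maps onto $\pi_0(LT) \cong \pi_1(T) \cong \Pi_T$ via the winding class; this is split by the affine exponential loops $\sigma_\pi : \theta \mapsto \exp(\theta\pi/2\pi)$ for $\pi \in \Pi_T$, and these genuinely form a group homomorphism since $T$ is abelian. The residual kernel consists of contractible based loops, which coincides with $U$: such a loop lifts to a periodic path in $\fra{t}$ starting at $0$, hence equals the antiderivative of a mean-zero $\beta \in L\fra{t}$. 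Combining the three retractions gives the desired Fr\'echet Lie group isomorphism.

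Second, for the central extension statement, I would show that $\tau$ factors through the above decomposition in the form $LT^\tau \cong (T \times \Pi_T)^\tau \otimes U^\tau$, where each factor is the restriction of $\tau$ along the corresponding inclusion. The admissibility hypothesis is crucial: the invariant bilinear form $\Innpro{\cdot}{\cdot}{\tau}$ gives the Lie-algebra cocycle the shape $c(\alpha,\beta) = \frac{1}{2\pi}\int_{S^1}\innpro{\alpha}{\dot\beta}{\tau}$. For constant $\alpha \in \fra{t}$ and any periodic $\beta$, this integrates to $\innpro{\alpha}{\beta(2\pi)-\beta(0)}{\tau}/(2\pi) = 0$, so $T$ lifts centrally into $LT^\tau$ and splits off cleanly. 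For the winding generator $\alpha_\pi$ paired against $\tilde u$ in the Lie algebra of $U$, integration by parts reduces $c(\alpha_\pi, \tilde u)$ to a boundary expression in $\tilde u(0)$ and $\int \tilde u$, which, after using $\tilde u(0) = 0$ for based loops, can be absorbed by adjusting the affine section lifting $\Pi_T$. This produces a splitting at the level of cocycles and hence the claimed tensor decomposition.

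The main obstacle is that $\Pi_T$ is discrete, so the splitting of the central extension over $\Pi_T$ is not Lie-algebraic and must be verified at the group level. I would address this by fixing the explicit affine section $\sigma_\pi$ above, transporting $\tau$ along it, and using the $\bb{T}_{rot}$-equivariance built into admissibility: $\bb{T}_{rot}$ fixes $T$ pointwise, acts on each $\sigma_\pi$ by a character of $\bb{T}_{rot}$, and rotates the nonconstant Fourier modes spanning $U$ freely, so averaging the group-level cocycle over $\bb{T}_{rot}$ forces the cross terms between $\Pi_T$ and $U$ to be cohomologically trivial. Together with the Lie-algebra computation, this upgrades the infinitesimal splitting to the global isomorphism $LT^\tau \cong (T \times \Pi_T)^\tau \otimes U^\tau$.
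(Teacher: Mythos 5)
The paper does not actually prove this lemma: it is imported verbatim from \cite{FHT2}, Proposition 2.27, so there is no in-paper argument to compare yours against. Your reconstruction follows the standard route (split off initial values, then winding components, then show the two halves of the extension commute inside $LT^\tau$), and the overall strategy --- reduce the tensor decomposition to the vanishing of the commutator pairing between $T\times\Pi_T$ and $U$, handle the connected pieces via the Lie-algebra cocycle and the discrete piece via $\bb{T}_{rot}$-equivariance --- is the right one.

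Two points need repair. First, your two stages use different models of $U$. In stage one you identify the residual kernel with contractible based loops normalized by $\tilde{\ell}(0)=0$; the lemma, and stage two of your own argument (where you speak of ``nonconstant Fourier modes''), use the mean-zero normalization $\int\beta=0$. This is not cosmetic: only the mean-zero $U$ is $\bb{T}_{rot}$-stable, and your averaging argument needs that stability. With your stage-one $U$, rotating $u\in U$ produces $t\cdot u'$ with $t\in T$ a nonzero constant, and $t$ pairs with $\phi_\pi$ by $\widetilde{\kappa^\tau}(\pi)(t)\neq 1$, so the invariance of the cross-pairing does not come out trivial. Second, and relatedly, $\bb{T}_{rot}$ does not act on $\sigma_\pi$ ``by a character'': $(\theta_0^*\sigma_\pi)=\exp(-\theta_0\pi/2\pi)\,\sigma_\pi$, i.e.\ it translates $\sigma_\pi$ by constants in $T$. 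The argument still closes, but only after one separately checks that constants pair trivially with $U$ --- which your computation $c(\alpha,\cdot)=0$ for constant $\alpha$ does give, since $T$ and $U$ are both connected. With the mean-zero $U$ the clean statement is: $u\mapsto[\widetilde{\phi_\pi},\tilde{u}]$ is a continuous $\bb{T}_{rot}$-invariant character of the connected group $U$, its differential is a $\bb{T}_{rot}$-invariant functional on the mean-zero loops $\bigoplus_{n\neq0}\fra{t}_n$, hence zero; so the character is trivial, the subgroups $(T\times\Pi_T)^\tau$ and $U^\tau$ commute and generate $LT^\tau$, and the tensor decomposition follows. I would also recast your second half in terms of this commutator pairing rather than ``splitting at the level of cocycles'': the extension over $T\times\Pi_T$ is genuinely non-split (that is exactly what $\kappa^\tau$ measures), and cocycle-splitting language invites proving more than the lemma asserts.
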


This lemma allows us to define $\kappa^\tau$ and describe some formulae. $\kappa^\tau$ is the tangent map of $\widetilde{\kappa^\tau}$ defined below.

\begin{dfn}[%homomorphism $\kappa^\tau$, 
\cite{FHT2} Proposition 2.27]\label{morphism kappa}

A homomorphism $\widetilde{\kappa^\tau}:\Pi_T\to{\rm Hom}(T,U(1))$ is defined by
$$\widetilde{\kappa^\tau}(X)(t):=\widetilde{\phi}_X\widetilde{t}\widetilde{\phi}_X^{-1}\widetilde{t}^{-1}\in i(U(1)),$$
where $\widetilde{t}$ is a chosen lift of $t\in T$, and $\widetilde{\phi}_X$ is a chosen lift of $\phi_X\in LT$ which is a geodesic loop corresponding to $X\in\Pi_T\cong\pi_1(T)$ whose initial value is $0$. This definition is independent of the choices of lifts because they are determined up to $i(U(1))$ which is in the center of $LT^\tau$.
\end{dfn}
This definition means that $\kappa^\tau$ represents how non-commutativity is caused by the central extension.

\begin{lem}[\cite{FHT2} Proposition 2.27]\label{property of kappa^tau}
$\widetilde{\kappa^\tau}$ determines a symmetric bilinear form $\innpro{n}{m}{\tau}=-\frac{1}{\sqrt{-1}}\dot{\widetilde{\kappa^\tau}}(n)(m)$ on $\Pi_T$ for $n,m\in\Pi_T$.

Moreover, when we extend $\innpro{\cdot}{\cdot}{\tau}$ to $\mathfrak{t}$, $\innpro{v}{w}{\tau}= \Innpro{v}{w}{\tau}$.
\end{lem}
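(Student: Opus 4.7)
The plan is to verify bilinearity formally, to compute $\mathrm{Ad}(\widetilde{\phi}_X)$ on constant-loop lifts in terms of $\dot{\widetilde{\kappa^\tau}}$, and then to use the $\mathrm{Ad}$-invariance of $\Innpro{\cdot}{\cdot}{\tau}$ to identify $\langle \cdot, \cdot \rangle_\tau$ with $\Innpro{\cdot}{\cdot}{\tau}|_\mathfrak{t}$; symmetry will then follow from the symmetry of $\Innpro{\cdot}{\cdot}{\tau}$.

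For bilinearity, one checks that $\widetilde{\kappa^\tau}$ is a group homomorphism $\Pi_T \to \mathrm{Hom}(T, U(1))$ and that each $\widetilde{\kappa^\tau}(X)$ is a homomorphism $T \to U(1)$. Both are routine manipulations in $LT^\tau$, exploiting that ambiguities in lifts lie in the central $i(U(1))$: for additivity in $X$ one uses $\phi_{X_1+X_2} = \phi_{X_1}\phi_{X_2}$ and slides the central correction past $\widetilde{t}$, while for multiplicativity in $t$ one uses $\widetilde{t_1 t_2} = \widetilde{t_1}\widetilde{t_2}z$ with $z$ central. The tangent map in the $T$-argument then gives a linear map $\mathfrak{t} \to i\mathbb{R}$, and extending $\mathbb{R}$-linearly in the first argument produces a bilinear form on $\mathfrak{t}$.

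The identification $\langle \cdot, \cdot \rangle_\tau = \Innpro{\cdot}{\cdot}{\tau}|_\mathfrak{t}$ goes via the adjoint action. Differentiating
\[
\widetilde{\phi}_X \,\widetilde{\exp(sv)}\, \widetilde{\phi}_X^{-1} \;=\; \widetilde{\kappa^\tau}(X)(\exp(sv)) \cdot \widetilde{\exp(sv)}
\]
at $s=0$ yields $\mathrm{Ad}(\widetilde{\phi}_X)(\tilde v) = \tilde v + \dot{\widetilde{\kappa^\tau}}(X)(v)\, K$ in $L\mathfrak{t}^\tau$, where $\tilde v$ denotes the canonical lift of the constant loop $v$. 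Pairing with $d$ via $\Innpro{\cdot}{\cdot}{\tau}$, using $\Innpro{K}{d}{\tau}=-1$, and applying $\mathrm{Ad}$-invariance on the left, one reduces the claim to computing $\mathrm{Ad}(\widetilde{\phi}_X^{-1})(d)$. Using $[d,\beta]=\beta'$ and the interpretation of $\phi_X$ as the geodesic loop with constant ``derivative'' $X$, this commutator evaluates to $d + X$ modulo a multiple of $K$; comparing the two sides and tracking the factor of $\sqrt{-1}$ produces $\langle X, v \rangle_\tau = \Innpro{X}{v}{\tau}$.

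The delicate step is the computation of $\mathrm{Ad}(\widetilde{\phi}_X^{-1})(d)$, since for $X \neq 0$ the loop $\phi_X$ lies in a non-identity component of $LT$ and is not the exponential of any $L\mathfrak{t}$-element. One workaround is to enlarge to $LT^\tau \rtimes \mathbb{T}_{rot}$ and realize $\widetilde{\phi}_X$ (up to an explicit rotation) as the image of a one-parameter flow generated by $X - d$: the combined constant shift by $X$ and reverse rotation traces out the geodesic loop, and the bracket $[d,X]=0$ together with the cocycle data determines the conjugation action on $d$ directly. Alternatively, the decomposition $LT^\tau \cong (T \times \Pi_T)^\tau \otimes U^\tau$ from Lemma \ref{factorization of LT} reduces the calculation to the finite-dimensional central extension of $T \times \Pi_T$, where the commutator formula is explicit.
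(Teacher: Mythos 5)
The paper offers no proof of this lemma at all: it is quoted directly from \cite{FHT2}, Proposition 2.27, so there is nothing internal to compare against. Your overall strategy is the standard one and is essentially the FHT argument: establish $\mathrm{Ad}(\widetilde{\phi}_X)(\tilde v)=\tilde v+\dot{\widetilde{\kappa^\tau}}(X)(v)\,K$ by differentiating the defining commutator, pair with $d$, and use $\mathrm{Ad}$-invariance together with $\Innpro{K}{d}{\tau}=-1$; symmetry then comes for free. Two bookkeeping points you should make explicit: you need $\Innpro{\tilde v}{K}{\tau}=0$ for the canonical lift $\tilde v$ (this is how the splitting in the remark following Definition \ref{admissible centaral extensions} is normalized), since otherwise the central term $cK$ in $\mathrm{Ad}(\widetilde{\phi}_X^{-1})(d)$ pollutes the identity; and the bilinearity check should note that the commutator $\widetilde{\phi}_X\widetilde{t}\widetilde{\phi}_X^{-1}\widetilde{t}^{-1}$ is central, so the usual identities $[ab,c]=[a,c][b,c]$ and $[a,bc]=[a,b][a,c]$ apply.

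The genuine gap is in your justification of the one delicate step, $\mathrm{Ad}(\widetilde{\phi}_X^{-1})(d)=d\pm\tfrac{1}{2\pi}\widetilde{X}+cK$. Your first workaround is false: the one-parameter subgroup of $LT^\tau\rtimes\mathbb{T}_{rot}$ generated by $X-d$ has loop part consisting of \emph{constant} loops, because constant loops are fixed by rotation --- solving $\dot\gamma_s=\gamma_s\cdot\bigl(R_{-s}\cdot X\bigr)=\gamma_s\cdot X$ gives $\gamma_s\equiv\exp(sX)$ --- so its image never meets $\phi_X$ for $X\neq0$; indeed $\phi_X$ lies in a non-identity component of $(T\times\Pi_T)\rtimes\mathbb{T}_{rot}$ and is not in the image of $\exp$ at all, with or without the rotation factor. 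The second workaround (Lemma \ref{factorization of LT}) does not address the problem either, since the difficulty is precisely the interaction of $\phi_X$ with $d\in i\mathbb{R}_{rot}$, which lies outside $LT^\tau$. The correct derivation differentiates in the rotation parameter rather than along $\phi_X$: from $(\theta_0^*l)(\theta)=l(\theta-\theta_0)$ one gets the group-level identity $R_\alpha\phi_XR_\alpha^{-1}=\exp(-\tfrac{\alpha}{2\pi}X)\,\phi_X$ in $LT\rtimes\mathbb{T}_{rot}$; lifting to $LT^\tau\rtimes\mathbb{T}_{rot}$ and differentiating at $\alpha=0$ yields $\mathrm{Ad}(\widetilde{\phi}_X^{-1})(d)=d-\tfrac{1}{2\pi}\widetilde{X}+cK$ (signs depending on conventions), with no exponential realization of $\phi_X$ required. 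With that replacement, and the $2\pi$ and $\sqrt{-1}$ normalizations you already flag, the argument closes.
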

%Def? Prop?
\begin{pro}\label{positivity of central extension}
If a central extension $\tau$ of $LT$ is positive, so is $\innpro{\cdot}{\cdot}{\tau}$.
\end{pro}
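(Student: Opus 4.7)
The plan is to reduce positivity of $\innpro{\cdot}{\cdot}{\tau}$ to positivity of $\Innpro{\cdot}{\cdot}{\tau}$ restricted to the constant-loop subspace, using Lemma \ref{property of kappa^tau}.

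First, I would recall the key compatibility from Lemma \ref{property of kappa^tau}: after extending $\innpro{\cdot}{\cdot}{\tau}$ $\bb{R}$-bilinearly from $\Pi_T$ to $\fra{t}$, one has the identity $\innpro{v}{w}{\tau} = \Innpro{v}{w}{\tau}$ for all $v, w \in \fra{t}$. Since $\Pi_T$ is a lattice in $\fra{t}$ (it spans $\fra{t}$ as a real vector space), proving that the form on $\Pi_T$ is positive is equivalent to proving that its $\bb{R}$-linear extension on $\fra{t}$ is positive definite, which in turn is equivalent to showing $\Innpro{\cdot}{\cdot}{\tau}|_{\fra{t}}$ is positive definite.

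Next, I would identify $\fra{t}$ with the subspace of constant loops inside $L\fra{t}$, viewed inside $L\fra{t}^\tau \oplus i\bb{R}_{rot}$ via the canonical splitting (cf.\ the remark after Definition \ref{admissible centaral extensions}). By hypothesis $\tau$ is positive, i.e.\ the restriction of $\Innpro{\cdot}{\cdot}{\tau}$ to $L\fra{t}$ is positive definite. A restriction of a positive definite symmetric bilinear form to any linear subspace is again positive definite, so $\Innpro{\cdot}{\cdot}{\tau}|_{\fra{t}}$ is positive definite.

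Combining these two steps gives that $\innpro{\cdot}{\cdot}{\tau}$, extended $\bb{R}$-linearly to $\fra{t}$, is positive definite, hence its restriction to the lattice $\Pi_T$ is positive (every nonzero $n \in \Pi_T$ pairs with itself to a strictly positive real number). The only subtlety worth double-checking is the coherence of the identification between the constant-loop copy of $\fra{t}$ inside $L\fra{t}$ and the target of the extension used in Lemma \ref{property of kappa^tau}; this is guaranteed by the decomposition $LT \cong T \times \Pi_T \times U$ of Lemma \ref{factorization of LT}, under which the constant-loop torus $T$ has Lie algebra $\fra{t}$ sitting inside $L\fra{t}$ in the standard way. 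I expect no real obstacle beyond confirming this bookkeeping.
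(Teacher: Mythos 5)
Your proof is correct and is exactly the argument the paper intends: the paper in fact states Proposition \ref{positivity of central extension} without proof, since it follows immediately from Lemma \ref{property of kappa^tau} (the extension of $\innpro{\cdot}{\cdot}{\tau}$ to $\fra{t}$ agrees with $\Innpro{\cdot}{\cdot}{\tau}$) together with the definition of positivity of $\tau$ (positive definiteness of $\Innpro{\cdot}{\cdot}{\tau}$ on $L\fra{t}$, hence on the constant-loop subspace $\fra{t}$). Your bookkeeping about identifying $\fra{t}$ with the constant loops via Lemma \ref{factorization of LT} is the right justification and presents no obstacle.
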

%We can extend this definition for any $G$.
%\begin{dfn}
%A central extension $\tau$ of $LG$ is non-degenerate/positive if the restriction of $\tau$ to %$LT$ is non-degenerate/positive, where $T$ is a chosen maximal torus of $G$.
%\end{dfn}

We can verify the following formulae.

Let $f:T'\to T$ be a local injection, that is, the tangent map $df$ is injective. Then we have the associated homomorphism $Lf:LT'\to LT$ and $\widetilde{Lf}:LT'^{f^*\tau}\to LT^{\tau}$.
\begin{lem}\label{pull back formula of kappa}
$$\kappa^{f^*\tau}={}^tdf\circ\kappa^{\tau}\circ df$$
\end{lem}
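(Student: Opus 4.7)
The plan is to reduce the Lie-algebra statement to a group-level identity $\widetilde{\kappa^{f^*\tau}}(X')(t') = \widetilde{\kappa^\tau}(df(X'))(f(t'))$ for $X'\in\Pi_{T'}$ and $t'\in T'$, and then differentiate. This reduction is natural because the only tool we have to get at $\kappa^\tau$ is the explicit commutator formula in Definition \ref{morphism kappa}, and commutators transport cleanly along homomorphisms.

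For the group-level identity, I would fix lifts $\widetilde{t'}\in T'^{f^*\tau}$ of $t'$ and $\widetilde{\phi}_{X'}\in LT'^{f^*\tau}$ of the geodesic loop $\phi_{X'}(\theta)=\exp(\theta X')$, so that
$$\widetilde{\kappa^{f^*\tau}}(X')(t')=\widetilde{\phi}_{X'}\,\widetilde{t'}\,\widetilde{\phi}_{X'}^{-1}\,\widetilde{t'}^{-1}\in i(U(1)).$$
Then apply the homomorphism $\widetilde{Lf}:LT'^{f^*\tau}\to LT^{\tau}$ from Definition \ref{definition of pull back of central extension of loop groups}. Two features of $\widetilde{Lf}$ are used: it restricts to the identity on $U(1)$ (so the left-hand side is unchanged), and it intertwines the projections to the underlying loop groups. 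Thus $\widetilde{Lf}(\widetilde{t'})$ is a lift of $f(t')\in T$ and $\widetilde{Lf}(\widetilde{\phi}_{X'})$ is a lift of $f\circ\phi_{X'}$. The key computation is
$$f(\phi_{X'}(\theta))=f(\exp(\theta X'))=\exp(\theta\,df(X')),$$
so $f\circ\phi_{X'}=\phi_{df(X')}$, the geodesic loop in $LT$ associated to $df(X')\in\Pi_T$ (note $df(X')\in\Pi_T$ since $\exp(df(X'))=f(\exp X')=e$). By the independence of $\widetilde{\kappa^\tau}$ from the choice of lifts, the resulting commutator equals $\widetilde{\kappa^\tau}(df(X'))(f(t'))$, establishing the group-level identity.

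For the second step, I would view each side as a character of $T'$ in the variable $t'$ and take tangent maps at the identity along $v'\in\mathfrak{t}'$. Because $\widetilde{\kappa^\tau}(df(X'))\circ f:T'\to U(1)$ has tangent map $\kappa^\tau(df(X'))\circ df$ by the chain rule, one obtains
$$\kappa^{f^*\tau}(X')(v')=\kappa^\tau(df(X'))(df(v')),$$
which, after extending linearly from $\Pi_{T'}$ to $\mathfrak{t}'$ as in Lemma \ref{property of kappa^tau} and rewriting via the transpose, is precisely $\kappa^{f^*\tau}={}^tdf\circ\kappa^{\tau}\circ df$.

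There is no genuine obstacle here; the argument is bookkeeping once the two structural facts (that $\widetilde{Lf}$ is the identity on the central $U(1)$, and that $f$ carries geodesic loops to geodesic loops) are in place. Both are immediate from the definitions, so the main care is just tracking which group each element lives in when applying $\widetilde{Lf}$.
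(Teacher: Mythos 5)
Your argument is correct and follows essentially the same route as the paper: apply $\widetilde{Lf}$ to the defining commutator, use that $\widetilde{Lf}$ is the identity on the central $U(1)$ and sends lifts of $\phi_{X'}$ and $t'$ to lifts of $\phi_{df(X')}$ and $f(t')$, and then take tangent maps. The only added content is your explicit check that $f\circ\phi_{X'}=\phi_{df(X')}$, which the paper leaves implicit.
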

\begin{proof}
Let $X\in\Pi_{T'}$, $t\in T'$. Take lifts $\widetilde{\phi_X}\in LT'^{f^*\tau}$ and $\widetilde{t}\in LT'^{f^*\tau}$ of $\phi_X\in LT'$ and $t$. %, where $\phi_X$ is a geodesic loop corresponding to $X$
Let us notice that lifts of $\phi_{df(X)}$ and $f(t)$ can be chosen by $\widetilde{Lf}(\phi_X)$ and $\widetilde{Lf}(\widetilde{t})$ respectively.
 By the definition, $\widetilde{Lf}|_{i(U(1))}$ is the identity. Therefore, we can compute $\widetilde{\kappa^{f^*\tau}}_X(t)$ using $\widetilde{\kappa^\tau}$ as follows.
$$\widetilde{\kappa^{f^*\tau}}_X(t)=\widetilde{Lf}(\widetilde{\kappa^{f^*\tau}}_X(t))$$
$$=\widetilde{Lf}(\widetilde{\phi_X}\widetilde{t}\widetilde{\phi_X}^{-1}\widetilde{t}^{-1})=
\widetilde{Lf}(\widetilde{\phi_X})\widetilde{Lf}(\widetilde{t})\widetilde{Lf}(\widetilde{\phi_X}^{-1})\widetilde{Lf}(\widetilde{t}^{-1})$$
$$=\widetilde{\phi_{df(X)}}\widetilde{f(t)}\widetilde{\phi_{df(X)}}^{-1}\widetilde{f(t)}^{-1}
=\widetilde{\kappa^\tau}_{df(X)}(f(t))$$
By taking the tangent map, we obtain the conclusion.

\end{proof}

From this lemma, we can verify the following one.

Let $T_1$ and $T_2$ be tori, $T:=T_1\times T_2$, $i_j:T_j\to T$ be the natural inclusion into the $j$'th factor, $p_j:T\to T_j$ be the projection onto the $j$'th factor and $\tau_j$ be an admissible central extension of $LT_j$ ($j=1,2$). Then $\tau=p_1^*\tau_1+p_2^*\tau_2$ is an admissible central extension of $LT$.

In this case we have canonical isomorphisms $LT\cong LT_1\times LT_2$ and $LT^\tau\cong LT_1^{\tau_1}\times LT_2^{\tau_2}/U(1)$.
Therefore, for any $l_1,l_1'\in LT_1^{\tau_1}$, $l_2, l_2'\in LT_2^{\tau_2}$, $l_1\otimes l_2\cdot l_1'\otimes l_2'=l_1l_1'\otimes l_2l_2'$.
Moreover, $LT^\tau\cong U_1^{\tau_1}\otimes U_2^{\tau_2}\otimes(T_1\times\Pi_{T_1})^{\tau_1}\otimes(T_2\times\Pi_{T_2})^{\tau_2}$.

\begin{lem}%[product formula for $\kappa^\tau$]
\label{product formula for kappa}
$\kappa^\tau:\Pi_{T_1\times T_2}\to\Lambda_{T_1\times T_2}$ is the composition of 
$$\Pi_{T_1\times T_2}\xrightarrow{dp_1\oplus dp_2}\Pi_{T_1}\oplus\Pi_{T_2}\xrightarrow{\kappa^{\tau_1}
\oplus\kappa^{\tau_2}}\Lambda_{T_1}\oplus\Lambda_{T_2}\xrightarrow{{}^tdp_1\oplus {}^tdp_2}\Lambda_{T_1\times T_2}.$$

Formally
$$\kappa^\tau=\begin{pmatrix}
\kappa^{\tau_1} & 0 \\
0 & \kappa^{\tau_2}
\end{pmatrix}
$$
\end{lem}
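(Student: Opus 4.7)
The plan is to reduce the formula to two ingredients already at hand: the pullback formula of Lemma \ref{pull back formula of kappa}, and additivity of $\kappa$ under the tensor product of central extensions on the same loop group.

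First I would establish the additivity statement: for two admissible central extensions $\sigma_1,\sigma_2$ of the \emph{same} loop group $LG$, one has $\kappa^{\sigma_1+\sigma_2}=\kappa^{\sigma_1}+\kappa^{\sigma_2}$. This follows directly from Definition \ref{morphism kappa} by choosing the lifts componentwise in the tensor product $LG^{\sigma_1+\sigma_2}=LG^{\sigma_1}\otimes LG^{\sigma_2}$: the lifts $\widetilde{\phi}_X=\widetilde{\phi}_X^{\sigma_1}\otimes\widetilde{\phi}_X^{\sigma_2}$ and $\widetilde{t}=\widetilde{t}^{\sigma_1}\otimes\widetilde{t}^{\sigma_2}$ make the group commutator factor as a product in $U(1)\subset LG^{\sigma_1}\otimes LG^{\sigma_2}$, giving $\widetilde{\kappa^{\sigma_1+\sigma_2}}_X(t)=\widetilde{\kappa^{\sigma_1}}_X(t)\cdot\widetilde{\kappa^{\sigma_2}}_X(t)$; taking tangent maps gives the claimed additivity.

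Next I would apply this to $\tau=p_1^*\tau_1+p_2^*\tau_2$ as central extensions of $LT$ to obtain
$$\kappa^\tau=\kappa^{p_1^*\tau_1}+\kappa^{p_2^*\tau_2}.$$
Then by Lemma \ref{pull back formula of kappa} applied to $f=p_j$ (the proof of that lemma is a purely formal computation with $\widetilde{Lf}$ and does not actually use injectivity of $df$, only the existence of $\widetilde{Lp_j}$, which is provided by Definition \ref{definition of pull back of central extension of loop groups}), each summand satisfies $\kappa^{p_j^*\tau_j}={}^tdp_j\circ\kappa^{\tau_j}\circ dp_j$. Summing yields
$$\kappa^\tau={}^tdp_1\circ\kappa^{\tau_1}\circ dp_1+{}^tdp_2\circ\kappa^{\tau_2}\circ dp_2,$$
which is precisely the composition described in the statement once one identifies $\Pi_{T_1\times T_2}\cong\Pi_{T_1}\oplus\Pi_{T_2}$ and $\Lambda_{T_1\times T_2}\cong\Lambda_{T_1}\oplus\Lambda_{T_2}$ via $dp_1\oplus dp_2$ and ${}^tdp_1\oplus{}^tdp_2$; the block-diagonal matrix form then follows because $dp_j$ kills the other factor.

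The only nonroutine step is the additivity claim, so the main care is in choosing the lifts consistently inside $LT^{\tau_1+\tau_2}=LT^{\tau_1}\otimes LT^{\tau_2}$ and checking that the geodesic loop $\phi_X$ admits a lift of the product form; this is a direct consequence of the canonical decomposition of Lemma \ref{factorization of LT}, under which the $\Pi_T$-factor of $LT^\tau$ is compatible with tensor products of central extensions.
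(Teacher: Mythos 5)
Your proposal is correct and follows exactly the route the paper indicates (it gives no explicit proof, saying only that the product formula follows "from this lemma," i.e.\ from Lemma \ref{pull back formula of kappa}): you supply the two needed ingredients, namely additivity of $\kappa$ under tensor product of central extensions via componentwise lifts, and the observation that the pull-back formula's proof never uses injectivity of $df$ and so applies to the projections $p_j$. Both points check out, and the block-diagonal form then follows as you say.
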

%\begin{proof}

%$X\in\Pi, t\in T$. Let $\phi_X\in LT$ be the corresponding geodesic loop. 

%$X_i:=dp_i(X)\in\Pi_i$, $t_i:=p_i(t)\in T_i (i=1,2)$. Choose lifts $\widetilde{\phi}_{X_i}\in LT^{\tau_i}$ of $\phi_{X_i}$ and $\widetilde{t}_i\in T_i^{\tau_i}\subseteq LT_i^{\tau_i}$ (i=1,2). We can take the associated lifts of $\phi_{X_1+X_2}$ and $t_1t_2$ by $\widetilde{\phi}_{X_1}\widetilde{\phi}_{X_2}$ and $\widetilde{t}_1\widetilde{t}_2$.

%From the above commutativity,
%$$\kappa^\tau(X_1+X_2)(t_1t_2):=\widetilde{\phi}_{X_1+X_2}\widetilde{t_1t_2}\widetilde{\phi}_{X_1+X_2}^{-1}\widetilde{t_1t_2}^{-1}$$
%$$=\widetilde{\phi}_{X_1}\widetilde{\phi}_{X_2}\widetilde{t_1}\widetilde{t_2}\widetilde{\phi}_{X_2}^{-1}\widetilde{\phi}_{X_1}^{-1}\widetilde{t_2}^{-1}\widetilde{t_1}^{-1}$$
%$$=\widetilde{\phi}_{X_1}\widetilde{t_1}\widetilde{\phi}_{X_1}^{-1}\widetilde{t_1}^{-1}\widetilde{\phi}_{X_2}\widetilde{t_2}\widetilde{\phi}_{X_2}^{-1}\widetilde{t_2}^{-1}$$
%$$=\kappa^{\tau_1}(X_1)(t_1)\kappa^{\tau_2}(X_2)(t_2).$$

%\end{proof}

We often use these formulae in this paper.
%%正エネルギー表現とその分類定理
\subsubsection{Positive energy representation}\label{positive energy representation}
We can define positive energy representations, which is one of the main objects in this paper.

Let $U(1)\xrightarrow{i} LG^\tau\xrightarrow{p} LG$ be an admissible central extension.

\begin{dfn}[%Positive energy representation 
\cite{PS}\cite{FHT2}]\label{def. of PER}
A continuous homomorphism $\rho:LG^\tau\to U(V)_{c.o.}$ is called a positive energy representation at level $\tau$ if

(1) $\rho\circ i(e^{2\pi\sqrt{-1}})=e^{2\pi\sqrt{-1}}id_V$ (the definition of $\tau$-twisted representations).

(2) $\rho$ lifts to a continuous homomorphism $\widehat{\rho}:LG^\tau\rtimes\mathbb{T}_{rot}\to U(V)_{c.o.}$.

(3) $E:=\frac{1}{\sqrt{-1}}\dot{\widehat{\rho}}(d)$ which is called an energy operator is self-adjoint with discrete spectrum bounded below.

$V$ is a separable complex Hilbert space as a representation space, $U(V)_{c.o.}$ is topologized by the compact open topology (or the strong topology) and $d$ is the infinitesimal generator of $\mathbb{T}_{rot}$.
\end{dfn}
A positive energy representation is said to be finitely reducible if it is a finite sum of irreducible representations.
%\begin{pro}[\cite{PS}]
%Positive energy representations are completely reducible.
%\end{pro}

Isomorphism classes of finitely reducible positive energy representations at level $\tau$ form an abelian semigroup under the direct sum.
\begin{dfn}
$R^\tau(LG)$ is the Grothendieck completion of this semigroup. In other words, $R^\tau(LG)$ is the $\mathbb{Z}$ free module generated by the set of isomorphism classes of irreducible positive energy representations. We call it a representation group of $LG$ at level $\tau$.
\end{dfn}
\begin{rmk}
%(1) The above proposition implies that $R^\tau(LG)$ is a $\bb{Z}$ free module generated by the set of irreducible positive energy representations at level $\tau$.

In fact, it has a ring structure and it is called the ``Verlinde ring''. The product is given by the fusion product of conformal field theory. However, we do not deal with the product in this paper.
\end{rmk}
Positive energy representations at level $\tau$ have been classified (\cite{PS}).

Let $G$ be a connected compact Lie group with torsion-free $\pi_1$ and $\tau$ be a positive central extension of $LG$. We fix a maximal torus $T$ of $G$.

We need to introduce some terminologies to describe the following theorems.

We can define central extensions $G^\tau$ and $T^\tau$ by the restrictions of $LG^\tau$ to $G$ and $T$. That is, the commutative following diagram holds.

$$\xymatrix{
1 \ar[r] \ar@{=}[d] & U(1) \ar[r]^i \ar@{=}[d] & T^\tau \ar[r]^p \ar[d] & T \ar[r] \ar[d] & 1 \ar@{=}[d] \\
1 \ar[r] \ar@{=}[d] & U(1) \ar[r]^i \ar@{=}[d] & G^\tau \ar[r]^p \ar[d] & G \ar[r] \ar[d] & 1 \ar@{=}[d] \\
1 \ar[r] & U(1) \ar[r]^i  &LG^\tau \ar[r]^p  & LG \ar[r]  & 1 }$$

We can verify that $T^\tau$ is a maximal torus of $G^\tau$.

So we can define a $\tau$-twisted representation of $T^\tau$. A $\tau$-twisted representation is a homomorphism $\rho:T^\tau\to U(V)$ such that $\rho\circ i(e^{\sqrt{-1}\theta})=e^{\sqrt{-1}\theta}id_V$, where $V$ is a representation space.
 $\Lambda_T^\tau$ is the set of irreducible $\tau$-twisted representations. It can be thought as the set of $\lambda:\Pi_{T^\tau}\to\mathbb{Z}$ such that the composition of $\mathbb{Z}=\Pi_{U(1)}\xrightarrow{di}\Pi_{T^\tau}\xrightarrow{\lambda}\mathbb{Z}$ is identity. We can define an action of $\Lambda_T$ on $\Lambda_T^\tau$ by the tensor product of the representations. We write it $\lambda+\lambda'$ ($\lambda\in\Lambda_T$, $\lambda'\in\Lambda_T^\tau$). The tensor product of the representations is defined by
$$(\lambda+\lambda')(t)=\lambda(p(t))\cdot\lambda'(t),$$
where $t\in T^\tau$. Since $p\circ i(z)=1$, $\lambda+\lambda'$ is also a $\tau$-twisted representation.

The Weyl group of $G$ and $G^\tau$ are naturally isomorphic. We write them $W(G)$. Since $W(G)$ acts on $\Pi_T$, we can define the semi direct product 
$$W^e_{\mathit{aff}}(G):=\Pi_T\rtimes W(G),$$
we call it the extended affine Weyl group. It is the Weyl group of $LG^\tau$, whose maximal torus is $T^\tau$. Since $W(G)$ acts on $\Pi_{T^\tau}$, $W(G)$ acts on  $\Lambda_T^\tau$.
And $\Pi_T$ acts on $\Lambda_T^\tau$ through the homomorphism $\kappa^\tau$ and $\Lambda_T\curvearrowright\Lambda_T^\tau$. Therefore, $W^e_{\mathit{aff}}(G)$ acts on $\Lambda_T^\tau$. We write the orbit space $\Lambda_T^\tau/\kappa^\tau(W^e_{\mathit{aff}}(G))$.

%The homomorphism $\kappa^\tau:\Pi_T\to\Lambda_T$ is defined in Definition \ref{morphism kappa}. It represents how the central extension causes non-commutativity.

$\mathbb{Z}[(\Lambda_T^\tau/\kappa^\tau(W^e_{\mathit{aff}}(G)))_\mathit{reg}]$ is the free $\mathbb{Z}$ module generated by the set of $W^e_{\mathit{aff}}(G)$-regular orbits $\Lambda_T^\tau/\kappa^\tau(W^e_{\mathit{aff}}(G))$. A regular orbit is an orbit whose stabilizer is trivial.

Let $\sigma$ be the spin extension of $LG$ defined in \cite{FHT2}. It is defined by the irreducible Clifford module of $L\mathfrak{g}^*$ and the coadjoint action of $LG$ on $L\fra{g}^*$. Let us suppose that $\tau-\sigma$ is positive.
\begin{lem}[\cite{Adams}]\label{rho-shift}
The map
$$\rho-\mathit{shift}:\Lambda_T^{\tau-\sigma}/\kappa^{\tau-\sigma}(W^e_{\mathit{aff}}(G))
\xrightarrow{\cong}(\Lambda_T^{\tau}/\kappa^{\tau}(W^e_{\mathit{aff}}(G)))_{\mathit{reg}}$$ defined by $[\lambda]\mapsto[\lambda+\rho]$ is bijective, where $\rho$ is the half of the sum of the positive roots of $G$.
\end{lem}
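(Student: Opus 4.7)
The plan is to verify (i) well-definedness, (ii) that the image lands in regular orbits, and (iii) bijectivity via the candidate inverse $[\mu]\mapsto[\mu-\rho]$ on regular orbits.

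For (i), recall that $W^e_{\mathit{aff}}(G)=\Pi_T\rtimes W(G)$ acts on $\Lambda_T^\tau$ by $(X,w)\cdot_\tau\lambda=w\lambda+\kappa^\tau(X)$, where $\kappa^\tau(X)\in\Lambda_T$ translates on $\Lambda_T^\tau$ via tensor product of representations. Additivity $\kappa^\tau=\kappa^{\tau-\sigma}+\kappa^\sigma$ follows from the tensor product construction of central extensions combined with Lemma \ref{property of kappa^tau} applied to the additive bilinear forms. Given $(X,w)$ and $\lambda\in\Lambda_T^{\tau-\sigma}$, I would seek $X'\in\Pi_T$ so that $(X',w)\cdot_\tau(\lambda+\rho)=\bigl((X,w)\cdot_{\tau-\sigma}\lambda\bigr)+\rho$; expanding both sides reduces this to
$$
\kappa^\tau(X') \;=\; \kappa^{\tau-\sigma}(X) + (\rho - w\rho).
$$
Such an $X'$ exists because $\rho-w\rho$ is a sum of positive roots (the classical identity $\rho-s_\alpha\rho=\alpha$ for simple reflections, extending inductively in the length of $w$), and the spin extension $\sigma$ is precisely the central extension whose $\kappa^\sigma$ sends the coroot lattice onto the root lattice in the normalization inherited from Definition \ref{admissible centaral extensions}.

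For (ii), I would argue via affine alcove geometry: a weight $\mu\in\Lambda_T^\tau$ is regular iff it avoids the hyperplane walls of the $W^e_{\mathit{aff}}(G)$-action on $\fra{t}^*$, i.e., lies in the interior of some affine alcove. Since $\rho$ is the half-sum of positive roots and therefore sits in the open fundamental Weyl alcove at level $\sigma$, shifting by $\rho$ carries the closure of the fundamental alcove at level $\tau-\sigma$ bijectively onto the open fundamental alcove at level $\tau$, so every $\lambda+\rho$ is regular.

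For (iii), the inverse on a regular orbit $[\mu]\in(\Lambda_T^\tau/\kappa^\tau(W^e_{\mathit{aff}}(G)))_{\mathit{reg}}$ is obtained by picking the unique representative $\mu_0$ in the open fundamental alcove at level $\tau$ and setting $[\mu]\mapsto[\mu_0-\rho]$; the alcove bijection in (ii) guarantees $\mu_0-\rho$ lies in the closure of the fundamental alcove at level $\tau-\sigma$, and the two maps invert each other by construction. The main obstacle will be step (i): pinning down the precise normalization that identifies $\kappa^\sigma$ with (twice) the Weyl vector $\rho$ under the bilinear form, and tracking the semidirect interaction between the $W(G)$-action and the $\Pi_T$-translation action on $\Lambda_T^\tau$.
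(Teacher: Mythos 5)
The paper itself gives no proof of this lemma --- it is quoted from [Adams], with only the remark that the torus case is trivial --- so there is no argument of the author's to compare yours against; I can only assess your proposal on its own terms. The genuine gap is in your step (i). The identity you reduce well-definedness to, $\kappa^\tau(X')=\kappa^{\tau-\sigma}(X)+(\rho-w\rho)$, is in general unsolvable for $X'\in\Pi_T$: the lattices $\kappa^\tau(\Pi_T)$ and $\kappa^{\tau-\sigma}(\Pi_T)$ are distinct sublattices of $\Lambda_T$ (neither contains the other), and the discrepancy $\kappa^\sigma$ is \emph{not} onto the root lattice --- for simple $G$ one has $\kappa^\sigma(\alpha^\vee)=h^\vee\alpha$ on long coroots, so its image is $h^\vee$ times too sparse. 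Concretely, for $G=SU(2)$ with $\tau-\sigma$ of level $k$ and $\tau$ of level $k+2$ (so $\rho=\varpi$, $\kappa^{m}(\alpha^\vee)=2m\varpi$): taking $w=e$ and $X=\alpha^\vee$, you would need $\kappa^\tau(X')=2k\varpi$, but $\kappa^\tau(\Pi_T)=2(k+2)\mathbb{Z}\varpi$ does not contain $2k\varpi$. Indeed the representatives $0$ and $2k$ of the same level-$k$ orbit are sent by $+\rho$ to $1$ and $2k+1$, which lie in \emph{different} level-$(k+2)$ orbits since $2k+1\not\equiv\pm1\pmod{2k+4}$. So "$[\lambda]\mapsto[\lambda+\rho]$ for an arbitrary representative $\lambda$" is genuinely ill-defined, and no choice of $X'$ repairs it; your step (i) is trying to prove something false.

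The statement must instead be read, as is standard, through distinguished representatives: each source orbit is represented by its unique point in the closed fundamental alcove at level $\tau-\sigma$, and adding $\rho$ carries the lattice points of that closed alcove bijectively onto the lattice points of the open alcove at level $\tau$, i.e.\ onto the regular orbits. Your steps (ii) and (iii) are exactly this alcove argument and are the right route --- note that (iii) already secretly selects the alcove representative in order to define the inverse --- so the repair is to define the forward map the same way and drop the representative-independence claim of (i). Two further points need care: the normalization of $\kappa^\sigma$ (it is $h^\vee$ times the basic form, not a surjection onto the root lattice); and the fact that $G$ here is only connected with torsion-free $\pi_1$, so $\Pi_T$ may strictly contain the coroot lattice, $W^e_{\mathit{aff}}(G)$ is the \emph{extended} affine Weyl group, and its fundamental domain is an alcove only up to a finite group, so the alcove bijection in (ii)--(iii) has to be carried out for the extended group (or by reducing to the product of a simply connected group and a central torus).
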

\begin{rmk}
When $G$ is a torus $T$, $W^e_{\mathit{aff}}(T)=\Pi_T$ and $\rho=0$. Since any orbit is regular, the above lemma is trivial.
\end{rmk}

\begin{thm}[\cite{FHT2}]\label{classifying theorem of PER in Section 1}
Isomorphism classes of positive energy $(\tau-\sigma)$-twisted representations of $LG^{\tau-\sigma}$ are parametrized by the $W^e_{\mathit{aff}}(G)$-orbits in $\Lambda_T^{\tau-\sigma}$ by taking the lowest weight.
From Lemma \ref{rho-shift}, we have the following isomorphisms
$$R^{\tau-\sigma}(LG)\xrightarrow{L.W._G}\mathbb{Z}[\Lambda_T^{\tau-\sigma}/W^e_{\mathit{aff}}(G)]\xrightarrow{\rho-\mathit{shift}}\mathbb{Z}[(\Lambda_T^{\tau}/W^e_{\mathit{aff}}(G))_{\mathit{reg}}].$$
$L.W._G$ is defined as follows. If $V$ is an irreducible positive energy representation of $LG$ at level $\tau-\sigma$, $L.W._G(V)$ is the lowest weight of $V$.
\end{thm}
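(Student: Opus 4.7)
The plan is to adapt the classical Cartan-Weyl strategy to the loop setting, with the energy operator $E$ playing the role of an extra torus direction that supplies the infinite-dimensional bookkeeping. Given an irreducible positive energy $(\tau-\sigma)$-twisted representation $\widehat{\rho}:LG^{\tau-\sigma}\rtimes\mathbb{T}_{rot}\to U(V)_{c.o.}$, the discreteness and lower-boundedness of the spectrum of $E$ isolates a nonzero lowest-energy eigenspace $V_0\subset V$. Since the rotation action is trivial on constant loops, $G^{\tau-\sigma}\subset LG^{\tau-\sigma}$ commutes with $E$, so $V_0$ is $G^{\tau-\sigma}$-stable; finite-dimensionality of $V_0$ follows from the positive energy condition. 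I would then decompose the finite-dimensional $G^{\tau-\sigma}$-module $V_0$ under its maximal torus $T^{\tau-\sigma}$ and pick the lowest weight $\lambda\in\Lambda_T^{\tau-\sigma}$ with respect to a fixed positive Weyl chamber of $G$; this is the provisional definition of $L.W._G(V)$.

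Next I would verify that the image $[\lambda]\in\Lambda_T^{\tau-\sigma}/W^e_{\mathit{aff}}(G)$ is independent of choices, splitting the task into the finite Weyl part and the translation lattice part. The $W(G)$-ambiguity arises from the choice of positive chamber, and two choices produce lowest weights in the same $W(G)$-orbit because $W(G)$ permutes $T^{\tau-\sigma}$-weight spaces inside each $G^{\tau-\sigma}$-isotype. For the $\Pi_T$-factor I would use the geodesic loops of Definition \ref{morphism kappa}: for $X\in\Pi_T$, the lift $\widetilde{\phi_X}\in LT^{\tau-\sigma}$ defines an inner automorphism of $LG^{\tau-\sigma}\rtimes\mathbb{T}_{rot}$ whose conjugation action on $V$ produces an isomorphic positive energy representation in which the lowest weight has been shifted by $\kappa^{\tau-\sigma}(X)$. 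Since $W^e_{\mathit{aff}}(G)=\Pi_T\rtimes W(G)$, this yields the well-defined class $L.W._G(V)$, and extending $\bb{Z}$-linearly produces the homomorphism $L.W._G:R^{\tau-\sigma}(LG)\to\bb{Z}[\Lambda_T^{\tau-\sigma}/W^e_{\mathit{aff}}(G)]$.

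For bijectivity, injectivity on irreducibles reduces to reconstructing $V$ from $V_0$: irreducibility implies $V$ is generated from $V_0$ under the negative Fourier modes of $L\fra{g}^{\tau-\sigma}$, and the $G^{\tau-\sigma}$-isotype of $V_0$ (equivalently its lowest weight modulo $W(G)$) determines this generation up to isomorphism, so any two irreducibles with the same $W^e_{\mathit{aff}}(G)$-orbit are isomorphic. Surjectivity is the construction step: given a regular orbit in $(\Lambda_T^\tau/W^e_{\mathit{aff}}(G))_{\mathit{reg}}$, apply the inverse of $\rho-\mathit{shift}$ from Lemma \ref{rho-shift} to land in $\Lambda_T^{\tau-\sigma}/W^e_{\mathit{aff}}(G)$, then build the corresponding representation either as the holomorphic sections of the level-$(\tau-\sigma)$ line bundle on the loop-group flag variety (a Borel-Weil-type theorem) or, more algebraically, as the unitary completion of the irreducible highest-weight quotient of a Verma module for the Kac-Moody algebra $L\fra{g}^{\tau-\sigma}\oplus i\bb{R}_{rot}$, with positivity of $E$ deduced from positivity of $\tau-\sigma$. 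This construction is the main obstacle; the analytic work needed to promote the algebraic module to a unitary $LG^{\tau-\sigma}\rtimes\mathbb{T}_{rot}$-representation with the required continuity, and to check irreducibility, is where the bulk of the labor lies.
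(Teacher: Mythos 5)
First, a point of orientation: the paper does not prove this theorem. It is quoted from \cite{FHT2} (ultimately from Pressley--Segal), so there is no in-paper argument to match your proposal against; what you have written is a sketch of the standard proof from those references. The overall architecture --- lowest-energy subspace, reduction to a finite-dimensional $G^{\tau-\sigma}$-module, lowest weight, injectivity by generation from $V_0$, surjectivity by Borel--Weil or Verma-module constructions --- is the right one.

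There is, however, one genuinely confused step. You claim that conjugation by $\widehat{\rho}(\widetilde{\phi_X})$ ``produces an isomorphic positive energy representation in which the lowest weight has been shifted by $\kappa^{\tau-\sigma}(X)$.'' As stated this is self-contradictory: if the conjugated representation is isomorphic to the original \emph{as a representation of} $LG^{\tau-\sigma}\rtimes\mathbb{T}_{rot}$ (and it is --- the conjugating unitary is itself the intertwiner), then it has the same energy grading and the same lowest weight. In fact there is no $\Pi_T$-ambiguity to resolve in the definition of $L.W._G(V)$: for an irreducible $V$ the lift of $\mathbb{T}_{rot}$ is unique up to an overall character by Schur's lemma, so $V_0$ and its lowest weight are canonical once a Weyl chamber is fixed. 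The correct role of the geodesic loops $\widetilde{\phi_X}$ is in the \emph{injectivity} step: conjugation by $\widetilde{\phi_X}$ is an automorphism of $T^{\tau-\sigma}\times\mathbb{T}_{rot}$ inside $LG^{\tau-\sigma}\rtimes\mathbb{T}_{rot}$ acting on the joint weight lattice by the affine translation (shifting the $\Lambda_T^{\tau-\sigma}$-component by $\kappa^{\tau-\sigma}(X)$ \emph{and} the energy by a quadratic term), so the joint weight multiplicities of $V$ are $W^e_{\mathit{aff}}(G)$-invariant; positivity of the level then forces the weight of $V_0$ to be the unique antidominant point of its orbit, which is what makes the orbit map injective. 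You should also be aware that two facts you treat as immediate are themselves theorems: finite-dimensionality of the energy eigenspaces does not follow formally from Definition \ref{def. of PER} (which only demands discrete spectrum bounded below), and complete reducibility of positive energy representations is needed even to speak of $R^{\tau-\sigma}(LG)$ as generated by irreducibles. Finally, as you yourself note, the existence half (surjectivity, unitarity and irreducibility of the constructed modules) is deferred entirely to the literature, so the proposal is an outline rather than a proof.
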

\begin{dfn}
$$l.w._G:=\rho-\mathit{shift}\circ L.W._G$$
\end{dfn}
Since $W^e_{\mathit{aff}}(G)=\Pi_T\rtimes W(G)$ is the Weyl group of $LG$ or $LG^{\tau-\sigma}$, this theorem is an analogue of the Cartan Weyl's highest weight theory for compact Lie groups.

%同変ねじれK理論
\subsection{Twisted equivariant $K$-theory}\label{Twisted equivariant K-theory1}
\subsubsection{The definitions and the classification of twistings}\label{definitions and the classification of twistings}
%Twisted $K$-theory was introduced by Donovan and Karoubi to extend Thom isomorphism for non $K$-orientable vector bundles. $K$-orientable means that the rank is even and has a $Spin^c$ structure. It was rekindled by its appearance in the late 1990's in string theory.

Let $X$ be a manifold, $P$ be a projective bundle. We can define twisted $K$-theory associated with it following \cite{AS}. We do not deal with non-trivial gradings, that is, projective bundle $P$ has two disjoint sub projective bundles $P_1$ and $P_2$ such that $\mathcal{H}|_x=\mathcal{H}_{1}|_{x}\oplus \mathcal{H}_{2}|_x$, where $\mathcal{H}_j|_x$ is a chosen lift of $P_j|_x$ ($j=1,2$, $x\in X$). This assumption is automatically satisfied for the space whose first cohomology group with $\mathbb{Z}_2$ coefficient is zero. An isomorphism class of projective bundles is called a twisting. We define the classifying bundle $Fred^{(0)}(P)$ for twisted $K$-theory. Firstly, we define the space of Fredholm operators of a projective space.

\begin{dfn}\label{def. of Fred(P)}
$Fred^{(0)}(\bb{P}(\mathcal{H}))$ is the space of odd skew-adjoint Fredholm operators $A:\mathcal{H}\to\mathcal{H}$, for which $A^2+1$ is a compact operator. It is topologized so that $A\mapsto(A, A^2+1)\in B(\mathcal{H})_{c.o.}\times K(\mathcal{H})_{norm}$ is continuous.
\end{dfn}

\begin{rmk}\label{well-def. of Fred(P)}
(1) If we take an another lift $\mathcal{K}$ of $\bb{P}(\ca{H})$, we can take the unique isomorphism $\phi:\mathcal{H}\to\mathcal{K}$ up to $U(1)$ action. $\phi$ makes the following diagram commute.
$$\xymatrix{
\mathcal{H} \ar[rr]^\phi \ar[dr] & & \mathcal{K} \ar[dl] \\
& P(\mathcal{H}) & 
}$$

So if $A:\mathcal{H}\to\mathcal{H}$ is a linear map, $\phi\circ A \circ\phi^{-1}:\mathcal{K}\to\mathcal{K}$ is independent of the choice of $\phi$, so the above is well-defined.

(2) For the same reason we can take tensor products $P_1\widehat{\otimes} P_2$, $P\widehat{\otimes}\mathcal{L}$ and so on, where $\mathcal{L}$ is a Hilbert bundle (can be finite rank).
That is, $P_1\widehat{\otimes}P_2|_x:=\bb{P}(\ca{H}_1|_x\widehat{\otimes}\ca{H}_2|_x)$ and  $P\widehat{\otimes}\ca{L}|_x:=\bb{P}(\ca{H}|_x\widehat{\otimes}\ca{L}|_x)$, where $\ca{H}|_x$, $\ca{H}_1|_x$ and $\ca{H}_2|_x$ are chosen lifts of $P$ , $P_1$ and $P_2$ respectively.
$\bb{P}(V)$ is the projectification of the vector space $V$.
\end{rmk}

Let us define twisted $K$-theory. Let $X$ be a manifold, $X'$ be a closed subset of $X$.
\begin{dfn}%[twisted $K$-theory]
\label{def. of twisted K-theoy}
Let $P$ be a projective bundle. The classifying bundle is defined by
$$Fred^{(0)}(P):=\bigcup_{x\in X} Fred^{(0)}(P_x\otimes l^2).$$
We write the space of sections as
$$\Gamma_{c}(X,X',Fred^{(0)}(P))$$
$$:=\{s:X\to Fred^{(0)}(P) | \text{continuous section with compact support and }supp(s)\cap X'=\emptyset\},$$
where $supp(s):=\{x\in X | s_x \text{ is not invertible}\}=\{x\in X|ker(s_x)\neq0\}$ is called the support of $s$.
We define twisted $K$-theory as a generalized cohomology theory.
$$K^0_{P}(X,X'):=\pi_0(\Gamma_{c}(X,X',Fred^{(0)}(P))).$$
For $n\neq 0$,
$$K^n_P(X,X'):=\begin{cases}
K_P^0(X\times [0,1]^{-n},X\times\partial[0,1]^{-n}\cup X'\times[0,1]^{-n}) & n<0 \\
K^{-n}_P(X,X') & n>0
\end{cases}$$
\end{dfn}

\begin{rmk}
The functor $K^n_P$ has the classifying bundle whose fiber is the space of Fredholm operators satisfying some conditions of a projective space (\cite{FHT1}). Therefore, we regard an element of $K^n_P(X)$ as a homotopy class of a family of Fredholm operators.
\end{rmk}

Let $X$ and $Y$ be smooth manifolds, $X'$ and $Y'$ be closed subset of $X$ and $Y$ respectively, $F:Y\to X$ be a smooth proper map such that $F(X')\subseteq Y'$, and $P$ be a projective bundle over $X$.
\begin{dfn}%[pull back]
\label{def. of pull back}
$$F^{*}:K^{k}_{P}(X,X')\to K^k_{F^*P}(Y,Y')$$
is defined by $F^*([s]):=[s\circ F]$, where $[\cdot]$ means a homotopy class of sections.
\end{dfn}
\begin{thm}[\cite{AS}]
Twisted $K$-theory is a two periodic generalized cohomology theory and. That is, $K^{k}_P(X)\cong K^{k+2}_P(X)$ (Bott periodicity).
\end{thm}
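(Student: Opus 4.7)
By the definition $K^n_P = K^{-n}_P$ for $n > 0$, it suffices to produce an isomorphism
$$K^0_P(X, X') \;\cong\; K^{-2}_P(X, X') \;=\; K^0_P\bigl(X \times [0,1]^2,\; X \times \partial[0,1]^2 \cup X' \times [0,1]^2\bigr).$$
My plan is to exhibit this isomorphism as fiberwise external multiplication by a generator of $\pi_2$ of the typical fiber of $Fred^{(0)}(P)$.

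First I would invoke the classical untwisted Bott periodicity in the form due to Atiyah: the space $Fred^{(0)}(\ca{H})$ of odd skew-adjoint Fredholm operators with $A^2 + 1$ compact is a classifying space for $K^0$, and there is an explicit weak equivalence $Fred^{(0)}(\ca{H}) \to \Omega^2 Fred^{(0)}(\ca{H})$ realized by external product with a fixed family $B_v \in Fred^{(0)}(l^2)$, $v \in \bb{R}^2$ --- for instance a rescaled Dirac family on $\bb{R}^2$, or a Toeplitz-type family whose only zero of the kernel sits at $v = 0$.

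Next I would apply this construction fiberwise over $X$. For $s \in \Gamma_c(X, X', Fred^{(0)}(P))$ define
$$(\beta_* s)_{(x, v)} \;:=\; s_x \,\widehat{\otimes}\, 1 \;+\; 1 \,\widehat{\otimes}\, B_v \;\in\; Fred^{(0)}\bigl(P_x \,\widehat{\otimes}\, l^2\bigr).$$
By Remark \ref{well-def. of Fred(P)} the graded tensor product makes sense independently of the local lift of $P_x$ to a Hilbert space, and the resulting support $\text{supp}(s) \times \{0\}$ is compact and disjoint from $X \times \partial[0,1]^2 \cup X' \times [0,1]^2$. Thus $\beta_*$ defines a natural transformation $\beta_* : K^0_P(X, X') \to K^{-2}_P(X, X')$.

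The main obstacle is showing that $\beta_*$ is an isomorphism, which I would establish by a local-to-global argument. Over any open $U \subseteq X$ on which $P$ trivializes, the twisted $K$-theory reduces to ordinary $K$-theory and $\beta_*$ reduces to the classical external Bott map, an isomorphism by the first step. Then a Mayer--Vietoris argument applied to a locally finite cover of $X$ by trivializing opens, combined with the five lemma on the resulting ladder of long exact sequences for both $K^0_P$ and $K^{-2}_P$, promotes the local isomorphism to a global one; paracompactness of $X$ and a colimit argument handle infinite covers. The delicate point is verifying that Mayer--Vietoris holds in this framework, which follows from the fact that $Fred^{(0)}(P)$ is the parametrized classifying bundle of a generalized cohomology theory (\cite{AS}).
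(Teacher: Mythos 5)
The paper itself offers no proof of this statement: it is quoted verbatim from \cite{AS}, so there is nothing internal to compare against. Judged on its own terms, your plan is essentially the standard argument and is sound in outline: represent $K^0_P$ by sections of the Fredholm bundle, and implement periodicity by fiberwise graded external product with a fixed Bott family $B_v$ on $l^2$. Two remarks. First, for $(\beta_*s)_{(x,v)}$ to remain odd and skew-adjoint you should write it with the grading involution, $s_x\widehat{\otimes}\,id+\epsilon\otimes B_v$, exactly as in the paper's own definition of the tensor product (Definition \ref{tensor products}); as written, $s_x\widehat{\otimes}1+1\widehat{\otimes}B_v$ is not the graded product and need not be odd. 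Second, your globalization step is more roundabout than necessary and is where the real work hides: Mayer--Vietoris for \emph{compactly supported} twisted $K$-theory of a non-compact $X$ requires the open-closed excision sequences rather than naive restriction maps, and the ``colimit argument for infinite covers'' needs the cover to be chosen so that supports are controlled. The cleaner route, and the one effectively taken in \cite{AS}, is to observe that $\beta_*$ is induced by a \emph{fiberwise weak homotopy equivalence} $Fred^{(0)}(P_x\otimes l^2)\to \Omega^2Fred^{(0)}(P_x\otimes l^2)$ of bundles over a paracompact base, and such a map induces a bijection on vertical homotopy classes of (compactly supported) sections; this bypasses Mayer--Vietoris entirely and avoids the support bookkeeping. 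Your local-to-global scheme can be made to work, but you would need to verify naturality of $\beta_*$ with respect to the connecting maps and spell out the excision input; as it stands that step is a genuine gap in rigor, though not in strategy.
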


%\begin{dfn}[\cite{AS}]
%An isomorphism class of projective bundles is called a twisting.
%\end{dfn}
Twistings are classified in \cite{AS}
\begin{pro}[\cite{AS} Proposition 2.1, 2.2]\label{classification theorem of unequivariant twistings}

(1) Isomorphism classes of twistings are classified by $H^3(X,\mathbb{Z})$ by taking Dixmier-Douady class.

(2) The group of connected components of the group of automorphisms of $P$ is isomorphic to $H^2(X,\bb{Z})$ which classifies isomorphism classes of complex line bundles. 

(3) For $l\in H^2(X,\bb{Z})$, the action of $l$ on $K_P^k(X)$ is given by the tensor product with $L$, where $L$ is a complex line bundle whose first Chern class is $l$ and we regard it as an element of $K(X)$.

%(3) If $l:P_1\xrightarrow{\cong}P_2$ is an isomorphism between two bundles we can define induced isomorphism between two twisted $K$-theory $K_{P_1}^n(X)$ and $K_{P_2}^n(X)$ by tensor product with corresponding line bundle $L$ whose first Chern class is $l$.

\end{pro}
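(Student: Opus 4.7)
The plan is to derive all three parts from the homotopy equivalence $PU(\ca{H}) \simeq K(\bb{Z},2)$ already noted in the paper. By Kuiper's theorem, $U(\ca{H})$ is contractible, so the principal $U(1)$-bundle $U(\ca{H}) \to PU(\ca{H})$ together with the long exact sequence of homotopy groups yields $\pi_k(PU(\ca{H})) \cong \pi_{k-1}(U(1))$, giving $PU(\ca{H}) \simeq K(\bb{Z},2)$ and hence $BPU(\ca{H}) \simeq K(\bb{Z},3)$. Part (1) then follows immediately: projective bundles with structure group $PU(\ca{H})$ are in bijection with principal $PU(\ca{H})$-bundles via the associated bundle construction, and these are classified by $[X, BPU(\ca{H})] = H^3(X, \bb{Z})$. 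The resulting characteristic class is by definition the Dixmier-Douady class.

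For part (2), I would identify the automorphism group $\mathrm{Aut}(P)$ with the group of sections of the associated bundle $P \times_{PU(\ca{H})} PU(\ca{H})_{\mathrm{ad}}$, where $PU(\ca{H})$ acts on itself by conjugation. Because $PU(\ca{H}) \simeq K(\bb{Z},2)$ is a homotopy abelian topological group, conjugation is homotopically trivial, so this bundle of groups is (up to fiberwise homotopy) a trivial $K(\bb{Z},2)$-bundle. Taking $\pi_0$ of the section space therefore yields $[X, K(\bb{Z},2)] = H^2(X,\bb{Z})$, the group classifying complex line bundles. Explicitly, an automorphism $\phi \in \mathrm{Aut}(P)$ corresponds to the line bundle $L(\phi)$ whose first Chern class is its class in $H^2(X,\bb{Z})$.

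For part (3), given a line bundle $L$ with $c_1(L) = l$, the corresponding automorphism $\phi_L$ of $P$ is constructed via the natural projective isomorphism $\ca{H}|_x \otimes L|_x \to \ca{H}|_x$, which is well-defined because $L|_x$ is one-dimensional (this is the same phenomenon as in Remark \ref{well-def. of Fred(P)}). Under this identification, the induced action of $\phi_L$ on a section $s$ of $Fred^{(0)}(P)$ sends $s_x$ to $s_x \otimes \mathrm{id}_{L|_x}$, and passing to homotopy classes this is precisely the multiplication by $[L] \in K(X)$ under the natural $K(X)$-module structure on $K_P^k(X)$.

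The main obstacle is part (2): the precise identification of $\pi_0(\mathrm{Aut}(P))$ with $H^2(X,\bb{Z})$ requires an obstruction-theoretic analysis of the bundle of groups $P \times_{PU(\ca{H})} PU(\ca{H})_{\mathrm{ad}}$, or equivalently a careful comparison of classifying spaces to make the heuristic ``homotopy abelian $\Rightarrow$ trivial conjugation action'' rigorous. This is the technical heart of \cite{AS}. Once the identification in (2) is canonical, parts (1) and (3) reduce to the classifying space translation and a direct check on representatives, respectively.
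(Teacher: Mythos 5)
Your proposal is correct, and parts (1) and (3) run along the same lines as the paper's remark following the proposition (classifying-space identification $BPU(\ca{H})\simeq K(\bb{Z},3)$ for (1); the explicit automorphism $\ca{H}|_x\otimes L|_x\to\ca{H}|_x$ acting by tensor product for (3)). Where you genuinely diverge is part (2). The paper (following \cite{AS}) constructs the line bundle directly from an automorphism: $\alpha:P_x\to P_x$ admits a lift $\widetilde{\alpha}_x:\ca{H}_x\to\ca{H}_x$ unique up to $U(1)$, so the collection of lifts forms a $U(1)$-principal bundle over $X$, whence a class in $H^2(X,\bb{Z})$; this is short, and it produces exactly the explicit automorphism--line-bundle correspondence that part (3) needs. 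You instead identify $\mathrm{Aut}(P)$ with the gauge group, i.e.\ the sections of the adjoint bundle $P\times_{PU(\ca{H})}PU(\ca{H})_{\mathrm{ad}}$, and compute $\pi_0$ of the section space by obstruction theory using $PU(\ca{H})\simeq K(\bb{Z},2)$. That route also works --- the key point making it rigorous is not so much ``homotopy abelian'' as the fact that $PU(\ca{H})$ is connected, so conjugation acts trivially on $\pi_2(PU(\ca{H}))=\bb{Z}$ and the local coefficient system in the obstruction-theoretic computation of sections is constant, giving $\pi_0(\Gamma)\cong H^2(X,\bb{Z})$ --- but it is less economical, and you still have to exhibit the inverse map $L\mapsto\phi_L$ explicitly (as you do in part (3)) to make (2) and (3) fit together. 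In short: your argument buys a conceptual description of the whole gauge group up to homotopy, while the paper's fiberwise-lift argument buys the canonical bijection with line bundles in one step and feeds directly into the $K$-theoretic statement (3).
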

\begin{rmk}
Since $PU(\ca{H})$ is an Eilenberg MacLane space $K(\bb{Z},2)$, the classifying space of it is homotopic to $K(\bb{Z},3)$. It is the classifying space of the functor $H^3$. Therefore, (1) holds. Let $\alpha: P\to P$ be an isomorphism, then $\alpha:P_x\to P_x$ has a lift
$\widetilde{\alpha_x}:\ca{H}_x\to\ca{H}_x$ up to $U(1)$. Therefore, $\alpha$ determines a $U(1)$ principal bundle. Therefore, (2) holds. (3) tells us that $H^2(X,\bb{Z})$ acts on $K^n_P(X)$ non-trivially.
\end{rmk}

\begin{ex}\label{example local spinor}
Let $X$ be a compact manifold. Let $p:E\to X$ be an oriented real vector bundle whose rank is $k$ over $X$. Let us take a local trivialization $\{E|_{U_\alpha}\cong U_\alpha\times\bb{R}^k\}_{\alpha}$.
Since any oriented vector space $V$ has the unique irreducible Clifford module $\Delta_V$ (\cite{Furuta}), we can define a ``local Spinor bundle''. That is, %when we trivialize $E$ locally, 
 $S|_{U_\alpha}:=U_\alpha\times \Delta_{\bb{R}^k}$.
From Schur's lemma, we can take a transition function $\phi_{\alpha\beta}:U_\alpha\cap U_\beta \to GL(\Delta_{\bb{R}^k})$ up to $\bb{C}^*$. Therefore, the family $\{\bb{P}(S|_{U_\alpha})\}_\alpha$ defines a projective bundle whose fiber is $\bb{P}(\Delta_{\bb{R}^k})$. Dixmier-Douady class of this bundle is the image of $w_2(E)$ under the Bockstein operator $\beta:H^2(X,\bb{Z}_2)\to H^3(X,\bb{Z})$. This is the obstruction for existence of a $Spin^c$ structure. $\beta(w_2(E))$-twisted $K$-theory is studied in \cite{Karoubi} and \cite{Furuta}. $K^{\beta(w_2(E))}(X)$ is written by $K^E(X)$ in \cite{Karoubi}, $K(X,Cl(E))$ in \cite{Furuta}.
Thom isomorphism states that a natural homomorphism $K(X)\to K^{\beta(w_2(p^*E))}(E)$ is isomorphic.
\end{ex}

Twisted $K$-theory has an equivariant version just like ordinary case. Let $G$ be a compact Lie group and act on $X$ smoothly. We explain only the changes or additional points from the non-equivariant cases. Let $X'$ be a closed $G$-invariant subset.

\begin{dfn}%[twisted equivariant $K$-theory]
\label{def. of twisted equivariant K-theoy}
Let $P$ be a $G$-equivariant projective bundle. An isomorphism class of $G$-equivariant projective bundles is called a $G$-equivariant twisting. Then the classifying bundle of twisted equivariant $K$-theory is defined by
$$Fred^{(0)}_G(P):=\bigcup_{x\in X} Fred^{(0)}(P_x\otimes L^2(G) \otimes l^2).$$
Since $G$ acts on $L^2(G)$ via the left regular representation and on $l^2$ trivially, we can define the space of $G$-equivariant sections.
$$\Gamma_{c,G}(X,X',Fred^{(0)}_G(P)) $$
$$:=\{s:X\to Fred^{(0)}_G(P) | \text{continuous, equivariant, compactly supported and }supp(s)\cap X'=\emptyset\}.$$

We define twisted equivariant $K$-theory as a generalized cohomology theory for $G$-spaces.
$$K^0_{G,P}(X,X'):=\pi_0(\Gamma_{c,G}(X,X',Fred^{(0)}_G(P)))$$
$K_{G,P}^n(X,X')$ is defined just like the non-equivariant cases for $n\neq 0$. ``Compactly supported'' means that the quotient space $supp(s)/G$ is compact.

Bott periodicity remains valid.
\end{dfn}
\begin{rmk}
Any finite dimensional representation space of $G$ can be embedded into $L^2(G)\otimes l^2$ from Peter-Weyl theorem.
\end{rmk}
Let $X$ be a smooth manifold and $G$ act on $X$ smoothly. Let $f:H\to G$ be a smooth group homomorphism. Let $P$ be a $G$-equivariant projective bundle over $X$, which is also $H$-equivariant through $f$. When we regard $P$ as an $H$-equivariant one, we write it $f^{\natural}P$.
\begin{dfn}%[restriction of group action]
\label{def. of restr. of group action}
$$f^{\natural}:K^k_{G,P}(X)\to K^k_{H,f^\natural P}(X)$$
is defined by regarding $G$-equivariant class $[s]$ as an $H$-equivariant one through $f$.
\end{dfn}

%\begin{dfn}
%An isomorphism class of $G$-equivariant projective bundles is called a $G$-equivariant twisting.
%\end{dfn}

Classification of $G$-equivariant twistings has been known.
\begin{pro}[\cite{AS} Proposition 6.3]\label{classification of twistings}

(1) Isomorphism classes of twistings are classified by $H^3_G(X,\mathbb{Z})$ by taking Dixmier-Douady class.

(2) The group of connected components of the group of automorphisms of $P$ is isomorphic to $H_G^2(X,\bb{Z})$ which classifies isomorphism classes of $G$-equivariant complex line bundles. 

(3) For $l\in H_G^2(X,\bb{Z})$, the action of $l$ on $K_{G,P}^k(X)$ is given by the tensor product with $L$, where $L$ is a complex line bundle whose first equivariant Chern class is $l$ and we regard $L$ as an element of $K_G(X)$.

%(3) If $l:P_1\xrightarrow{\cong}P_2$ is an isomorphism between two bundles we can define induced isomorphism between two twisted $K$-theory $K_{G,P_1}^n(X)$ and $K_{G,P_2}^n(X)$ by tensor product with corresponding line bundle $L$ whose first equivariant Chern class is $l$.

\end{pro}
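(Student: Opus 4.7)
The plan is to adapt the non-equivariant classification of Proposition \ref{classification theorem of unequivariant twistings} to the equivariant setting, in the spirit of \cite{AS}. First I would fix $\ca{H} = L^2(G) \otimes l^2$ as a complete $G$-universe, so that every $G$-equivariant $PU(\ca{H})$-principal bundle is, up to isomorphism, pulled back from a universal bundle with its induced $G$-action; this reduces the classification of $G$-equivariant projective bundles to equivariant homotopy classes of maps into $BPU(\ca{H})$.

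For (1), the central extension $1 \to U(1) \to U(\ca{H}) \to PU(\ca{H}) \to 1$ together with an equivariant Kuiper theorem (asserting that the strong-topology unitary group of a complete $G$-universe is $G$-equivariantly contractible) yields $G$-equivariant weak equivalences $PU(\ca{H}) \simeq K(\bb{Z}, 2)$ and hence $BPU(\ca{H}) \simeq K(\bb{Z},3)$, where the targets carry the trivial $G$-action. Since $K(\bb{Z},3)$ with trivial $G$-action represents the functor $H^3_G(-,\bb{Z})$ on reasonable $G$-spaces, assigning the classifying map its cohomology class produces the desired bijection, which is the equivariant Dixmier-Douady class.

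For (2), a $G$-equivariant automorphism $\alpha \colon P \to P$ lifts locally over each $x \in X$ to a unitary on $\ca{H}_x$ unique up to $U(1)$; these local lifts patch into a $G$-equivariant principal $U(1)$-bundle, hence a $G$-equivariant complex line bundle on $X$. Conversely, any such line bundle $L$ gives an automorphism of $P$ by tensoring. Homotopic automorphisms produce isomorphic line bundles, so $\pi_0$ of the equivariant automorphism group is identified with isomorphism classes of $G$-equivariant line bundles, and the equivariant first Chern class then supplies the bijection with $H^2_G(X,\bb{Z})$.

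For (3), given $\alpha$ corresponding to $L$, the induced map on $K^k_{G,P}(X)$ sends $[s]$ to $[\alpha \circ s]$. To identify this with multiplication by $L \in K_G(X)$, I would absorb $L$ into the ambient Hilbert space by a $G$-equivariant isometric isomorphism $\ca{H} \otimes L \otimes l^2 \cong \ca{H} \otimes l^2$ (using that $L$ is a rank one $G$-line bundle and the stabilizing $l^2$ factor), and then deform $\alpha \circ s$ to $s \otimes \mathrm{id}_L$ through a linear path of $G$-equivariant Fredholm sections. The main obstacle I anticipate is the equivariant Kuiper statement together with the compatibility between the compact-open and strong operator topologies on $U(\ca{H})$ and $PU(\ca{H})$, which must be handled carefully to ensure that all the intervening spaces carry the expected equivariant homotopy types; once these foundational points are in place, the classification statements are straightforward generalizations of the non-equivariant ones.
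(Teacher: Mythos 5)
The paper does not actually prove this proposition: it is quoted verbatim from \cite{AS} (Proposition 6.3), and the only added content is the remark that $H^2_G(X)$ acts non-trivially on $K^k_{G,P}(X)$. So you are supplying an argument where the paper supplies a citation, and the argument you sketch for part (1) has a genuine gap.

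The problem is the claim that equivariant Kuiper gives $G$-equivariant weak equivalences $PU(\ca{H})\simeq K(\bb{Z},2)$ and $BPU(\ca{H})\simeq K(\bb{Z},3)$ \emph{with trivial $G$-action on the targets}, and that the trivial-action $K(\bb{Z},3)$ represents $H^3_G(-,\bb{Z})$. Both halves fail. First, $[X,K(\bb{Z},3)]_G$ for a trivial action on the target is Bredon cohomology with constant coefficients, not the Borel equivariant cohomology $H^3(EG\times_G X;\bb{Z})$ that $H^3_G$ denotes here. Second, and more concretely, your scheme gives the wrong answer already for $X=\{pt\}$: a $G$-equivariant projective bundle over a point is a homomorphism $G\to PU(\ca{H})$, i.e.\ a central extension of $G$ by $U(1)$ together with a stable representation, and these are classified by $H^3_G(pt;\bb{Z})=H^3(BG;\bb{Z})$, which is non-zero in general (e.g.\ the Schur multiplier for finite $G$); your classifying map into a trivial-action $K(\bb{Z},3)$ would instead give $H^3(pt)=0$. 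The fixed-point spaces $PU(\ca{H})^H$ for subgroups $H\le G$ are not connected $K(\bb{Z},2)$'s --- their components record central extensions of $H$ --- which is exactly why the naive equivariant delooping does not exist. The route actually taken in \cite{AS} is to pass to the Borel construction: for the stable fiber $\ca{H}=L^2(G)\otimes l^2$, $G$-equivariant projective bundles on $X$ correspond to (non-equivariant) projective bundles on $X_G=EG\times_G X$, and one then applies the non-equivariant classification; equivalently the classifying object is ${\rm Map}(EG,BPU(\ca{H}))$ with its induced $G$-action, not $BPU(\ca{H})$ with the trivial one. Parts (2) and (3) of your sketch are essentially sound once (1) is repaired (the identification of $G$-line bundles with $H^2_G(X;\bb{Z})$ being itself a cited fact for compact $G$), but as written the core of (1) does not go through.
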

Just like non-equivariant cases, $H^2_G(X)$ acts on $K^{k}_{G,P}(X)$ non-trivially.
%The above proposition means that twisted (equivariant) $K$-theory $K_{(G)}^{\tau+n}$ is determined as an isomorphism class of groups if we take only cohomology class $\tau$. However $G$-equivariant projective bundles we deal with in this paper are obtained from the following lemma. 
%So we regard projective bundle to be defined naturally, we write twisted equivariant $K$-group $K^{\tau+n}_G(X,X')$ simply.

$G$-equivariant twisting over $G$ we deal with in this paper can be obtained by the following lemma, where $G$ acts on itself by conjugation.

From now on, we write $K^k_{G,P}(X)$ as $K^{\tau+k}_G(X)$, where $\tau$ is Dixmier-Douady class of a projective bundle $P$.
\begin{lem}[\cite{AS}]\label{twisting induced by central extension}
If a central extension $LG^\tau$ has a $\tau$-twisted representation, $\tau$ gives a $G$-equivariant twisting over $G$.
\end{lem}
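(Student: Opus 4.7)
The approach is the classical holonomy construction over the affine space of connections on $S^1\times G$. Fix a $\tau$-twisted representation $\rho:LG^\tau\to U(\ca{H})$; since $i(U(1))$ acts on $\ca{H}$ by scalars, $\rho$ descends to a genuine homomorphism $\bar\rho:LG\to PU(\ca{H})$. It then suffices to build a $G$-equivariant $PU(\ca{H})$-bundle $P\to G$ whose base action is conjugation, since such a bundle presents a $G$-equivariant projective bundle and thus a class in $H^3_G(G,\bb{Z})$ by Proposition \ref{classification of twistings}.

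\textbf{Construction.} Let $\ca{A}:=\ca{A}_{S^1\times G}$ be the affine space of connections on the trivial $G$-bundle over $S^1$, with its smooth gauge action of $LG$. The based loop group $\Omega G\subset LG$ (loops with value $e$ at the basepoint) acts freely on $\ca{A}$, and the holonomy map $hol:\ca{A}\to G$ identifies $\ca{A}/\Omega G$ with $G$; a direct calculation shows $hol$ is $LG$-equivariant where the induced $LG/\Omega G\cong G$ action on the target $G$ is by conjugation. Form the trivial projective bundle $\ca{A}\times\bb{P}(\ca{H})$ and endow it with the diagonal $LG$-action (gauge on $\ca{A}$ and $\bar\rho$ on the fiber). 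Since $\Omega G$ acts freely on $\ca{A}$, the $\Omega G$-quotient
$$P:=\bigl(\ca{A}\times\bb{P}(\ca{H})\bigr)/\Omega G\longrightarrow G$$
is a locally trivial projective bundle over $G$, and the residual action of $LG/\Omega G\cong G$ makes it the sought-for $G$-equivariant projective bundle with conjugation action on the base.

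\textbf{Main obstacle.} The difficulty is infinite-dimensional bookkeeping rather than algebra: one must verify that $hol:\ca{A}\to G$ really is a smooth principal $\Omega G$-bundle admitting local sections, and that the quotient $P$ is locally trivial with structure group $PU(\ca{H})$ in the operator topology required by Definition \ref{def. of Fred(P)}. Both are standard in the gauge-theoretic treatment of loop groups (the same local equivalence underlies the holonomy map $hol:\ca{A}_{S^1\times T}//LT\to T//T$ invoked in the Introduction), and once these are in hand the construction above reproduces the assignment of a twisting to a central extension described in \cite{AS}.
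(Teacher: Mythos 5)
Your construction is correct and is the standard one, but it is not the route the paper takes. The paper works with the path space $PG=\{p:\bb{R}\to G \mid p(\theta+2\pi)p(\theta)^{-1}\ \text{independent of}\ \theta\}$, viewed as a principal $LG$-bundle over $G$ (with $G$ acting by left multiplication covering conjugation), and defines the twisting as the associated bundle $PG\times_{LG}\bb{P}(V\otimes l^2\otimes L^2(G))$. You instead present $G//G$ by the gauge action on $\ca{A}_{S^1\times G}$ and quotient $\ca{A}\times\bb{P}(\ca{H})$ by the based loop group $\Omega G$, using that $\Omega G=\ker(\mathrm{ev}_0)$ is normal so that the residual $LG/\Omega G\cong G$ acts. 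The two models are naturally isomorphic (restricting $PG$ to paths with $p(0)=e$ gives a principal $\Omega G$-bundle identified with $\ca{A}$ by $p\mapsto -dp\,p^{-1}$, and $PG\times_{LG}\bb{P}(\ca{H})\cong P_eG\times_{\Omega G}\bb{P}(\ca{H})$), so nothing is at stake mathematically; the paper's version has the mild advantage that local triviality of $PG\to G$ and the associated-bundle formalism are slightly more self-contained, whereas yours makes the link to the holonomy local equivalence $\ca{A}_{S^1\times G}//LG\to G//G$ (used later for the Dirac family) explicit. Two small points to tighten: you should stabilize the fiber as the paper does, replacing $\bb{P}(\ca{H})$ by $\bb{P}(\ca{H}\otimes l^2\otimes L^2(G))$, so that the fiber is the fixed infinite-dimensional separable Hilbert space required by the definition of a ($G$-equivariant) twisting and by the model of $Fred^{(0)}_G$; and, as you note, freeness of the $\Omega G$-action alone does not give local triviality --- you need local sections of $hol$, which do exist but must be invoked.
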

\begin{proof}
Firstly, we explain the relationship between projective representations of $LG$ and central extensions of $LG$.

Let us assume that $LG$ has a projective representation $\rho:LG\to PU(V)$. From the central extension $U(1)\to U(V) \to PU(V)$, we can define a central extension $LG^\tau=LG\times_{PU(V)}U(V)$ and a unitary representation $\widehat{\rho}:LG^\tau\to U(V)$ such that $\widehat{\rho}\circ i(e^{\sqrt{-1}\theta})=e^{\sqrt{-1}\theta}id_V$ as follows.
%$$\rho^*U(V)=U(V)\times_{PU(V)}LG\to LG$$

%$$\widehat{\rho}:\rho^*U(V)\to U(V).$$
%That is, the following diagram commutes.
$$\xymatrix{
U(1) \ar[r] & U(V) \ar[r] & PU(V) \\
U(1) \ar@{=}[u] \ar[r]^i & LG^\tau \ar[u]^{\widehat{\rho}} \ar[r]^p & LG \ar[u]^\rho
}
$$

In this situation, $(\widehat{\rho},V)$ is called a $\tau$-twisted representation.

If a central extension $U(1)\xrightarrow{i} LG^\tau\xrightarrow{p}LG$ is given, a representation 
$$\widehat{\rho}:LG^\tau\to U(V)$$
 is also called a $\tau$-twisted representation if $\widehat{\rho}\circ i(e^{\sqrt{-1}\theta})=e^{\sqrt{-1}\theta}id_V$. In this case, we can define a projective representation $\rho:LG\to PU(V)$.

Two definitions coincide.

Let $\rho:LG^\tau\to U(V)$ be a $\tau$-twisted representation. Then $LG$ acts on $\bb{P}(V\otimes l^2 \otimes L^2(G))$.

We have a $G$-equivariant $LG$ principal bundle $LG\to PG \to G$, where $G$ acts on $PG$ by left multiplication, and on itself by conjugation.
$PG$ is the path space of $G$ defined by
$$PG:=\{p:\bb{R}\to G|p(\theta+2\pi)p(\theta)^{-1}\in G \text{ is independent of }\theta\}.$$
Then we can define a $G$-equivariant projective bundle 
$$PG\times_{LG}\mathbb{P}(V\otimes l^2 \otimes L^2(G)).$$
\end{proof}

Lastly, let us study the push-forward maps. When $Y$ is a compact manifold and $X$ is a manifold, any smooth map $F:Y\to X$ can be written as a composition of closed embedding $i:Y\hookrightarrow X\times S^{2N}$ and the trivial $S^{2N}$ fibration $p:X\times S^{2N}\twoheadrightarrow X$ for sufficient large $N$. Let $\tau$ be a twisting over $X$.  Since we do not deal with non-trivial gradings, we assume that the normal bundle of $i(Y)$ in $X\times S^{2N}$ is orientable. Push-forward map 
$$i_!:K^{F^*\tau+W_3(F)+k+dimX-dimY}(Y)\to K^{p^*\tau+k}(X\times S^{2N})$$
 is defined by ``zero extension'' and 
$$p_!:K^{p^*\tau+k}(X\times S^{2N})\to K^{\tau+k}(X)$$
 is defined by ``family index theorem'', where $W_3(F)$ is the image of $w_2(X)-F^*(w_2(Y)) \in H^2(X,\mathbb{Z}_2)$ under the Bockstein homomorphism $\beta:H^2(X,\bb{Z}_2)\to H^3(X,\bb{Z}).$
\begin{dfn}%[push-forward]
\label{def of push-forward}
$$F_!:=p_!\circ i_!$$
\end{dfn}
$F_!$ is independent of choices of $i$ and $N$. One can find more information in \cite{CW}.

\begin{rmk}We can generalize for non-compact $Y$ because we deal with compactly supported $K$-theory.
\end{rmk}
Push-forward map has functoriality. Let $X$, $Y$ and $Z$ be manifolds, $\tau$ be a twisting over $Z$, and $X\xrightarrow{f}Y\xrightarrow{g}Z$ be smooth maps satisfying the above assumption.
\begin{lem}[\cite{CW}]\label{functoriality of push-forward}
$$g_!\circ f_!=(g\circ f)_!$$
\end{lem}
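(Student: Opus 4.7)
The plan is to expand both sides using the sphere-fibration factorization from Definition~\ref{def of push-forward} and then reduce to three standard ingredients: functoriality of the zero-extension push-forward along a tower of closed embeddings, functoriality of the family-index push-forward along a tower of sphere-fibrations, and a base-change identity gluing the two on a Cartesian square.

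First I would choose factorizations $f=\pi_f\circ\iota_f$ with $\iota_f:X\hookrightarrow Y\times S^{2M}$ a closed embedding and $\pi_f:Y\times S^{2M}\to Y$ the projection, and $g=\pi_g\circ\iota_g$ with $\iota_g:Y\hookrightarrow Z\times S^{2N}$, $\pi_g:Z\times S^{2N}\to Z$, taking $M,N$ large enough that all relevant normal bundles are orientable. By definition,
$$g_!\circ f_!=\pi_{g,!}\circ\iota_{g,!}\circ\pi_{f,!}\circ\iota_{f,!}.$$
The crucial step is to commute the central block $\iota_{g,!}\circ\pi_{f,!}$ using the Cartesian square
$$\begin{CD}
Y\times S^{2M} @>{\iota_g\times\mathrm{id}}>> Z\times S^{2N}\times S^{2M} \\
@V{\pi_f}VV @V{q}VV \\
Y @>{\iota_g}>> Z\times S^{2N}
\end{CD}$$
in which $q$ forgets the last sphere factor. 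Transversality is automatic in this product situation, so the base-change formula for twisted $K$-theory yields $\iota_{g,!}\circ\pi_{f,!}=q_!\circ(\iota_g\times\mathrm{id})_!$.

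After substitution, $g_!\circ f_!$ becomes $\pi_{g,!}\circ q_!\circ(\iota_g\times\mathrm{id})_!\circ\iota_{f,!}$. The composite $(\iota_g\times\mathrm{id})\circ\iota_f:X\hookrightarrow Z\times S^{2N}\times S^{2M}$ is a closed embedding whose normal bundle is the direct sum of the pulled-back normal bundles, so iterated zero-extension gives $(\iota_g\times\mathrm{id})_!\circ\iota_{f,!}=((\iota_g\times\mathrm{id})\circ\iota_f)_!$. Likewise the family index for the iterated sphere-fibration $\pi_g\circ q:Z\times S^{2N}\times S^{2M}\to Z$ factors as $\pi_{g,!}\circ q_!$. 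Embedding $S^{2N}\times S^{2M}$ as a closed submanifold of $S^{2(N+M)}$ and appealing to independence of the factorization on the chosen ambient sphere dimension (stated after Definition~\ref{def of push-forward}) identifies the resulting composition with $(g\circ f)_!$.

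The main obstacle is the base-change identity together with the tracking of twistings: one must verify that the Dixmier-Douady classes and the $W_3$ corrections on the four corners of the Cartesian square are compatible, so that each of the four push-forwards lands in the correct twisted group. Because the square is the product of a fixed embedding with a trivial sphere factor, the twistings are pullbacks through the projections and the identity $W_3(\iota_g\times\mathrm{id})=\mathrm{pr}^*W_3(\iota_g)$ is automatic, so this bookkeeping reduces to naturality of the Stiefel-Whitney classes under the evident Cartesian pullback.
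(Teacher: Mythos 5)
The paper does not prove this lemma at all: it is quoted from \cite{CW} (the text immediately after Definition~\ref{def of push-forward} simply refers the reader there), so there is no internal proof to compare against. Your sketch is essentially the standard argument from that reference, and its skeleton is sound: factor each map, commute the middle block $\iota_{g,!}\circ\pi_{f,!}$, merge the two embeddings and the two fibrations, and invoke independence of the factorization. Two points deserve more care. First, the identity $\iota_{g,!}\circ\pi_{f,!}=q_!\circ(\iota_g\times\mathrm{id})_!$ is not a pull--push base-change statement (both sides are compositions of push-forwards along the two factorizations of the same map $Y\times S^{2M}\to Z\times S^{2N}$); what actually proves it is the projection formula for the sphere fibration $q$ together with the fact that the Thom class of the normal bundle of $\iota_g\times\mathrm{id}$ is the pullback of that of $\iota_g$ — naming this correctly matters because it is the only step with real content. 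Second, your final identification lands on a factorization of $g\circ f$ through $Z\times S^{2N}\times S^{2M}$ rather than through a single sphere $Z\times S^{2K}$ as in Definition~\ref{def of push-forward}; to quote the "independence of $i$ and $N$" statement you must first check that pushing forward along $Z\times S^{2N}\times S^{2M}\to Z$ agrees with embedding $S^{2N}\times S^{2M}$ into a larger sphere and pushing forward from there, which is a further (routine but not free) instance of the same compatibility. With those two steps made precise, the argument is correct.
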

Let $p:Y\to X$ be a fiber bundle whose fiber is discrete and $P$ be a projective bundle over $X$. Then, we can describe $p_!:K^k_{p^*P}(Y)\to K^k_P(X)$ as follows.
\begin{lem}\label{explicit formula of push-forward with discrete fiber}
If $s$ is a section of $p^*Fred(P)$, the homotopy class $[s]$ determines an element of $K^{p^*\tau}(Y)$. Then, we have an explicit formula

$$p_!([s])=[\{\bigoplus_{y\in p^{-1}(x)}s_y\}_{x\in X}],$$
where we use an isomorphism $\oplus(P\otimes l^2)\cong P\otimes(\oplus l^2)\cong P\otimes l^2$ and Kuiper's theorem.
\end{lem}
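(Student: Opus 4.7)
The strategy is to reduce to the definition of $p_!$ via the standard factorization $p = \pi \circ i$, choosing the embedding $i$ compatibly with $p$, and then computing the Thom/index contribution fiberwise.

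First, I would observe that since $p:Y\to X$ is a fiber bundle with discrete fiber, it is a local diffeomorphism; in particular $\dim Y = \dim X$ and $w_2(Y) = p^*w_2(X)$, so the twisting shift $W_3(p)$ in Definition \ref{def of push-forward} vanishes and no degree shift appears. Now choose a smooth map $\phi:Y\to S^{2N}$, for $N$ large, that is injective on each (finite) fiber $p^{-1}(x)$, and define the closed embedding $i:Y\hookrightarrow X\times S^{2N}$ by $i(y) := (p(y),\phi(y))$. Then $\pi\circ i = p$, where $\pi: X\times S^{2N}\to X$ is the projection, and the normal bundle $\nu$ of $i(Y)$ in $X\times S^{2N}$ is $\phi^*TS^{2N}$, which is $p$-fiberwise of rank $2N$.

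Next I would compute $i_!([s])$ explicitly. Using a tubular neighborhood of $i(Y)$ diffeomorphic to the total space of $\nu$ and the Thom isomorphism for $\nu$ (whose Spin$^c$ contribution absorbs the $W_3(p)=0$ shift), the zero extension produces a section $\widetilde{s}$ of $\pi^*Fred(P)\otimes \Delta_\nu$ over $X\times S^{2N}$ that is invertible outside small disjoint disks around each point of $i(p^{-1}(x))$ in each fiber $\{x\}\times S^{2N}$, and on a neighborhood of $i(y)$ is the Thom class of $\nu$ tensored with $s_y$.

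Then I would apply $\pi_!$, which for the trivial bundle $X\times S^{2N}\to X$ is the family index along the fiber $S^{2N}$. Using the splitting $K^{\pi^*\tau+k}(X\times S^{2N}) \cong K^{\tau+k}(X)\oplus K^{\tau+k-2N}(X)$ coming from Bott periodicity on $S^{2N}$, $\pi_!$ is the projection onto the second summand. Because $\widetilde{s}$ is concentrated at finitely many disjoint Thom-class contributions inside each fiber $\{x\}\times S^{2N}$, and each Thom class pairs with the family index to recover $s_y$ up to stable equivalence, the family index over $x\in X$ yields precisely $\bigoplus_{y\in p^{-1}(x)}s_y$, interpreted in $Fred(P_x\otimes l^2)$ after absorbing the direct sum into $l^2$ via Kuiper's theorem. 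Functoriality $p_!=\pi_!\circ i_!$ (Lemma \ref{functoriality of push-forward}) then yields the stated formula.

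\textbf{Main obstacle.} The delicate point is showing that summing the Thom-class contributions at finitely many fiber points of $S^{2N}$ and then taking the family index really produces the naive direct sum $\bigoplus_{y\in p^{-1}(x)} s_y$, rather than some twisted or shifted version. This requires keeping track of the Thom isomorphism for $\nu = \phi^*TS^{2N}$ and verifying that the local model near each $i(y)$ is exactly the Bott element tensored with $s_y$, so that $\pi_!$ acts by ``evaluation of the Bott element'' fiberwise; independence from the choices of $N$ and $\phi$ then follows from the stability and functoriality of $p_!$.
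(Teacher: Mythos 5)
The paper states this lemma without any proof at all, so there is nothing to compare your argument against; it is simply asserted and then used (e.g.\ in the fourth square of Theorem \ref{naturality for F^*} and in Lemma \ref{decomposition of char(i_1)}). Your argument is the natural one and is essentially correct: factor $p=\pi\circ i$ with $i(y)=(p(y),\phi(y))$, identify the normal bundle with $\phi^*TS^{2N}$, observe $W_3(p)=0$ since $p$ is a local diffeomorphism, and note that the local contribution of $i_!$ near each $i(y)$ is the Bott element tensored with $s_y$, whose family index along $S^{2N}$ returns $s_y$; additivity over the disjoint disks gives the direct sum, absorbed into $P\otimes l^2$ by Kuiper. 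The identification of each local contribution with index one is exactly the normalization $(\pi\circ i)_!=\mathrm{id}$ for the inclusion of a point into $S^{2N}$, which you may as well cite rather than recompute.

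One caveat you should address: your construction of $\phi$ requires each fiber $p^{-1}(x)$ to be finite, but the paper applies the lemma to coverings with countably infinite discrete fiber (the Mackey covering $p:Y_T=\Lambda^\tau_T\times_{\Pi_T}\fra{t}\to X$ has fiber $\Lambda^\tau_{T,x}$). There you must first use compact support: $\mathrm{supp}(s)$ is compact, hence meets $p^{-1}(x)$ in finitely many points uniformly over a compact neighborhood, so you can replace $Y$ by an open neighborhood of $\mathrm{supp}(s)$ on which $p$ is a finite-sheeted (or finitely non-invertible) map before running your embedding argument; the remaining summands $s_y$ are invertible and contribute nothing to the class. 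With that reduction made explicit, your proof is complete.
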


\subsubsection{Computation of $K_G^{\tau+k}(G)$}
Freed, Hopkins and Teleman computed $K_G^{\tau+k}(G)$ by use of spectral sequences or a Mackey decomposition in \cite{FHT1} or \cite{FHT3}.
\begin{thm}[\cite{FHT1} \cite{FHT3}]
Let $\tau$ be a $G$-equivariant twisting over $G$ coming from a positive central extension of $LG$. Then the following isomorphism holds.
$$
K^{\tau+k}_G(G)=
\begin{cases}
0&(k=rank(G)+1 \mod 2)\\
\mathbb{Z}[(\Lambda^\tau/\kappa^\tau(W^e_{\mathit{aff}}(G)))_\mathit{reg}]&(k=rank(G) \mod 2)
\end{cases}
$$
\end{thm}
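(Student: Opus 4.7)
The plan is to proceed in two stages: first establish the torus case via a local equivalence of groupoids and an explicit decomposition of $LT$, then descend from $T$ to $G$ using Weyl-group invariants together with the $\rho$-shift of Lemma \ref{rho-shift}.

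For the torus case, I would exploit the holonomy map $hol: \ca{A}_{S^1\times T}//LT \to T//T$ discussed in the introduction, which is a local equivalence of groupoids and hence induces an isomorphism on twisted $K$-theory. This identifies $K^{\tau+k}_T(T)$ with $LT^\tau$-equivariant twisted $K$-theory of the affine space $\ca{A}_{S^1\times T}$ of connections on the trivial $T$-bundle over $S^1$. Using the canonical decomposition $LT\cong T\times \Pi_T\times U$ of Lemma \ref{factorization of LT}, the gauge subgroup $U$ acts freely on each component of $\ca{A}_{S^1\times T}$ labelled by a holonomy class in $\Pi_T$, with contractible quotient; thus integration over the $U$-fibers (which contributes a dimension shift of $\dim T$) reduces the computation to $(T\times\Pi_T)^\tau$-equivariant twisted $K$-theory of a point, fibered over the discrete set $\Pi_T$. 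A $(T\times\Pi_T)^\tau$-equivariant class over a point is precisely a virtual $\tau$-twisted representation of $T^\tau$ equivariant under the $\Pi_T$-action, which by the formulae in Section \ref{definitions and some formulae about central extension} (namely that $\Pi_T$ acts on $\Lambda_T^\tau$ via $\kappa^\tau$) gives $\bb{Z}[\Lambda_T^\tau/\kappa^\tau(\Pi_T)]$ in degree $\dim(T) \bmod 2$ and zero otherwise, recovering Theorem \ref{computation of K_T(T)}.

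To pass from $T$ to $G$, I would stratify $G$ by conjugacy classes using the exponential map: regular conjugacy classes form an open dense $G$-invariant submanifold $G_{\mathit{reg}}\subseteq G$ on which the quotient stack $G_{\mathit{reg}}//G$ is locally equivalent to $T_{\mathit{reg}}//N(T)$, so that $K^{\tau+k}_G(G_{\mathit{reg}})$ is computed as the $W(G)$-invariant part of $K^{\tau+k}_T(T_{\mathit{reg}})$. Applying the torus computation, the $\Pi_T$-action combines with the $W(G)$-action into the full extended affine Weyl group $W^e_{\mathit{aff}}(G)=\Pi_T\rtimes W(G)$, and the restriction to the regular locus selects regular $W^e_{\mathit{aff}}(G)$-orbits. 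The $\rho$-shift of Lemma \ref{rho-shift} then rewrites the answer in the form $\bb{Z}[(\Lambda_T^\tau/\kappa^\tau(W^e_{\mathit{aff}}(G)))_{\mathit{reg}}]$.

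The hard part will be controlling the contributions from the singular locus $G\setminus G_{\mathit{reg}}$ and proving that the long exact sequence of the pair $(G, G_{\mathit{reg}})$ degenerates correctly. The singular conjugacy classes form a Weyl-alcove stratification, and each stratum contributes twisted equivariant $K$-theory that is a priori nonzero; one must show that positivity of $\tau$ forces these contributions to cancel in the appropriate spectral sequence (an Atiyah--Hirzebruch-type sequence, or equivalently a Mayer--Vietoris assembled from an open cover by tubular neighborhoods of the alcove faces). This vanishing, driven by the fact that the off-diagonal entries of $\kappa^\tau$ restricted to each stratum produce only non-regular orbits and hence trivial $K$-theoretic contribution after $\rho$-shift, is the technical core of \cite{FHT1}, \cite{FHT3}, and is where the positivity hypothesis enters essentially.
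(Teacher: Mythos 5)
You should be aware that the paper does not actually prove this theorem: it is imported wholesale from \cite{FHT1} and \cite{FHT3}, and the only piece the paper establishes itself is the torus case, which it does in Corollary \ref{computation of K using a Mackey decomposition} by composing the Mackey decomposition $K_T^{\tau+k}(T)\cong K^{k}(\Lambda_T^\tau\times_{\Pi_T}\fra{t})$ of Theorem \ref{theorem Mackey decomposition} with the Thom isomorphism $\pi_!$ along the fibers of the trivial $\fra{t}$-bundle $\Lambda_T^\tau\times_{\Pi_T}\fra{t}\to\Lambda_T^\tau/\kappa^\tau(\Pi_T)$. Your torus argument via the holonomy local equivalence $\ca{A}_{S^1\times T}//LT\to T//T$ and the splitting $LT\cong T\times\Pi_T\times U$ of Lemma \ref{factorization of LT} is a genuinely different route (closer to the Dirac-family picture of \cite{FHT2}) that lands in the same place, but several of its intermediate statements are wrong as written: $\ca{A}_{S^1\times T}$ is a connected affine space, not a disjoint union of components labelled by $\Pi_T$ (it is $LT$ whose components are so labelled); the quotient of $\ca{A}_{S^1\times T}$ by $U$ is $\fra{t}$, not a point, with $\Pi_T$ acting by translation through the lattice; the degree shift by $\dim(T)$ comes from the Thom isomorphism along these $\fra{t}$-fibers, not from ``integration over the $U$-fibers''; and the end result is a free module on $\Lambda_T^\tau/\kappa^\tau(\Pi_T)$, not something ``fibered over the discrete set $\Pi_T$''. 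These are fixable, but as stated the reduction does not parse.

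For general $G$ your plan (stratify by conjugacy classes, identify $G_{\mathit{reg}}//G$ with $T_{\mathit{reg}}//N(T)$, kill the singular strata by a spectral sequence using positivity of $\tau$) is indeed the strategy of \cite{FHT1}, but two points are genuine gaps rather than omitted routine details. First, $K^{\tau+k}$ of $T_{\mathit{reg}}//N(T)$ is not simply the $W(G)$-invariant part of $K_T^{\tau+k}(T_{\mathit{reg}})$: the Weyl group acts on the twisting, the passage from $N(T)$-equivariance to $W(G)$-invariants of $T$-equivariant $K$-theory requires its own argument, and this is precisely where the regularity condition on orbits is produced. Second, the $\rho$-shift of Lemma \ref{rho-shift} plays no role here: it relates $\Lambda_T^{\tau-\sigma}/W^e_{\mathit{aff}}(G)$ to the \emph{regular} $\tau$-level orbits and is used on the representation-theory side (Theorem \ref{classifying theorem of PER in Section 1}), whereas the $K$-theory computation must output $\bb{Z}[(\Lambda^\tau/\kappa^\tau(W^e_{\mathit{aff}}(G)))_{\mathit{reg}}]$ directly from the geometry of the stratification; invoking the $\rho$-shift at the end of your argument would move you to the wrong group. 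Since the technical core (degeneration of the spectral sequence over the alcove) is explicitly left as a black box, the proposal is a correct reconstruction of the shape of the FHT proof but not a proof.
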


%3つの函手%この章の目的は言葉を用意することにしよう
\section{Three contravariant quasi functors and FHT isomorphisms}\label{3contravariant functors}
As a preliminary, we introduce some categories and quasi functors. Then we rewrite FHT isomorphisms using our terminologies and verify Theorem \ref{main corollary} under the assumption that Theorem \ref{main theorem for K for tori} and \ref{main theorem for R for tori} hold.

Firstly, we define ``quasi functors''.
\begin{dfn}%[Contravariant quasi functor]
Let $\mathcal{C}_1$, $\mathcal{C}_2$ be categories. $F:\mathcal{C}_1\to \mathcal{C}_2$ is a contravariant quasi functor if the following data are given.

(1) For any $O\in Obj(\mathcal{C}_1)$, $F(O)\in Obj(\mathcal{C}_2)$.

(2) For any $f\in Mor_{\mathcal{C}_1}(O_1,O_2)$, $F(f)\in Mor_{\mathcal{C}_2}(F(O_2),F(O_1))$ satisfying that $F(id)=id$.

That is, a quasi functor is a ``functor'' which allows that $F(f_1\circ f_2)\neq F(f_2)\circ F(f_1)$.
\end{dfn}

The classical category of Lie groups is too large to construct all quasi functors at the same time. For example, when $G=SO(3)$, $t.e.K(G,\tau)$ defined below does not sometimes have all information of $G$-equivaritant twisted $K$-theory of $G$ (\cite{FHT4}).

%議論する圏%
\subsection{Categories}\label{categories}

\begin{dfn}%[Category $\mathcal{C}_0^{twist}$ and $T-\mathcal{C}^{twist}$]
\label{2categories}

An object of $\mathcal{C}_0^{twist}$ and $T-\mathcal{C}^{twist}$ is a pair of a compact connected Lie group $G$ with torsion-free $\pi_1$ and a positive central extension $\tau$ of $LG$. 
$\tau$ determines a $G$-equivariant twisting over $G$, for which we use the same character (Lemma \ref{twisting induced by central extension}).

$f:(H,\tau')\to(G,\tau)$ is a morphism in $\mathcal{C}_0^{twist}$ if $G=H$, $\tau'=\tau$ and $f=id$. That is, $\mathcal{C}_0^{twist}$ is a discrete category.

$f:(H,\tau')\to(G,\tau)$ is a morphism in $T-\mathcal{C}^{twist}$ if it is a morphism in $\mathcal{C}_0^{twist}$ or $H$ and $G$ are tori, $\tau'=f^*\tau$, and $f$ has the injective tangent map.

%$\rho:(H,\sigma)\to(G,\tau)$ is a morphism in $\widetilde{\mathcal{C}}^{twist}$ if $\rho $is a Lie group homomorphism equipped with an isomorphism of twistings $\rho^*\tau\cong\sigma$ satisfying global condition and regurality condition defined below.
\end{dfn}

 %%linklinklink
We introduce the following definition to extend Theorem \ref{main theorem for K for tori} to $f:H\to G$ satisfying the decomposable condition described below.

Let $(G,\tau)$ be an object of $\ca{C}^{twist}_0$ and $H$ be a compact connected Lie group with torsion-free $\pi_1$. Take maximal tori $S$ and $T$ of $H$ and $G$ respectively. We can assume $f(S)\subseteq T$. That is, the following diagram commutes.
$$\begin{CD}
H @>f>> G \\
@AiAA @AkAA \\
S @>f|_S >> T
\end{CD}$$

\begin{dfn}\label{decomposable condition}
Let $f:H\to G$ be a smooth group homomorphism. The decomposable condition means the followings.

(1) $f^*\tau$ is also positive.

(2) $H/\ker(f)$ has torsion-free $\pi_1$.

(3) $[ df(\mathfrak{h}),\mathfrak{s}^\perp ] =0$, where $\fra{s}^\perp$ is the orthogonal complement of $df(\fra{s})$ in $\fra{g}$ under the bilinear form $\innpro{\cdot}{\cdot}{\tau}$ defined in Lemma \ref{property of kappa^tau}. We call this condition the ``local condition''.
%$\tau$ induces a symmetric bilinear form on $\mathfrak{t}$ and the restriction to $\mathfrak{s}$ of this bilinear form coincides with induced one from $f^*\tau$. Then we can take the orthogonal complement $\mathfrak{s}^\perp:=\mathfrak{t}\ominus df(\mathfrak{s})$. $[ df(\mathfrak{h}),\mathfrak{s}^\perp ] =0$. We call this condition ``local condition''.
\end{dfn}
\begin{rmk}
Condition (1) and (2) guarantee that $(H,f^*\tau)$ and $(H/\ker(f),f^*\tau)$ are objects of $\ca{C}^{twist}_0$.

Condition (3) guarantees that $H/\ker(f)\times S^\perp$ is also a group, where $S^\perp\subseteq T$ is the torus whose Lie algebra is $\fra{s}^\perp$. Well-definedness of of $S^\perp$ is verified in Corollary \ref{well-def. of orthogonal completion of tori}.

If $f$ satisfies this condition, $df$ is automatically injective. It follows from Lemma \ref{pull back formula of kappa}.
\end{rmk}
The following examples satisfy the decomposable condition.
\begin{ex}
(a) Any injection $f$ when $rank(H)=rank(G)$.

(b) Any local injection $f$ satisfying (2) when $rank(H)=rank(G)$.

(c) Any local injection $f$ when $H$ is a torus.

(d) $U(n)\hookrightarrow U(n+N)$ defined by 
$$A\mapsto \begin{pmatrix}A & 0 \\ 0 & 1\end{pmatrix}.$$
\end{ex}

\begin{lem}\label{well-def. of the map between Weyl group}
If $f$ satisfies the local condition.
The normalizer of $S$ in $H$ is mapped to the one of $T$ in $G$.
\end{lem}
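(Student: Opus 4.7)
The plan is to reduce the problem to the infinitesimal level: I will show that for every $n \in N_H(S)$ the adjoint action $\operatorname{Ad}(f(n))$ preserves the Lie algebra $\mathfrak{t}$, and then use connectedness of $T = \exp(\mathfrak{t})$ to conclude $f(n)\,T\,f(n)^{-1} = \exp(\operatorname{Ad}(f(n))\mathfrak{t}) = T$, i.e.\ $f(n) \in N_G(T)$.

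First I would set up the right orthogonal decomposition of $\mathfrak{t}$. Since $\tau$ is positive, the bilinear form $\innpro{\cdot}{\cdot}{\tau}$ is positive definite on $\mathfrak{g}$, and because $df(\mathfrak{s}) \subseteq \mathfrak{t}$, intersecting the global decomposition $\mathfrak{g} = df(\mathfrak{s}) \oplus \mathfrak{s}^\perp$ with $\mathfrak{t}$ yields an orthogonal decomposition $\mathfrak{t} = df(\mathfrak{s}) \oplus (\mathfrak{s}^\perp \cap \mathfrak{t})$, in which $\mathfrak{s}^\perp \cap \mathfrak{t}$ is genuinely the $\mathfrak{t}$-orthogonal complement of $df(\mathfrak{s})$. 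Next I would integrate the local condition: $[df(\mathfrak{h}), \mathfrak{s}^\perp] = 0$ says $\operatorname{ad}(df(X))$ vanishes on $\mathfrak{s}^\perp$ for every $X \in \mathfrak{h}$, and since $H$ is connected (so $f(H)$ is generated by the images of one-parameter subgroups) this exponentiates to $\operatorname{Ad}(f(h))|_{\mathfrak{s}^\perp} = \operatorname{id}_{\mathfrak{s}^\perp}$ for every $h \in H$. In particular $\operatorname{Ad}(f(n))$ fixes $\mathfrak{s}^\perp \cap \mathfrak{t}$ pointwise.

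Finally I would combine the two pieces: for $n \in N_H(S)$, $\operatorname{Ad}(n)$ preserves $\mathfrak{s}$, so $\operatorname{Ad}(f(n))$ preserves $df(\mathfrak{s})$; together with the previous step this shows $\operatorname{Ad}(f(n))$ preserves both summands of $\mathfrak{t} = df(\mathfrak{s}) \oplus (\mathfrak{s}^\perp \cap \mathfrak{t})$, hence preserves $\mathfrak{t}$, which by the initial reduction gives $f(n) \in N_G(T)$. The argument is essentially formal once the orthogonal decomposition is in hand; the only delicate point — and the main thing to verify carefully — is that $\mathfrak{s}^\perp \cap \mathfrak{t}$ is truly the $\mathfrak{t}$-orthogonal complement of $df(\mathfrak{s})$, which is ensured precisely by the positivity of $\tau$ built into the definition of $\mathcal{C}_0^{twist}$.
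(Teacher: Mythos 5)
Your proof is correct, and it takes a slightly but genuinely different route from the paper's. The paper argues entirely at the group level: it invokes the (forward-referenced) surjectivity of $f\cdot j: S/\ker(f|_S)\times S^\perp\to T$ to write an arbitrary $x\in T$ as $f(s)\,t$ with $s\in S$, $t\in S^\perp$, proves via the Campbell--Hausdorff formula and the local condition that $f(h)$ commutes with every element of $S^\perp$, and then computes $f(n)xf(n)^{-1}=f(nsn^{-1})\,t\in T$ directly. You instead linearize everything: the positive definiteness of $\innpro{\cdot}{\cdot}{\tau}$ on $\fra{t}$ gives the decomposition $\fra{t}=df(\fra{s})\oplus(\fra{s}^\perp\cap\fra{t})$ by pure linear algebra, the local condition $[df(\fra{h}),\fra{s}^\perp]=0$ exponentiates to $\operatorname{Ad}(f(h))|_{\fra{s}^\perp}=\operatorname{id}$, and $n\in N_H(S)$ gives $\operatorname{Ad}(f(n))\,df(\fra{s})=df(\operatorname{Ad}(n)\fra{s})=df(\fra{s})$, so $\operatorname{Ad}(f(n))$ preserves $\fra{t}$ and hence $f(n)$ normalizes $T=\exp(\fra{t})$. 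What your version buys is that it replaces the group-level surjectivity statement $T=f(S)\cdot S^\perp$ (which the paper only proves later, in Proposition \ref{decomposition of injection for tori}) with the elementary fact that a subspace of a positive-definite inner product space is complemented by its orthogonal complement, so the argument is self-contained at this point in the text; the paper's version, by contrast, stays at the level of group elements throughout and never needs to pass through $\operatorname{Ad}$. One small point worth making explicit in a final write-up is the identification of $\fra{s}^\perp\cap\fra{t}$ with the Lie algebra of the torus $S^\perp\subseteq T$, since the paper's notation oscillates between the orthogonal complement taken in $\fra{g}$ and the one taken in $\fra{t}$; your remark that positivity of $\tau$ is exactly what makes $\fra{s}^\perp\cap\fra{t}$ a genuine complement of $df(\fra{s})$ in $\fra{t}$ addresses this correctly.
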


\begin{proof}
Since $H$ is compact and connected, $\exp:\mathfrak{h}\to H$ is surjective, that is, for any $h\in H$, there exists $v\in\mathfrak{h}$ such that $h=\exp(v)$. With the same way, for $t\in S^\perp$, there exists $\widetilde{t}\in\mathfrak{s}^\perp$ such that $\exp(\widetilde{t})=t$. From Campbell-Hausdorff's formula 
$$f(h)tf(h)^{-1}=f(\exp(v))\exp(\widetilde{t})f(\exp(-v))$$
$$=\exp(df(v)+\widetilde{t}+\frac{1}{2}[df(v),\widetilde{t}]+\cdots)f(\exp(-\widetilde{t}))=\exp(df(v)+\widetilde{t})\exp(-df(v))$$
$$=\exp(df(v)+\widetilde{t}-df(v)+\frac{1}{2}[df(v)+\widetilde{t},-df(v)]+\cdots)$$
$$=\exp(\widetilde{t})=t.$$
Therefore, $f(h)$ and $t$ commute.
Moreover, for any $x\in T$, there exit $s\in S$ and $t\in T^\perp$ such that $x=f(s)\cdot t$. We prove it at Proposition \ref{decomposition of injection for tori}.

Therefore, for any $n\in N(S)$ and $x\in T$, the following holds.
$$f(n)xf(n^{-1})=f(n)(f(s)t)f(n^{-1})=f(n)f(s)f(n^{-1})t$$
$$=f(nsn^{-1})t\in T.$$
\end{proof}

From this lemma, we can define a homomorphism $f_*:W(H)\to W(G)$, where $W(H)$ and $W(G)$ are the Weyl groups of $H$ and $G$ respectively. 

\begin{lem}
$f_*$ is an injection.
\end{lem}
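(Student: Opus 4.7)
The plan is to reduce to the injective case by passing to $H/\ker(f)$ and then to apply the classical fact that in a compact connected Lie group the centralizer of a maximal torus equals the torus itself. Concretely, suppose $n \in N_H(S)$ satisfies $f_*([n]) = e$, i.e., $f(n) \in T$; the goal is to show $n \in S$.

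First I would establish that $\ker(f) \subseteq S$. By the remark after Definition \ref{decomposable condition}, the local condition forces $df$ to be injective, so $\ker(f)$ is a discrete closed subgroup of the compact group $H$ and hence finite. A finite normal subgroup of a connected Lie group is central, since for each element the conjugation orbit is the continuous image of $H$ inside a finite set and so reduces to a single point. The center of a compact connected Lie group lies in every maximal torus, so $\ker(f) \subseteq Z(H) \subseteq S$. Consequently $f$ descends to an injective homomorphism $\bar{f} : H/\ker(f) \hookrightarrow G$, and $\bar{f}$ carries the maximal torus $S/\ker(f)$ of $H/\ker(f)$ into $T$.

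Next, with $\bar{n}$ denoting the image of $n$ in $H/\ker(f)$, the hypothesis $f(n)\in T$ says that $\bar{f}(\bar{n})$ commutes with every element of $T$, in particular with every element of $\bar{f}(S/\ker(f))$. The injectivity of $\bar{f}$ transfers this commutation back: $\bar{n}$ commutes with every element of $S/\ker(f)$, so $\bar{n} \in Z_{H/\ker(f)}(S/\ker(f))$. The classical centralizer theorem identifies this centralizer with $S/\ker(f)$ itself. Combined with $\ker(f) \subseteq S$ this yields $n \in S$, i.e., $[n] = e$ in $W(H)$. There is no serious obstacle here; the only point requiring attention is the verification that $\ker(f) \subseteq S$, which legitimizes the quotient argument and uses condition (3) of the decomposable condition only through its consequence that $df$ is injective.
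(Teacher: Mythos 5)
Your proof is correct, but it takes a different route from the paper's. The paper argues infinitesimally: $[n]\in\ker(f_*)$ gives $f(n)\in T$, hence $\mathrm{Ad}(f(n))=\mathrm{id}$ on $\mathfrak{t}$; restricting to $df(\mathfrak{s})$ and using the injectivity of $df$ (which intertwines $\mathrm{Ad}(n)$ on $\mathfrak{s}$ with $\mathrm{Ad}(f(n))$ on $df(\mathfrak{s})$) yields $\mathrm{Ad}(n)=\mathrm{id}_{\mathfrak{s}}$, so $n$ centralizes $S$ and $n\in S$. This sidesteps $\ker(f)$ entirely. You instead work at the group level: you first show $\ker(f)$ is finite, hence central, hence contained in $S$, pass to the injective map $\bar f$ on $H/\ker(f)$, transfer the commutation of $\bar f(\bar n)$ with $\bar f(S/\ker(f))$ back through the injection, and invoke $Z_{H/\ker(f)}(S/\ker(f))=S/\ker(f)$. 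Every step checks out, and the auxiliary fact $\ker(f)\subseteq Z(H)\subseteq S$ is correctly justified. Both arguments ultimately rest on the centralizer theorem for maximal tori; yours costs an extra reduction but records the useful observation that $\ker(f)$ is central, while the paper's is shorter because the Lie-algebra formulation makes the injectivity of $df$ do all the work. (A small simplification of your route: you could avoid the quotient altogether by noting that $s\mapsto nsn^{-1}s^{-1}$ is a continuous map from the connected group $S$ into the discrete set $\ker(f)$, hence constant equal to $e$.)
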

\begin{proof}
Suppose that $n\in N(S)$ and $[n]\in \ker(f_*)$. Then, $Ad(f(n))$ defines an automorphism of $\fra{t}$. From the assumption that $[n]\in \ker(f_*)$, $Ad(f(n))=id_{\fra{t}}$. 
Since $Ad(f(n))|_{\fra{s}}=Ad(n)$, $n\in S$. We obtain the conclusion.
\end{proof}

%Local condition is a sufficient condition to define $f^\#$, $f^!$ and 
%global condition is a sufficient condition to define 
%$char(f)$ defined below.

In this section, we define three quasi functors $t.e.K$, $char$ and $RL$ $:\mathcal{C}_0^{twist}\to\mathcal{A}b$. Later, we extend them as widely as possible.

Throughout this paper, $\ca{A}b$ is the category of abelian groups.

%ねじれ指標函手%
\subsection{The quasi functor $char$}\label{char}
Let us start from the easiest quasi functor $char$. FHT isomorphism was verified by use of this quasi functor essentially. 

Let us define a quasi functor
$$char:T-\ca{C}^{twist}\to\ca{A}b.$$
Firstly, we define it from $\ca{C}^{twist}_0$, later we extend it to $T-\ca{C}^{twist}$.

Let $(G,\tau)$ be an object of $\mathcal{C}_0^{twist}$. Let $T$ be a maximal torus of $G$.  Let us recall that 
$$1\to U(1) \xrightarrow{i}T^\tau \xrightarrow{p} T \to 1 $$
is the central extension of $T$ induced from $LT^\tau$. $\Lambda^\tau_T$ is the set of irreducible $\tau$-twisted representations of $T^\tau$.
That is,
$$\Lambda_T^\tau:=\{\lambda:T^\tau \to U(1)|\text{ homomorphism such that } \lambda\circ i=id\}$$
$$\cong \{d\lambda:\Pi_{T^\tau}\to\bb{Z}| \text{ homomorphism such that }d\lambda\circ di=id\}.$$

As we explain in Section 1, $W^{e}_{\mathit{aff}}(G)=\Pi_T\rtimes W(G)$ acts on $\Lambda^\tau_T$, we can define regular orbits. An orbit is regular if the stabilizer is trivial.
\begin{dfn}\label{def. of regular orbit}
The quasi functor 
$$char:\mathcal{C}_0^{twist}\to\mathcal{A}b$$
 is defined by the $\mathbb{Z}$ free module generated by the set of regular orbits $(\Lambda^\tau/W^e_{\mathit{aff}}(G))_\mathit{reg}$.

We write it as $\mathbb{Z}[(\Lambda^\tau/W^e_{\mathit{aff}}(G))_\mathit{reg}]$ also.

\end{dfn}

%ねじれK理論%
\subsection{The quasi functor $t.e.K$}\label{Twisted equivariant K-theory}

The following theorem is one of the main theorem of \cite{FHT1}. We have already stated it for tori.
\begin{thm}[\cite{FHT1}]\label{isomorphism between t.e.K and char}
Let $(G,\tau)$ be an object of $\ca{C}^{twist}_0$.
% compact connected Lie group with $\pi_1$ torsion-free and $\tau$ be a $G$-twisting.

$$
K^{\tau+k}_G(G)=
\begin{cases}
0&(k=rank(G)+1 \mod 2)\\
\mathbb{Z}[(\Lambda^\tau/\kappa^\tau(W^e_{\mathit{aff}}(G)))_\mathit{reg}]&(k=rank(G) \mod 2)
\end{cases}
$$
\end{thm}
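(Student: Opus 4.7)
The plan is to follow the Mackey-decomposition strategy of \cite{FHT1,FHT3}, reducing to the torus case through a stratification of the conjugation quotient.

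The torus case $G=T$ is exactly Theorem \ref{computation of K_T(T)}, so I would concentrate on descending from $T$ to general $G$. The key geometric input is the map $q\colon G\times_{N(T)} T\to G$, $[g,t]\mapsto gtg^{-1}$, which is a $|W(G)|$-fold branched cover ramified precisely along the singular conjugacy classes. On the regular stratum the groupoid $G^{\mathit{reg}}//G$ is locally equivalent, in the sense of \cite{FHT1}, to $T^{\mathit{reg}}//N(T)$, so $K^{\tau+*}_G(G^{\mathit{reg}})$ identifies with the $W(G)$-invariants of $K^{\tau+*}_T(T^{\mathit{reg}})$. Combined with Theorem \ref{computation of K_T(T)}, these invariants form the free abelian group on those $W^e_{\mathit{aff}}(G)$-orbits in $\Lambda_T^\tau$ that meet the regular locus, i.e.\ the $W^e_{\mathit{aff}}(G)$-regular orbits, which accounts for the right-hand side on the regular piece.

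The main obstacle is showing that the closed complement $G\setminus G^{\mathit{reg}}$ contributes nothing to $K^{\tau+*}_G(G)$. I would stratify $T$ by affine-Weyl face type and build a spectral sequence whose $E_1$-terms are twisted equivariant K-groups of the stabiliser subgroups attached to each face. At a face with non-trivial stabiliser $W_0\subseteq W^e_{\mathit{aff}}(G)$, a local Mackey/Thom reduction expresses the contribution in terms of $\tau$-twisted representations of a compact subgroup whose shifted character lattice admits no $W_0$-fixed vector; positivity of $\tau$, via Proposition \ref{positivity of central extension}, then forces these invariants to vanish. This vanishing is the technical heart of \cite{FHT1,FHT3}, where the interplay between the Weyl denominator and the positivity of $\innpro{\cdot}{\cdot}{\tau}$ really does the work.

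To conclude, the surviving contribution is reindexed via the $\rho$-shift bijection of Lemma \ref{rho-shift}, and the parity statement follows from the $rank(G)$-dimensional equivariant Bott/Thom shift on $\fra{t}$. This yields $K^{\tau+k}_G(G)\cong\bb{Z}[(\Lambda^\tau/\kappa^\tau(W^e_{\mathit{aff}}(G)))_{\mathit{reg}}]$ in degree $k\equiv rank(G)\pmod 2$ and zero otherwise.
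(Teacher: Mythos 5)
The paper does not actually prove this statement: it is quoted from \cite{FHT1}, \cite{FHT3}, and the only piece proved in the text is the torus case, done in Corollary \ref{computation of K using a Mackey decomposition} by passing to the covering space $\Lambda_T^\tau\times_{\Pi_T}\mathfrak{t}$ via the Mackey decomposition and applying the Thom isomorphism along the $\mathfrak{t}$-fibres (which is also where the $\dim(T)$ parity shift comes from). Your outline takes the other of the two routes the paper attributes to Freed--Hopkins--Teleman, namely the stratification/spectral-sequence argument: reduction of the regular stratum to $T^{\mathit{reg}}//N(T)$ via the branched cover $G\times_{N(T)}T\to G$, identification of $K^{\tau+*}_G(G^{\mathit{reg}})$ with the $W(G)$-invariants (valid because $W(G)$ acts freely on $T^{\mathit{reg}}$), and vanishing of the contributions of the singular strata. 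The outline is correct as far as it goes, and the reindexing by the $\rho$-shift at the end matches Lemma \ref{rho-shift}. However, you should be explicit that what you have written is a proof scheme rather than a proof: the vanishing over the singular locus --- the $E_1$-page analysis at faces with non-trivial stabiliser $W_0$, where positivity of $\tau$ and the $\rho$-shift force the relevant twisted representation groups to have no surviving classes --- is precisely the content you defer to \cite{FHT1}, \cite{FHT3}, and it is the only non-formal step. Since the paper itself treats the theorem as a citation, this is on par with the text, but it means neither your argument nor the paper's is self-contained at this point.
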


This theorem implies the following immediately.
\begin{cor}\label{zeroness of classical pull back}
If $rank (G)-rank (H)$ is odd, the classical induced homomorphism $f^*\circ f^\natural:K^{\tau+k}_G(G)\to K^{f^*\tau+k}_H(H)$ is zero.
\end{cor}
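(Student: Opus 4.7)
The plan is to deduce this immediately from Theorem \ref{isomorphism between t.e.K and char} by a parity argument. That theorem tells us that $K^{\tau+k}_G(G)$ vanishes unless $k \equiv rank(G) \pmod 2$, and similarly $K^{f^*\tau+k}_H(H)$ vanishes unless $k \equiv rank(H) \pmod 2$. Note that $f^*\tau$ is positive when $\tau$ is and $f$ is a local injection (by Lemma \ref{admissibility of f^*tau} together with the positivity condition, and in the settings considered here $f^*\tau$ will indeed be positive), so the theorem applies to both sides.

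First I would fix an integer $k$ and consider the two possible residues of $k$ modulo $2$. Since the hypothesis says $rank(G) - rank(H)$ is odd, we have $rank(G) \not\equiv rank(H) \pmod 2$, so exactly one of the two congruences $k \equiv rank(G) \pmod 2$ or $k \equiv rank(H) \pmod 2$ holds. In the first case the target group $K^{f^*\tau+k}_H(H)$ is zero, so any homomorphism into it (in particular $f^* \circ f^\natural$) is zero. In the second case the source group $K^{\tau+k}_G(G)$ is zero, so any homomorphism out of it is zero. Either way the composition vanishes.

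There is essentially no obstacle; the only thing to verify carefully is that Theorem \ref{isomorphism between t.e.K and char} is applicable on the $H$-side, i.e.\ that $(H, f^*\tau)$ is an object of $\ca{C}^{twist}_0$ so that its twisted equivariant $K$-theory has the stated parity vanishing. This is immediate from the standing assumptions that $H$ is a compact connected Lie group with torsion-free $\pi_1$ and $f^*\tau$ is a positive central extension of $LH$. With that in hand, the corollary follows in one line by combining the two vanishing statements.
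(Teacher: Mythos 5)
Your parity argument is exactly the paper's intended proof: the corollary is stated as an immediate consequence of Theorem \ref{isomorphism between t.e.K and char}, and for any fixed $k$ exactly one of the source group $K^{\tau+k}_G(G)$ or the target group $K^{f^*\tau+k}_H(H)$ vanishes when $rank(G)-rank(H)$ is odd. The proposal is correct and takes essentially the same route as the paper.
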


Therefore, $f^*\circ f^\natural:K^{\tau+k}_G(G)\to K^{f^*\tau+k}_H(H)$ must not have a compatibility with the quasi functor $char$ generally.

Motivated by this observation, we define the quasi functor
$$t.e.K:T-\ca{C}^{twist}\to \ca{A}b$$
so that Theorem \ref{main theorem for K for tori} holds.

%So we will construct the quasi functor $t.e.K$, modification of twisted equivariant $K$-theory. At first we define it at $\mathcal{C}^{twist}_0$ and extend to $\mathcal{C}^{twist}$, that is we construct the ``induced homomorphism'' $f^\#:=t.e.K(f)$ so that it has a compatibility with $char(f)$ at Section \ref{Modified twisted equivariant K-functor}. This is one of the main construction in this paper.

\begin{dfn}%[The modified twisted equivariant $K$-theory functor $t.e.K$]
\label{def. of t.e.K}
A quasi functor
$$t.e.K:\mathcal{C}^{twist}_0\to\mathcal{A}b$$
is defined by $t.e.K(G,\tau):=K^{\tau+rank(G)}_G(G)$.
\end{dfn}
Theorem \ref{isomorphism between t.e.K and char} implies that there is an isomorphism $$M.d._G:t.e.K(G,\tau)\to char(G,\tau).$$
We define it in Section \ref{Modified twisted equivariant K-functor} by use of a Mackey decomposition and a push-forward map.

%ループ群の表現論函手%
\subsection{The quasi functor $RL$}\label{Positive energy representation group functor}
We have already stated the following isomorphism (Theorem \ref{classifying theorem of PER in Section 1}).
\begin{thm}\label{Computation of representation group, again}
$$R^{\tau-\sigma}(LG)\cong\mathbb{Z}[(\Lambda^{\tau}/W^e_{\mathit{aff}}(G))_{\mathit{reg}}]$$
The isomorphism is given by the composition of taking the lowest weight and $\rho-\mathit{shift}$.
\end{thm}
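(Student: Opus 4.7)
The plan is that this theorem is essentially a direct corollary of two results already stated in the excerpt: Theorem \ref{classifying theorem of PER in Section 1} (from \cite{FHT2}) and Lemma \ref{rho-shift} (from \cite{Adams}). So the proof is a short composition argument.

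First I would invoke Theorem \ref{classifying theorem of PER in Section 1}, which classifies isomorphism classes of positive energy representations of $LG^{\tau-\sigma}$ at level $\tau-\sigma$ by their lowest weight. This gives the isomorphism
$$L.W._G:R^{\tau-\sigma}(LG)\xrightarrow{\cong}\mathbb{Z}[\Lambda_T^{\tau-\sigma}/W^e_{\mathit{aff}}(G)],$$
defined on generators by sending an irreducible positive energy representation to its lowest weight orbit. Note that for this to make sense one needs $\tau-\sigma$ to be positive, which is part of the standing hypothesis preceding Lemma \ref{rho-shift}.

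Next I would apply Lemma \ref{rho-shift}, the $\rho$-shift isomorphism
$$\rho\text{-}\mathit{shift}:\Lambda_T^{\tau-\sigma}/\kappa^{\tau-\sigma}(W^e_{\mathit{aff}}(G))\xrightarrow{\cong}\bigl(\Lambda_T^{\tau}/\kappa^{\tau}(W^e_{\mathit{aff}}(G))\bigr)_{\mathit{reg}},$$
which on the level of generators sends $[\lambda]\mapsto[\lambda+\rho]$. Linearly extending to the free $\mathbb{Z}$-modules yields an isomorphism of abelian groups. The regularity appears on the right because shifting by $\rho$ (half the sum of the positive roots of $G$) moves the closed Weyl alcove into the open one, killing exactly the nonregular orbits; this is precisely the content of Lemma \ref{rho-shift}.

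Finally, I would define $l.w._G := \rho\text{-}\mathit{shift}\circ L.W._G$ as in the definition immediately following Theorem \ref{classifying theorem of PER in Section 1}, and conclude that
$$l.w._G : R^{\tau-\sigma}(LG) \xrightarrow{\cong} \mathbb{Z}\bigl[(\Lambda^\tau/W^e_{\mathit{aff}}(G))_{\mathit{reg}}\bigr]$$
is the desired isomorphism as a composition of two isomorphisms. Since both ingredients are quoted from the literature, there is no real obstacle here; the only subtle point to verify is that the hypothesis ``$\tau-\sigma$ positive'' in Lemma \ref{rho-shift} is indeed the right hypothesis to apply Theorem \ref{classifying theorem of PER in Section 1}, so that both maps are simultaneously available and composable.
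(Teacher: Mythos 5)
Your proposal is correct and coincides with the paper's treatment: the paper gives no separate proof of this theorem, explicitly introducing it with ``We have already stated the following isomorphism (Theorem \ref{classifying theorem of PER in Section 1})'', so the content is exactly the composition $l.w._G=\rho\text{-}\mathit{shift}\circ L.W._G$ of the two quoted results, as you describe.
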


Let $f:H\to G$ be a smooth homomorphism, then we can define $Lf:LH\to LG$ %as follows. Let $l\in LH$, that is $l:S^1\to H$. We define 
by $Lf(l):=f\circ l$ for $l\in LH$.

At first sight, the homomorphism $Lf$ seems to induce the pull back of representations $(Lf)^*:R^\tau(LG)\to R^{f^*\tau}(LH)$ just like the cases of compact Lie groups.

Recall that all irreducible representations of compact Lie groups are finite dimensional and all representations of compact Lie groups are completely reducible, therefore, we can identify finitely reducible representations as finite dimensional representations. It is independent of the groups that a representation is finite dimensional.

However, in the case of loop groups, ``finitely reducibility'' is not a property of spaces, but one of group actions unlike the cases of compact Lie groups. For example, if $i_1:T\to T\times T'$ is the natural inclusion into the first factor, and $(V,\rho)$ is an irreducible positive energy representation of $L(T\times T')$, $(V,(Li_1)^*\rho)$ is not finitely reducible (Corollary \ref{infinitely reducibility in Section 5}).

Motivated by this observation, we define a quasi functor
$$RL:T-\ca{C}^{twist}\to \ca{A}b.$$
Firstly, we define it at $\ca{C}^{twist}_0$, later we extend to $T-\ca{C}^{twist}$.
%So we define the quasi functor $RL$ associated with the representation group of $LG$ so that it has a compatibility with $char$. At first we define it at $\mathcal{C}^{twist}_0$ and extend to $T-\mathcal{C}^{twist}$.

\begin{dfn}\label{def. of RL on C_0}
The contravariant quasi functor $RL:\mathcal{C}^{twist}_0\to\mathcal{A}b$ is defined by
$$RL(G,\tau):=R^{\tau-\sigma}(LG).$$
\end{dfn}
Let us recall that the isomorphism
$$l.w._G:RL(G,\tau)\to char(G,\tau)$$
has been defined by taking the lowest weight. We describe it more explicitly in Section 5 for tori.

We define the ``induced homomorphism'' $f^!=RL(f)$ for $f$ which is a morphism in $T-\mathcal{C}^{twist}$ so that it has a compatibility with $char(f)$.

%%FHTisomorphisms

\subsection{Rewriting of FHT isomorphism}\label{FHT isom.}

Freed, Hopkins and Teleman constructed isomorphism between twisted equivariant $K$-theory and positive energy representation group by use of a family of ``Dirac operators'' parametrized by the set of connections over the trivial bundle $S^1\times G$ in \cite{FHT2}. We write FHT isomorphism for $G$ as $FHT_G$.

We describe the essence of their proof for tori in our terminology.
\begin{thm}[\cite{FHT2} Proposition 4.8]\label{explicit description of FHT}
Let $T$ be a torus and $\tau$ be a positive central extension of $LT$. The following  commutative diagram holds.
$$\xymatrix{
RL(T,\tau) \ar[rr]^{FHT_T} \ar[dr]^{l.w._T} & & t.e.K(T,\tau) \ar[dl]^{M.d._T} \\
& char(T,\tau) & 
}$$
\end{thm}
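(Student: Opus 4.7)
The plan is to verify the diagram on the natural generators of $RL(T,\tau)$, namely the classes $[V]$ of irreducible positive energy representations. For a torus $T$, the spin extension $\sigma$ is trivial (the adjoint action on $L\fra{t}$ is trivial), so $RL(T,\tau)=R^\tau(LT)$; moreover $W^e_{\mathit{aff}}(T)=\Pi_T$ and $\rho=0$, so $l.w._T([V])$ is simply the class of the lowest weight $\lambda\in\Lambda_T^\tau$ in $\bb{Z}[\Lambda_T^\tau/\Pi_T]$. It therefore suffices to show that $M.d._T(FHT_T([V]))=[\lambda]$. Since both $l.w._T$ and $M.d._T$ are already known to be isomorphisms onto $char(T,\tau)$ (Theorems \ref{computation of R(LT)} and \ref{computation of K_T(T)}), commutativity on these generators propagates to the whole group by linearity.

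First I would unravel $FHT_T([V])$ following the construction of \cite{FHT2}: identify $\ca{A}_{S^1\times T}\cong L\fra{t}$ via the trivial connection as basepoint, and consider the Dirac family $\ca{D}_V$, an $LT^\tau\rtimes\bb{T}_{rot}$-equivariant family of odd skew-adjoint Fredholm operators on $V\otimes S$ (with $S$ a Clifford module for $L\fra{t}$) parametrized by $L\fra{t}$. Transported along the holonomy local equivalence $hol:\ca{A}_{S^1\times T}//LT\to T//T$, this family yields $FHT_T([V])\in K^{\tau+\dim(T)}_T(T)$.

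Next I would compute $M.d._T$ on this class. The Mackey decomposition rewrites a twisted $T$-equivariant class (with trivial conjugation action, every point is fixed) as a formal sum of $T^\tau$-weights appearing in the kernels of the family at the finitely many points where it degenerates. The heart of the argument is the explicit spectral description of $\ca{D}_V$: decomposing $V$ into its $T^\tau$-weight spaces $V_\mu$ for $\mu\in\Lambda_T^\tau$, the operator $\ca{D}_V|_\beta$ at a connection $\beta\in L\fra{t}$ is block-diagonal with respect to this decomposition, and acquires nontrivial kernel precisely when $\beta$ meets the $\Pi_T$-orbit of $\lambda$ inside $\Lambda_T^\tau$ (after using $\innpro{\cdot}{\cdot}{\tau}$ to identify $\fra{t}$ with its dual). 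Under $hol$, this single orbit collapses to one point class, with multiplicity one coming from the unique minimum-energy weight space.

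The main obstacle is this spectral computation: one must check that the kernel loci of $\ca{D}_V$ are exactly the affine translates of $\lambda$ and that each contributes with multiplicity one, balancing the contributions of the Clifford bundle $S$. This is carried out in \cite{FHT2} Proposition 4.8, and I would invoke it rather than reprove it; the only verification required here is that our normalizations of $l.w._T$ (lowest weight, no $\rho$-shift since $\rho=0$) and of $M.d._T$ (push-forward along inclusion of kernel locus, see Section \ref{Twisted equivariant K-theory}) match those used in that computation.
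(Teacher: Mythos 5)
The paper offers no proof of this statement at all --- it is quoted directly as \cite{FHT2} Proposition 4.8 --- and your proposal does essentially the same thing: it reduces to the generators $V_{[\lambda]}$, correctly notes that $\sigma=0$, $\rho=0$ and $W^e_{\mathit{aff}}(T)=\Pi_T$ for a torus so that $l.w._T$ is just the lowest weight, and then invokes the spectral analysis of the Dirac family from that same proposition for the one nontrivial step. Your added remark that one must check the normalizations of $l.w._T$ and $M.d._T$ against those of \cite{FHT2} is the right caveat, so the proposal is correct and matches the paper's treatment.
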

%\begin{thm}[for simply connected]
%Let $G$ be a simly connected Lie group, $i:T\to G$ be a chosen maximal torus and $\tau$ be a non-degenerate twisting.
%$\sigma$ is the spin extension of $LG$. Following diagram is commutative.
%$$\xymatrix{
%RL(G,\tau-\sigma) \ar[r]^{FHT_G} \ar[d]^{l.w.} & t.e.K(G,\tau) \ar[r]^{i^\#} & t.e.K(T,i^*\tau) \ar[d]^{M.d._T} \\
%\mathbb{Z}[\Lambda^{\tau-\sigma}/W^e_\mathit{aff}(G)] \ar[r]^{\rho -\mathit{shift}} & char(G,\tau) \ar[r]^{char(i)} & char(T,i^*\tau)
%}$$
%\end{thm}
%They verified that $FHT_G$ is isomorphic using the above theorem (it implies that $FHT_G$ is injective) and computation of image of $RL(G,\tau-\sigma)$ under $M.d._T\circ i^\# \circ FHT_G$ and $char(i) \circ (\rho-\mathit{shift}) \circ l.w._G$.

%For general compact Lie group $G$ with $\pi_1$ torsion-free, they verified this theorem by taking finite cover $1\to A\to \widetilde{G}\to G\to 1$ such that $\widetilde{G}=G'\times Z$ is the product of a torus $Z$ and a simply connected Lie group $G'$.
\subsection{Proof of Theorem \ref{main corollary}}\label{proof of main corollary}
Let us verify our main theorem under the assumption that Theorem \ref{main theorem for K for tori} and \ref{main theorem for R for tori} hold.

The following commutative diagram holds.
$$\xymatrix{
R^\tau(LT) \ar[rrr]^{f^!} \ar[rd]^{l.w._{T}} \ar[dd]_{FHT_{T}} \ar@{}[rrrd]|{(1)} \ar@{}[drd]|{(2)} & & &
R^{f^*\tau}(LT') \ar[dl]_{l.w._{T'}} \ar[dd]^{FHT_{T'}} \ar@{}[ddl]|{(3)} \\
 & char(T,\tau) \ar[r]^{char(f)} \ar@{}[rd]|{(4)} & char(T',f^*\tau) & \\
K^{\tau+dim(T)}_{T}(T) \ar[ru]_{M.d._{T}} \ar[rrr]_{f^\#} & & & K^{f^*\tau+dim(T')}_{T'}(T') \ar[lu]^{M.d._{T'}}
}$$

Commutativity of (1) is Theorem \ref{main theorem for R for tori}, and commutativity of (4) is Theorem \ref{main theorem for K for tori}. Commutativity of (2) and (3) are the above theorem. Since $M.d._T$ and $M.d._{T'}$ are isomorphisms, we obtain the conclusion.

%%%ねじれ指標函手

\section{The quasi functor $char$}\label{Section of the quasi functor char}
In this section, we study the easiest quasi functor $char$. We use Lemma \ref{pull back formula of kappa} and \ref{product formula for kappa}.

%トーラス
\subsection{For tori}\label{the quasi functor for tori}
In this section, we study in the case of tori. This is essential because we extend the quasi functor $char$ to general Lie groups by use of reduction to maximal tori.

%Definition of char(f)
\subsubsection{Definition of induced homomorphism $char(f)$}\label{subsubsection of def. of induced homomorphism of char(f)}
Let $f:T'\to T$ be a local injection and $\tau$ be a positive central extension of $LT$. We define $char(f)([\lambda]_{T})$ for $\lambda\in\Lambda_{T}^\tau$, where $[\lambda]_{T}$ means the $\Pi_{T}$-orbit including $\lambda$. Let us consider the set 
$\{{}^tdf(\lambda+\kappa^\tau(n))|n\in\Pi_{T}\}$ and decompose it as $\Pi_{T'}$-orbit. That is,
$$\{{}^tdf(\lambda+\kappa^\tau(n))|n\in\Pi_{T}\}=\coprod_{i=1}^N\{\mu_i+\kappa^{f^*\tau}(n)|n\in\Pi_T\}.$$
$N$ is a finite number because the set $\Lambda^{f^*\tau}_{T'}/\kappa^{f^*\tau}(\Pi_{T'})$ is a finite set.
\begin{dfn}\label{def. of char(f)}
$$char(f)([\lambda]_{T}):=\sum_{i=1}^N[\mu_i]_{T'}$$
\end{dfn}
\begin{lem}
This is well-defined.
\end{lem}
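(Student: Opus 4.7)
The plan is to verify four points implicit in the definition: (i) the set $S_\lambda := \{{}^tdf(\lambda+\kappa^\tau(n))\mid n\in\Pi_T\}$ is a union of $\Pi_{T'}$-orbits in $\Lambda^{f^*\tau}_{T'}$; (ii) this union has only finitely many orbits, so $N<\infty$; (iii) the orbit sum $\sum_{i=1}^N[\mu_i]_{T'}$ does not depend on the choice of representatives $\mu_i$ (automatic, since we sum the orbits themselves); (iv) replacing $\lambda$ by another element of $[\lambda]_T$ produces the same sum.

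For (i), the key tool is Lemma~\ref{pull back formula of kappa}, which gives $\kappa^{f^*\tau}={}^tdf\circ\kappa^\tau\circ df$. Given $\nu={}^tdf(\lambda+\kappa^\tau(n))\in S_\lambda$ and $m'\in\Pi_{T'}$, I would compute
$$\nu+\kappa^{f^*\tau}(m')={}^tdf\bigl(\lambda+\kappa^\tau(n)\bigr)+{}^tdf\bigl(\kappa^\tau(df(m'))\bigr)={}^tdf\bigl(\lambda+\kappa^\tau(n+df(m'))\bigr),$$
which lies in $S_\lambda$ because $n+df(m')\in\Pi_T$. Hence $S_\lambda$ is $\Pi_{T'}$-stable and so breaks into $\Pi_{T'}$-orbits.

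For (ii), since $df$ is injective and $\tau$ is positive, the restriction of $\Innpro{\cdot}{\cdot}{\tau}$ to $df(\mathfrak{t}')$ is positive definite; by the compatibility $\innpro{v}{w}{f^*\tau}=\innpro{df(v)}{df(w)}{\tau}$ (Lemma~\ref{property of kappa^tau} and its pullback form), $f^*\tau$ is positive as well. A positive $\innpro{\cdot}{\cdot}{f^*\tau}$ is non-degenerate, so $\kappa^{f^*\tau}\colon\Pi_{T'}\to\Lambda_{T'}$ has finite cokernel, and consequently the whole quotient $\Lambda^{f^*\tau}_{T'}/\kappa^{f^*\tau}(\Pi_{T'})$ is finite. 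In particular $S_\lambda$ meets only finitely many orbits, so $N<\infty$.

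For (iv), if $\lambda'=\lambda+\kappa^\tau(m)$ with $m\in\Pi_T$, then
$$S_{\lambda'}=\{{}^tdf(\lambda+\kappa^\tau(n+m))\mid n\in\Pi_T\}=\{{}^tdf(\lambda+\kappa^\tau(n'))\mid n'\in\Pi_T\}=S_\lambda,$$
since $n\mapsto n+m$ is a bijection of $\Pi_T$. Thus the orbit decomposition and the sum $\sum[\mu_i]_{T'}$ are identical, completing well-definedness. I do not expect a real obstacle here: the argument is essentially bookkeeping organized around the identity $\kappa^{f^*\tau}={}^tdf\circ\kappa^\tau\circ df$ and the positivity of $f^*\tau$, both of which are already recorded earlier in the paper.
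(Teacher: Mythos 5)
Your proposal is correct and its core step (i) — showing $\Pi_{T'}$-stability of $\{{}^tdf(\lambda+\kappa^\tau(n))\mid n\in\Pi_T\}$ via the identity $\kappa^{f^*\tau}={}^tdf\circ\kappa^\tau\circ df$ — is exactly the computation the paper uses. The additional checks (ii)--(iv) are sound and simply make explicit the finiteness and representative-independence that the paper records in the surrounding text or leaves as immediate.
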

\begin{proof}
It is sufficient to verify that $\{{}^tdf(\lambda+\kappa^\tau(n))|n\in \Pi_{T}\}$ is $\Pi_{T'}$-invariant. It follows from that
$${}^tdf(\lambda+\kappa^\tau(n))+\kappa^{f^*\tau}(l)={}^tdf(\lambda+\kappa^\tau(n)+\kappa^\tau(df(l)))\in\{{}^tdf(\lambda+\kappa^\tau(n))|n\in \Pi_{T}\},$$
where the equality follows from Lemma \ref{pull back formula of kappa}.
\end{proof}
We can compute the above homomorphism for some cases. We verify the followings in Section 5.

\begin{lem}%[direct product]
\label{char formula for direct product}
Let $i_1:T_1\to T_1\times T_2$ be the natural inclusion into the first factor and 
$\tau=p_1^*\tau_1+p_2^*\tau_2$, where $\tau_j$ is a positive central extension of $LT_j$ and $p_j:T_1\times T_2\to T_j$ is the natural projection onto the $j$'th factor. Then,
$$char(i_1)([\lambda]_{T_1\times T_2})=[{}^tdi_1(\lambda)]_{T_1}.$$
\end{lem}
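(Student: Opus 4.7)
The plan is to unwind the definition of $char(i_1)$ and compute the left-hand side directly, using the two formulae that govern how $\kappa^\tau$ behaves under pull-back (Lemma \ref{pull back formula of kappa}) and under direct products (Lemma \ref{product formula for kappa}). The definition tells us to form the set $S := \{{}^tdi_1(\lambda+\kappa^\tau(n))\mid n\in\Pi_{T_1\times T_2}\}$ and then decompose it into $\kappa^{i_1^*\tau}(\Pi_{T_1})$-orbits. So the task is just to show that $S$ is a single such orbit, and identify it with $[{}^tdi_1(\lambda)]_{T_1}$.

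First I would identify $\Pi_{T_1\times T_2}=\Pi_{T_1}\oplus\Pi_{T_2}$ and write $n=(n_1,n_2)$. By Lemma \ref{product formula for kappa},
$$\kappa^\tau(n_1,n_2) = {}^tdp_1\bigl(\kappa^{\tau_1}(n_1)\bigr) + {}^tdp_2\bigl(\kappa^{\tau_2}(n_2)\bigr).$$
Apply ${}^tdi_1$ to both sides. Since $p_1\circ i_1 = \mathrm{id}_{T_1}$ and $p_2\circ i_1$ is the constant map, we have ${}^tdi_1\circ{}^tdp_1=\mathrm{id}_{\Lambda_{T_1}}$ and ${}^tdi_1\circ{}^tdp_2=0$. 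Hence
$${}^tdi_1(\kappa^\tau(n_1,n_2)) = \kappa^{\tau_1}(n_1).$$

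Second, I would compute $\kappa^{i_1^*\tau}$ using Lemma \ref{pull back formula of kappa}: for $m\in\Pi_{T_1}$,
$$\kappa^{i_1^*\tau}(m) = {}^tdi_1\bigl(\kappa^\tau(di_1(m))\bigr) = {}^tdi_1(\kappa^\tau(m,0)) = \kappa^{\tau_1}(m).$$
Combining the two computations,
$$S = \bigl\{{}^tdi_1(\lambda) + \kappa^{\tau_1}(n_1) \;\big|\; (n_1,n_2)\in\Pi_{T_1}\oplus\Pi_{T_2}\bigr\} = \bigl\{{}^tdi_1(\lambda) + \kappa^{i_1^*\tau}(n_1) \;\big|\; n_1\in\Pi_{T_1}\bigr\},$$
which is exactly the single $\Pi_{T_1}$-orbit $[{}^tdi_1(\lambda)]_{T_1}$. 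So the decomposition in Definition \ref{def. of char(f)} has $N=1$ and $\mu_1={}^tdi_1(\lambda)$, yielding the claimed formula.

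There is no genuine obstacle here; the lemma is essentially an algebraic consequence of the two earlier formulae and the trivial facts $p_1\circ i_1=\mathrm{id}$, $p_2\circ i_1=\mathrm{const}$. The only thing to be a little careful about is confirming that varying $n_2$ really contributes nothing, so that the $\Pi_{T_2}$-freedom does not split $S$ into multiple orbits — but this is immediate from ${}^tdi_1\circ{}^tdp_2=0$.
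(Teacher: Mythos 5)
Your proof is correct and follows essentially the same route as the paper's: the paper also invokes Lemma \ref{product formula for kappa} to split the orbit $\{\lambda+\kappa^\tau(n)\}$ according to $\Pi_{T_1}\oplus\Pi_{T_2}$ and then kills the second factor via ${}^tdi_1\circ{}^tdp_2=0$. Your explicit check that $\kappa^{i_1^*\tau}=\kappa^{\tau_1}$ (via Lemma \ref{pull back formula of kappa}) is a small point the paper leaves implicit, but the argument is the same.
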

\begin{lem}%[finite cover]
\label{char formula for finite cover}
Let $q:T'\to T$ be a finite covering. Then, 
$$char(q)([\lambda]_{T})=\sum_{m\in\Pi_{T}/df(\Pi_{T'})}[{}^tdf(\lambda+\kappa^\tau(m))]_{T'}.$$

\end{lem}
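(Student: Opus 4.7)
The plan is to unpack the definition of $char(q)$ directly, using the pullback formula for $\kappa^\tau$ (Lemma \ref{pull back formula of kappa}) as the main algebraic input. (Note: the $df$ in the statement should read $dq$, since the map is $q$; I will prove the formula with $dq$.) By definition,
$$char(q)([\lambda]_T) = \sum_{i=1}^N [\mu_i]_{T'}, \text{ where } \{{}^tdq(\lambda + \kappa^\tau(n)) \mid n \in \Pi_T\} = \coprod_{i=1}^N \{\mu_i + \kappa^{q^*\tau}(l) \mid l \in \Pi_{T'}\}.$$
So the task is to parametrize the $\Pi_{T'}$-orbits of the set $S := \{{}^tdq(\lambda + \kappa^\tau(n)) \mid n \in \Pi_T\}$ by $\Pi_T/dq(\Pi_{T'})$.

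First I would show that if $n, n' \in \Pi_T$ satisfy $n - n' = dq(l)$ for some $l \in \Pi_{T'}$, then the corresponding elements of $S$ are in the same $\Pi_{T'}$-orbit. Using linearity of $\kappa^\tau$ and ${}^tdq$ together with Lemma \ref{pull back formula of kappa},
$${}^tdq(\lambda + \kappa^\tau(n)) - {}^tdq(\lambda + \kappa^\tau(n')) = {}^tdq(\kappa^\tau(dq(l))) = \kappa^{q^*\tau}(l),$$
which is exactly the $\Pi_{T'}$-translation. Hence $char(q)([\lambda]_T) = \sum_m [{}^tdq(\lambda + \kappa^\tau(m))]_{T'}$ where $m$ runs over a set of coset representatives for $\Pi_T / dq(\Pi_{T'})$, provided this sum has no repetition; since $q$ is a finite covering, this coset space is finite, yielding finiteness of $N$ as in Definition \ref{def. of char(f)}.

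The main (still mild) obstacle is the converse: showing distinct cosets give distinct $\Pi_{T'}$-orbits. Suppose ${}^tdq(\lambda + \kappa^\tau(n))$ and ${}^tdq(\lambda + \kappa^\tau(n'))$ lie in the same $\Pi_{T'}$-orbit, so there exists $l \in \Pi_{T'}$ with ${}^tdq(\kappa^\tau(n-n')) = \kappa^{q^*\tau}(l) = {}^tdq(\kappa^\tau(dq(l)))$ by Lemma \ref{pull back formula of kappa}. Since $q$ is a finite covering, $dq$ is injective with finite cokernel, and therefore its transpose ${}^tdq : \Lambda_T \to \Lambda_{T'}$ is also injective. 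Hence $\kappa^\tau(n - n' - dq(l)) = 0$. Now invoke positivity of $\tau$: by Proposition \ref{positivity of central extension} the symmetric bilinear form $\innpro{\cdot}{\cdot}{\tau}$ on $\Pi_T$ is positive definite, so $\kappa^\tau$ is injective. Thus $n - n' = dq(l) \in dq(\Pi_{T'})$, as required.

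Combining the two directions yields a bijection between $\Pi_T / dq(\Pi_{T'})$ and the set of $\Pi_{T'}$-orbits appearing in $S$, which gives the asserted formula
$$char(q)([\lambda]_T) = \sum_{m \in \Pi_T / dq(\Pi_{T'})} [{}^tdq(\lambda + \kappa^\tau(m))]_{T'}.$$
The only conceptual ingredients are the naturality of $\kappa^\tau$ under pullback and the two injectivity statements; everything else is bookkeeping.
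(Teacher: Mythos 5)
Your proof is correct and follows essentially the same route as the paper's: decompose $n\in\Pi_T$ as $dq(n')+m$ over cosets of $dq(\Pi_{T'})$ and apply the pullback formula $\kappa^{q^*\tau}={}^tdq\circ\kappa^\tau\circ dq$ of Lemma \ref{pull back formula of kappa}. The only difference is that you explicitly justify the disjointness of the resulting $\Pi_{T'}$-orbits (via injectivity of ${}^tdq$ and of $\kappa^\tau$ from positivity), a point the paper's one-line computation asserts without comment; your observation that the statement's $df$ should read $dq$ is also correct.
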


\begin{rmk}
$m\in\Pi_T/df(\Pi_{T'})$ is a chosen representative element. Let us notice that the orbit $[{}^tdf(\lambda+\kappa^\tau(m))]_{T'}$ is independent of the choice of the representative element.
\end{rmk}

%%関手でないための反例
\subsubsection{A counterexample to that $char$ is a functor}
The quasi functor $char$ is not a functor. We give a counterexample to that $char$ is a functor.

Let us consider the following commutative diagram
$$\xymatrix{
\mathbb{T} \ar[rr]^h \ar[rd]^f & & \mathbb{T}^2_2 \\
& \mathbb{T}^2_1\ar[ru]^g &
}$$
where $f(z):=(z,z^{-1})$, $g(z_1,z_2):=(z_1z_2,z_1z_2^{-1})$ and $h(z):=(1,z^2)$. Let us notice that $1$ is the unit element of $\bb{T}$.

We regard $\Pi_\bb{T}=\bb{Z}=\Lambda^{h^*\tau}_\bb{T}$, $\Pi_{\bb{T}^2_1}=\bb{Z}^2=\Lambda^{g^*\tau}_{\bb{T}^2_1}$ and $\Pi_{\bb{T}^2_2}=\bb{Z}^2=\Lambda^\tau_{\bb{T}^2_2}$ by fixing the base of $\Pi$ and dual base.

Let us consider a central extension $\tau$ of $L\bb{T}^2_2$ so that 
$$\kappa^\tau=\begin{pmatrix} -1 & 0 \\ 0 & -1 \end{pmatrix}.$$
Therefore, $char(\mathbb{T}^2_2,\tau)=\mathbb{Z}\begin{bmatrix} 0 \\ 0 \end{bmatrix}_{\bb{T}^2_2}$. $\begin{bmatrix} 0 \\ 0 \end{bmatrix}_{\bb{T}^2_2}$ means the orbit generated by $\begin{pmatrix} 0 \\ 0 \end{pmatrix}$, that is, the whole $\Lambda^\tau_{\bb{T}^2_2}$.

We have the following data about the central extensions from $df=\begin{pmatrix} 1 \\ -1 \end{pmatrix}$, $dg=\begin{pmatrix} 1 & 1 \\ 1 & -1 \end{pmatrix}$, $dh=\begin{pmatrix} 0 \\ 2 \end{pmatrix}$ and Lemma \ref{pull back formula of kappa}.
$$\kappa^{g^*\tau}=
\begin{pmatrix} -2 & 0 \\ 0 & -2 \end{pmatrix} ,
\kappa^{h^*\tau}=
\begin{pmatrix} -4 \end{pmatrix}.$$
Therefore, 
$$char(\mathbb{T}^2_1,g^*\tau)=\mathbb{Z}\begin{bmatrix} 0 \\ 0 \end{bmatrix}_{\bb{T}^2_1}
\oplus \mathbb{Z}\begin{bmatrix} 0 \\ 1 \end{bmatrix}_{\bb{T}^2_1}\oplus \mathbb{Z}\begin{bmatrix} 1 \\ 0 \end{bmatrix}_{\bb{T}^2_1} \oplus \mathbb{Z}\begin{bmatrix} 1 \\ 1 \end{bmatrix}_{\bb{T}^2_1}$$
$$char(\mathbb{T},h^*\tau)=\mathbb{Z}[0]_{\bb{T}}\oplus\mathbb{Z}[1]_{\bb{T}}\oplus\mathbb{Z}[2]_{\bb{T}}\oplus\mathbb{Z}[3]_{\bb{T}}$$

Let us compute $char(f)$, $char(g)$ and $char(h)$. Since

$${}^tdh\begin{pmatrix}n \\ m \end{pmatrix} = 2m,$$
the image of $\begin{bmatrix}0 \\ 0 \end{bmatrix}_{\bb{T}^2_2}$ under ${}^tdh$ is the set of the whole even numbers. Therefore,
 $$char(h)\begin{bmatrix}0 \\ 0 \end{bmatrix}_{\bb{T}^2_2} = [0]_{\bb{T}}+[2]_{\bb{T}}.$$

With the same way, we have the followings
$$char(g) \begin{bmatrix}0 \\ 0 \end{bmatrix}_{\bb{T}^2_2}=\begin{bmatrix}0 \\ 0 \end{bmatrix}_{\bb{T}^2_1}+\begin{bmatrix}1 \\ 1 \end{bmatrix}_{\bb{T}^2_1}$$
$$char(f)\begin{bmatrix}0 \\ 0 \end{bmatrix}_{\bb{T}^2_1}=[0]_{\bb{T}}+[2]_{\bb{T}}$$
$$char(f)\begin{bmatrix}1 \\ 1 \end{bmatrix}_{\bb{T}^2_1}=[0]_{\bb{T}}+[2]_{\bb{T}}.$$

Therefore, $$char(f)\circ char(g)\begin{bmatrix} 0 \\ 0 \end{bmatrix}_{\bb{T}^2_2} = 2 char(h) \begin{bmatrix} 0 \\ 0 \end{bmatrix}_{\bb{T}^2_2}.$$

% the decomposition of a local injection
\subsubsection{The decomposition of a local injection associated with a positive central extension}
While $char$ is not a functor, it holds ``functoriality'' we need.
To state it, we need to rewrite a local injection as a composition of finite coverings and an injection into the first factor of a direct product canonically.
%\begin{thm}
%$$
%\begin{CD}
%t.e.K(G,\tau) @>>> t.e.K(H,f^*\tau) \\
%@VVV @VVV \\
%char (G,\tau) @>>> char (H,f^*\tau)
%\end{CD}
%$$commutes.
%\end{thm}

Let $T$ and $T'$ be tori, $f:T'\to T$ be a local injection and $\tau$ be a positive central extension of $LT$. At first sight, our formulae Lemma \ref{char formula for direct product}, \ref{char formula for finite cover} are very special. However, we can rewrite any local injection $f$ as a composition of finite coverings and an inclusion into the first factor of a direct product canonically. The following proposition is clear. Firstly, we state the following proposition, which is clear.

\begin{pro}%[decomposition of a local injection]
\label{decomposition of local injection for tori}
We can canonically decompose a local injection $f$ as follows
$$T'\xrightarrow{q} T'/\ker f \xrightarrow{i} T$$
where $q$ is a finite covering, and $i$ is an injection.
\end{pro}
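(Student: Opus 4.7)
The plan is to exhibit $\ker(f)$ as a finite subgroup of $T'$ and then invoke the universal property of the quotient. First, since $f$ is a local injection, the tangent map $df:\fra{t}'\to\fra{t}$ is injective, so the Lie algebra of $\ker(f)$, which equals $\ker(df)$, is trivial. Therefore $\ker(f)$ is a discrete closed subgroup of $T'$, and since $T'$ is compact, $\ker(f)$ is finite.

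Consequently the quotient map $q:T'\to T'/\ker(f)$ is a surjective Lie group homomorphism with finite kernel, hence a finite covering. Moreover $T'/\ker(f)$ is a connected compact abelian Lie group, i.e.\ again a torus, so it is a legitimate object for the quasi functor $char$. By the universal property of the quotient, $f$ descends uniquely through $q$ to a smooth homomorphism $i:T'/\ker(f)\to T$ with $f=i\circ q$. Injectivity of $i$ is automatic: if $i([t])=1$ then $f(t)=1$, so $t\in\ker(f)$ and $[t]$ is the identity in $T'/\ker(f)$. The decomposition is canonical because $\ker(f)$ is canonically determined by $f$.

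There is no serious obstacle; the proposition is essentially a restatement of the first isomorphism theorem for Lie groups in the torus setting. The only point that needs checking is that $\ker(f)$ is finite, and this follows immediately from injectivity of $df$ together with compactness of $T'$. In subsequent applications one will also want to know that $di:\fra{t}'\cong\fra{t}'/0\to\fra{t}$ agrees with $df$ under the canonical identification, which is tautological from the construction and ensures compatibility with the formulae for $\kappa^{f^*\tau}$ derived in Lemma \ref{pull back formula of kappa}.
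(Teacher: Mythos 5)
Your proof is correct and is exactly the standard argument; the paper itself states this proposition without proof, declaring it clear. Your write-up (finiteness of $\ker(f)$ from injectivity of $df$ plus compactness, then the first isomorphism theorem) supplies precisely the details the paper omits.
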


The above proposition tells us that we can assume that $f$ is an injection.

Since we assume $f^*\tau$ is positive, $df(\mathfrak{t}')\cap df(\mathfrak{t}')^\perp=0$, where $df(\mathfrak{t}')^\perp$ is the orthogonal completion under $\innpro{\cdot}{\cdot}{\tau}$ in $\fra{t}$.% That is $df(\mathfrak{t}')\oplus df(\mathfrak{t}')^\perp=\mathfrak{t}$. 

\begin{lem}
$df(\mathfrak{t}')^\perp\cap\Pi_T$ is a $\mathbb{Z}$ free module of rank $n-n'$, where $n=\dim(T)$ and $n'=\dim(T')$.
\end{lem}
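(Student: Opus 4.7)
The plan is to reduce the statement to a rationality argument: show that $df(\mathfrak{t}')^\perp$ is a $\mathbb{Q}$-rational subspace of $\mathfrak{t}$ with respect to the lattice $\Pi_T$, and then invoke the standard fact that a rational subspace of dimension $k$ in $\mathfrak{t}$ meets $\Pi_T$ in a $\mathbb{Z}$-free module of rank $k$. The dimension $n-n'$ is immediate from positivity of $\tau$: since $\innpro{\cdot}{\cdot}{\tau}$ is positive definite on $\mathfrak{t}$ and $df$ is injective, $df(\mathfrak{t}')^\perp$ is an $\mathbb{R}$-subspace of the correct dimension.

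For the rationality, I would proceed as follows. First, by Lemma~\ref{property of kappa^tau}, the bilinear form $\innpro{\cdot}{\cdot}{\tau}$ on $\mathfrak{t}$ is determined on $\Pi_T$ by $\kappa^\tau$; since $\kappa^\tau$ takes values in $\Lambda_T=\mathrm{Hom}(\Pi_T,\mathbb{Z})$, the restricted form $\innpro{\cdot}{\cdot}{\tau}\colon \Pi_T\times\Pi_T\to\mathbb{Z}$ is integer-valued, and therefore extends to a $\mathbb{Q}$-valued form on $\Pi_T\otimes\mathbb{Q}\subset\mathfrak{t}$. Second, because $f\colon T'\to T$ is a group homomorphism of tori, $df$ carries $\Pi_{T'}$ into $\Pi_T$; in particular $df(\mathfrak{t}')$ is the $\mathbb{R}$-span of the rational subspace $df(\Pi_{T'})\otimes\mathbb{Q}$ of $\Pi_T\otimes\mathbb{Q}$.

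With these two inputs, the orthogonal complement of $df(\Pi_{T'})\otimes\mathbb{Q}$ inside $\Pi_T\otimes\mathbb{Q}$ under the $\mathbb{Q}$-valued form is a $\mathbb{Q}$-subspace of dimension $n-n'$ (positive definiteness of the form guarantees non-degeneracy, so the expected dimension count holds). Its $\mathbb{R}$-extension coincides with $df(\mathfrak{t}')^\perp$, so $df(\mathfrak{t}')^\perp$ is a rational subspace of $\mathfrak{t}$ of dimension $n-n'$.

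Finally, since $\Pi_T$ is a cocompact lattice in $\mathfrak{t}$ and $df(\mathfrak{t}')^\perp$ is a rational subspace of dimension $n-n'$, the intersection $df(\mathfrak{t}')^\perp\cap\Pi_T$ is a discrete subgroup of $df(\mathfrak{t}')^\perp$ whose $\mathbb{R}$-span is all of $df(\mathfrak{t}')^\perp$; hence it is a $\mathbb{Z}$-free module of rank $n-n'$. I do not anticipate a real obstacle here; the only subtlety is checking that $\innpro{\cdot}{\cdot}{\tau}$ really is $\mathbb{Z}$-valued on the lattice, which is exactly the content of the identification $\kappa^\tau\colon \Pi_T\to\Lambda_T$ recorded in Lemma~\ref{property of kappa^tau}.
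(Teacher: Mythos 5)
Your proposal is correct and follows essentially the same route as the paper: the paper also identifies $df(\mathfrak{t}')^\perp$ as a rational subspace of dimension $n-n'$ (by writing it as $\bigcap_i \ker(\kappa^\tau(df(v_i))\otimes\mathbb{Q})$ for a $\mathbb{Z}$-basis $\{v_i\}$ of $\Pi_{T'}$, using that $\kappa^\tau$ lands in $\Lambda_T$ and is injective) and then concludes by the standard fact that a rational subspace of dimension $k$ meets the lattice $\Pi_T$ in a free module of rank exactly $k$. The only cosmetic difference is that you phrase the rationality via the $\mathbb{Q}$-valuedness of the form on $\Pi_T\otimes\mathbb{Q}$ and cite positive definiteness for the dimension count, whereas the paper spells out the final lattice step with an explicit basis argument; these are interchangeable.
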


\begin{proof}
Since $\Pi_T$ is a free module, $df(\mathfrak{t}')\cap\Pi_T\subseteq\Pi_T$ is also free.
%We use linear algebra over $\mathbb{Q}$.
Let us fix a $\bb{Z}$ besis $\{v_1,v_2,\cdots,v_{n'}\}$ of $\Pi_{T'}$. Then, 
$$(df(\Pi_{T'})\otimes\mathbb{Q})^\perp=(df(\fra{t}')^\perp\cap\Pi_T)\otimes\bb{Q}=
\bigcap_{i\in\{1,2,\cdots,n'\}}\ker(\kappa^\tau(v_i)\otimes\mathbb{Q}).$$
Since $\kappa^\tau$ is injective, $\dim((df(\Pi_{T'})\otimes\bb{Q})^\perp)=n-n'$.
%Dimension of this vector space is $n-n'$ 
 So we can take a $\bb{Q}$ basis $\{w_j\}_{j=1}^{n-n'}$ such that $w_j\in\Pi_T$. %because for any $v\in \Pi_T\otimes\mathbb{Q}$ there exists $N\in\mathbb{Z}$ such that $Nv\in\Pi$.
Since $\{w_j\}$ are linearly independent over $\bb{Z}$, $rank(df(\mathfrak{t}')^\perp\cap\Pi_T)\geq n-n'$. If $r:=rank(df(\mathfrak{t}')^\perp\cap\Pi)>n-n'$, we can take a $\bb{Z}$ basis $\{w_j'\}_{j=1}^r$ of $df(\mathfrak{t}')^\perp\cap\Pi_T$, linearly independent in $(df(\mathfrak{t}')^\perp\cap\Pi_T)\otimes\mathbb{Q}$ over $\bb{Q}$. The dimension of the right hand side is $n-n'$, which is a contradiction.
\end{proof}

This lemma implies the followings.

\begin{cor}\label{well-def. of orthogonal completion of tori}
$$T'^\perp:= df(\mathfrak{t}')^\perp/(df(\mathfrak{t}')^\perp\cap\Pi_T)\subseteq T$$
is a torus. We call it the ``orthogonal completion torus''. Let $j:T'^\perp\hookrightarrow T$ be the natural inclusion.
\end{cor}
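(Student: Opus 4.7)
The plan is to assemble the corollary from the content of the lemma that immediately precedes it together with standard facts about lattices in real vector spaces. Set $V := df(\fra{t}')^\perp \subseteq \fra{t}$ and $L := V \cap \Pi_T$. Since $\innpro{\cdot}{\cdot}{\tau}$ restricted to $\fra{t}$ is positive definite and $df(\fra{t}') \cap V = 0$, we have $\dim_\bb{R} V = n - n'$.

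First I would observe that $L$ is a discrete subgroup of $V$: indeed $L \subseteq \Pi_T$ and $\Pi_T$ is already discrete in $\fra{t}$, so $L$ is discrete in the subspace topology of $V$. The preceding lemma shows that $L$ is a free abelian group of rank $n-n' = \dim_\bb{R} V$. A discrete subgroup of a finite-dimensional real vector space whose rank equals the dimension of the ambient space is a full rank lattice, so $V/L \cong (\bb{R}/\bb{Z})^{n-n'}$ is a torus in a canonical way.

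Next I would construct the inclusion $j$. The exponential map $\exp:\fra{t}\to T$ has kernel $\Pi_T$, so its restriction $\exp|_V : V \to T$ has kernel exactly $V\cap\Pi_T = L$. This gives a well-defined injective continuous group homomorphism $\bar{\exp}:V/L\to T$. Since $V/L$ is compact and $T$ is Hausdorff, $\bar{\exp}$ is a closed embedding, hence a topological (and Lie group) embedding onto its image. Take $T'^\perp$ to be this image and $j$ to be the inclusion.

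There is no real obstacle: the only thing that could fail would be either non-discreteness of $L$ in $V$, which is immediate from its discreteness in $\fra{t}$, or a mismatch between $\mathrm{rank}_\bb{Z} L$ and $\dim_\bb{R} V$, and that equality is precisely the content of the lemma just proved (whose proof in turn relied on positivity of $\innpro{\cdot}{\cdot}{\tau}$ and hence on positivity of $\tau$ and $f^*\tau$). So the corollary really is a one-line consequence, and my write-up will simply make the passage from ``full rank discrete subgroup'' to ``subtorus'' explicit.
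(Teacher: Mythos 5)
Your proof is correct and follows the same route the paper intends: the paper offers no written proof, simply asserting that the preceding lemma (rank of $df(\mathfrak{t}')^\perp\cap\Pi_T$ equals $\dim df(\mathfrak{t}')^\perp = n-n'$) "implies" the corollary, and your write-up just makes explicit the standard passage from a full-rank discrete subgroup to a subtorus via the exponential map. No gaps.
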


\begin{pro}\label{decomposition of injection for tori}
If $f:T'\to T$ is an injection, we can define the new torus $T'\times T'^\perp$ and decompose $f$ as follows.
$$T'\xrightarrow{i_1}T'\times T'^\perp \xrightarrow{f\cdot j} T.$$
Moreover, $f\cdot j$ is a finite covering, where $(f\cdot j)(t_1,t_2)$ is defined by $f(t_1)\cdot j(t_2)$.
\end{pro}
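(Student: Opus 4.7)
The plan is to verify three things in sequence: (i) $f\cdot j$ is a well-defined Lie group homomorphism, (ii) the stated factorization $f=(f\cdot j)\circ i_1$ holds, and (iii) $f\cdot j$ is a finite covering. Parts (i) and (ii) are essentially formal, while (iii) is the content of the proposition; the positivity of $\tau$ is the ingredient that makes everything work.

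First I would observe that because $T$ is abelian, the map $(t_1,t_2)\mapsto f(t_1)\cdot j(t_2)$ is a group homomorphism out of the direct product $T'\times T'^\perp$, and unraveling definitions gives $(f\cdot j)\circ i_1(t_1)=f(t_1)\cdot j(e)=f(t_1)$, which is the required decomposition. Note also that $\dim(T'\times T'^\perp)=n'+(n-n')=n=\dim T$ by Corollary \ref{well-def. of orthogonal completion of tori}.

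The heart of the argument is computing the tangent map
\[
d(f\cdot j):\mathfrak{t}'\oplus df(\mathfrak{t}')^\perp\to\mathfrak{t},\qquad (v_1,v_2)\mapsto df(v_1)+v_2.
\]
Since $\tau$ is positive, Definition \ref{admissible centaral extensions} together with Lemma \ref{property of kappa^tau} tells us that $\innpro{\cdot}{\cdot}{\tau}$ restricts to a positive definite inner product on $\mathfrak{t}$; hence $\mathfrak{t}=df(\mathfrak{t}')\oplus df(\mathfrak{t}')^\perp$ as an internal direct sum. (Here we use that $df$ is injective, which is the standing assumption on $f$.) This decomposition shows that $d(f\cdot j)$ is a linear isomorphism, so $f\cdot j$ is a local diffeomorphism.

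To finish (iii), standard compact Lie group theory does the job: since $f\cdot j$ is a local diffeomorphism with compact domain, the image is both open (local diffeomorphism) and closed (compact), and $T$ is connected, so $f\cdot j$ is surjective; the kernel is discrete (because $d(f\cdot j)$ is injective) and compact (closed in a compact group), hence finite; therefore $f\cdot j$ is a finite covering. The only real subtlety, and the step I expect to have to state most carefully, is the orthogonal decomposition $\mathfrak{t}=df(\mathfrak{t}')\oplus df(\mathfrak{t}')^\perp$, which is where the positivity hypothesis is used in an essential way; without positivity the bilinear form $\innpro{\cdot}{\cdot}{\tau}$ could be degenerate on $df(\mathfrak{t}')$ and the complement would fail to be a vector space direct summand of the right dimension.
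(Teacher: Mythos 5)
Your proof is correct, and it fills in exactly the argument the paper leaves implicit: the paper states this proposition without proof, treating it as an immediate consequence of the preceding lemma and Corollary \ref{well-def. of orthogonal completion of tori}. Your identification of the key point — that positivity of $\tau$ gives the internal direct sum $\mathfrak{t}=df(\mathfrak{t}')\oplus df(\mathfrak{t}')^\perp$, so that $d(f\cdot j)$ is an isomorphism and the standard compactness/connectedness argument yields a finite covering — is precisely the intended reasoning.
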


Combining Proposition \ref{decomposition of local injection for tori} with Proposition \ref{decomposition of injection for tori}, we obtain the following theorem.

\begin{thm}%[decomposition of a local injection]
\label{decomposition of local injection}
We can decompose $f$ as follows
$$T'\xrightarrow{q} T'/\ker(f) \xrightarrow{i_1} T'/\ker(f)\times (T'/\ker(f))^\perp \xrightarrow{f\cdot j} T,$$
where $(T'/\ker(f))^\perp$ is the orthogonal completion torus associated with $f$ and $\tau$, $q$ is the natural finite covering, $i_1$ is the natural inclusion into the first factor and $j$ is the natural inclusion.
\end{thm}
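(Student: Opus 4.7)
The plan is to obtain Theorem \ref{decomposition of local injection} as a routine concatenation of the two preceding propositions, the real content of which has already been carried out. First I would invoke Proposition \ref{decomposition of local injection for tori} to factor $f$ as $T'\xrightarrow{q}T'/\ker(f)\xrightarrow{i}T$. The map $q$ is the projection to the quotient by $\ker(f)$; since $df$ is injective, $\ker(f)$ is a discrete (hence finite, by compactness of $T'$) subgroup of $T'$, so $q$ is a finite covering. The induced map $i$ is an injection by construction, and its tangent map is identified with $df$ under $\mathfrak{t}'\cong\mathfrak{t}'/\{0\}=\mathrm{Lie}(T'/\ker f)$, so $i$ itself is a local injection and $i^*\tau=f^*\tau$ is positive.

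Next I would apply Proposition \ref{decomposition of injection for tori} to the injection $i:T'/\ker(f)\to T$. The hypothesis needed is precisely that $i^*\tau$ is positive, which gives $di(\mathrm{Lie}(T'/\ker f))\cap di(\mathrm{Lie}(T'/\ker f))^\perp=0$ in $\mathfrak{t}$ under $\innpro{\cdot}{\cdot}{\tau}$. Corollary \ref{well-def. of orthogonal completion of tori} then produces the orthogonal completion torus $(T'/\ker(f))^\perp\subseteq T$ as a genuine subtorus, and Proposition \ref{decomposition of injection for tori} yields the factorization
$$T'/\ker(f)\xrightarrow{i_1}T'/\ker(f)\times (T'/\ker(f))^\perp\xrightarrow{i\cdot j}T.$$
Because $T$ is abelian, the multiplicativity of $i\cdot j$ is automatic, and this map coincides with what the theorem calls $f\cdot j$. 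Splicing the two factorizations produces the four-term decomposition in the statement.

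The only verification left is that $f\cdot j$ is a finite covering. Its tangent map at the identity sends $(X,Y)\mapsto di(X)+Y$ from $\mathrm{Lie}(T'/\ker f)\oplus di(\mathrm{Lie}(T'/\ker f))^\perp$ to $\mathfrak{t}$; this is injective because the two summands meet trivially (positivity of $f^*\tau$) and surjective by a dimension count, hence it is a linear isomorphism. Therefore $f\cdot j$ is a surjective local diffeomorphism between compact tori of the same dimension, so its kernel is finite and it is a finite covering. There is no real obstacle here; the substantive geometric content sits in Corollary \ref{well-def. of orthogonal completion of tori} (integrality of $df(\mathfrak{t}')^\perp\cap\Pi_T$), which is what makes the orthogonal complement close up to a torus rather than a mere wind-around.
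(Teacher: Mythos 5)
Your proposal is correct and follows exactly the route the paper takes: the paper obtains this theorem by the one-line remark ``Combining Proposition \ref{decomposition of local injection for tori} with Proposition \ref{decomposition of injection for tori}, we obtain the following theorem,'' and your argument is precisely that concatenation, with the routine verifications (finiteness of $\ker(f)$, the tangent map of $f\cdot j$ being an isomorphism) filled in.
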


Formula $\kappa^{(f\cdot j)^*\tau}={}^td(f\cdot j)\circ\kappa^\tau\circ d(f\cdot j)$
and that $df(\mathfrak{t}')\perp df(\mathfrak{t}')^\perp$ under the bilinear form $\innpro{\cdot}{\cdot}{\tau}$ imply the following lemma.

\begin{lem}\label{decompositin formula for kappa}
$$(f\cdot j)^*\tau=p_1^*i_1^*(f\cdot j)^*\tau+p_2^*i_2^*(f\cdot j)^*\tau=p_1^*\tau_1+p_2^*\tau_2.$$
\end{lem}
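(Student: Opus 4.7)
The plan is to verify the claimed equality of central extensions by comparing the associated bilinear forms $\kappa$ on both sides, exploiting orthogonality to kill cross terms. Write $\sigma := (f\cdot j)^{*}\tau$ and set $\tau_{1}:=i_{1}^{*}\sigma$, $\tau_{2}:=i_{2}^{*}\sigma$; then the second equality is by definition, so the content of the lemma is the splitting $\sigma=p_{1}^{*}\tau_{1}+p_{2}^{*}\tau_{2}$.

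First I would apply Lemma \ref{pull back formula of kappa} together with Lemma \ref{property of kappa^tau} to $\sigma=(f\cdot j)^{*}\tau$. Because $d(f\cdot j)(v_{1},v_{2})=df(v_{1})+dj(v_{2})$ under the identification $\Pi_{T'\times T'^{\perp}}=\Pi_{T'}\oplus\Pi_{T'^{\perp}}$, the induced symmetric bilinear form satisfies
\begin{align*}
\innpro{(v_{1},v_{2})}{(w_{1},w_{2})}{\sigma}
&=\innpro{df(v_{1})+dj(v_{2})}{df(w_{1})+dj(w_{2})}{\tau}\\
&=\innpro{df(v_{1})}{df(w_{1})}{\tau}+\innpro{dj(v_{2})}{dj(w_{2})}{\tau}\\
&\quad +\innpro{df(v_{1})}{dj(w_{2})}{\tau}+\innpro{dj(v_{2})}{df(w_{1})}{\tau}.
\end{align*}

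Next I would invoke the orthogonality hypothesis. By the very construction of $T'^{\perp}$ in Corollary \ref{well-def. of orthogonal completion of tori}, $dj(\mathfrak{t}'^{\perp})=df(\mathfrak{t}')^{\perp}$ in $\mathfrak{t}$ under $\innpro{\cdot}{\cdot}{\tau}$, so the two cross terms vanish. The surviving diagonal terms are exactly $\innpro{v_{1}}{w_{1}}{\tau_{1}}+\innpro{v_{2}}{w_{2}}{\tau_{2}}$ by a second application of Lemma \ref{pull back formula of kappa} to $f\circ i_{1}=f\cdot j\circ i_{1}$ and $j\circ i_{2}=f\cdot j\circ i_{2}$. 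On the other hand, Lemma \ref{product formula for kappa} gives exactly this block-diagonal formula for $\kappa^{p_{1}^{*}\tau_{1}+p_{2}^{*}\tau_{2}}$. Hence $\kappa^{\sigma}=\kappa^{p_{1}^{*}\tau_{1}+p_{2}^{*}\tau_{2}}$.

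The last step is to upgrade the equality of bilinear forms to an equality of central extensions. Using Lemma \ref{factorization of LT}, both $\sigma$ and $p_{1}^{*}\tau_{1}+p_{2}^{*}\tau_{2}$ decompose canonically as the tensor product of the $(T'\times T'^{\perp})\times\Pi$-part and the $U$-part; the $U$-part is canonically the same for any admissible central extension, while the $(T'\times T'^{\perp})\times\Pi$-part is completely encoded by the commutator form $\kappa$ via Definition \ref{morphism kappa} and Lemma \ref{property of kappa^tau}. Since both sides share the same $\kappa$, they are isomorphic as admissible central extensions of $L(T'\times T'^{\perp})$. The main obstacle is exactly this final reconstruction step: one has to be careful that matching $\kappa$ really does pin down the extension (and not merely its image in a cohomology classification), but this follows from the explicit product decomposition in Lemma \ref{factorization of LT} together with the standard structural facts about torus loop groups recalled from \cite{PS} and \cite{FHT2}.
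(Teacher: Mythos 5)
Your proposal is correct and follows essentially the same route as the paper: the paper offers no proof beyond the one-line assertion that the formula $\kappa^{(f\cdot j)^*\tau}={}^td(f\cdot j)\circ\kappa^\tau\circ d(f\cdot j)$ together with the orthogonality $df(\mathfrak{t}')\perp df(\mathfrak{t}')^\perp$ under $\innpro{\cdot}{\cdot}{\tau}$ implies the lemma. Your computation of the vanishing cross terms and your closing remark on reconstructing the extension from $\kappa$ via Lemma \ref{factorization of LT} simply make explicit what the paper leaves implicit.
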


That is, the pair $((T'/\ker(f))\times (T'/\ker(f))^\perp,(f\cdot j)^*\tau)$ satisfies the assumption of Lemma \ref{char formula for direct product}.

\subsubsection{The functoriality we need}\label{The compatibility required for our theorems}
We use the same notations in the previous section.

The following is the functoriality we need mentioned above.

\begin{thm}\label{partial functoriality}
$$char(f)=char(q)\circ char(i_1) \circ char(f\cdot j)$$
\end{thm}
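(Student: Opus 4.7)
The plan is to unfold each of the three maps in the decomposition with the explicit formulas given in Lemma~\ref{char formula for direct product} and Lemma~\ref{char formula for finite cover}, then reassemble and match with Definition~\ref{def. of char(f)} by a lattice counting argument. Throughout set $T_1:=T'/\ker(f)$, $T_2:=(T'/\ker(f))^\perp$, $S:=T_1\times T_2$, $\bar{f}:T_1\hookrightarrow T$, $g:=f\cdot j:S\to T$, so $f=\bar{f}\circ q$ and $\bar{f}=g\circ i_1$. Lemma~\ref{decomposition of local injection} guarantees $g$ is a finite covering and Lemma~\ref{decompositin formula for kappa} guarantees $g^*\tau$ splits as a direct product on $S$, so the hypotheses of both Lemmas~\ref{char formula for direct product} and \ref{char formula for finite cover} are met.

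First I would compute $char(g)([\lambda]_T)=\sum_{m\in\Pi_T/dg(\Pi_S)}[{}^tdg(\lambda+\kappa^\tau(m))]_S$, then apply $char(i_1)$ term-by-term, which by Lemma~\ref{char formula for direct product} simply applies ${}^tdi_1$, giving representatives ${}^td\bar{f}(\lambda+\kappa^\tau(m))$ in $\Lambda_{T_1}^{\bar{f}^*\tau}$. Applying $char(q)$ and using Lemma~\ref{pull back formula of kappa} to rewrite $\kappa^{\bar{f}^*\tau}(m')={}^td\bar{f}\circ\kappa^\tau\circ d\bar{f}(m')$ collapses the composition to
\[
char(q)\circ char(i_1)\circ char(g)\bigl([\lambda]_T\bigr)=\sum_{m\in\Pi_T/dg(\Pi_S)}\sum_{m'\in\Pi_{T_1}/dq(\Pi_{T'})}\bigl[{}^tdf(\lambda+\kappa^\tau(m+d\bar{f}(m')))\bigr]_{T'}.
\]
(Well-definedness under change of coset representatives follows from $\kappa^\tau(\Pi_{T_2})\subseteq\ker({}^tdf)$ for the $T_2$ part of $dg(\Pi_S)$, from $d\bar{f}(\Pi_{T_1})$ being absorbed into a relabeling of the inner $m'$-sum, and from ${}^tdf\circ\kappa^\tau\circ df=\kappa^{f^*\tau}$ for the $dq(\Pi_{T'})$ ambiguity.)

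Next I would compare this double sum with $char(f)([\lambda]_T)=\sum_i[\mu_i]_{T'}$, where the $\mu_i$ enumerate the image of the map $\psi:\Pi_T\to\Lambda_{T'}^{f^*\tau}/\kappa^{f^*\tau}(\Pi_{T'})$, $n\mapsto[{}^tdf(\lambda+\kappa^\tau(n))]_{T'}$. Positivity of $\tau$ makes $\kappa^\tau$ injective with the property that $\kappa^\tau(v)\in\ker{}^tdf\iff v\perp_\tau df(\mathfrak{t}')\iff v\in\Pi_{T_2}$, so $\psi(n_1)=\psi(n_2)\iff n_1-n_2\in df(\Pi_{T'})+\Pi_{T_2}$. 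Hence $|\mathrm{im}(\psi)|=[\Pi_T:df(\Pi_{T'})+\Pi_{T_2}]$, and each orbit $[\mu_i]_{T'}$ appears once in $char(f)([\lambda]_T)$.

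The main (and only nontrivial) step is the matching of multiplicities, which is a lattice index computation. Using $dg(\Pi_S)=d\bar{f}(\Pi_{T_1})+\Pi_{T_2}$, $df(\Pi_{T'})=d\bar{f}(dq(\Pi_{T'}))$, and the orthogonal splitting $d\bar{f}(\mathfrak{t}_1)\cap\Pi_{T_2}=0$, one gets an isomorphism
\[
\bigl(d\bar{f}(\Pi_{T_1})+\Pi_{T_2}\bigr)\big/\bigl(df(\Pi_{T'})+\Pi_{T_2}\bigr)\;\cong\;\Pi_{T_1}/dq(\Pi_{T'}),
\]
so the total number of pairs $(m,m')$ equals $|\mathrm{im}(\psi)|$. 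It then suffices to show every class in $\Pi_T/(df(\Pi_{T'})+\Pi_{T_2})$ is hit by some $m+d\bar{f}(m')$, which is immediate: given $n\in\Pi_T$, choose $m$ in the class of $n$ modulo $dg(\Pi_S)$; then $n-m\in d\bar{f}(\Pi_{T_1})+\Pi_{T_2}$, and the $d\bar{f}(\Pi_{T_1})$ part provides the required $m'$. By a cardinality pigeonhole argument each orbit $[\mu_i]_{T'}$ therefore appears exactly once in the double sum, yielding the desired equality. The expected obstacle is precisely this bookkeeping: one must carefully use the three structural facts (positivity of $\tau$ giving injectivity of $\kappa^\tau$, orthogonality of $d\bar{f}(\mathfrak{t}_1)$ and $\mathfrak{t}_2$, and $df=d\bar{f}\circ dq$) simultaneously, since any one of them in isolation is insufficient.
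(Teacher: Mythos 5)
Your argument is correct, and it rests on exactly the same structural inputs as the paper's proof: the explicit formulas of Lemmas \ref{char formula for direct product} and \ref{char formula for finite cover}, the identity $\kappa^{f^*\tau}={}^tdf\circ\kappa^\tau\circ df$, and the fact (coming from positivity) that $\kappa^\tau(v)\in\ker({}^tdf)$ for $v\in\Pi_T$ exactly when $v\in\Pi_{T_2}$. The only real difference is how the multiplicity-one statement is closed. The paper does it in two disjointness steps: Lemma \ref{reduction to injection} first shows $char(f)=char(q)\circ char(\bar f)$ by observing that ${}^tdq$ sends distinct $S$-orbits to disjoint sets, and then the main proof shows directly that distinct cosets $m\in\Pi_T/d(f\cdot j)(\Pi_{T'\times T'^\perp})$ give distinct orbits under $char(i_1)$, via the computation $\ker({}^tdi_1)=\kappa^{(f\cdot j)^*\tau}(di_1(\mathfrak{t}')^\perp)$. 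You instead establish the equivalence $\psi(n_1)=\psi(n_2)\iff n_1-n_2\in df(\Pi_{T'})+\Pi_{T_2}$, prove the lattice-index identity $[\Pi_T:df(\Pi_{T'})+\Pi_{T_2}]=[\Pi_T:dg(\Pi_S)]\cdot[\Pi_{T_1}:dq(\Pi_{T'})]$, and conclude by surjectivity plus equicardinality; this replaces the paper's two injectivity arguments by one global count, at the cost of having to verify the index identity and handle the coset-representative bookkeeping (which you do correctly, noting that $m+d\bar f(m')$ is only well-defined modulo $df(\Pi_{T'})+\Pi_{T_2}$ after fixing representatives $m$, since shifting $m$ by $d\bar f(\Pi_{T_1})$ merely permutes the inner sum). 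One cosmetic slip: in your chain of equivalences the conclusion should read $v\in\mathfrak{t}_2$, which gives $v\in\Pi_{T_2}$ only because the relevant $v$ already lies in $\Pi_T$; as used, this is harmless.
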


Firstly, we verify that we can assume that $f$ is injective.

\begin{lem}\label{reduction to injection}
If we have the following commutative diagram
$$\xymatrix{
T' \ar[rr]^f \ar[rd]^q & & T \\
& S \ar[ur]^g & 
}$$
where $f$ and $g$ are local injections and $q$ is a finite covering, the equality
$$char(f)=char(q)\circ char(g)$$ holds.
\end{lem}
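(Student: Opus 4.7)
The plan is to unpack both sides using Definition~\ref{def. of char(f)} and the factorization ${}^tdf={}^tdq\circ{}^tdg$, and then to further split the resulting pieces via Lemma~\ref{char formula for finite cover} applied to the finite covering $q$. Essentially, the $\Pi_S$-orbit decomposition of the image of $g$ should pull back via ${}^tdq$ to a decomposition into $\Pi_{T'}$-orbits that exactly matches $char(q)\circ char(g)([\lambda]_T)$.

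First I would establish one preliminary injectivity statement. Because $q$ is a finite covering, $dq:\fra{t}'\to\fra{s}$ is a Lie algebra isomorphism whose restriction is a finite-index sublattice inclusion $dq:\Pi_{T'}\hookrightarrow\Pi_S$. Consequently the pullback ${}^tdq:\Lambda^{g^*\tau}_S\to\Lambda^{f^*\tau}_{T'}$, obtained by composing a twisted character with the restriction of $\widetilde{Lq}$ to $T'^{f^*\tau}$, is \emph{injective}. Moreover, Lemma~\ref{pull back formula of kappa} applied to $q$ yields the compatibility $\kappa^{f^*\tau}(l)={}^tdq\bigl(\kappa^{g^*\tau}(dq(l))\bigr)$ for every $l\in\Pi_{T'}$, which lets us recognize $\Pi_{T'}$-orbits under $\kappa^{f^*\tau}$ inside ${}^tdq$-images of $\Pi_S$-orbits under $\kappa^{g^*\tau}$.

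Fix $\lambda\in\Lambda^\tau_T$ and set $A:=\{{}^tdg(\lambda+\kappa^\tau(n)):n\in\Pi_T\}\subseteq\Lambda^{g^*\tau}_S$. By Definition~\ref{def. of char(f)} applied to $g$, there is a disjoint decomposition $A=\coprod_{i=1}^N(\nu_i+\kappa^{g^*\tau}(\Pi_S))$ into $\Pi_S$-orbits, giving $char(g)([\lambda]_T)=\sum_{i=1}^N[\nu_i]_S$. Since $f=g\circ q$, the analogous set for $f$ is $B:={}^tdq(A)$, and the injectivity of ${}^tdq$ forces the images ${}^tdq(\nu_i+\kappa^{g^*\tau}(\Pi_S))$ to remain pairwise disjoint. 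Applying Lemma~\ref{char formula for finite cover} to each orbit $[\nu_i]_S$ together with the compatibility formula refines each such image into $\Pi_{T'}$-orbits:
$${}^tdq\bigl(\nu_i+\kappa^{g^*\tau}(\Pi_S)\bigr)=\coprod_{m_0\in\Pi_S/dq(\Pi_{T'})}\bigl({}^tdq(\nu_i+\kappa^{g^*\tau}(m_0))+\kappa^{f^*\tau}(\Pi_{T'})\bigr),$$
so that $char(q)([\nu_i]_S)=\sum_{m_0}[{}^tdq(\nu_i+\kappa^{g^*\tau}(m_0))]_{T'}$.

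Assembling the previous step, $B$ is the disjoint union indexed by pairs $(i,m_0)$ of the $\Pi_{T'}$-orbits represented by ${}^tdq(\nu_i+\kappa^{g^*\tau}(m_0))$, so Definition~\ref{def. of char(f)} applied to $f$ gives $char(f)([\lambda]_T)=\sum_{i,m_0}[{}^tdq(\nu_i+\kappa^{g^*\tau}(m_0))]_{T'}=\sum_i char(q)([\nu_i]_S)=char(q)\bigl(char(g)([\lambda]_T)\bigr)$, as required. The only point requiring care is the preliminary injectivity of ${}^tdq$ on twisted characters (as opposed to ordinary characters), since $\Lambda^{g^*\tau}_S$ and $\Lambda^{f^*\tau}_{T'}$ are $\Lambda_S$- and $\Lambda_{T'}$-torsors rather than groups; reducing it to injectivity of the lattice inclusion $dq|_{\Pi_{T'}}$ is the main (though minor) obstacle.
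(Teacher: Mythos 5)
Your proposal is correct and follows essentially the same route as the paper's proof: both factor ${}^tdf={}^tdq\circ{}^tdg$ so that the set computed for $f$ is the ${}^tdq$-image of the set computed for $g$, and both use injectivity of ${}^tdq$ to conclude that the images of distinct $\Pi_S$-orbits remain disjoint, so no multiplicities greater than $1$ appear. Your explicit justification of the injectivity of ${}^tdq$ on twisted characters (via the finite-index lattice inclusion $dq(\Pi_{T'})\subseteq\Pi_S$) is a detail the paper takes for granted, but it does not change the argument.
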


\begin{proof}
Let us take an orbit $[\lambda]_{T}\in char(T,\tau)$ and compute $char(f)([\lambda]_{T})$ and $char(q)\circ char(g)([\lambda]_{T})$.

Since ${}^tdf={}^tdq\circ {}^tdg$, the image of the orbit $[\lambda]_T$ under ${}^tdf$ coincides with the one under ${}^tdq\circ{}^tdg$. Therefore, when $char(f)([\lambda]_{T})=\sum_{i=1}^N[\mu_i]_{T'}$,
$$char(q)\circ char(g)([\lambda]_{T})=\sum_{i=1}^Na_i[\mu_i]_{T'}$$
for some $a_i\in\mathbb{Z}_{>0}.$
It is sufficient to verify that $a_i=1$ for any $i$.

If $char(g)([\lambda]_{T})=\sum_{j=1}^M[\nu_j]_S$, 
$$char(q)\circ char(g)([\lambda]_{T})=\sum_{j=1}^Mchar(q)([\nu_j]_S)=\sum_{i=1}^Na_i[\mu_i]_{T'},$$ where 
$a_i=\#\{j\in\{1,2,\cdots,M\}|{}^tdq(\nu_j+\kappa^{g^*\tau}(\Pi_S))\supseteq \mu_i+\kappa^{f^*\tau}(\Pi_{T'})\}.$

However, if $[\nu]_S\neq[\nu']_S$, the images of orbits $\{\nu+\kappa^{g^*\tau}(n)|n\in\Pi_{S}\}$ and $\{\nu'+\kappa^{g^*\tau}(n)|n\in\Pi_{S}\}$ under ${}^tdq$ are disjoint because ${}^tdq$ is injective. That is, for any $j_1\neq j_2$, $char(q)([\nu_{j_1}]_S)\neq char(q)([\nu_{j_2}]_S)$.
\end{proof}
Let us verify Theorem \ref{partial functoriality}. Let $f:T'\to T$ be an injection and $\tau$ be a positive central extension of $LT$.\\\\
{\it Proof of Theorem \ref{partial functoriality}.}
The following commutative diagram holds.
$$\xymatrix{
T' \ar[rr]^f \ar[rd]^{i_1}& & T \\
& T'\times T'^\perp \ar[ur]^{f\cdot j} &
}$$
Let $[\lambda]_{T}\in char(T,\tau)$ be an orbit, $char(f)([\lambda]_{T})=\sum_{i=1}^N[\mu_i]_{T'}$, and $char(i_1)\circ char(f\cdot j)([\lambda]_{T})=\sum_{i=1}^Na_i[\mu_i]_{T'}$. We verify that $a_i=1$ for any $i$. Since
%We write $T'':=T\times T^\perp$ for simplicity. 
$${}^td(f\cdot j)(\{\lambda+\kappa^\tau(n)|n\in\Pi_T\})
=\{{}^td(f\cdot j)(\lambda)+{}^td(f\cdot j)(\kappa^\tau(n))|n\in\Pi_T\}$$
$$=\coprod_{m\in\Pi_T/d(f\cdot j)(\Pi_{T'\times T'^\perp})}\{{}^td(f\cdot j)(\lambda)+{}^td(f\cdot j)(\kappa^\tau(m))+\kappa^{(f\cdot j)^*\tau}(n')|n'\in\Pi_{T'\times T'^\perp}\}$$
$$=\coprod_{m\in\Pi_T/d(f\cdot j)(\Pi_{T'\times T'^\perp})}[{}^td(f\cdot j)(\lambda+\kappa^\tau(m))]_{T'\times T'^\perp},$$
that $a_i=1$ for any $i$ is equivalent to that 
$$char(i_1)([{}^td(f\cdot j)(\lambda+\kappa^\tau(m))]_{T'\times T'^\perp})\neq char(i_1)([{}^td(f\cdot j)(\lambda+\kappa^\tau(m'))]_{T'\times T'^\perp})$$
 if $m\neq m'$. From liniearity of these maps, it is sufficient to verify it under the assumption that $m'\in d(f\cdot j)(\Pi_{T'\times T'^\perp}).$

%That is, $m\in \Pi_{T\times T^\perp}$ if and only if 
%$[{}^tdi_1({}^tdr(\lambda+\kappa^\tau(m)))]=[{}^tdi_1({}^tdr(\lambda))]$.

Let us assume that $[{}^tdi_1({}^td(f\cdot j)(\lambda+\kappa^\tau(m)))]_{T'}=[{}^tdi_1({}^td(f\cdot j)(\lambda))]_{T'}$. We verify that $m\in d(f\cdot j)(\Pi_{T'\times T'^\perp})$.

The assumption is equivalent to that
$${}^tdi_1\circ{}^td(f\cdot j)\circ\kappa^\tau(m)={}^tdf(\kappa^{\tau}(m))\in\kappa^{f^*\tau}(\Pi_{T'}).$$
Therefore, there exists $k\in\Pi_{T'}$ such that 
$${}^tdi_1\circ{}^td(f\cdot j)\circ\kappa^\tau(m)={}^tdi_1\circ{}^td(f\cdot j)\circ\kappa^\tau\circ df(k).$$
So, ${}^td(f\cdot j)(\kappa^\tau(m)-\kappa^\tau(df(k)))\in\ker({}^tdi_1)$.

Since $df(\fra{t}')\perp df(\fra{t}')^\perp$ under $\innpro{\cdot}{\cdot}{\tau}$, 
$$\ker({}^tdi_1)%=\kappa^{(f\cdot j)^*\tau}(di_1(\Pi_{T'}\otimes\bb{R})^\perp)
=\kappa^{(f\cdot j)^*\tau}(di_1(\fra{t}')^\perp).$$ 
Therefore, there exists $v\in di_1(\fra{t}')^\perp=\fra{t}'^\perp$ such that 
$${}^td(f\cdot j)\circ(\kappa^\tau(m)-\kappa^\tau(df(k)))
=\kappa^{(f\cdot j)^*\tau}(v)
={}^td(f\cdot j)\circ\kappa^\tau\circ d(f\cdot j)(v).$$
Since ${}^td(f\cdot j)\circ\kappa^\tau$ is injective, $m-df(k)=d(f\cdot j)(v)$. Moreover, from that $m$ and $df(k)$ are elements of $\Pi_{T}$, $m-df(k)=d(f\cdot j)(v)\in\Pi_{T'}\cap df(\fra{t}')^\perp$.

By the definition of the orthogonal completion torus, $j=(f\cdot j)|_{T^\perp}$ is injective and
$$dj|_{\Pi_{T'^{\perp}}}=d(f\cdot j)|_{\Pi_{T'^\perp}}:\Pi_{T'^\perp}\to \Pi_{T}\cap df(\fra{t}')^\perp$$
 is bijective. Therefore, $v\in \Pi_{T'^\perp}\subseteq\Pi_{T'\times T'^\perp}$.
Therefore, $$m=d(f\cdot j)(di_1(k)+v)\in d(f\cdot j)(\Pi_{T'\times T'^\perp}).$$

\begin{flushright}$\Box$
\end{flushright}

%一般のLie群
\subsection{For general Lie group}\label{quasi functor for general Lie group}
We extend the above construction to more general cases. We use reduction to maximal tori.

%極大トーラスへのreduction
\subsubsection{Reduction to a maximal torus}\label{subsubsection of reduction to a max. torus of the quasi functor char}
Let $G$ be a compact connected Lie group with torsion-free $\pi_1$ and $T$ be a maximal torus of $G$. Let $i:T\to G$ be the natural inclusion.

Let us suppose that $\lambda\in\Lambda_T^\tau$ determines a regular orbit in $\Lambda_T^\tau$, that is, the stabilizer of $\lambda$ in $W^e_{\mathit{aff}}(G)$ is trivial. Then the $W^e_{\mathit{aff}}(G)$-orbit $W^e_{\mathit{aff}}(G).\lambda$ can be regarded as the union of $\Pi_T$-orbits $\coprod_{w\in W(G)}(w.\lambda+\kappa^\tau(\Pi_T))$. So we define $char(i)([\lambda]_G)$ as follows.
\begin{dfn}\label{def. of char(i)}
$$char(i)([\lambda]_G):=\sum_{w\in W(G)}[w.\lambda]_T$$
\end{dfn}
The following lemma is clear from the definition.
\begin{lem}
$char(i)$ is an injection.
\end{lem}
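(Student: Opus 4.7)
The plan is to unpack the definition and observe that $char(i)$ sends each basis element of $char(G,\tau)$ to a sum of basis elements of $char(T,\tau)$ whose supports are pairwise disjoint as the source orbit varies. Since $char(G,\tau)$ is the free $\bb{Z}$-module on the regular $W^e_{\mathit{aff}}(G)$-orbits in $\Lambda_T^\tau$, it suffices to exhibit a set-theoretic injection at the level of basis supports.

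More concretely, I would first recall that for a regular $\lambda\in\Lambda_T^\tau$ the full $W^e_{\mathit{aff}}(G)$-orbit decomposes as the disjoint union
$$W^e_{\mathit{aff}}(G).\lambda \;=\; \coprod_{w\in W(G)}\bigl(w.\lambda+\kappa^\tau(\Pi_T)\bigr),$$
so that the $|W(G)|$ summands $[w.\lambda]_T$ appearing in $char(i)([\lambda]_G)$ are indexed precisely by the $\Pi_T$-cosets inside the regular orbit. Regularity of $[\lambda]_G$, together with injectivity of $\kappa^\tau$ (which follows from Proposition \ref{positivity of central extension}, since $\innpro{\cdot}{\cdot}{\tau}$ is positive definite), ensures that $w.\lambda$ and $w'.\lambda$ lie in the same $\Pi_T$-orbit only when $w=w'$; otherwise $w^{-1}w'$ together with the connecting $\kappa^\tau(n)$ would yield a non-trivial element of the stabilizer. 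Hence $char(i)([\lambda]_G)$ is a sum of $|W(G)|$ distinct basis elements of $char(T,\tau)$, each with multiplicity one.

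Next, if $[\lambda]_G$ and $[\mu]_G$ are distinct regular orbits, then $W^e_{\mathit{aff}}(G).\lambda$ and $W^e_{\mathit{aff}}(G).\mu$ are disjoint subsets of $\Lambda_T^\tau$; in particular none of the $\Pi_T$-orbits $[w.\lambda]_T$ coincides with any $[w'.\mu]_T$. Consequently the supports of $char(i)([\lambda]_G)$ and $char(i)([\mu]_G)$ in the basis of $char(T,\tau)$ are disjoint. Putting the two observations together, $char(i)$ sends distinct basis elements to sums with mutually disjoint supports and unit coefficients, so it is injective on all of $char(G,\tau)$.

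The only mildly subtle point is the multiplicity-one assertion inside a single regular orbit; everything else is just orbit bookkeeping. I do not anticipate a real obstacle here, which matches the author's remark that the lemma is clear from the definition.
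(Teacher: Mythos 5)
Your proof is correct and is just the careful version of what the paper dismisses as ``clear from the definition'': disjointness of the supports of $char(i)([\lambda]_G)$ for distinct regular orbits, which follows since distinct $W^e_{\mathit{aff}}(G)$-orbits are disjoint unions of $\Pi_T$-orbits. Note that your multiplicity-one observation (which correctly justifies the $\coprod$ in the paper's Definition \ref{def. of char(i)} via regularity) is not actually needed for injectivity, since even with repetitions the image of a basis element would be a nonzero sum with positive coefficients.
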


%%一般の場合のchar(f)の定義
\subsubsection{The definition of the induced homomorphism of $char$}\label{subsubsection of def. of char(f)}
Let us consider the following commutative diagram
$$\begin{CD}
H @>f>> G \\
@AiAA @AkAA \\
S @>f|_S>> T
\end{CD}$$
where $H$ and $G$ are compact connected Lie groups with torsion-free $\pi_1$, $S$ and $T$ are maximal tori of $H$ and $G$ respectively satisfying that $f(S)\subseteq T$. Let $\tau$ be a positive central extension of $LG$. Let us suppose that $f$ satisfies the decomposable condition.

%$G$-equivariant twisting over $G$. We assume that $f$ is a morphism in $\mathcal{C}^{\mathit{twist}}$.
\begin{dfn}\label{def. of char(f)}
$char(f):char(G,\tau) \to char(H,f^*\tau)$ is defined so that the following commutative diagram holds.
$$\begin{CD}
char(G,\tau) @>char(f)>> char(H,f^*\tau) \\
@Vchar(k)VV @Vchar(i)VV \\
char(T,k^*\tau) @>char(f|_S)>> char(S,i^*f^*\tau).
\end{CD}$$
\end{dfn}
We have to verify well-definedness of this definition.

Firstly, we verify the regularity of orbits. For any $[\lambda]_G\in char(G,\tau)$, $char(f|_S)\circ char(k)([\lambda]_G)$ determines a subset in $\Lambda^{i^*f^*\tau}_S$. We verify that the subset is a union of $W^e_{\mathit{aff}}(H)$-regular orbits. Since this subset coincides with the image of $W^e_{\mathit{aff}}(G).\lambda$ under ${}^tdf$, it is sufficient to verify the following lemma.
\begin{lem}\label{regularity of the image of a regular orbit}
If $f$ satisfies the local condition, the image of a regular orbit under ${}^tdf$ is $W^e_{\mathit{aff}}(H)$-invariant and has the trivial stabilizer.
\end{lem}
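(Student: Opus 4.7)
The plan is to establish two things: (a) the $W^e_{\mathit{aff}}(H)$-invariance of ${}^tdf\bigl(W^e_{\mathit{aff}}(G).\lambda\bigr)$, which is formal, and (b) the triviality of the stabilizer of each point in the image, which is the substantive part.

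For (a), I would compute the action of $W^e_{\mathit{aff}}(H)=\Pi_S\rtimes W(H)$ on ${}^tdf(\lambda)$. Lemma~\ref{pull back formula of kappa} yields $\kappa^{f^*\tau}(n)={}^tdf(\kappa^\tau(df(n)))$ for $n\in\Pi_S$, while the very definition of $f_*\colon W(H)\to W(G)$ gives $w.({}^tdf(\lambda))={}^tdf(f_*(w).\lambda)$ for $w\in W(H)$ (since $df$ intertwines the $W(H)$-action on $\fra{s}$ with the $f_*(W(H))$-action on $\fra{t}$). Combining,
\[
(n,w).\bigl({}^tdf(\lambda)\bigr)={}^tdf\bigl((df(n),f_*(w)).\lambda\bigr),
\]
which lies in ${}^tdf(W^e_{\mathit{aff}}(G).\lambda)$, giving invariance.

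For (b), suppose $(n,w)$ fixes ${}^tdf(\lambda)$. Then $\xi:=(df(n),f_*(w)).\lambda-\lambda$ lies in $\ker({}^tdf)\cap\Lambda_T$. Identifying $\fra{t}^*\cong\fra{t}$ via $\innpro{\cdot}{\cdot}{\tau}$, the kernel corresponds to the orthogonal complement $B$ of $df(\fra{s})$ in $\fra{t}$, whose exponential is the subtorus whose existence is granted by the local condition. The strategy is to find $m\in\Pi_T\cap B$ with $\kappa^\tau(m)=-\xi$, so that $(df(n)+m,f_*(w))$ fixes $\lambda$ in $W^e_{\mathit{aff}}(G)$. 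Regularity of $\lambda$ then forces $df(n)+m=0$ and $f_*(w)=e$; since $df(n)\in df(\fra{s})$ and $m\in B$ intersect trivially, both vanish. Injectivity of $df$ yields $n=0$, and injectivity of $f_*$ (established in the preceding lemma) yields $w=e$.

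The main obstacle is producing the integral lift $m\in\Pi_T\cap B$. Under the identification above, $\xi$ already lies in $B$, but $-\xi$ must be realized as $\kappa^\tau$ of an element of $\Pi_T$; this is an integrality statement about the Weyl-translation difference $\lambda-f_*(w).\lambda$ together with $\kappa^\tau(df(n))$ along the splitting $\fra{t}=df(\fra{s})\oplus B$. The positivity of $\tau$ makes $\kappa^\tau$ inject $\Pi_T$ as a full-rank sublattice of $\Lambda_T$, and the local condition makes $B$ stable under $f_*(W(H))$, so the problem splits coordinate-wise along the orthogonal decomposition. I expect to reduce to this check by invoking the decomposition of $f|_S\colon S\to T$ furnished by Theorem~\ref{decomposition of local injection}, writing $f|_S$ as a finite cover followed by an inclusion into a direct-product factor, and then handling the direct-product and finite-cover pieces separately using Lemma~\ref{product formula for kappa} for the direct-sum compatibility of $\kappa^\tau$.
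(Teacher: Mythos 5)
Your part (a) (the $W^e_{\mathit{aff}}(H)$-invariance) is exactly the paper's computation and is fine. The gap is in part (b). Your plan hinges on producing an integral lift $m\in\Pi_T\cap B$ with $\kappa^\tau(m)=-\xi$, and you correctly flag this as the main obstacle — but you never close it, only sketch a hoped-for reduction via Theorem~\ref{decomposition of local injection}. As stated this is not a proof: $\xi$ lies in $\Lambda_T\cap\kappa^\tau(B)$ (extending $\kappa^\tau$ to $\fra{t}$), while $\kappa^\tau(\Pi_T\cap B)$ is in general a proper sublattice of that, so the existence of $m$ is not automatic and the coordinate-wise splitting you invoke does not by itself supply the integrality.

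The detour is also unnecessary, because $\xi=0$ outright, and this is where the full strength of the local condition enters. You only use that $B$ is \emph{stable} under $f_*(W(H))$; the paper uses the stronger consequence of $[df(\fra{h}),\fra{s}^\perp]=0$, namely that $f_*(W(H))$ acts \emph{trivially} on $\fra{s}^\perp$ (this is the content of Lemma~\ref{well-def. of the map between Weyl group}: $f(n)$ commutes with all of $S^\perp$). Decomposing $\lambda=\lambda_1+\lambda_2$ orthogonally with $\lambda_1$ in the $\kappa^\tau(df(\fra{s}))$-direction and $\lambda_2$ in its complement, triviality of the action on the complement gives $f_*(w).\lambda-\lambda=f_*(w).\lambda_1-\lambda_1\in\kappa^\tau(df(\fra{s}))$, and $\kappa^\tau(df(n))$ is manifestly there too; hence $\xi\in\kappa^\tau(df(\fra{s}))$. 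But you already showed $\xi\in\ker({}^tdf)=\kappa^\tau(df(\fra{s})^\perp)$, and positivity of $\innpro{\cdot}{\cdot}{\tau}$ forces $df(\fra{s})\cap df(\fra{s})^\perp=0$ and $\kappa^\tau$ injective, so $\xi=0$. Then $(df(n),f_*(w))\in W^e_{\mathit{aff}}(G)$ stabilizes $\lambda$, regularity gives $f_*(w)=e$ and $df(n)=0$, and injectivity of $f_*$ and $df$ gives $w=e$, $n=0$ — no integral lift $m$ is ever needed. Mere stability of $B$ under $f_*(W(H))$ would not suffice: a nontrivial action on the $\lambda_2$-component would leave a nonzero orthogonal contribution in $\xi$ and the intersection argument would fail.
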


\begin{proof}
Let $\lambda\in\Lambda_T^\tau$ determine a $W^e_{\mathit{aff}}(G)$-regular orbit. Let us consider the $W^e_{\mathit{aff}}(G)$-orbit $W^e_{\mathit{aff}}(G).\lambda$ and the image of it under ${}^tdf$. Firstly, we verify that ${}^tdf(W^e_{\mathit{aff}}(G).\lambda)$ is $W^e_{\mathit{aff}}(H)$-invariant, then we verify that it has the trivial stabilizer.

For any $\mu\in {}^tdf(W^e_{\mathit{aff}}(G).\lambda)$, there exists $\nu\in W^e_{\mathit{aff}}(G).\lambda$ such that $\mu={}^tdf(\nu)$.
%If $\mu\in{}^tdf(W^e_{\mathit{aff}}(G)\cdot\lambda)$ we can take the natural preimage $\widetilde{\mu}\in W^e_{\mathit{aff}}(G)\cdot\lambda$ using bilinear form induced $\kappa^\tau$, that is $\widetilde{\mu}\in\kappa^\tau(df(\mathfrak{s}))$. 
$W^e_{\mathit{aff}}(H)$-invariance of ${}^tdf(W^e_{\mathit{aff}}(G).\lambda)$ follows from that
$$w.\mu=w.{}^tdf(\nu)={}^tdf(f_*(w).\nu)\in{}^tdf(W^e_{\mathit{aff}}(G).\lambda)$$
$$\mu+\kappa^{f^*\tau}(n)={}^tdf(\nu+\kappa^\tau(df(n)))\in{}^tdf(W^e_{\mathit{aff}}(G).\lambda)$$
where $w\in W(H)$ and $n\in \Pi_S$. These imply the invariance of ${}^tdf(W^e_{\mathit{aff}}(G).\lambda)$.

Now, let us verify the triviality of the stabilizer of ${}^tdf(W^e_{\mathit{aff}}(G).\lambda)$. Suppose that $(w,n)\in W_{\mathit{aff}}^e(H)$ stabilizes $\mu\in{}^tdf(W^e_{\mathit{aff}}(G).\lambda)$. Then
$$w.\mu+\kappa^{f^*\tau}(n)=w.{}^tdf(\nu)+{}^tdf\circ\kappa^\tau(df(n))$$
$$={}^tdf(f_*(w).\nu+\kappa^\tau(df(n)))=\mu={}^tdf(\nu).$$
That is,
$$f_*(w).\nu+\kappa^\tau(df(n))-\nu\in \ker({}^tdf)=\kappa^\tau(df(\fra{s})^\perp).$$
On the other hand, that
$$f_*(w).\nu+\kappa^\tau(df(n))-\nu\in\kappa^\tau(df(\fra{s}))$$
holds. It can be verified as follows.

Let us notice that $\nu$ can be orthogonally decomposed as $\nu=\nu_1+\nu_2$, where $\nu_1\in \kappa^\tau(df(\fra{s}))$ and $\nu_2\in \kappa^\tau(df(\fra{s}))^\perp$. The local condition implies that $f_*(W(H))$ preserves $df(\fra{s})$ and acts on $\fra{s}^\perp$ trivially. Therefore, $f_*(w).\nu-\nu=f_*(w).\nu_1-\nu_1\in \kappa^\tau(df(\fra{s}))$.

Since $df(\fra{s})\cap df(\fra{s})^\perp=0$,
$$f_*(w).\nu+\kappa^\tau(df(n))-\nu=0.$$
Since $\nu$ has the trivial stabilizer, $(f_*(w),\kappa^\tau(df(n)))=(e_{W(G)},0)$. Since $f_*$, $df$ and $\kappa^\tau$ are injective, we obtain the conclusion.

\end{proof}
Let us verify the well-definedness  of Definition \ref{def. of char(f)} by the following three steps.
\begin{lem}%[the case of finite cover]
\label{lemma for char in the case of finite cover}
If the restriction of $f$ to $S$ is a finite covering, the above definition is well-defined.
\end{lem}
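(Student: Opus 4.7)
The plan is to check that the composition $char(f|_S) \circ char(k)$ factors through the injection $char(i)$, which would then determine $char(f)$ uniquely by the commuting square. Since, by Definition \ref{def. of char(i)}, $char(i)$ sends $[\mu]_H$ to $\sum_{u \in W(H)} [u.\mu]_S$, an element of $char(S, i^*f^*\tau)$ lies in its image exactly when (a) every $\Pi_S$-orbit appearing comes from a regular $W^e_{\mathit{aff}}(H)$-orbit in $\Lambda^{f^*\tau}_S$, and (b) within each $W^e_{\mathit{aff}}(H)$-orbit of $\Pi_S$-orbits, all constituents appear with the same multiplicity.

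Property (a) is immediate from Lemma \ref{regularity of the image of a regular orbit}: the $\Pi_S$-orbits occurring in
$$char(f|_S) \circ char(k)([\lambda]_G) = \sum_{w \in W(G)} \sum_{m \in \Pi_T/df(\Pi_S)} \bigl[{}^tdf(w.\lambda + \kappa^\tau(m))\bigr]_S,$$
expanded by Definition \ref{def. of char(i)} and Lemma \ref{char formula for finite cover}, jointly cover ${}^tdf(W^e_{\mathit{aff}}(G).\lambda)$, which is a union of regular $W^e_{\mathit{aff}}(H)$-orbits.

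For property (b), the key is to exhibit a $W(H)$-action on the index set $W(G) \times \Pi_T/df(\Pi_S)$ that covers the $W(H)$-action on the resulting $\Pi_S$-orbits. For $u \in W(H)$, Lemma \ref{well-def. of the map between Weyl group} yields $f_*(u) \in W(G)$; the homomorphism property of $f$ gives $f \circ \mathrm{Ad}_n = \mathrm{Ad}_{f(n)} \circ f$ for $n \in N(S)$, hence by duality the intertwining relation ${}^tdf \circ f_*(u) = u \cdot {}^tdf$ on $\Lambda^\tau_T$, and moreover $f_*(u)$ preserves the sublattice $df(\Pi_S) \subseteq \Pi_T$, so it descends to $\Pi_T/df(\Pi_S)$. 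Using the $W(G)$-equivariance of $\kappa^\tau$ (immediate from Weyl-invariance of $\innpro{\cdot}{\cdot}{\tau}$ via Lemma \ref{property of kappa^tau}), the map $(w,m) \mapsto (f_*(u)w,\, f_*(u).m)$ becomes a bijection of $W(G) \times \Pi_T/df(\Pi_S)$ that carries the fiber over $[\mu]_S$ bijectively onto the fiber over $[u.\mu]_S$. Hence the multiplicities coincide.

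Properties (a) and (b) together let us rewrite $char(f|_S) \circ char(k)([\lambda]_G)$ as an integer linear combination of expressions $\sum_{u \in W(H)} [u.\mu]_S$ with $[\mu]_H \in char(H,f^*\tau)$, proving that the composition lies in the image of $char(i)$ and so defines $char(f)$ uniquely. The main technical point will be establishing the intertwining relation together with the $f_*(u)$-invariance of the sublattice $df(\Pi_S)$; both rely on the local condition, packaged through Lemma \ref{well-def. of the map between Weyl group}.
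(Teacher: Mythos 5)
Your proposal is correct and follows essentially the same route as the paper: both expand $char(f|_S)\circ char(k)([\lambda]_G)$ as a double sum indexed by $W(G)\times\Pi_T/df(\Pi_S)$ and use the intertwining relation ${}^tdf\circ f_*(u)=u\cdot{}^tdf$ together with the $W(G)$-equivariance of $\kappa^\tau$ (plus the regularity lemma) to exhibit the $W(H)$-symmetry of the result. The paper packages this as a coset decomposition $W(G)=\coprod_l f_*(W(H))W_l$ and regroups the sum, whereas you argue that the multiplicity function is constant on $W(H)$-orbits via an equivariant bijection of the index set --- a cosmetic difference only.
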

\begin{proof}
We can write $char(f|_S)$ explicitly as 
$$char(f|_S)([\lambda]_T)=\sum_{m\in \Pi_T/df(\Pi_S)}[{}^tdf(\lambda+\kappa^\tau(m))]_S.$$
Since ${}^tdf$ is injective and the orbit $W^e_{\mathit{aff}}(G).\lambda$ is regular, for any $w,w'\in W(H)$ and $m,m'\in\Pi_T$ such that $(w,n)\neq(w',n')$, the following holds.
$$(w.{}^tdf(\lambda+\kappa^\tau(m))+\kappa^{i^*f^*\tau}(\Pi_S))\cap (w'.{}^tdf(\lambda+\kappa^\tau(m'))+\kappa^{i^*f^*\tau}(\Pi_S))=\emptyset,$$
 that is, $[{}^tdf(w.\lambda+\kappa^\tau(m))]_S\neq [{}^tdf(w'.\lambda+\kappa^\tau(m'))]_S$.

$$char(f|_S)(\sum_{w\in W(G)}[w.\lambda]_T)=\sum_{w\in W(G)}\sum_{m\in\Pi_T/df(\Pi_S)}[{}^tdf(w.\lambda+\kappa^\tau(m))]_S.$$
Since $\Pi_T$ is $W(G)$-invariant and $\kappa^\tau$ is $W(G)$-equivariant
($\innpro{\cdot}{\cdot}{\tau}$ is $W(G)$-invariant), for any $w\in W(G)$ and $m\in \Pi_T$, $w^{-1}.m\in\Pi_T$ and
$$w.\lambda+\kappa^\tau(m)=w.(\lambda+\kappa^\tau(w^{-1}.m)).$$

Let us consider the quotient space $W(G)/f_*(W(H))=\{[W_1],[W_2],\cdots,[W_k]\}$. That is, for any $w$, there uniquely exist $w'\in W(H)$ and $l\in\{1,2,\cdots,k\}$ such that $w=f_*(w')W_l$, where $W_l$ is a fixed representative element of $[W_l]$. Since ${}^tdf\circ f_*(w')=w'.{}^tdf$,
$$\sum_{m\in\Pi_T/df(\Pi_S)}\sum_{w\in W(G)}
[{}^tdf(w.\lambda+\kappa^\tau(m))]_S=
\sum_{l=1}^k\sum_{m\in\Pi_T/df(\Pi_S)}\sum_{w'\in W(H)}[w'.{}^tdf(W_l.\lambda+\kappa^\tau(m))]_S.$$

%Therefore,
%$$char(f|_S)(\sum_{w\in W(G)}[w.\lambda]_T)=\sum_{[w]\in W(G)/W(H)}\sum_{m\in\Pi_T/\Pi_S}(\sum_{w'\in W(H)}[w'.({}^tdf([w].\lambda+\kappa^\tau(m))]_S.$$
%The right hand side is an element of $char(H,f^*\tau)$.

%Where $[w]\in W(G)/W(H)$ means a chosen representative element.
By summing over $W(H)$, ambiguity of the choice of representatives $W_1,W_2,\cdots,W_k$ does not appear. $W(H)$-equivariance of $\kappa^\tau$ implies that for any $m'\in\Pi_T/df(\Pi_S)$, there exists $m\in\Pi_T/df(\Pi_S)$ such that $w'.\kappa^\tau(m)=\kappa^\tau(m')$. Since  $$\sum_{w'\in W(H)}[w'.{}^tdf(W_l.\lambda+\kappa^\tau(m))]_S$$
 is an element of $char(H,f^*\tau)$, so is the right hand side.
\end{proof}
\begin{lem}%[the case of direct product]
\label{lemma for char in the case of direct product}
If $T=S\times S^\perp$, the restriction of $f$ to $S$ is the inclusion into the first factor and $k^*\tau=p_1^*i^*f^*\tau+p_2^*j^*k^*\tau$, where $p_1:S\times S^\perp\to S$ and $p_2:S\times S^\perp\to S^\perp$ are the natural projection, the above definition is well-defined.

\end{lem}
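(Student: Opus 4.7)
The plan is to imitate the proof of Lemma \ref{lemma for char in the case of finite cover}, but using Lemma \ref{char formula for direct product} in place of Lemma \ref{char formula for finite cover}. Let $[\lambda]_G \in char(G,\tau)$ be a regular orbit represented by $\lambda \in \Lambda_T^\tau$. By definition, $char(k)([\lambda]_G) = \sum_{w \in W(G)} [w.\lambda]_T$. Under our hypothesis that $T = S \times S^\perp$, that $f|_S$ is the inclusion into the first factor, and that $k^*\tau = p_1^* i^* f^*\tau + p_2^* j^* k^*\tau$, Lemma \ref{char formula for direct product} gives $char(f|_S)([w.\lambda]_T) = [{}^td(f|_S)(w.\lambda)]_S$. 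Summing over $w$, I get
$$char(f|_S)\circ char(k)([\lambda]_G) = \sum_{w \in W(G)} [{}^td(f|_S)(w.\lambda)]_S.$$

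Next, since $f_* : W(H) \hookrightarrow W(G)$ is injective, I decompose $W(G) = \coprod_{l=1}^{k} f_*(W(H)) W_l$ for a choice of representatives $W_1, \dots, W_k$. Because $\innpro{\cdot}{\cdot}{\tau}$ is $W(G)$-invariant, ${}^td(f|_S) \circ f_*(w') = w' \cdot {}^td(f|_S)$ for every $w' \in W(H)$, so
$$\sum_{w \in W(G)} [{}^td(f|_S)(w.\lambda)]_S = \sum_{l=1}^k \sum_{w' \in W(H)} [w'.{}^td(f|_S)(W_l.\lambda)]_S.$$
Setting $\mu_l := {}^td(f|_S)(W_l.\lambda) \in \Lambda_S^{i^* f^*\tau}$, Lemma \ref{regularity of the image of a regular orbit} guarantees that the $W^e_{\mathit{aff}}(H)$-orbit $[\mu_l]_H$ is regular (its stabilizer is contained in the stabilizer of an element of the image of a regular $W^e_{\mathit{aff}}(G)$-orbit, which is trivial). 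Triviality of the stabilizer also shows that for fixed $l$ the $\Pi_S$-orbits $[w'.\mu_l]_S$ are pairwise distinct as $w'$ ranges over $W(H)$, so $\sum_{w' \in W(H)} [w'.\mu_l]_S$ is precisely $char(i)([\mu_l]_H)$ by Definition \ref{def. of char(i)}.

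Combining the displays yields
$$char(f|_S)\circ char(k)([\lambda]_G) = char(i)\Bigl(\sum_{l=1}^{k} [\mu_l]_H\Bigr),$$
and since $char(i)$ is injective, the element $char(f)([\lambda]_G) := \sum_{l=1}^{k} [\mu_l]_H \in char(H, f^*\tau)$ is forced. A change of representatives $W_l \mapsto f_*(w'_l) W_l$ replaces $\mu_l$ by $w'_l.\mu_l$, which lies in the same $W^e_{\mathit{aff}}(H)$-orbit, so the sum is independent of the choices. This establishes well-definedness.

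The only real subtlety I anticipate is the bookkeeping to confirm there is no hidden over- or undercounting: one must check that the $|W(H)|$ elements $\{[w'.\mu_l]_S\}_{w' \in W(H)}$ are genuinely distinct $\Pi_S$-orbits (which is where trivial stabilizer enters critically), and that the same $W^e_{\mathit{aff}}(H)$-orbit occurring for two different indices $l \neq l'$ is harmless because it simply records multiplicities in the free abelian group $char(H, f^*\tau)$. Both points follow cleanly from Lemma \ref{regularity of the image of a regular orbit}, so no new ideas are needed beyond those already used in Lemma \ref{lemma for char in the case of finite cover}.
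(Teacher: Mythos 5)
Your proposal is correct and follows essentially the same route as the paper's proof: apply the explicit direct-product formula $char(f|_S)([\lambda]_T)=[{}^tdf(\lambda)]_S$, sum over $W(G)$, split into $W(H)$-cosets using the equivariance ${}^tdf\circ f_*(w')=w'.{}^tdf$, and recognize each inner sum $\sum_{w'\in W(H)}[w'.\mu_l]_S$ as $char(i)$ of a regular $W^e_{\mathit{aff}}(H)$-orbit. The paper states these steps more tersely, leaving the regularity and distinctness of the $\Pi_S$-orbits to the earlier Lemma \ref{regularity of the image of a regular orbit}, which you invoke explicitly; no substantive difference.
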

\begin{proof}
We can write $char(f|_S)$ explicitly as $char(f|_S)([\lambda]_T)=[{}^tdf(\lambda)]_S$. So

$$char(f|_S)(\sum_{w\in W(G)}[w.\lambda]_T)=\sum_{w\in W(G)}[{}^tdf(w.\lambda)]_S$$
$$=\sum_{[w]\in W(G)/W(H)}\sum_{w'\in W(H)}[w'.{}^tdf([w].\lambda)]_S$$

Since $\sum_{w'\in W(H)}[w'.{}^tdf(W_l.\lambda)]_S$ is an element of $char(H,f^*\tau)$, so is the right hand side.
\end{proof}

\begin{thm}\label{well-def. of char(f) for general Lie group}
The definition is well-defined for $f:H\to G$ satisfying the decomposable condition.
\end{thm}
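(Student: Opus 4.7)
The plan is to factor $f:H\to G$ at the group level into three simpler morphisms --- a surjective quotient, a direct-product inclusion, and a finite covering --- each of which is covered by one of the previously proven Lemmas \ref{lemma for char in the case of finite cover} and \ref{lemma for char in the case of direct product}. Set $H':=H/\ker(f)$ (which has torsion-free $\pi_1$ by the decomposable condition), let $i:H'\hookrightarrow G$ be the induced injection, and let $S'^\perp\subseteq T$ be the orthogonal completion torus associated to $i|_{S'}:S'\hookrightarrow T$ (well-defined by Corollary \ref{well-def. of orthogonal completion of tori}). The local condition $[df(\fra{h}),\fra{s}^\perp]=0$ guarantees that $i(H')$ commutes with $S'^\perp$ inside $G$, so $\psi(h,t):=i(h)\cdot t$ defines a smooth group homomorphism $\psi:H'\times S'^\perp\to G$, yielding a factorization
\[ H \xrightarrow{q} H' \xrightarrow{\iota_1} H'\times S'^\perp \xrightarrow{\psi} G, \]
where $q$ is the quotient map and $\iota_1$ is the inclusion into the first factor.

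At the maximal-torus level this chain restricts to exactly the canonical decomposition of $f|_S$ produced by Theorem \ref{decomposition of local injection}: $q|_S$ and $\psi|_{S'\times S'^\perp}$ are finite coverings of tori, while $\iota_1|_{S'}$ is the direct-product inclusion into the first factor. By Lemma \ref{decompositin formula for kappa} the bilinear form of $\psi^*\tau|_{L(S'\times S'^\perp)}$ is the orthogonal direct sum of those of $i^*\tau$ and $j^*\tau$; one then checks that the cross terms also vanish on $L(\fra{h}'\oplus\fra{s}'^\perp)$ by combining the $\mathrm{Ad}$-$T$-invariance of $\Innpro{\cdot}{\cdot}{\tau}$ with the local condition, so that only the piece $di(\fra{s}')$ of $di(\fra{h}')$ can pair non-trivially against $\fra{s}'^\perp$ and this pairing is zero by the definition of $S'^\perp$. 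Consequently $\psi^*\tau$ is positive, and together with the hypotheses on $f$ this yields the decomposable condition for each of $q$, $\iota_1$, $\psi$. Lemma \ref{lemma for char in the case of finite cover} then applies to $q$ and $\psi$, and Lemma \ref{lemma for char in the case of direct product} applies to $\iota_1$, giving well-defined maps $char(q)$, $char(\iota_1)$, $char(\psi)$.

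I then set $char(f):=char(q)\circ char(\iota_1)\circ char(\psi)$ and verify the commutative square of Definition \ref{def. of char(f)} by chaining together the three commutative squares already furnished by the sub-lemmas and invoking Theorem \ref{partial functoriality} at the torus level, which gives the identity $char(f|_S)=char(q|_S)\circ char(\iota_1|_{S'})\circ char(\psi|_{S'\times S'^\perp})$; injectivity of $char(i)$ then pins $char(f)$ down uniquely. The main obstacle will be verifying positivity of $\psi^*\tau$ via the cross-term vanishing argument sketched above and confirming that the induced Weyl group maps for the three sub-homomorphisms compose correctly to $f_*:W(H)\to W(G)$; both reduce to fairly direct calculations once the local condition has been unpacked and the $T$-isotypic decomposition of $di(\fra{h}')\subseteq\fra{g}$ has been set up.
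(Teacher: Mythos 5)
Your proposal is correct and follows essentially the same route as the paper: decompose $f$ as $H\xrightarrow{q}H/\ker(f)\xrightarrow{i_1}H/\ker(f)\times S^\perp\xrightarrow{\bar f\cdot j}G$ (your $\psi$ is the paper's $\bar f\cdot j$), apply Lemma \ref{lemma for char in the case of finite cover} to the two covering-type pieces and Lemma \ref{lemma for char in the case of direct product} to $i_1$, then conclude via Theorem \ref{partial functoriality} at the torus level and the injectivity of $char(i)$. The extra checks you flag (that the local condition makes $\psi$ a homomorphism, and that $(f\cdot j)^*\tau$ splits with vanishing cross terms) are exactly what the paper delegates to the remark after Definition \ref{decomposable condition} and to Lemma \ref{decompositin formula for kappa}.
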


\begin{proof}
Let us notice that we can decompose $f$ so that the following commutative diagram holds.
$$\begin{CD}
H @>q>> H/\ker(f) @>i_1>> H/\ker(f)\times S^\perp @>\bar{f}\cdot j>> G \\
@AiAA @A\bar{i}AA @A\bar{i}\times idAA @AkAA \\
S @>q|_S>> S/\ker(f|_S) @>i_1|_S>> S/\ker(f|_S)\times S^\perp @>\overline{f|_S}\cdot j>> T
\end{CD}$$
where $j:S^\perp\hookrightarrow T\subseteq G$ is the orthogonal completion torus defined in Corollary \ref{well-def. of orthogonal completion of tori}. 
Firstly, we assume that $f$ is injective for simplicity.

Lemma \ref{lemma for char in the case of finite cover} and \ref{lemma for char in the case of direct product} imply that the following commutative diagram holds.
$$\begin{CD}
char(G,\tau) @>char(f\cdot j)>> char(H\times S^\perp,(f\cdot j)^*\tau) @>char(i_1)>> char(H,f^*\tau) \\
@Vchar(k)VV @Vchar(i\times id)VV @Vchar(i)VV \\
char(T,k^*\tau) @>char(f|_S \cdot j)>> char(S\times S^\perp,(f|_S\cdot j)^*\tau) @>char(i_1|_S)>> char(S,i^*f^*\tau)
\end{CD}$$

Theorem \ref{partial functoriality} tells us that $char(f|_S)=char(i_1|_S)\circ char(f|_S \cdot j)$.

If we define $char(f)$ by $char(i_1)\circ char(f\cdot j)$, $char(f)$ is compatible with $char(f|_S)$, that is, the following commutative diagram holds.
$$\begin{CD}
char(G,\tau) @>char(f)>> char(H,f^*\tau)\\
@Vchar(k)VV @Vchar(i)VV \\
char(T,k^*\tau) @>char(f|_S)>> char(S,i^*f^*\tau)
\end{CD}$$
If $f$ is not injective, we can verify the well-definedness with the same way.

\end{proof}

%安直にやってはいけないための反例
\subsubsection{An example which shows that we should not easily define $char(f)$ for general group}

If one easily define $char(f)$, just like the case in tori, $char(f)([\lambda]_G)$ may be the sum of $W^e_{\mathit{aff}}(H)$-orbits ${}^tdf(W(G).(\lambda+\kappa^\tau(\Pi_T))$. However, this is not compatible with the case of tori. We give an example.

Let us define $f:\mathbb{T}\to U(3)$ as
$$f(z):=\begin{pmatrix} z & 0 & 0 \\ 0 & 1 & 0 \\ 0 & 0 & 1\end{pmatrix}$$
and $\tau$ be a positive central extension of $LU(3)$ such that induced homomorphism is represented as
$$\kappa^\tau=\begin{pmatrix} -3 & 0 & 0 \\ 0 & -3 & 0 \\ 0 & 0 & -3 \end{pmatrix}$$
where the maximal torus of $U(3)$ is the diagonal matrices. Let $i:\mathbb{T}^3\to U(3)$ be the natural inclusion. We write $f$ as $i_1$ if we regard $f$ as $\mathbb{T} \to \mathbb{T}^3$. That is, the following commutative diagram holds.
$$\xymatrix{
\bb{T} \ar@{=}[d] \ar[r]^f & U(3)  \\
\bb{T} \ar[r]^{i_1} & \bb{T}^3 \ar[u]^i
}$$

Since $W(U(3))\cong\mathfrak{S}_3$, $W^e_{\mathit{aff}}(U(3))$-regular orbit is 
$W^e_{\mathit{aff}}(U(3))\cdot\begin{pmatrix}0 \\ 1 \\ 2 \end{pmatrix}.$ Therefore, $K_{U(3)}^{\tau+3}(U(3))\cong\mathbb{Z}$.

Since $\kappa^{f^*\tau}=(-3)$, $K^{f^*\tau+1}_\bb{T}(\bb{T})\cong \bb{Z}^3$.

We can compute $char(i)$ and $char(i_1)$ as follows.
$$char(i)(\begin{bmatrix}0 \\ 1 \\ 2 \end{bmatrix}_{U(3)})=\begin{bmatrix}0 \\ 1 \\ 2 \end{bmatrix}_{\mathbb{T}^3}+\begin{bmatrix}0 \\ 2 \\ 1 \end{bmatrix}_{\mathbb{T}^3}+\begin{bmatrix}1 \\ 0 \\ 2 \end{bmatrix}_{\mathbb{T}^3}+\begin{bmatrix}1 \\ 2 \\ 0 \end{bmatrix}_{\mathbb{T}^3}+\begin{bmatrix}2 \\ 0 \\ 1 \end{bmatrix}_{\mathbb{T}^3}+\begin{bmatrix}2 \\ 1 \\ 0 \end{bmatrix}_{\mathbb{T}^3}.$$
$$char(i_1)(\begin{bmatrix}a \\ b \\ c \end{bmatrix}_{\bb{T}^3})=[a]_\bb{T}$$

Therefore, $char(i_1)\circ char(i)(\begin{bmatrix}0 \\ 1 \\ 2 \end{bmatrix}_{U(3)})=2([0]_\bb{T}+[1]_\bb{T}+[2]_\bb{T}).$

However, ${}^tdf(W^e_{\mathit{aff}}(U(3))\cdot\begin{pmatrix}0 \\ 1 \\ 2 \end{pmatrix})$ is entire of $\Lambda^{f^*\tau}_\mathbb{T}$, that is $char(f)(\begin{bmatrix} 0 \\ 1 \\ 2\end{bmatrix}_{U(3)})$ seems to be $[0]_\bb{T}+[1]_\bb{T}+[2]_\bb{T}$. This example tells us that we should not define just like the case of tori using the correspondence of orbits.

%ねじれK理論%このたびの結果
\section{The quasi functor $t.e.K$}\label{Modified twisted equivariant K-functor}

%一般に使える構成
\subsection{General constructions}
In this section, we describe general constructions, K\"unneth isomorphisms, family index maps and Mackey decompositions.

K\"unneth isomorphisms will be verified by a canonical way, that is, Mayer Vietoris argument just like ordinary $K$-theory \cite{Karoubi}. By use of it, we define family index maps.

A Mackey decomposition is an analogue of the isomorphism in un-twisted case $K_G(X)\cong K(X)\otimes R(G)$ for trivial $G$ space $X$.

\subsubsection{K\"unneth isomorphisms and family index maps}\label{Kunneth isomorphisms and family index map}
In this section, we verify K\"unneth isomorphisms and define family index maps for twisted equivariant $K$-theory in special cases. This is very important to construct $i_1^\#$, where $i_1$ is the natural inclusion into the first factor of the direct product.

Let us start from more general construction, the tensor product.
\begin{dfn}%[tensor products]
\label{tensor products}
We can define the graded commutative and associative tensor product 
$$K_G^{\tau_1+n_1}(X)\otimes K_G^{\tau_2+n_2}(X)\to K_G^{\tau_1+\tau_2+n_1+n_2}(X)$$
by well known formula
$$[\{\mathcal{F}_x\}_{x\in X}]\otimes[\{\mathcal{G}_x\}_{x\in X}]\mapsto[\{\mathcal{F}_x\otimes id+\epsilon\otimes\mathcal{G}_x\}_{x\in X}],$$
where $\epsilon$ is the grading involution $id\oplus(-id)$. Associativity follows from Kuiper's theorem.
\end{dfn}
\begin{rmk}\label{rmk about Kunneth formula}
(1) When $G$ acts on $X$ trivially, we have a natural map $K^{\tau_2+n_2}(X)\to K_G^{\tau_2+n_2}(X)$ by regarding a family of Fredholm operators equivariant one via the trivial action. Through this map, we can define the tensor product
$$K_G^{\tau_1+n_1}(X)\otimes K^{\tau_2+n_2}(X)\to K_G^{\tau_1+\tau_2+n_1+n_2}(X).$$
We need it for K\"unneth isomorphism in our cases.

(2) We can define a ``formal $K$-theory class'' $\{\mathcal{F}_x\}_{x\in X}$ as a family of Fredholm operators not compactly supported. Moreover, we can define as
$$[\{\ca{G}_x\}_{x\in X}]\otimes\{\ca{F}_x\}_{x\in X}:=[\{\ca{G}_x\otimes id+\epsilon\otimes \ca{F}_x\}_{x\in X}]$$
 for a $K$-theory class $[\{\ca{G}_x\}_{x\in X}]\in K^{\tau+n}_G(X)$ just like de-Rham comlex $\Omega^n(X)\otimes\Omega^m_c(X)\to\Omega^{n+m}_c(X)$. We need this construction for a Mackey decomposition in Section \ref{section about Mackey decomposition}.
\end{rmk}

The following is clear from the definition.
\begin{pro}\label{ring hom}
$F^*$ and $f^\natural$ preserves $\otimes$.
\end{pro}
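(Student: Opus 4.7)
The plan is to verify the claimed identities at the level of representatives and then check that they descend to $K$-theory classes. Both operations act on a family of Fredholm operators $\{s_x\}_{x\in X}$ without altering the algebraic structure of the family: $F^*$ simply reparametrizes via $x\mapsto F(y)$, while $f^\natural$ only changes the group through which equivariance is recorded. The tensor product representative $\ca{F}_x\otimes id+\epsilon\otimes\ca{G}_x$ is applied pointwise in $x$ and uses only the two Fredholm families together with the fixed grading involution $\epsilon$, so it commutes with reparametrization and is unaffected by restriction of the group.

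Concretely, for $F^*$ I would compute
$$F^*\bigl([\{\ca{F}_x\}_{x\in X}]\otimes[\{\ca{G}_x\}_{x\in X}]\bigr)=[\{\ca{F}_{F(y)}\otimes id+\epsilon\otimes\ca{G}_{F(y)}\}_{y\in Y}]=F^*[\{\ca{F}_x\}]\otimes F^*[\{\ca{G}_x\}],$$
which is immediate from the pointwise description of $F^*$. For $f^\natural$, the underlying Fredholm family and the grading involution are unchanged, so the algebraic formula for the tensor product is preserved verbatim upon regarding both sides as $H$-equivariant via $f$.

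To pass from representatives to $K$-theory classes, I would note that pull backs along $F$ and restrictions along $f$ of equivariant, compactly supported homotopies are again equivariant, compactly supported homotopies (compactness of support is preserved because $F$ is proper and $f^\natural$ does not alter supports), and the tensor product formula is continuous in its two inputs. Hence a homotopy between representatives induces a homotopy between their tensor products, and the identity descends to $K$-theory classes.

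I do not foresee any real obstacle here: the proposition is a bookkeeping compatibility between $\otimes$ and the two pull-back operations, and the only minor point to be careful about is ensuring the grading involution $\epsilon$ is applied consistently on both sides of the identity, which is automatic since $\epsilon$ is a fixed algebraic constant independent of the base point and of the group action.
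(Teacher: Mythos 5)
Your verification is correct and matches the paper, which simply asserts the proposition is clear from the definitions; your pointwise computation for $F^*$ and the observation that $f^\natural$ leaves the Fredholm family and grading untouched are exactly the intended argument. The added remarks about properness and descent through homotopies are fine but not needed beyond what the definitions already give.
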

Let us verify K\"unneth isomorphisms.

\begin{thm}%[K\"unneth isomorphism]
\label{Kunneth isomorphism}
Let $M$ be a smooth compact manifold which $G$ acts on trivially, $\tau$ be a $G$-equivariant twisting over $M$ and $Z$ be an $n$ dimensional compact manifold. 
Let us suppose that $K^m(Z)$ is free abelian groups for any $m$. Let us consider the direct product $M\times Z$ and the projections $p_1:M\times Z\to M$, $p_2:M\times Z\to Z$. $G$ acts on $M\times Z$ trivially. Then the homomorphism
$$p_1^*\otimes p_2^*:K_G^{\tau+*}(M)\otimes K^*(Z)\to K^{p_1^*\tau+*}_G(M\times Z)$$
is an isomorphism. Therefore, we can define the K\"unneth isomorphism
$$\Phi_{M\times Z,G,\tau}:K^{p_1^*\tau+*}_G(M\times Z)\xrightarrow{\cong} K_G^{\tau+*}(M)\otimes K^*(Z)$$
by $(p_1^*\otimes p_2^*)^{-1}$, where $K^*$ means $K^0\oplus K^1$.

%Moreover K\unneth isomorphism is compatible with Mackey decomposition when $G$ is a torus.
\end{thm}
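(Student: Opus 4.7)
The plan is to prove the theorem by Mayer--Vietoris induction on a good cover of $Z$, exploiting the hypothesis that $G$ acts trivially on $M \times Z$ so that the twisting $p_1^*\tau$ depends only on the $M$-factor.

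First I would fix $G$, $M$ and $\tau$, and treat both sides as functors in $Z$. Concretely, for any open $W \subseteq Z$, set
\[
\mathsf{L}^*(W) := K_G^{\tau+*}(M)\otimes K^*(W), \qquad \mathsf{R}^*(W) := K_G^{p_1^*\tau+*}(M\times W),
\]
and view $p_1^*\otimes p_2^*$ as a natural transformation $\mathsf{L}^*(W)\to \mathsf{R}^*(W)$. Both sides are $\mathbb{Z}/2$-graded and I would first verify naturality under restriction to open subsets, together with compatibility with connecting homomorphisms of Mayer--Vietoris sequences; this is the routine content of Proposition \ref{ring hom} combined with the fact that the tensor product of Definition \ref{tensor products} commutes with pullback.

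Next I handle the base case. When $W$ is contractible, $K^0(W)=\mathbb{Z}$ and $K^1(W)=0$, while $p_1:M\times W\to M$ is a $G$-equivariant homotopy equivalence (since $G$ acts trivially on $W$), so homotopy invariance of twisted equivariant $K$-theory gives $p_1^*:K_G^{\tau+*}(M)\xrightarrow{\cong}K_G^{p_1^*\tau+*}(M\times W)$, which is precisely $p_1^*\otimes p_2^*$ for the generator of $K^0(W)$. For the inductive step, choose a finite good cover $\{U_1,\dots,U_N\}$ of the compact manifold $Z$ in which every finite intersection is either empty or diffeomorphic to $\mathbb{R}^n$, and induct on $N$. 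Writing $Z = U \cup V$ with $U = U_1\cup\cdots\cup U_{N-1}$ and $V = U_N$, I have the Mayer--Vietoris sequences in $K^*$ for $Z=U\cup V$ and in $K_G^{p_1^*\tau+*}$ for $M\times Z = (M\times U)\cup (M\times V)$, and the map $p_1^*\otimes p_2^*$ intertwines them. The freeness of $K^*(W)$ for every $W$ appearing (open subsets of $Z$ built from contractible pieces in a good cover, where $K^*$ remains free by downward induction using the M--V exact sequence) guarantees that the tensor product $K_G^{\tau+*}(M)\otimes(-)$ preserves the exactness of the M--V sequence for $K^*$; hence the target sequence $\mathsf{L}^*$ is exact, and the five lemma lets me conclude isomorphism for $Z$ from isomorphism for $U$, $V$, and $U\cap V$, all of which have strictly smaller good covers.

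The main obstacle I anticipate is ensuring the existence and naturality of the Mayer--Vietoris sequence for $K_G^{p_1^*\tau+*}(M\times -)$ with a genuinely non-trivial twisting: one must check that a $G$-equivariant projective bundle representing $p_1^*\tau$ over $M\times Z$ restricts compatibly to $M\times U$, $M\times V$ and $M\times (U\cap V)$, and that the boundary maps of the M--V sequences are natural under the cross-product $p_1^*\otimes p_2^*$. Because $\tau$ lives purely on the $M$-factor and $Z$ is covered by contractibles, the underlying projective bundle is globally the pullback $p_1^*P$ for a projective bundle $P$ on $M$, so restriction is automatic and no reglueing of twistings is needed; the naturality of the boundary map with respect to external tensor product with a fixed $K$-class on $M$ then follows from the standard construction of the M--V sequence via a partition of unity in the $Z$-variable. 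Once this is in place, the induction closes and yields the isomorphism $p_1^*\otimes p_2^*$, whose inverse we denote $\Phi_{M\times Z,G,\tau}$.
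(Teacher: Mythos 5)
Your argument is correct in outline and, like the paper's, is a Mayer--Vietoris induction; but your route differs in one substantive way. The paper runs a \emph{double} induction: it first chooses a finite compact contractible cover of $M$ (with contractible intersections), reduces each piece to a point, and then for $M=\{pt\}$ runs a second Mayer--Vietoris induction over a contractible cover of $Z$, with the atomic base case $K^*_{G^\tau}(\{pt\})(1)\otimes K^*(\{pt\})\cong K^*_{G^\tau}(\{pt\}\times\{pt\})(1)$ supplied by \cite{FHT1} Proposition 3.5. You instead keep $M$ intact, induct only over a cover of $Z$, and dispose of the base case by observing that $p_1:M\times W\to M$ is a $G$-homotopy equivalence when $W$ is contractible, so $p_1^*$ is already an isomorphism on twisted equivariant $K$-theory. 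This is more economical (one induction instead of two, no appeal to the point computation), and it has the added benefit that it never uses triviality of the $G$-action on $M$, so it would prove Theorem \ref{generalization of Kunneth formula} directly without the slice-theorem argument the paper gives there. Two caveats, both at the same level of rigor as the paper's own proof: (i) since the paper works with compactly supported $K$-theory, your good cover by open sets diffeomorphic to $\mathbb{R}^n$ is awkward ($K^0_c(\mathbb{R}^n)\neq\mathbb{Z}$ in general, and homotopy invariance only holds for proper homotopies); you should use compact contractible pieces with contractible intersections, as the paper does, so that the base case and the $K^*(W)$ computations read as stated. (ii) Your assertion that $K^*$ of the intermediate unions $U_1\cup\cdots\cup U_k$ stays free needs more than ``downward induction'' (quotients in the M--V sequence can a priori acquire torsion); what one actually needs is vanishing of $\mathrm{Tor}_1(K_G^{\tau+*}(M),-)$ on the images appearing in the sequence, which follows since those images are subgroups of the free groups $K^*(U)\oplus K^*(V)$ and $K^*(U\cap V)$ — the paper silently elides the same point.
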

\begin{proof}
We write the restriction of a twisting $\tau$ to $U$ which is a subset of $M$ as $\tau|_U$ for simplicity.

We can take a finite compact contractible cover $\{U_i\}_{i=1}^{i=N}$ of $M$ such that $\bigcap_{i\in I}U_i$ is contractible for any $I\subseteq\{1,2,\cdots,N\}$, for example triangulation.

Since $U_i$ is compact and contractible, we can assume that $U_i$ is a point $\{pt\}$ and $\tau$ determines the central extension $G^\tau$ of $G$. Firstly, we verify that 
$$p_1^*\otimes p_2^*:K_G^{\tau|_{\{pt\}}+*}(\{pt\})\otimes K^*(Z)\xrightarrow{\cong}K_G^{p_1^*(\tau|_{\{pt\}})+*}(\{pt\}\times Z)$$
is an isomorphism.

Since $Z$ is a compact manifold, it has a finite compact contractible cover $\{W_j\}_{j=1}^{i=N'}$ such that $\bigcap_{j\in J}W_j$ is contractible for any $J\subseteq\{1,2,\cdots,N'\}$, for example triangulation. Since $W_j$ is compact and contractible, we can assume that $W_j$ is also a point.
$$K_G^{\tau|_{\{pt\}}+*}(\{pt\})\otimes K^*(\{pt\}) \cong K^*_{G^\tau}(\{pt\})(1)\otimes K^*(\{pt\})$$
$$\cong K^*_{G^\tau}(\{pt\}\times\{pt\})(1)\cong K_G^{p_1^*(\tau|_{\{pt\}})+*}(\{pt\}\times\{pt\}),$$
where $(1)$ means that the weight of $i(U(1))\subseteq G^\tau$ is $1$, that is, $K^*_{G^\tau}(\{pt\})(1)$ is the Grothendieck completion of the semigroup of $G^\tau$-equivariant vector spaces which $i(U(1))\subseteq G^\tau$ acts on by scaler multiplication. The addition of this semigroup is defined by the direct sum. The first and the last isomorphisms are verified in \cite{FHT1} Proposition 3.5 i). The middle isomorphism is obtained by $p_1^*\otimes p_2^*$.
%
%$$K^{p_1^*\tau+*}_G(\{pt\}\times \{pt\})=K^*_{G^\tau}(\{pt\}\times \{pt\})(1)\cong R(G^\tau)(1)\otimes K^*(\{pt\})$$
%$$=R^\tau(G)\otimes K^*(\{pt\})\cong K_G^{\tau+*}(\{pt\})\otimes K^*(\{pt\}).$$
%Therefore, $p_1^*\otimes p_2^*$ gives an isomorphism for compact contractible $W_j$.

These isomorphisms and Mayer Vietoris argument imply that
$$p_1^*\otimes p_2^*:K_G^{\tau|_{\{pt\}}+*}(\{pt\})\otimes K^*(W_1\cup W_2)\xrightarrow{\cong}K_G^{p_1^*(\tau|_{\{pt\}})+*}(\{pt\}\times (W_1\cup W_2))$$
is an isomorphism. By the induction on the number of $k$ in $\bigcup_{i=1}^kW_j$, we have the conclusion.

With the same way, we can verify that
$$p_1^*\otimes p_2^*:K_G^{\tau|_{U_1\cup U_2}+*}(U_1\cup U_2)\otimes K^*(Z)\xrightarrow{\cong}K_G^{p_1^*(\tau|_{U_1\cup U_2})+*}((U_1\cup U_2)\times Z)$$
is an isomorphism. By the induction on the number of $k$ in $\bigcup_{i=1}^kU_i$, we have the conclusion.

\end{proof}

Let us define family index maps for our cases.

When the tangent bundle $TZ$ is trivial, and we fix a trivialization of $TZ$ and an orientation of $Z$, we can define isomorphisms
$$K^n(Z)\cong K^{TZ}(Z)\cong K(Z,Cl(TZ))$$
by fixing the trivial Spinor bundle $Z\times \Delta_{\bb{R}^n}$, where $n$ is the dimension of $Z$ (Example \ref{example local spinor}). 
Let us recall that an orientation of vector space $V$ determines the Spinor $\Delta_V$ (\cite{Furuta} Theorem 2.15). %The second symbol is defined in \cite{Karoubi}, the 
%third one is defined in \cite{Furuta}.

The following lemma is one of the consequence of Atiyah-Singer index theorem when $n$ is even.

\begin{lem}[\cite{CW}]\label{def. of ind.}
We can define the index map $ind_Z:K^n(Z)\to \mathbb{Z}$. It depends on the choice of the isomorphism $K^n(Z)\cong K(Z,Cl(TZ))$, that is, it depends on the choice of the trivialization of $TZ$ and the orientation of $Z$. $ind_Z$ is given by the push-forward along the unique map $Z\to\{pt\}$.
\end{lem}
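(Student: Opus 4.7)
The plan is to construct $ind_Z$ directly as an instance of the push-forward $F_!$ from Definition \ref{def of push-forward} applied to the unique map $F:Z\to\{pt\}$, and to exhibit the claimed dependence through the choice of Spinor identification. Since $Z$ is compact, $F$ is automatically proper, so $F_!$ is defined. One subtlety: $F_!$ as stated lives on $K^{F^*\tau+W_3(F)+*+\dim Z}$ of $Z$, so for the target $\{pt\}$ with trivial twisting we need the normal bundle information to cancel against $TZ$. The trivialization of $TZ$ and the orientation of $Z$ provide exactly the data needed to identify $K^n(Z)$ with $K(Z,Cl(TZ))$ (via the local Spinor bundle construction of Example \ref{example local spinor} applied to the oriented bundle $TZ$) and thereby absorb the shift by $W_3(F)+\dim Z$.

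Concretely, I would first embed $Z\hookrightarrow S^{2N}$ for large $N$, giving a factorization $Z\xhookrightarrow{i}\{pt\}\times S^{2N}\xrightarrow{p}\{pt\}$. The normal bundle $\nu$ of $i$ satisfies $\nu\oplus TZ\cong i^*T S^{2N}$, which is stably trivial, so a trivialization of $TZ$ together with the orientation of $Z$ induces an orientation and a stable framing of $\nu$; this lets us construct $i_!:K(Z,Cl(TZ))\to K^{\dim Z -2N}(S^{2N})$ via the Thom isomorphism of Example \ref{example local spinor} followed by open extension. Then $p_!:K^{\dim Z-2N}(S^{2N})\to K^{\dim Z}(\{pt\})=\mathbb{Z}$ (for $\dim Z$ even) is the classical analytic index, i.e.\ the family index of the Dirac operator along the fiber $S^{2N}$; for odd $\dim Z$, Bott periodicity together with the suspension isomorphism identifies the target with $\mathbb{Z}$ as well after one further framed suspension. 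Defining $ind_Z:=p_!\circ i_!$ and invoking Lemma \ref{functoriality of push-forward} shows this equals $F_!$ and is independent of the choice of $N$ and of the embedding $i$.

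Finally, I would verify the dependence statement: two different trivializations of $TZ$, or a change of orientation, yield two different identifications $K^n(Z)\cong K(Z,Cl(TZ))$ that differ by multiplication by a unit (a line bundle class, respectively a sign) in $K(Z,Cl(TZ))$. Precomposing $F_!$ with these different identifications gives index maps differing by the corresponding automorphism, which accounts precisely for the stated dependence on the trivialization and orientation.

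The main obstacle is purely bookkeeping: tracking that the local Spinor twisting $\beta(w_2(TZ))$ cancels $W_3(F)$ and that the degree shift by $\dim Z$ is exactly what the Thom isomorphism consumes. Once these indices match, the lemma is essentially a restatement of the general push-forward construction of \cite{CW} in the special case of a point target, and no further analytic input beyond the family index theorem already cited is required.
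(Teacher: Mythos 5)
The paper offers no proof of this lemma (it is quoted from \cite{CW}), but your construction is exactly the intended one: unwind Definition \ref{def of push-forward} for the proper map $F:Z\to\{pt\}$, using the trivialization of $TZ$ and the orientation of $Z$ to fix the identification $K^n(Z)\cong K(Z,Cl(TZ))$ via the local Spinor bundle and to $K$-orient the (stably framed) normal bundle of an embedding $Z\hookrightarrow S^{2N}$, then apply $p_!\circ i_!$ and Lemma \ref{functoriality of push-forward} for independence of choices. One bookkeeping correction: with the paper's degree convention for $i_!$ (shift by $\dim X-\dim Y=-\dim Z$) the composite lands in $K^0(\{pt\})=\mathbb{Z}$ for either parity of $\dim Z$, so your intermediate target $K^{\dim Z}(\{pt\})$ — which would vanish for odd $\dim Z$ — and the ensuing ``one further framed suspension'' are artifacts of a misplaced shift rather than a genuine obstruction.
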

For simplicity, we assume that $K_G^{\tau}(M)$ or $K_G^{\tau+1}(M)$ is equal to $0$. Then we can define a family index map. We take $n_0=0$ or $1$ so that $K^{\tau+n_0+1}_G(M)=0$.
\begin{dfn}%[family index map]
\label{family index map}
If we fix a trivialization of $TZ$ and an orientation of $Z$, 
$$ind_{M\times Z\to M}:K^{p_1^*\tau+n_0+n}_G(M\times Z)\to K_G^{\tau+n_0}(M)$$
 is defined by the composition of the following sequence
$$K^{p_1^*\tau+n_0+n}_G(M\times Z)\xrightarrow{\Phi_{M\times Z,G,\tau}}K^{\tau+n_0}_G(M)\otimes K^n(Z)
\xrightarrow{id\otimes ind_Z}K^{\tau+n_0}_G(M).$$
\end{dfn}
%We write $ind_{M\times Z\to M}$ simply $p_{1!}$ too.

%With the same way, we can prove following theorem.

%\begin{rmk}\label{dependence on orientation}
%Constructed map $ind$ depends on the choice of orientation of $Z$.
%\end{rmk}

We can extend these constructions to non-trivial $G$-manifold $M$.

\begin{thm}%[generalization of K\"unneth formula]
\label{generalization of Kunneth formula}
Theorem \ref{Kunneth isomorphism} 
is valid for non-trivial $G$-manifold $M$.
\end{thm}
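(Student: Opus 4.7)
The plan is to mimic the proof of Theorem \ref{Kunneth isomorphism}, replacing the use of a contractible cover of $M$ by a $G$-equivariant cover whose members are tubular neighborhoods of single orbits. By compactness of $M$ and the slice theorem, we can choose a finite $G$-invariant open cover $\{U_i\}_{i=1}^N$ of $M$ such that each $U_i$ is $G$-equivariantly diffeomorphic to $G\times_{H_i}V_i$ for a closed subgroup $H_i\leq G$ and an $H_i$-representation $V_i$. The external product $p_1^*\otimes p_2^*$ is natural in the first variable, so the essential task is a base case together with a Mayer--Vietoris induction.

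For the base case $M=G\times_HV$, the groupoid $(G\times_HV)//G$ is locally equivalent to $V//H$, and since twisted equivariant $K$-theory is a local equivalence invariant (as used in \cite{FHT1}) we get $K_G^{\tau+*}(G\times_HV)\cong K_H^{\widetilde{\tau}+*}(V)$ and similarly $K_G^{p_1^*\tau+*}((G\times_HV)\times Z)\cong K_H^{p_1^*\widetilde{\tau}+*}(V\times Z)$, in a way compatible with $p_1^*\otimes p_2^*$. Since $V$ is $H$-equivariantly contractible to the origin, homotopy invariance reduces us to the comparison
\[
K_H^{\widetilde{\tau}+*}(\{pt\})\otimes K^*(Z)\to K_H^{p_1^*\widetilde{\tau}+*}(\{pt\}\times Z),
\]
and this is exactly the point case of Theorem \ref{Kunneth isomorphism} (where the $H$-action on $\{pt\}$ is automatically trivial), which was treated there via $K^*_{G^\tau}(\{pt\})(1)$-modules.

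Next I would run Mayer--Vietoris induction on the size of the cover. The map $p_1^*\otimes p_2^*$ is a natural transformation between two six-term sequences: the Mayer--Vietoris sequence for $K^{p_1^*\tau+*}_G(\,\cdot\times Z)$, and the Mayer--Vietoris sequence for $K^{\tau+*}_G(\,\cdot\,)$ tensored over $\mathbb{Z}$ with $K^*(Z)$. The hypothesis that $K^*(Z)$ is free abelian is used precisely here: tensoring with a free abelian group preserves exactness, so the source remains an exact sequence. Writing $M_k:=U_1\cup\cdots\cup U_k$, the intersection $U_{k+1}\cap M_k=\bigcup_{i\leq k}(U_{k+1}\cap U_i)$ admits a cover of strictly smaller size, each member of which is an open $G$-invariant subset of a slice-type neighborhood and hence itself of the form $G\times_{H'}V'$ after further shrinking; thus induction applies to it, to $M_k$, and to $U_{k+1}$. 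The five lemma then promotes the isomorphism from the three pieces to $M_{k+1}$.

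The main obstacle is organizing the cover so that all iterated intersections also fall under the induction hypothesis, and verifying naturality of the Mayer--Vietoris boundary maps with respect to the external product $\otimes p_2^*$. The first point is handled by refining the slice cover so that every intersection admits a finer slice cover with strictly fewer pieces (an induction that terminates by finiteness); the second is standard, because $p_2^*$ is a module operation over $K^*(Z)$ and the connecting homomorphism is a $K^*(Z)$-module map. Everything else is a diagram chase identical in spirit to the one already carried out in the proof of Theorem \ref{Kunneth isomorphism}.
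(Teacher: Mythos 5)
Your proposal is correct and follows essentially the same route as the paper: slice theorem to reduce a tubular neighborhood $G\times_{G_x}S_x$ to the stabilizer acting on the (equivariantly contractible) slice, then the point case of Theorem \ref{Kunneth isomorphism}, then a Mayer--Vietoris induction. You spell out the Mayer--Vietoris bookkeeping (freeness of $K^*(Z)$, five lemma, intersections of slice neighborhoods) in more detail than the paper does, but the argument is the same.
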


\begin{proof}
We have the ``slice theorem''. That is, each $x\in M$ has a closed $G$-neighborhood of the form $G\times_{G_x}S_x$, where the slice $S_x$ is equivariantly contractible under the stabilizer $G_x$.
Therefore, we have an isomorphism 
$$K_G^{p_1^*(\tau|_{G\times_{G_x}S_x})+*}((G\times_{G_x}S_x)\times Z)\cong K^{p_1^*(\tau|_{G\times_{G_x}S_x})+*}_G(G\times_{G_x}(S_x\times Z))$$
$$\cong K_{G_x}^{p_1^*(\tau|_{S_x})+*}(S_x\times Z),$$
where the last isomorphism can be defined by the pull back along the natural inclusion $$S_x\times Z\ni(s,z)\mapsto [(e_G,s,z)]\in G\times_{G_x}(S_x\times Z).$$
Since $S_x$ is equivariantly contractible under the stabilizer $G_x$, we have an isomorphism 
$$K_{G_x}^{p_1^*(\tau|_{S_x})+*}(S_x\times Z)
\cong K_{G_x}^{p_1^*(\tau|_{\{x\}})+*}(\{x\}\times Z).$$
 $K_{G_x}^{p_1^*(\tau|_{\{x\}})+*}(\{x\}\times Z)$ is isomorphic to $K_{G_x}^{\tau|_{\{x\}}+*}(\{x\})\otimes K^*(Z)$ by Theorem \ref{Kunneth isomorphism}. From the isomorphism $K_{G_x}^{\tau|_{\{pt\}}+*}(\{x\})\cong K^{\tau|_{G\times_{G_x}S_x}+*}_G(G\times_{G_x}S_x)$, the natural isomorphism 
$$K_G^{p_1^*(\tau|_{G\times_{G_x}S_x})+*}(G\times_{G_x}(S_x\times Z))\cong K_G^{\tau|_{G\times_{G_x}S_x}+*}(G\times_{G_x}S_x)\otimes K^*(Z)$$
holds. Mayer Vietoris argument implies the theorem just like Theorem \ref{Kunneth isomorphism}.
\end{proof}

When $K^{\tau+n_0+1}_G(M)=0$ and $Z$ has the trivial tangent bundle, we can extend a family index map to non-trivial $G$-manifold with the same way in the case of trivial action.

\begin{dfn}%[generalization of the family index map]
\label{generalization of family index map}
A family index map
$$ind_{M\times Z\to M}:K_G^{p_1^*\tau+n_0+n}(M\times Z)\to K_G^{p_1^*\tau+n_0}(M)$$
is defined just like Definition \ref{family index map}.
\end{dfn}
The following lemmas about functorialities of K\"unneth isomorphisms are clear from Proposition \ref{ring hom}.

Let $X$ and $Y$ be smooth manifolds, $G$ be a compact Lie group and $G$ act on $X$ and $Y$ smoothly. Let a smooth proper map $F:Y\to X$ be $G$-equivariant, $f:H\to G$ be a smooth group homomorphism and $\tau$ be a $G$-equivariant twisting over $X$, which is also $H$-equivariant through $f$. When we regard $\tau$ as $H$-equivariant one, we write it as $f^{\natural}\tau$.

\begin{lem}%[naturality of K\"unneth formula for $F^*$]
\label{naturality of Kunneth formula for F^* for G}
The following commutative diagram holds.
$$
\begin{CD}
K_G^{p_1^*\tau +*}(M\times Z) @>\Phi_{M\times Z,G,\tau}>> K_G^{\tau+*}(M)\otimes K^*(Z) \\
@V(F\times id)^* VV @VF^*\otimes id VV \\
K_G^{p_1^*F^*\tau+*}(N\times Z) @>\Phi_{N\times Z,G,F^*\tau}>> K_G^{F^*\tau+*}(N)\otimes K^*(Z)
\end{CD}
$$
\end{lem}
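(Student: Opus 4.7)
The plan is to exploit the fact that $\Phi_{M\times Z, G, \tau}$ is defined as the inverse of the tensor product map $p_1^* \otimes p_2^*$. Hence, rather than checking commutativity of the given diagram directly, I would invert both horizontal arrows and verify commutativity of the resulting diagram
\[
\begin{CD}
K_G^{\tau+*}(M)\otimes K^*(Z) @>p_1^*\otimes p_2^*>> K_G^{p_1^*\tau+*}(M\times Z) \\
@VF^*\otimes idVV @V(F\times id)^*VV \\
K_G^{F^*\tau+*}(N)\otimes K^*(Z) @>p_1'^*\otimes p_2'^*>> K_G^{p_1'^*F^*\tau+*}(N\times Z)
\end{CD}
\]
where $p_1', p_2'$ denote the projections from $N\times Z$. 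Since both horizontal maps are isomorphisms by Theorem \ref{generalization of Kunneth formula}, commutativity of this square is equivalent to commutativity of the square in the statement of the lemma.

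The verification of this square is then a direct computation on elementary tensors $\alpha\otimes\beta$ with $\alpha\in K_G^{\tau+*}(M)$ and $\beta\in K^*(Z)$. First I would note the identities
\[
p_1\circ (F\times id) = F\circ p_1', \qquad p_2\circ(F\times id)=p_2',
\]
which, by functoriality of pullback (which is immediate from Definition \ref{def. of pull back}), yield $(F\times id)^*\circ p_1^* = p_1'^*\circ F^*$ and $(F\times id)^*\circ p_2^* = p_2'^*$. Then, applying Proposition \ref{ring hom}, which guarantees that $(F\times id)^*$ preserves the tensor product $\otimes$, I obtain
\[
(F\times id)^*\bigl(p_1^*\alpha\otimes p_2^*\beta\bigr)
=\bigl((F\times id)^* p_1^*\alpha\bigr)\otimes\bigl((F\times id)^* p_2^*\beta\bigr)
=(p_1'^* F^*\alpha)\otimes(p_2'^*\beta),
\]
which coincides with $(p_1'^*\otimes p_2'^*)(F^*\alpha\otimes\beta)$, i.e.\ the image of $\alpha\otimes\beta$ along the other route. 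Since both composites agree on elementary tensors and the maps involved are $\mathbb{Z}$-linear, they agree on the whole tensor product.

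There is no real obstacle here: the only non-trivial input is Proposition \ref{ring hom} (that $F^*$ is a ring/$\otimes$-homomorphism), which has already been established from the explicit formula for $\otimes$ in Definition \ref{tensor products}. Everything else is pure naturality of pullback combined with the definition $\Phi := (p_1^*\otimes p_2^*)^{-1}$.
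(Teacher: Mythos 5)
Your proposal is correct and follows the same route the paper takes: the paper disposes of this lemma in one line by declaring it ``clear from Proposition \ref{ring hom}'', and your argument simply spells out why — inverting the horizontal isomorphisms $\Phi=(p_1^*\otimes p_2^*)^{-1}$, using $p_1\circ(F\times id)=F\circ p_1'$, $p_2\circ(F\times id)=p_2'$, and the fact that $(F\times id)^*$ preserves $\otimes$. This is a faithful and complete elaboration of the intended proof.
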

%\begin{proof}
%This is clear from that $F^*$ preserves $\otimes$.
%\end{proof}
\begin{lem}%[naturality of K\"unneth formula for $f^\natural$]
\label{naturality of Kunneth formula for j^natural for G}
The following commutative diagram holds.
$$\begin{CD}
K_G^{p_1^*\tau+*}(M\times Z) @>\Phi_{M\times Z,G,\tau}>> K_G^{\tau+*}(M)\otimes K^*(Z) \\
@Vf^\natural VV @Vf^\natural\otimes id VV \\
K_H^{p_1^* f^\natural \tau +*}(M\times Z) @>\Phi_{M\times Z,H,f^\natural\tau}>> K_H^{f^\natural\tau+*}(M)\otimes K^*(Z)
\end{CD}$$
\end{lem}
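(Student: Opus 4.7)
The plan is to reduce the claim to the naturality of the inverse map $p_1^{*}\otimes p_2^{*}$, and then to invoke Proposition \ref{ring hom} together with the fact that $p_2^{*}\beta$ lives in the image of $K^{*}(Z)\to K_{G}^{*}(M\times Z)$ through the trivial action, where $f^{\natural}$ acts as the identity.

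First I would recall that by Theorem \ref{generalization of Kunneth formula}, $\Phi_{M\times Z,G,\tau}$ is defined as the inverse of $p_{1}^{*}\otimes p_{2}^{*}$, and similarly for $H$ in place of $G$. Hence the stated square commutes if and only if the reversed square
\begin{equation*}
\begin{CD}
K_{G}^{\tau+*}(M)\otimes K^{*}(Z) @>p_{1}^{*}\otimes p_{2}^{*}>> K_{G}^{p_{1}^{*}\tau+*}(M\times Z) \\
@Vf^{\natural}\otimes idVV @Vf^{\natural}VV \\
K_{H}^{f^{\natural}\tau+*}(M)\otimes K^{*}(Z) @>p_{1}^{*}\otimes p_{2}^{*}>> K_{H}^{p_{1}^{*}f^{\natural}\tau+*}(M\times Z)
\end{CD}
\end{equation*}
commutes, which is what I will check.

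Take $\alpha\in K_{G}^{\tau+*}(M)$ and $\beta\in K^{*}(Z)$. By Proposition \ref{ring hom}, $f^{\natural}$ is multiplicative with respect to $\otimes$, so $f^{\natural}(p_{1}^{*}\alpha\otimes p_{2}^{*}\beta)=f^{\natural}(p_{1}^{*}\alpha)\otimes f^{\natural}(p_{2}^{*}\beta)$. That same proposition also gives $f^{\natural}\circ p_{1}^{*}=p_{1}^{*}\circ f^{\natural}$ (functoriality of pullbacks and change of groups in separate variables), so the first tensor factor equals $p_{1}^{*}(f^{\natural}\alpha)$. For the second factor, $\beta$ lies in the non-equivariant $K$-theory $K^{*}(Z)$, and $p_{2}^{*}\beta\in K_{G}^{*}(M\times Z)$ is obtained by regarding a family of Fredholm operators as equivariant via the trivial $G$-action (Remark \ref{rmk about Kunneth formula}); applying $f^{\natural}$ then gives the same family viewed through the trivial $H$-action, which is exactly the image of $\beta$ under $K^{*}(Z)\to K_{H}^{*}(M\times Z)$, i.e.\ $p_{2}^{*}\beta$ itself. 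Combining the two computations yields
\begin{equation*}
f^{\natural}\bigl(p_{1}^{*}\alpha\otimes p_{2}^{*}\beta\bigr)=p_{1}^{*}(f^{\natural}\alpha)\otimes p_{2}^{*}\beta,
\end{equation*}
which is precisely $(p_{1}^{*}\otimes p_{2}^{*})\circ(f^{\natural}\otimes id)(\alpha\otimes\beta)$.

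There is no real obstacle here: the only thing to be careful about is that all three operations (pullback along the equivariant map $p_{1}$, restriction of group action $f^{\natural}$, and external tensor product) are defined by pointwise manipulations of representative Fredholm families, so they commute strictly rather than merely up to homotopy. Once that is observed, the computation above proves the lemma.
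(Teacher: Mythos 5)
Your proof is correct and follows exactly the route the paper intends: the paper dismisses this lemma as "clear from Proposition \ref{ring hom}," and your argument is precisely the fleshed-out version of that remark — invert the Künneth map to reduce to naturality of $p_1^*\otimes p_2^*$, use that $f^\natural$ preserves $\otimes$ and commutes with $p_1^*$, and observe that $f^\natural$ fixes classes coming from the non-equivariant $K^*(Z)$ via the trivial action. Nothing is missing.
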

%\begin{proof}
%This is clear from that $f^\natural$ preserves $\otimes$.% and 
%$$K_G^{p_1^*\tau+n_0+n}(M\times Z)\supseteq K_G^{p_1^*\tau+n_0}(M\times Z)\otimes K_G^{n}(M\times Z)$$
%we have a commutative diagram
%$$
%\begin{CD}
%K_G^{p_1^*\tau+n_0+n}(M\times Z) @<p_1^*\otimes p_2^*<< K_G^{p_1^*\tau +n_0}(M)\otimes K^n_G(Z) @<id\otimes(\otimes[1])<< K_G^{p_1^*\tau +n_0}(M)\otimes K^n(Z) \\
%@Vj^\natural VV @Vj^\natural\otimes j^\natural VV @Vj^\natural\otimes idVV \\
%K_H^{p_1^*j^\natural\tau+n_0+n}(M\times Z) @<p_1^*\otimes p_2^*<< K_H^{p_1^*j^\natural\tau +n_0}(M)\otimes K^n_H(Z) @<id\otimes(\otimes[1])<< K_H^{p_1^*j^\natural\tau +n_0}(M)\otimes K^n(Z)
%\end{CD}
%$$
%The compositions of horizontal arrows are the inverse maps of the K\unneth isomorphisms.
%\end{proof}

These lemmas imply the followings.% $ind_{M\times Z\to M}$.
\begin{pro}%[naturality of index map for $F^*$]
\label{naturality of ind. for F^*}
The following commutative diagram holds.
$$
\begin{CD}
K^{p_1^*\tau+n_0+n}_G(M\times Z) @>ind_{M\times Z\to M}>> K^{\tau+n_0}_G(M) \\
@V(F\times id)^*VV @VF^*VV \\
K^{p_1^*F^*\tau+n_0+n}_G(N\times Z) @>ind_{N\times Z\to N}>> K^{F^*\tau+n_0}_G(N) 
\end{CD}
$$
\end{pro}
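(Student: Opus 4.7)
The plan is a straight diagram chase: unfold the definition of the family index map and glue together the two naturality ingredients (Künneth on the $M$--side, triviality of $Z$--dependence on the other side).

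First I would write each family index map as the composition given in Definition \ref{generalization of family index map}:
$$ind_{M\times Z\to M} = (id\otimes ind_Z)\circ \Phi_{M\times Z,G,\tau},\qquad ind_{N\times Z\to N} = (id\otimes ind_Z)\circ \Phi_{N\times Z,G,F^*\tau}.$$
This splits the square into two sub-squares joined along the middle tensor product group $K^{\tau+n_0}_G(M)\otimes K^n(Z)\to K^{F^*\tau+n_0}_G(N)\otimes K^n(Z)$, where the vertical arrow is $F^*\otimes id$.

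Next I would verify that the upper sub-square (Künneth) commutes: this is precisely Lemma \ref{naturality of Kunneth formula for F^* for G} applied to the map $F:N\to M$, which says $(F^*\otimes id)\circ \Phi_{M\times Z,G,\tau} = \Phi_{N\times Z,G,F^*\tau}\circ (F\times id)^*$. The lower sub-square commutes tautologically, since $F^*$ acts only on the $M$--factor while $id\otimes ind_Z$ acts only on the $Z$--factor, so the two operations are performed on disjoint tensor slots and manifestly commute. Concatenating the two commuting sub-squares gives the desired equality $F^*\circ ind_{M\times Z\to M} = ind_{N\times Z\to N}\circ (F\times id)^*$.

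There is no genuine obstacle here: the content has already been isolated in the K\"unneth naturality Lemma \ref{naturality of Kunneth formula for F^* for G}, and what remains is purely formal. The only mild subtlety to check is that the hypothesis ensuring $ind_Z$ is well defined (trivialization of $TZ$, orientation of $Z$, and vanishing of $K^{\tau+n_0+1}_G(M)$, which by the K\"unneth isomorphism and Lemma \ref{naturality of Kunneth formula for F^* for G} forces vanishing of $K^{F^*\tau+n_0+1}_G(N)$ as well along the image of $F^*$) is inherited on the $N$--side, so that both horizontal arrows make sense and the composition can be performed.
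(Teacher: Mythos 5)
Your proof is correct and is essentially identical to the paper's: both decompose $ind$ as $(id\otimes ind_Z)\circ\Phi$, invoke Lemma \ref{naturality of Kunneth formula for F^* for G} for the left square, and observe the right square commutes trivially.
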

\begin{proof}
Lemma \ref{naturality of Kunneth formula for F^* for G} implies the commutativity of the left side of the following diagram.
$$
\begin{CD}
K_G^{p_1^*\tau +n_0+n}(M\times Z) @>\Phi_{M\times Z,G,\tau}>> K_G^{\tau+n_0}(M)\otimes K^n(Z) @>id\otimes ind_Z>> K_G^{\tau+n_0}(M) \\
@V(F\times id)^*VV @VF^*\otimes idVV @VF^*VV \\
K_G^{p_1^*F^*\tau +n_0+n}(N\times Z) @>\Phi_{N\times Z,G,\tau}>> K_G^{F^*\tau+n_0}(N)\otimes K^n(Z) @>id\otimes ind_Z>> K_G^{\tau+n_0}(N)
\end{CD}
$$
The compositions of horizontal arrows are $ind_{M\times Z\to M}$ and $ind_{N\times Z\to N}$ respectively. The commutativity of the right side is clear.

\end{proof}
\begin{pro}%[naturality for $f^\natural$]
\label{naturality of ind. for j^natural}
The following commutative diagram holds.
$$
\begin{CD}
K^{p_1^*\tau+n_0+n}_G(M\times Z) @>ind_{M\times Z\to M}>> K^{\tau+n_0}_G(M) \\
@Vf^\natural VV @Vf^\natural VV \\
K^{p_1^*f^\natural\tau+n_0+n}_H(M\times Z) @>ind_{M\times Z\to M}>> K^{f^\natural\tau+n_0}_H(M)
\end{CD}
$$
\end{pro}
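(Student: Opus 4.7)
The plan is to mimic the proof of Proposition \ref{naturality of ind. for F^*} verbatim, substituting the role played there by Lemma \ref{naturality of Kunneth formula for F^* for G} with the analogous Lemma \ref{naturality of Kunneth formula for j^natural for G}. Recall that by Definition \ref{generalization of family index map}, the family index map $ind_{M\times Z\to M}$ is defined as the composition
$$K^{p_1^*\tau+n_0+n}_G(M\times Z)\xrightarrow{\Phi_{M\times Z,G,\tau}} K^{\tau+n_0}_G(M)\otimes K^n(Z)\xrightarrow{id\otimes ind_Z} K^{\tau+n_0}_G(M),$$
so it suffices to verify the commutativity of the two squares in
$$
\begin{CD}
K_G^{p_1^*\tau +n_0+n}(M\times Z) @>\Phi_{M\times Z,G,\tau}>> K_G^{\tau+n_0}(M)\otimes K^n(Z) @>id\otimes ind_Z>> K_G^{\tau+n_0}(M) \\
@Vf^\natural VV @Vf^\natural \otimes id VV @Vf^\natural VV \\
K_H^{p_1^*f^\natural\tau +n_0+n}(M\times Z) @>\Phi_{M\times Z,H,f^\natural\tau}>> K_H^{f^\natural\tau+n_0}(M)\otimes K^n(Z) @>id\otimes ind_Z>> K_H^{f^\natural\tau+n_0}(M).
\end{CD}
$$

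The left-hand square is precisely the content of Lemma \ref{naturality of Kunneth formula for j^natural for G}. The right-hand square is trivial: the operation $f^\natural$ acts as the identity on the $K^n(Z)$ tensor factor (since $Z$ here carries no group action and the map $f^\natural$ only affects how the group action on operators over $M$ is interpreted), and $id\otimes ind_Z$ evaluates only the $Z$-factor, so the two operations act on independent tensor factors and therefore commute.

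There is really no obstacle here; the whole content has been packaged into the preceding lemmas on K\"unneth isomorphisms. The only very mild point worth noting is that the assumption $K^{\tau+n_0+1}_G(M)=0$ used in Definition \ref{generalization of family index map} carries over to the $H$-equivariant setting via $f^\natural$ only when the corresponding hypothesis $K^{f^\natural\tau+n_0+1}_H(M)=0$ is independently imposed; under that standing assumption on both sides, the diagram makes sense and commutes by the argument above.
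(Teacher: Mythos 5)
Your proposal matches the paper's own argument: the paper proves this proposition by repeating the proof of Proposition \ref{naturality of ind. for F^*} with Lemma \ref{naturality of Kunneth formula for j^natural for G} in place of Lemma \ref{naturality of Kunneth formula for F^* for G}, exactly as you do. The decomposition through the K\"unneth isomorphism, the use of the cited lemma for the left square, and the observation that the right square commutes trivially are all as in the paper, so the proof is correct and essentially identical.
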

\begin{proof}
We can verify it with the same way in the proof of the above proposition from Lemma \ref{naturality of Kunneth formula for j^natural for G}.
\end{proof}

%Mackey Lemma%
\subsubsection{A Mackey decomposition}\label{section about Mackey decomposition}
We use a Mackey decomposition to compute $K^{\tau+dim(T)}_T(T)$ for a torus, which is an analogue of the isomorphism in un-twisted equivariant $K$-theory $K_G(X)\cong K(X)\otimes R(G)$ for trivial $G$-space $X$. We state it for trivial $T$-spaces. We omit the proof. See \cite{FHT3} for detail.

Let $T$ be a torus, $X$ be a smooth manifold, $T$ act on $X$ trivially, $P$ be a $T$-equivariant projective bundle over $X$, and $\tau$ be the twisting represented by $P$. Covering space $Y$ and the associated twisting $\tau'$ over $Y$ are defined by the following.

(i) a $T$-equivariant family, parametrized by $X$, of central extension $T^\tau$ of $T$ by $U(1)$;

(ii) a covering space $p:Y\to X$, whose fibers label the isomorphism classes of irreducible, $\tau$-twisted representations $\Lambda_T^\tau$;

(iii) a tautological projective bundle $\mathbb{P}R\to Y$ whose fiber $\mathbb{P}R_\lambda$ at $\lambda\in Y$ is the projectification of the $\tau$-twisted representation of $T$ labelled by $\lambda$;

(iv) a formal $\mathbb{P}R$-twisted $K$-theory class $[R]$, represented by $R$;

(v) a non-equivariant twisting $\tau'$ over $Y$ and an isomorphism as $T$-equivariant twistings $\tau'\cong p^*\tau-\mathbb{P}R$.

\begin{thm}[\cite{FHT3}]\label{theorem Mackey decomposition}
There is an isomorphism
$$Mackey_{X,T,\tau}:K_T^{\tau+k}(X)\to K^{\tau'+k}(Y)$$
associated with an isomorphism $\tau'\cong p^*\tau-\bb{P}R$ in (v).

%The twisted $K$-theories $K^{\tau'}(Y)$ and $K_T^\tau(X)$ are naturally isomorphic. We write this isomorphism as $Mackey_{X,T,\tau}$.
\end{thm}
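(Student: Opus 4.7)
The plan is to construct the Mackey map by fiberwise Peter--Weyl decomposition of $T$-equivariant Fredholm families, then verify bijectivity by a Mayer--Vietoris reduction to contractible patches where the calculation becomes pointwise.

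First, I would build the map. Given a $T$-equivariant section $s$ of $Fred^{(0)}_T(P)$ over $X$, consider a point $x\in X$: the fiber $(P\otimes L^2(T)\otimes l^2)_x$ carries an action of the central extension $T^{\tau_x}$ described in datum (i), and Peter--Weyl for this compact group splits it as a Hilbert sum $\bigoplus_{\lambda\in\Lambda_T^{\tau_x}}V_\lambda\otimes H_{x,\lambda}$, where $V_\lambda$ is the irreducible $\tau_x$-twisted $T$-representation labelled by $\lambda$ and $H_{x,\lambda}$ is its multiplicity Hilbert space. Since $s_x$ commutes with the $T$-action, it must act diagonally as $\bigoplus_\lambda id_{V_\lambda}\otimes s_{x,\lambda}$ for Fredholm operators $s_{x,\lambda}$ on the multiplicity spaces. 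As $x$ varies, the parameters $\lambda$ assemble into the covering $p:Y\to X$ of datum (ii), the irreducibles $V_\lambda$ constitute the tautological projective bundle $\bb{P}R$ of (iii), and the multiplicity bundle $\{H_{x,\lambda}\}_{(x,\lambda)\in Y}$ projectivizes to a twisting whose Dixmier--Douady class is $p^*\tau-\bb{P}R\cong\tau'$ by (v). I would therefore define $Mackey_{X,T,\tau}([s]):=[\{s_{x,\lambda}\}_{(x,\lambda)\in Y}]\in K^{\tau'+k}(Y)$ and check directly that this is well defined on homotopy classes.

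Next I would handle the support condition. For each $x\in supp(s)$, the operator $s_x$ is Fredholm, so its kernel and cokernel are finite dimensional; splitting them along the Peter--Weyl decomposition shows that $s_{x,\lambda}$ is invertible for all but finitely many $\lambda$. Hence $p$ restricts to a proper map $supp(Mackey_{X,T,\tau}(s))\to supp(s)$ with finite discrete fibers, so the image in $Y$ has compact support whenever $s$ does in $X$. To prove bijectivity, I would invoke Mayer--Vietoris: both sides define generalized cohomology functors on $T$-manifolds equipped with the relevant twisting data, so by induction on a finite good cover I can reduce to a contractible open $U\subseteq X$ on which $P$ trivializes and $\tau_x\equiv\tau$ is constant. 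Then $Y|_U\cong U\times\Lambda_T^\tau$, the left side $K^{\tau+*}_T(U)$ is the Grothendieck group of $\tau$-twisted representations of $T^\tau$ tensored with $K^*(pt)$ by \cite{FHT1} Proposition 3.5, the right side $K^{\tau'+*}(Y|_U)$ is $\bigoplus_\lambda K^*(pt)=\bb{Z}[\Lambda_T^\tau]$ (the twisting $\tau'$ trivializes on each sheet since the base is contractible and the multiplicity line has rank one per $\lambda$), and the Mackey map is the classical Peter--Weyl identification of representations with isotypic multiplicities, which is bijective.

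The main obstacle is globalizing the construction of step (iii): showing that the pointwise multiplicity spaces $H_{x,\lambda}$ genuinely fit together into a projective Hilbert bundle over $Y$ whose twisting class equals $p^*\tau-\bb{P}R$. The pointwise Peter--Weyl decomposition is elementary, but tracking how the central extensions $T^{\tau_x}$ vary continuously over $X$, how the tautological $\bb{P}R$ is assembled over the parameter cover $Y$, and how the resulting twisting matches $\tau'$ under the chosen isomorphism in (v), is the technical heart of the statement; I would follow the detailed construction carried out in \cite{FHT3} rather than reproduce it in full.
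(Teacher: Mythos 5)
Your proposal is correct in outline, but it is organized in the opposite direction from the paper's presentation. The paper does not prove the theorem at all: it defers the proof to \cite{FHT3} and only records the \emph{inverse} map explicitly, as the composite $K^{\tau'}(Y)\to K^{\tau'}_T(Y)\cong K^{p^*\tau-\bb{P}R}_T(Y)\xrightarrow{\otimes[R]}K^{p^*\tau}_T(Y)\xrightarrow{p_!}K^{\tau}_T(X)$ (regard a family as trivially equivariant, apply the chosen twisting isomorphism, tensor with the tautological class $[R]$, push forward along the covering $p$). You instead build the forward map directly, by fiberwise Peter--Weyl/isotypic decomposition of an equivariant Fredholm family into multiplicity operators over $Y$, and then argue bijectivity by Mayer--Vietoris reduction to contractible patches. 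The two descriptions are mutually inverse --- by the paper's Lemma \ref{explicit formula of push-forward with discrete fiber}, tensoring with $[R]$ and applying $p_!$ exactly reassembles $\bigoplus_{\lambda\in p^{-1}(x)}id_{V_\lambda}\otimes s_{x,\lambda}$ into $s_x$ --- so what your route buys is an actual sketch of \emph{why} the map is an isomorphism (the local model being the classical identification of a twisted representation with its isotypic multiplicities, glued by a good cover), whereas the paper's formulation buys the explicit composite that it needs later for the naturality arguments with respect to $F^*$ and $f^\natural$. Your handling of the support condition (finite-dimensionality of kernels forces all but finitely many $s_{x,\lambda}$ to be invertible, so the support in $Y$ is proper and finite over $supp(s)$) is the right point to check and is handled correctly, noting that for the odd skew-adjoint operators of Definition \ref{def. of Fred(P)} invertibility is detected by the kernel alone. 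The one genuinely technical step --- that the multiplicity spaces assemble into a bundle over $Y$ whose twisting is $p^*\tau-\bb{P}R$, and that the identification with $\tau'$ depends on the chosen isomorphism in (v) --- you correctly flag and defer to \cite{FHT3}, which is exactly what the paper itself does.
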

%\begin{rmk}
%This isomorphism is natural with respect to $F:X_1\to X_2$ in the sense that the isomorphism between two twistings can be pulled back along $\widetilde{F}:Y_1\to Y_2$, where $Y_j$ is the covering space of $X_j$ defined above ($j=1,2$).
%\end{rmk}
We verify naturality with respect to pull back along a continuous map between two spaces and pull back of group action along a local injection between two tori. Before that, we verify Theorem \ref{isomorphism between t.e.K and char}.
\begin{cor}\label{computation of K using a Mackey decomposition}%[$M.d._T$]
Let $T$ be a torus and $\tau$ be a positive central extension of $LT$ and the associated $T$-equivariant twisting over $T$. Fix an orientation of $T$.

Then we have an isomorphism $M.d._T:K^{\tau+dim(T)}_T(T)\to char(T,\tau)$. Moreover, $K^{\tau+dim(T)+1}_T(T)\cong 0$.
\end{cor}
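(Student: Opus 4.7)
The plan is to read off both claims directly from Theorem \ref{theorem Mackey decomposition} applied to $X = T$ with the (trivial) conjugation action. First I would invoke the Mackey decomposition to obtain an isomorphism
$$Mackey_{T,T,\tau} : K^{\tau+k}_T(T) \xrightarrow{\cong} K^{\tau'+k}(Y),$$
where $p : Y \to T$ is the covering whose fiber over $x \in T$ labels the isomorphism classes of irreducible $\tau$-twisted representations of $T^\tau_x$, i.e.\ $\Lambda_T^\tau$, and $\tau' \cong p^*\tau - \mathbb{P}R$.

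Next I would identify $Y$ explicitly. Since $T$ is connected, the covering $Y \to T$ is classified by the monodromy action of $\pi_1(T) \cong \Pi_T$ on $\Lambda_T^\tau$, and this monodromy is precisely the action $n \cdot \lambda = \lambda + \kappa^\tau(n)$ built from the central extension (this is the geometric content behind the $\Pi_T$-action appearing in Theorem \ref{computation of K_T(T)}). Because $\tau$ is positive, Proposition \ref{positivity of central extension} and Lemma \ref{property of kappa^tau} together imply that $\innpro{\cdot}{\cdot}{\tau}$ is positive definite on $\fra{t}$; in particular $\kappa^\tau : \Pi_T \to \Lambda_T$ is injective and the $\Pi_T$-action on $\Lambda_T^\tau$ is free. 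Consequently the covering decomposes as
$$Y \;\cong\; \coprod_{[\lambda] \in \Lambda_T^\tau/\Pi_T} \widetilde{T}_{[\lambda]},$$
where each $\widetilde{T}_{[\lambda]}$ is a principal $\Pi_T$-covering of $T$, hence the universal cover $\fra{t} \cong \bb{R}^n$ with $n = \dim T$.

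Each component of $Y$ is contractible, so $\tau'$ restricted to $\widetilde{T}_{[\lambda]}$ is trivializable (its Dixmier--Douady class lives in $H^3(\fra{t},\bb{Z}) = 0$). Thus
$$K^{\tau' + k}(\widetilde{T}_{[\lambda]}) \;\cong\; K^{k}(\bb{R}^n) \;\cong\; K^{k-n}(\mathrm{pt}) \;=\; \begin{cases} \bb{Z} & k \equiv n \bmod 2 \\ 0 & k \equiv n+1 \bmod 2 \end{cases}$$
via Bott periodicity, and summing over the components gives $K^{\tau+k}_T(T) \cong \bb{Z}[\Lambda_T^\tau/\Pi_T]$ when $k \equiv \dim(T) \bmod 2$ and $0$ otherwise. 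The map $M.d._T$ is then defined as the composition of the Mackey isomorphism with this identification; the fixed orientation of $T$ (and the canonical orientation of the torsor $\fra{t}/\Pi_T$) is precisely what is needed to pin down the generator of each $K^{k-n}(\mathrm{pt})$-summand, so that the isomorphism lands in the free $\bb{Z}$-module on the set $\Lambda_T^\tau/\Pi_T$ rather than on each orbit up to sign.

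The main obstacle is the identification of the monodromy of $Y \to T$ with the $\kappa^\tau$-action, together with a careful trivialization of $\tau'$ on each sheet that is compatible across sheets. Both points are essentially built into the data of the Mackey decomposition as recorded in (i)--(v) before Theorem \ref{theorem Mackey decomposition}: the family of central extensions $T^\tau_x$ over $x \in T$ varies with holonomy given by $\kappa^\tau$, and this is precisely what produces both the gluing of the sheets of $Y$ and the isomorphism class of $\tau'$ on each contractible component. Everything else is formal once this identification is in hand.
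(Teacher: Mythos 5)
Your proposal is correct and follows essentially the same route as the paper: identify the Mackey covering as $\Lambda_T^\tau\times_{\Pi_T}\fra{t}$ (using positivity of $\tau$ to see that $\kappa^\tau$ is injective, hence the $\Pi_T$-action is free and every component is a copy of $\fra{t}$), observe that the residual twisting is trivial on each contractible component, and reduce to $K$-theory of a point. The only cosmetic difference is that the paper packages the component-wise Bott periodicity as a single Thom isomorphism $\pi_!$ for the trivial $\fra{t}$-bundle over $\Lambda_T^\tau/\kappa^\tau(\Pi_T)$, with the orientation of $T$ playing exactly the role you assign it.
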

\begin{proof}
Associated covering space $Y$ is $\Lambda_T^\tau\times_{\Pi_T}\mathfrak{t}$, $\Pi_T$ acts on $\Lambda_T^\tau$ through $\kappa^\tau$ (Definition \ref{morphism kappa}) and on $\fra{t}$ via the translation. $\kappa^\tau(n)$ represents how the character changes when one travels in $T$ along the geodesic loop $n$. Since $\tau$ is positive, $\kappa^\tau$ is injective. Therefore, $\Lambda_T^\tau\times_{\Pi_T}\mathfrak{t}$ has a structure of a trivial vector bundle over $\Lambda_T^\tau/\kappa^\tau(\Pi_T)$ whose fiber is $\mathfrak{t}$. We write this vector bundle as 
$$\pi:\Lambda_T^\tau\times_{\Pi_T}\mathfrak{t}\to \Lambda_T^\tau/\kappa^\tau(\Pi_T).$$ Moreover, since any connected component of $\Lambda_T^\tau\times_{\Pi_T}\mathfrak{t}$ is contractible, a twisting over $\Lambda_T^\tau\times_{\Pi_T}\mathfrak{t}$ is automatically trivial. So we have the following series of isomorphisms
$$K^{\tau+dim(T)}_T(T)\xrightarrow{Mackey_{T,T,\tau}} K^{dim(T)}(\Lambda_T^\tau\times_{\Pi_T}\mathfrak{t})\xrightarrow{\pi_!} K( \Lambda_T^\tau/\kappa^\tau(\Pi_T))\cong char(T,\tau).$$

The first isomorphism is the Mackey decomposition and $\pi_!$ is Thom isomorphism, so this isomorphism depends on the orientation of $T$.

Since $K^1(\Lambda_T^\tau/\kappa^\tau(\Pi_T))\cong0$, $K^{\tau+dim(T)+1}_T(T)\cong 0$.
\end{proof}
We omit the proof of the following theorem. You can find the proof in \cite{FHT3}.
\begin{thm}%[$M.d._G$]
Let $k:T\hookrightarrow G$ be a chosen maximal torus. We can define the isomorphism $M.d._G:K^{\tau+rank(G)}_G(G)\to char(G,\tau)$ so that the following diagram commutes.
$$\begin{CD}
t.e.K(G,\tau) @>k^*\circ k^\natural>> t.e.K(T,k^*\tau) \\
@VM.d._GVV @VM.d._TVV \\
char(G,\tau) @>char(k)>> char(T,k^*\tau)
\end{CD}$$
\end{thm}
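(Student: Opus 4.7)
The plan is to define $M.d._G := char(k)^{-1}\circ M.d._T \circ (k^*\circ k^\natural)$, so that the square commutes tautologically, and then to verify (a) well-definedness, namely that $M.d._T\circ(k^*\circ k^\natural)$ factors through the image of the injective map $char(k)$, and (b) bijectivity of the resulting $M.d._G$. Injectivity of $char(k)$ is immediate from Definition \ref{def. of char(i)}: since the orbit $[\lambda]_G$ is $W^e_{\mathit{aff}}(G)$-regular, the $W(G)$-translates $w.\lambda$ represent pairwise distinct $\Pi_T$-orbits, so $char(k)([\lambda]_G)=\sum_{w\in W(G)}[w.\lambda]_T$ is a non-degenerate formal sum, and distinct basis elements of $char(G,\tau)$ go to distinct $W(G)$-symmetric sums. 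The image of $char(k)$ is thus the $\bb{Z}$-submodule of $\bb{Z}[\Lambda^{k^*\tau}_T/\Pi_T]$ spanned by $W(G)$-symmetric sums indexed by regular orbits.

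To land in this image, I would exploit the action of $N_G(T)$: conjugation gives $N_G(T)$ an action on $K^{k^*\tau+rank(G)}_T(T)$ via restriction of $G$-equivariance along $N_G(T)\subset G$, and on $char(T,k^*\tau)$ via the natural $W(G)$-action on $\Lambda^{k^*\tau}_T$; since the Mackey decomposition is intrinsic, $M.d._T$ intertwines these two actions. Because $k^\natural$ factors $G$-equivariance through $N_G(T)$, classes in its image are automatically $N_G(T)$-invariant and so land in $char(T,k^*\tau)^{W(G)}$. Bijectivity of $M.d._G$ then follows from Theorem \ref{isomorphism between t.e.K and char}, which asserts that $t.e.K(G,\tau)$ and $char(G,\tau)$ are both free abelian on regular $W^e_{\mathit{aff}}(G)$-orbits; checking basis element by basis element that $M.d._G$ matches them correctly reduces to the torus computation already captured by $M.d._T$ together with the identification of $char(k)$-image just described.

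The main obstacle is strengthening ``$W(G)$-invariant'' to ``in the image of $char(k)$'': one must show that only $W(G)$-symmetric sums over \emph{regular} orbits appear, each with multiplicity exactly one. Bare $W(G)$-invariance is formal from the $N_G(T)$ argument, but this regularity refinement encodes the $\rho$-shift of Lemma \ref{rho-shift} and is essentially the Weyl--Kac denominator identity; rigorously establishing it requires either the spectral sequence computation of \cite{FHT1} or the non-abelian Mackey decomposition of \cite{FHT3}, in which the Dirac family explicitly kills non-regular characters via the $\rho$-shift.
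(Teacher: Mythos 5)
The paper does not actually prove this theorem: immediately before the statement it says the proof is omitted and refers the reader to \cite{FHT3}. So there is no internal argument to compare yours against; the honest comparison is between your scaffolding plus citation and the paper's bare citation.

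Your reduction is structurally sound and you are candid about where the real content lies, but as written it is not a proof, and you correctly say so: the step ``the image of $M.d._T\circ(k^*\circ k^\natural)$ is exactly the span of $W(G)$-symmetric sums over \emph{regular} orbits, each with multiplicity one, and the induced map is bijective'' is precisely the theorem, and you defer it to \cite{FHT1}/\cite{FHT3} --- which is all the paper does too. Two points in your formal part deserve more care. First, the claim that $M.d._T$ intertwines the $N_G(T)$-actions ``since the Mackey decomposition is intrinsic'' is not automatic: by the paper's own remark after the construction, $Mackey_{X,T,\tau}$ depends on a choice of isomorphism $\tau'\cong p^*\tau-\bb{P}R$ up to $H^2_T$, and $M.d._T$ additionally involves the Thom isomorphism $\pi_!$, which depends on an orientation of $\fra{t}$; elements of $W(G)$ can reverse that orientation, so the naive $W(G)$-action on $K(\Lambda_T^{k^*\tau}/\kappa^{k^*\tau}(\Pi_T))$ acquires signs. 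This is not a cosmetic issue --- it is exactly why the image is governed by (anti-)invariance and hence supported on regular orbits, i.e.\ it is the mechanism behind the refinement you flag as the ``main obstacle,'' so treating $W$-invariance as formal while isolating regularity as the hard part slightly misplaces the difficulty. Second, bijectivity does not ``follow from'' Theorem \ref{isomorphism between t.e.K and char}: knowing that source and target are abstractly isomorphic free abelian groups says nothing about a particular homomorphism between them being an isomorphism, so the basis-by-basis matching you mention is not a finishing touch but again the substance of \cite{FHT3}. In short: your proposal is a reasonable and honest framing, equivalent in logical content to the paper's citation, but it should not be mistaken for an independent proof, and the $W$-equivariance of $M.d._T$ needs the sign and choice issues addressed before even the formal part is complete.
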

Let us construct a Mackey decomposition, and verify naturality with respect to $F^*$ and $f^\natural$, where $F$ is a proper smooth map between two manifolds and $f$ is a locally injective group homomorphism.\\\\
\textit{Construction}: 
The isomorphism stated above is the inverse of the composition of the following sequence
$$K^{\tau'}(Y)\to K^{\tau'}_T(Y)\cong K^{p^*\tau-\mathbb{P}R}_T(Y)\xrightarrow{\otimes[R]}
K_T^{p^*\tau}(Y)\xrightarrow{p_!}K^\tau_T(X),
$$
where the first map is defined by regarding a family of Fredholm operators as an equivariant one via the trivial action, the second isomorphism is defined by the isomorphism of twistings, the third map is defined by the tensor product with the formal $K$-theory class $[R]$, and the last map is the push-forward along $p:Y\to X$.

%\begin{lem}\label{M.d.'s independence of isom. of twisting}
%Composition of the above series is indepondent of the choice of the isomorphism $\tau'\cong p^*\tau-\mathbb{P}R$.
%\end{lem}

Since the central extension of $T$ splits and $T$ is abelian group, $T^\tau$ is also an abelian group (\cite{FHT1} Lemma 4.1). So $\tau$-twisted irreducible representations of $T$ are $1$ dimensional. Therefore, the fiber $\mathbb{P}R_\lambda$ of $\mathbb{P}R$ at $\lambda$ is a point, that is, $\mathbb{P}R=Y$. In other words, the twisting $\mathbb{P}R\cong 0$.
\begin{rmk}
If $T$-equivariant twistings $\tau$ and $\sigma$ are isomorphic, the isomorphism $K^\tau_T(X)\to K^\sigma_T(X)$ depends on the choice of the ``homotopy class'' of isomorphisms between $\tau$ and $\sigma$ (Proposition \ref{classification of twistings}). ``Homotopy classes'' of isomorphisms are classified by the second equivariant cohomology group $H^2_T(X)$, that is, isomorphism classes of equivariant complex line bundles, just as $H^2(X)$ acts on $K^*(X)$ via the tensor product with the corresponding complex line bundle, where we regard $K^*(X)$ as not a ring but a group.

So the isomorphism $K^{\tau'}_T(Y)\cong K^{p^*\tau-\mathbb{P}R}_T(Y)$ is determined not automatically but by taking an isomorphism $\tau'\cong p^*\tau-\mathbb{P}R$.
\end{rmk}
Let us verify naturality of a Mackey decomposition.

\begin{thm}%[naturality for $F^*$]
\label{naturality for F^*}
Let $X$ and $X'$ be manifolds, $T$ be a torus acting on $X$ and $X'$ trivially, $\tau$ be a $T$-equivariant twisting over $X$, and $F:X'\to X$ be a smooth proper map. $p:Y_T\to X$ and $p':Y_T'\to X'$ are the covering spaces constructed above.

The following commutative diagram holds.
$$\begin{CD}
K^{\tau'+k}(Y_T) @>\widetilde{F}^*>> K^{\widetilde{F}^*\tau'+k}(Y_T') \\
@AMackey_{X,T,\tau}AA @AAMackey_{X',T,F^*\tau}A \\
K_T^{\tau+k}(X) @>F^*>> K^{F^*\tau+k}_T(X') \\
\end{CD}$$
where $\widetilde{F}$ is defined in the proof.
\end{thm}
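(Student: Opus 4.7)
The plan is to verify commutativity of the diagram by chasing around the defining decomposition of the Mackey map step by step. Recall that $Mackey_{X,T,\tau}^{-1}$ is the composition
\[
K^{\tau'}(Y_T)\xrightarrow{\iota}K^{\tau'}_T(Y_T)\xrightarrow{\cong}K^{p^*\tau-\mathbb{P}R}_T(Y_T)\xrightarrow{\otimes[R]}K^{p^*\tau}_T(Y_T)\xrightarrow{p_!}K^\tau_T(X),
\]
where $\iota$ regards a section as equivariant under the trivial $T$-action. The four steps are (a) $\iota$; (b) the isomorphism of twistings $\tau'\cong p^*\tau-\mathbb{P}R$; (c) the tensor product with the formal class $[R]$; (d) the push-forward $p_!$ along the covering $p$. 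I will check that each step commutes with pullback along $F$ on the base and along the lift $\widetilde F$ on the cover.

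First I would construct $\widetilde F$. Since $p:Y_T\to X$ is fiberwise the set of isomorphism classes of irreducible $\tau$-twisted representations of the central extension determined by $\tau$, and since the central extension pulls back naturally along $F$, the covering $Y'_T\to X'$ is canonically identified with the fiber product $X'\times_X Y_T$. This yields a fiber-preserving lift $\widetilde F:Y'_T\to Y_T$ making the square Cartesian; moreover the tautological bundle $\mathbb{P}R'$ and the formal class $[R']$ coincide with $\widetilde F^*\mathbb{P}R$ and $\widetilde F^*[R]$ respectively, and the chosen isomorphism $\tau'\cong p^*\tau-\mathbb{P}R$ pulls back along $\widetilde F$ to the chosen isomorphism $\widetilde F^*\tau'\cong p'^*F^*\tau-\mathbb{P}R'$ (one would build this compatibility into the construction of the coverings so that the choices made in (v) above for $X$ and for $X'$ are consistent).

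With $\widetilde F$ and these identifications in hand, commutativity in step (a) is immediate because the trivial equivariant structure is preserved by pullback; step (b) is automatic from the naturality of the identification of twistings; step (c) follows from Proposition \ref{ring hom} (compatibility of pullback with the tensor product) together with $\widetilde F^*[R]=[R']$. The heart of the matter is step (d), the base change identity $F^*\circ p_!=p'_!\circ\widetilde F^*$.

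The hard part will be verifying this base change. I would reduce to the pointwise formula of Lemma \ref{explicit formula of push-forward with discrete fiber}, which describes $p_!$ on a covering (discrete fibers) by the fiberwise direct sum $(p_![s])_x=[\bigoplus_{y\in p^{-1}(x)}s_y]$. Since the square $Y'_T=X'\times_X Y_T$ is Cartesian, for every $x'\in X'$ the fiber $p^{-1}(F(x'))$ is in canonical bijection with $p'^{-1}(x')$ through $\widetilde F$, so the two fiberwise sums agree under the identifications of the Fredholm bundles provided by the chosen isomorphism of twistings. This gives the base change at the level of representatives, and since all operations used are homotopy-invariant, it descends to $K$-theory. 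Composing the four commutative squares then yields the theorem.
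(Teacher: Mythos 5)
Your proposal matches the paper's proof essentially step for step: the same four-square decomposition of $Mackey^{-1}$, the same construction of $\widetilde F$ via the fiber product $Y'_T\cong F^*Y_T$, the same identification $[R']\cong\widetilde F^*[R]$ (the paper's Lemma \ref{naturality of the classes R}), and the same base-change verification for $p_!$ using the fiberwise direct-sum formula of Lemma \ref{explicit formula of push-forward with discrete fiber}. No substantive differences.
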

\begin{proof}
Firstly, we verify naturality of the covering space.
\begin{lem}%[naturality of covering spaces]
\label{naturality of covering spaces}
The covering space $Y'_T$ over $X'$ associated with $F^*\tau$ is homeomorphic to $F^*Y_T$. Therefore, we can define a continuous map $\widetilde{F}:Y'_T\to Y_T$ and the following commutative diagram holds.
$$
\begin{CD}
Y'_T\cong F^* Y_T @>\widetilde{F}>> Y_T \\
@Vp' VV @Vp VV \\
X' @>F>> X
\end{CD}
$$
\end{lem}
\begin{proof}
Firstly, we verify naturality of $T^\tau$ family with $F^*$, then we verify the lemma.

Let $P$ be a projective bundle which represents the twisting $\tau$ and $\rho$ be the action of $T$ on $P$. $T^\tau_x$ is defined by the pull back of the central extension $U(1)\to U(\mathcal{H}_x)\to PU(\mathcal{H}_x)$ along the homomorphism $\rho_x:T\to U(P_x)=PU(\mathcal{H}_x)$, that is, $T^\tau_x=\rho_x^*(U(\mathcal{H}_x))$, where $\ca{H}_x$ is a chosen lift of $P_x$.

Since the action of $T$ on $F^*P$ is defined by $\rho_{x'}=\rho_{F(x')}$, $T^{F^*\tau}_{x'}=\rho_{x'}^*(U(\mathcal{H}_{F(x')}))=\rho_{F(x')}^*(U(\mathcal{H}_{F(x')}))=T^\tau_{F(x')}$.

Let us verify the lemma. Let us recall the topology of $Y_T$. Let $\mathcal{U}$ be a sufficient small open neighborhood of $x_0\in X$ in $X$. Then we can trivialize $P$ on $\mathcal{U}$, that is, $P|_\mathcal{U}\cong \mathcal{U}\times\mathbb{P}(\mathcal{H})$. By use of this homeomorphism, we can trivialize the $T^\tau$ family over $\mathcal{U}$, that is, $\{T_x^\tau\}_{x\in\mathcal{U}}\cong\mathcal{U}\times T^\tau_{x_0}$. So we can trivialize as $Y_T|_\mathcal{U}\cong\mathcal{U}\times\Lambda_{T,x_0}^\tau$. Since this trivialization depends only on one of $P$, we can define the associated trivialization of $Y_{T'}$ on $F^{-1}(\mathcal{U})$ by well known method.

Since $\Lambda^\tau_{T,F(x')}=\Lambda^{F^*\tau}_{T,x'}$, we can define a fiber map $\widetilde{F}:Y'_T\to Y_T$ by $\widetilde{F}(x',\lambda):=(F(x'),\lambda)$, where $x'\in X', \lambda\in\Lambda^{F^*\tau}_{T,x'}$. Let us notice that $p\circ \widetilde{F}=F\circ p'$ from the definition of $\widetilde{F}$. Continuity of this map follows from that the restriction of $\widetilde{F}$ to $F^{-1}(\mathcal{U})$ can be written as $F|_{F^{-1}(\ca{U})}\times id_{\Lambda_{T,x_0}^\tau}$ through the above trivialization. By the universality of the fiber product, there exists the unique continuous fiber map $F^*Y_T\to Y'_T$. Since this map is surjective and fiberwisely homeomorphic, it is a homeomorphism.
$$\begin{xymatrix}{
F^*Y_T=X'\times_XY_T \ar[rrd]^{p_2} \ar@{.>}[rd]|{\exists!} \ar[rdd]_{p_1} & & \\
& Y'_{T} \ar[r]_{\widetilde{F}} \ar[d]^{p'} & Y_T \ar[d]^p \\
& X' \ar[r]_F & X }\end{xymatrix}$$
\end{proof}

Let us consider the following diagram
$$
\begin{CD}
K^{\tau'+k}(Y_T) @>>> K^{\tau'+k}_T(Y_T) @>\cong>>K_T^{p^*\tau-\mathbb{P}R_T+k}(Y_T) @>\otimes[R_T]>>  \\
@V\widetilde{F}^*VV @V\widetilde{F}^*VV @V\widetilde{F}^* VV \\
K^{\widetilde{F}^*\tau'+k}(Y'_T) @>>> K^{\widetilde{F}^*\tau'+k}_T(Y'_T) @>\cong>>K_T^{\widetilde{F}^*p^*\tau-\mathbb{P}\widetilde{F}^*R_T+k}(Y'_T) @>\otimes[R_T']>> \\
\end{CD}
$$
$$
\begin{CD}
 @>\otimes[R_T]>> K_T^{p^*\tau+k}(Y_T) @>p_!>> K_T^{\tau+k}(X) \\
@. @V \widetilde{F}^* VV @VF^*VV\\
 @>\otimes[R_T']>>K_T^{\widetilde{F}^*p^*\tau+k}(Y'_T) @>p'_!>>K_T^{F^*\tau+k}(X').
\end{CD}
$$

The compositions of horizontal sequences are the inverse of $Mackey_{X,T,\tau}$ and $Mackey_{X',T,F^*\tau}$ respectively. If we verify the commutativity of four squares, we obtain the conclusion.

The first is clear.

The second is verified by pull back of isomorphism between two twistings along $\widetilde{F}$.

The third is clear from the following lemma and that $F^*$ preserves $\otimes$.
\begin{lem}%[naturality of the classes $R$]
\label{naturality of the classes R}
Let $R_T$ and $R'_T$ be the formal $K$-theory classes associated with $(X,\tau)$ and $(X',F^*\tau)$ respectively. Then $R_T'$ is canonically isomorphic to $\widetilde{F}^*R_T$.
\end{lem}
\begin{proof}
Let us define an isomorphism as $T$-equivariant vector bundles $\phi:R'_T\to F^*R_T$. Let us recall that
$$R_T=\bigcup_{(x,\lambda)\in Y_T}(\mathbb{C}_\lambda)_x, \text{ }R'_{T}=\bigcup_{(x',\lambda)\in Y'_T}(\mathbb{C}_\lambda)_{x'},$$
where $T_x^\tau$ acts on $\bb{C}_\lambda$ via  character $\lambda\in\Lambda^\tau_{T,x}$.

Let us define $\phi$ as
$$\phi(x',\lambda,z):=(F(x'),\lambda,z),$$
where $x'\in X'$, $\lambda\in\Lambda^{F^*\tau}_{x'}$ and $z\in\mathbb{C}_\lambda.$

Continuity is verified by taking a local trivialization. The isomorphism $R'_T\cong \widetilde{F}^*R_T$ as vector bundles is verified just like the canonical homeomorphism $F^*Y_T\cong Y'_T$. Equivariance is verified as follows.
$$t.\phi(x',\lambda,z):=t.(F(x'),\lambda,z)$$
$$=(F(x'),\lambda,\lambda(t)z)=\phi(x',\lambda,\lambda(t)z),$$
where $t\in T^{F^*\tau}_{x'}=T^\tau_{F(x')}$.
\end{proof}

The fourth follows from Lemma \ref{explicit formula of push-forward with discrete fiber}.
Let $[\{\ca{F}_{(x,\lambda)}\}_{(x,\lambda)\in Y_T}\}]\in K^{p^*\tau+k}_T(Y_T)$. We can compute $F^*\circ p_!([\{\ca{F}_{(x,\lambda)}\}_{(x,\lambda)\in Y_T}])$ and $p'_!\circ\widetilde{F}^*([\{\ca{F}_{(x,\lambda)}\}_{(x,\lambda)\in Y_T}])$ as follows.
$$F^*\circ p_!([\{\ca{F}_{(x,\lambda)}\}_{(x,\lambda)\in Y_T}])=F^*([\{\bigoplus_{\lambda\in\Lambda^\tau_{T,x}}\ca{F}_{(x,\lambda)}\}_{x\in X}])$$
$$=[\{\bigoplus_{\lambda\in\Lambda^{\tau}_{T,F(x')}}\ca{F}_{(F(x'),\lambda)}\}_{x'\in X'}]$$
$$p'_!\circ\widetilde{F}^*([\{\ca{F}_{(x,\lambda)}\}_{(x,\lambda)\in Y_T}])=
p'_!([\{\ca{F}_{(F(x'),\lambda)}\}_{(x',\lambda)\in Y'_T}])$$
$$=[\{\bigoplus_{\lambda\in\Lambda^{\tau}_{T,F(x')}}\ca{F}_{(F(x'),\lambda)}\}_{x'\in X'}].$$
Therefore, the both sides coincide.

\end{proof}

Let us verify naturality with respect to pull back of group action. However, the sense of ``naturality with respect to $f^\natural$'' is not clear, because $K^{\tau'+k}(Y_T)$ is a non-equivariant twisted $K$-group. In this paper, we regard the following theorem as naturality with respect to $f^\natural$.

\begin{thm}%[naturality for $f^\natural$ for our cases]
\label{naturality for f^natural}
Let $p:Y_T\to X$ and $p':Y_{T'}\to X$ be the covering spaces associated with $(T,\tau)$ and $(T',f^\natural\tau)$ respectively.
The following commutative diagram holds.
$$
\begin{CD}
K^{\tau'+k}(Y_T) @>\cong>> K^{\tau+k}_T(X) \\
@V\widetilde{{}^tdf}_! VV @Vf^\natural VV \\
K^{\tau''+k}(Y_{T'}) @>\cong>> K^{f^\natural\tau+k}_{T'}(X)
\end{CD}
$$ where $\tau'$, $\tau''$ and $\widetilde{{}^tdf}$ are defined in the proof.
\end{thm}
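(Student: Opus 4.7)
The plan is to mirror the proof of Theorem \ref{naturality for F^*}, replacing the contravariant pull-back $\widetilde{F}^*$ by a covariant push-forward $\widetilde{{}^tdf}_!$ and invoking the projection formula at the crucial step. First I will define $\widetilde{{}^tdf}:Y_T\to Y_{T'}$ over $X$ by $(x,\lambda)\mapsto(x,{}^tdf_x(\lambda))$, where ${}^tdf_x:\Lambda^\tau_{T,x}\to\Lambda^{f^\natural\tau}_{T',x}$ is the pull-back of characters along the induced homomorphism of fibrewise central extensions $T'^{f^\natural\tau}_x\to T^\tau_x$. Continuity will be checked by the local trivialisation argument of Lemma \ref{naturality of covering spaces}. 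Since the fibres of $\widetilde{{}^tdf}$ are discrete, compactly supported $K$-classes on $Y_T$ push forward via fibrewise direct sums (Lemma \ref{explicit formula of push-forward with discrete fiber}).

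Next I will establish the canonical isomorphism of $T'$-equivariant line bundles
$$f^\natural R_T\cong\widetilde{{}^tdf}^*R_{T'}$$
on $Y_T$, in direct analogy with Lemma \ref{naturality of the classes R}: both sides have fibre $\bb{C}_\lambda$ at $(x,\lambda)$ on which $T'$ acts via the character ${}^tdf_x(\lambda)$. Projectifying yields $f^\natural\bb{P}R_T\cong\widetilde{{}^tdf}^*\bb{P}R_{T'}$ as $T'$-equivariant twistings. Setting $\tau'':=p'^*f^\natural\tau-\bb{P}R_{T'}$ and combining with the defining $\tau'\cong p^*\tau-\bb{P}R_T$ will give $\tau'\cong\widetilde{{}^tdf}^*\tau''$ as non-equivariant twistings, which is precisely what is needed to make $\widetilde{{}^tdf}_!:K^{\tau'+k}(Y_T)\to K^{\tau''+k}(Y_{T'})$ well-defined.

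Finally I will chase the diagram by expanding the inverse Mackey isomorphisms. For $\beta\in K^{\tau'+k}(Y_T)$ one has $Mackey_{X,T,\tau}^{-1}(\beta)=(p_T)_!(\beta\otimes[R_T])$, and since $T$ acts trivially on $Y_T$ and $X$ the restriction functor $f^\natural$ commutes with $(p_T)_!$ and with $\otimes$ (Proposition \ref{ring hom}), so
$$f^\natural\circ Mackey_{X,T,\tau}^{-1}(\beta)=(p_T)_!\bigl(\beta\otimes\widetilde{{}^tdf}^*[R_{T'}]\bigr).$$
The factorisation $p_T=p_{T'}\circ\widetilde{{}^tdf}$ together with Lemma \ref{functoriality of push-forward} and the projection formula $\widetilde{{}^tdf}_!(\beta\otimes\widetilde{{}^tdf}^*[R_{T'}])=(\widetilde{{}^tdf}_!\beta)\otimes[R_{T'}]$ converts this into $(p_{T'})_!\bigl((\widetilde{{}^tdf}_!\beta)\otimes[R_{T'}]\bigr)=Mackey_{X,T',f^\natural\tau}^{-1}(\widetilde{{}^tdf}_!\beta)$, which is the required commutativity. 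The main obstacle is verifying the projection formula and the coherence of all twistings along the diagram in the presence of possibly infinite discrete fibres; once the identification $f^\natural R_T\cong\widetilde{{}^tdf}^*R_{T'}$ is in place, the remaining identifications and square-by-square checks proceed as in the proof of Theorem \ref{naturality for F^*}.
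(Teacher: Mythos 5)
Your proposal is correct and follows essentially the same route as the paper: the fibrewise construction of $\widetilde{{}^tdf}$ from the pull-back of characters along $T'^{f^\natural\tau}_x\to T^\tau_x$, the identification $f^\natural R_T\cong\widetilde{{}^tdf}^*R_{T'}$ and the induced relation $\tau'\cong\widetilde{{}^tdf}^*\tau''$, and a diagram chase through the inverse Mackey isomorphism are exactly the steps of the paper's proof. The only cosmetic difference is that you package the paper's final squares as ``$f^\natural$ commutes with $p_!$'' plus the projection formula, whereas the paper verifies the same identities by the explicit fibrewise direct-sum formula for push-forward along a map with discrete fibres (Lemma \ref{explicit formula of push-forward with discrete fiber}), which also disposes of your remaining ``obstacle'' since $[R_{T'}]$ is a line bundle and the formula is immediate fibrewise.
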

\begin{rmk}
The above theorem is an analogue of the following commutative diagram mentioned in Introduction.
$$\begin{CD}
K(\Lambda_T) @>\cong>> R(T)=K_T(\{pt\}) \\
@V({}^tdf)_!VV @Vf^\natural VV \\
K(\Lambda_{T'}) @>\cong>> R(T')=K_{T'}(\{pt\})
\end{CD}$$
Usually, $f^\natural$ is written as $f^*$ which is the pull back of representations along $f$.
\end{rmk}
\begin{proof}
Let us start from naturality of the covering spaces.
\begin{lem}
%Let $f:T'\to T$ be a smooth locally injective group homomorphism, $\tau$ be a $T$-equivariant twisting over $X$, $p:Y_T\to X$, $p':Y_{T'}\to X$ be the covering space associated to $\tau$, $f^\natural\tau$ respectively.

We can define a continuous fiber map $\widetilde{{}^tdf}:Y_T\to Y_{T'}$ such that thefollowing diagram holds.
$$
\begin{CD}
Y_T @>\widetilde{{}^tdf} >> Y_{T'} \\
@V p VV @V p' VV \\
X @= X
\end{CD}
$$
\end{lem}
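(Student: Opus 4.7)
The plan is to define $\widetilde{{}^tdf}$ fiberwise by pullback of characters along the canonical lift of $f$ to the central extension, and then establish continuity via the same trivialization argument used in the proof of the preceding lemma (naturality of the covering spaces under $F^*$).

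First, I would unpack the construction of the $T'^{f^\natural\tau}$-family from that of the $T^\tau$-family. As recalled in the proof of the previous lemma, if $P$ is a projective bundle representing $\tau$ and $\rho_x: T \to PU(\mathcal{H}_x)$ is the induced action, then $T^\tau_x = \rho_x^*(U(\mathcal{H}_x))$. When we regard the same bundle as $T'$-equivariant via $f$, the action becomes $\rho_x \circ f$, and by universality of the pullback we obtain a canonical lift $\widetilde{f}_x: T'^{f^\natural\tau}_x \to T^\tau_x$ which restricts to the identity on the central $U(1)$ and covers $f: T' \to T$. Pointwise I would then define
\[
\widetilde{{}^tdf}(x, \lambda) := (x,\, \lambda \circ \widetilde{f}_x),
\]
for $(x,\lambda) \in Y_T$ with $\lambda \in \Lambda^\tau_{T,x}$. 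Since $\widetilde{f}_x$ restricts to the identity on $U(1)$, the composite $\lambda \circ \widetilde{f}_x$ indeed satisfies $(\lambda \circ \widetilde{f}_x) \circ i = \lambda \circ i = \mathrm{id}$, so it belongs to $\Lambda^{f^\natural\tau}_{T',x}$. Fiber-preservation $p' \circ \widetilde{{}^tdf} = p$ is then immediate.

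For continuity, I would repeat the local trivialization argument of the preceding lemma. Choose a sufficiently small $\mathcal{U} \ni x_0$ over which $P$ trivializes; this simultaneously trivializes $\{T^\tau_x\}_{x\in\mathcal{U}}$ and $\{T'^{f^\natural\tau}_x\}_{x\in\mathcal{U}}$, and the canonical lifts $\widetilde{f}_x$ become the constant family $\widetilde{f}_{x_0}: T'^{f^\natural\tau}_{x_0} \to T^\tau_{x_0}$. Consequently the trivializations $Y_T|_{\mathcal{U}} \cong \mathcal{U} \times \Lambda^\tau_{T,x_0}$ and $Y_{T'}|_{\mathcal{U}} \cong \mathcal{U} \times \Lambda^{f^\natural\tau}_{T',x_0}$ identify $\widetilde{{}^tdf}|_{\mathcal{U}}$ with $\mathrm{id}_{\mathcal{U}} \times {}^tdf$, where ${}^tdf: \Lambda^\tau_{T,x_0} \to \Lambda^{f^\natural\tau}_{T',x_0}$ is the transpose of $df$ acting between character lattices of the fiber central extensions. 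This is manifestly continuous.

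The only mild subtlety, which I expect to be the main point to be careful about, is checking that the canonical lift $\widetilde{f}_x$ is truly well-defined and varies continuously in $x$: this rests on the definition $T'^{f^\natural\tau}_x = (\rho_x \circ f)^* U(\mathcal{H}_x)$ together with the universal property of the fiber product, which produces $\widetilde{f}_x$ as the second projection in the commutative diagram
\[
\xymatrix{
T'^{f^\natural\tau}_x \ar[r]^-{\widetilde{f}_x} \ar[d] & T^\tau_x \ar[d] \\
T' \ar[r]^f & T.
}
\]
Once this is in place, everything else is routine and the proof of the lemma is complete.
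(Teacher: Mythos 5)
Your proposal is correct and follows essentially the same route as the paper: the canonical lift $\widetilde{f}_x$ obtained from the pullback/fiber-product description of $T'^{f^\natural\tau}_x$, the fiberwise definition $\widetilde{{}^tdf}(x,\lambda)=(x,\lambda\circ\widetilde{f}_x)$ as pullback of characters, and continuity via the local trivialization from the $F^*$-naturality argument. The extra check that $\lambda\circ\widetilde{f}_x$ restricts to the identity on the central $U(1)$ is a worthwhile detail the paper leaves implicit.
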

\begin{proof}
The family $\{T'^{f^\natural\tau}_x\}_{x\in X}$ of central extensions of $T'$ can be obtained by the fiber product $T^{'f^\natural\tau}_x:=T'\times_TT_x^\tau$, that is, the following diagram commutes.
$$\begin{CD}
T^{'f^\natural \tau}_x @>\widetilde{f}_x>> T^\tau_x \\
@VVV @VVV \\
T' @>f>> T
\end{CD}$$
It can be verified with the same way in the proof of Lemma \ref{naturality of covering spaces}.

Since we have a homomorphism $\widetilde{f}_x:T_x'^{f^\natural\tau}\to T_x^\tau$ induced by $f$, we have
$${}^td\widetilde{f}_x:\Lambda_{T,x}^\tau\to\Lambda_{T',x}^{f^\natural\tau},$$
a pull back of representations. We can define $\widetilde{{}^tdf}:Y_T\to Y_{T'}$ by $\widetilde{{}^tdf}(x,\lambda):=(x,{}^td\widetilde{f}_x(\lambda))$. Continuity is verified by use of the trivialization defined in Theorem \ref{naturality for F^*}.
\end{proof}

Formal $K$-theory classes $[R_T]$ and $[R_{T'}]$ have the following functoriality.
\begin{lem}%[naturality of the classes $R$]
Let $[R_T]$ and $[R_{T'}]$ be the formal $K$-theory classes associated with $(T,\tau)$ and $(T',f^\natural \tau)$ respectively. 

We can define a homomorphism of vector bundles $\widehat{\widetilde{{}^tdf}}:R_{T'}\to R_T$, which makes the following diagram commute.
$$
\begin{CD}
R_{T}@>\widehat{\widetilde{{}^tdf}}>>R_{T'} \\
@VVV @VVV \\
Y_{T}@>\widetilde{{}^tdf}>>Y_{T'}
\end{CD}
$$
Through $f:T'\to T$, $T'$ acts on $R_T$. When we regard it as $T'$-equivariant bundle, $\widetilde{{}^tdf}$ is a homomorphism as $T'$-equivariant bundles.
\end{lem}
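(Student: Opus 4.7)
The plan is to mirror exactly the proof of Lemma \ref{naturality of the classes R}, working fiberwise and then invoking the local trivializations already used for $Y_T$ and $Y_{T'}$. First I would define
\[
\widehat{\widetilde{{}^tdf}}(x,\lambda,z) := (x,\,\lambda\circ\widetilde{f}_x,\,z)
\]
for $(x,\lambda)\in Y_T$ and $z\in (\mathbb{C}_\lambda)_x$. This makes sense because ${}^td\widetilde{f}_x(\lambda)=\lambda\circ\widetilde{f}_x$ is precisely the target label in $Y_{T'}$ under $\widetilde{{}^tdf}$, and the underlying vector spaces $\mathbb{C}_\lambda$ and $\mathbb{C}_{\lambda\circ\widetilde{f}_x}$ are both canonically $\mathbb{C}$. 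Commutativity of the square with $\widetilde{{}^tdf}$ over the base then follows by definition.

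Next I would establish continuity. Around each $x_0\in X$, the trivialization $P|_{\mathcal{U}}\cong\mathcal{U}\times\mathbb{P}(\mathcal{H})$ simultaneously trivializes $\{T^\tau_x\}$ and $\{T'^{f^\natural\tau}_x\}$ (via the fiber product defining the latter), and hence trivializes $Y_T|_{\mathcal{U}}$, $Y_{T'}|_{\mathcal{U}}$, $R_T$ and $R_{T'}$ over $\mathcal{U}$. In these trivializations, $\widehat{\widetilde{{}^tdf}}$ reduces to $(x,\lambda,z)\mapsto (x,\lambda\circ\widetilde{f}_{x_0},z)$, which is plainly continuous; this simultaneously shows that $\widehat{\widetilde{{}^tdf}}$ is a vector bundle homomorphism covering $\widetilde{{}^tdf}$.

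Finally, the equivariance claim. Through $f$ the group $T'$ acts on the fiber $(\mathbb{C}_\lambda)_x$ by first lifting $t'\in T'$ to some $\widetilde{t'}\in T'^{f^\natural\tau}_x$, then applying $\widetilde{f}_x$, and then acting by the character $\lambda$; that is, $t'\cdot z=\lambda(\widetilde{f}_x(\widetilde{t'}))z$. On $R_{T'}$ the same $t'$ acts on the fiber $(\mathbb{C}_{\lambda\circ\widetilde{f}_x})_x$ by $t'\cdot z=(\lambda\circ\widetilde{f}_x)(\widetilde{t'})z$, using the same lift $\widetilde{t'}$. The two scalars are identical, and the $U(1)$--ambiguity in the choice of lift cancels because both $\lambda$ and $\lambda\circ\widetilde{f}_x$ restrict to the identity on $i(U(1))$ (they are twisted characters). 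Hence $\widehat{\widetilde{{}^tdf}}$ intertwines the two $T'$-actions fiberwise, and the lemma follows.

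The only delicate point I expect is Step 3: keeping the central-extension bookkeeping straight so that the $T'$-action on $R_T$ pulled through $\widetilde{f}_x$ genuinely matches the natural $T'$-action on $R_{T'}$ given by the fiber-product central extension $T'^{f^\natural\tau}_x=T'\times_T T^\tau_x$. Once one notes that this fiber product \emph{defines} $\widetilde{f}_x$ as the projection whose composition with $\lambda$ is exactly $\lambda\circ\widetilde{f}_x$, the equivariance is forced, and there is no further obstacle.
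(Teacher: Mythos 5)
Your proposal is correct and follows essentially the same route as the paper: the map is defined fiberwise by $(x,\lambda,z)\mapsto(x,{}^td\widetilde{f}_x(\lambda),z)$, continuity comes from the local trivializations already set up for $Y_T$ and $Y_{T'}$, and equivariance is the same scalar computation used for the $F^*$-naturality of $[R]$. (You also correctly resolve the paper's typo by taking the direction $R_T\to R_{T'}$, consistent with the displayed diagram and with $\widetilde{{}^tdf}:Y_T\to Y_{T'}$.)
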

\begin{proof}
$\widehat{\widetilde{{}^tdf}}(x,\lambda',z):=(x,{}^tdf(\lambda'),z)$ is a homomorphism between two vector bundles. Equivariance is verified by the same method in Lemma \ref{naturality of the classes R}.
\end{proof}
Let us verify the theorem. Let us consider the following diagram.
{\small
$$\xymatrix{
K^{\tau'+k}(Y_T) \ar[r] \ar@{=}[d] \ar@{}[dr]|{(1)} & K^{\tau'+k}_T(Y_T) \ar[r]^{\cong} \ar[d]^{f^\natural} \ar@{}[dr]|{(2)} & K^{p^*\tau-\mathbb{P}R_T+k}_T(Y_T) \ar[r]^{\otimes[R_T]} \ar[d]^{f^\natural} \ar@{}[dr]^{(3)}& 
K^{p^*\tau+k}_T(Y_T) \ar[r]^{p_!} \ar[d]^{f^\natural} \ar@{}[ddr]|{(4)} & K^{\tau+k}_T(X) \ar[dd]^{f^\natural} 
\\
K^{\tau'+k}(Y_T) \ar[r] \ar[d]^{(\widetilde{{}^tdf})_!} \ar@{}[dr]|{(5)}& K^{f^\natural\tau'+k}_{T'}(Y_T) \ar[r]^{\cong} \ar[d]^{\widetilde{({}^tdf)}_!} \ar@{}[dr]|{(6)} & K^{f^\natural(p^*\tau-\mathbb{P}R_T)+k}_{T'}(Y_{T}) \ar[r] \ar@{}[ur]_{\otimes\widetilde{({}^tdf)}^*[R_{T'}]} \ar[d]^{\widetilde{({}^tdf)}_!} \ar@{}[dr]|{(7)} & 
K^{f^\natural p^*\tau+k}_{T'}(Y_T) \ar[d]^{\widetilde{({}^tdf)}_!} & 
\\
K^{\tau''+k}(Y_{T'}) \ar[r] & K^{\tau''+k}_{T'}(Y_{T'}) \ar[r]^{\cong} & K^{p'^*f^\natural\tau-\mathbb{P}R_{T'}+k}_{T'}(Y_{T'}) \ar[r]^{\otimes[R_{T'}]} & 
K^{p'^*f^\natural\tau+k}_{T'}(Y_{T'}) \ar[r]^{p'_!} & K^{f^\natural\tau+k}_{T'}(X) 
}$$}

If we can prove all of the commutativities from (1) to (7), we obtain the required commutativity. Let us notice that $f^\natural p^*\tau=\widetilde{{}^tdf}^*p'^*f^\natural\tau$ and $f^\natural(p^*\tau-\bb{P}R_T)=\widetilde{{}^tdf}^*(p'^*f^\natural\tau-\bb{P}R_{T'})$. Therefore, the above push-forward maps are well-defined.

(1), (5) are clear from the definitions of each maps.

(2), (6) follow from that we can pull back the isomorphism between two twistings. $\tau'$ and $\tau''$ are defined so that $\widetilde{{}^tdf}^*\tau''=f^\natural\tau'$.
%(2), (6) follow from that we can pull back the isomorphism between two twistings.
%We can define two isomorphisms (a) $f^\natural\tau'\cong f^\natural(p^*\tau-\bb{P}R_T)$ and (b) $\widetilde{{}^tdf}^*\tau'''\cong f^\natural(p^*\tau-\bb{P}R_T)$, where $\tau'''$ is a non-equivariant twisting over $Y_{T'}$. The ``difference'' of them can be written $l\in H^2_{T'}(Y_T)$  from Proposition \ref{classification of twistings}. Therefore, the following commutative diagram commutes.
%$$\begin{CD}
%K_{T'}^{\widetilde{{}^tdf}^*\tau'''+k}(Y_T) @>\otimes l>> K_{T'}^{f^\natural\tau+k}(Y_T) \\
%@V(\widetilde{{}^tdf})_!VV @V(\widetilde{{}^tdf})_!VV \\
%K_{T'}^{\tau'''+k}(Y_{T'}) @>\otimes\widetilde{{}^tdf}^*l>> K_{T'}^{\tau'''\otimes\widetilde{{}^tdf}^*l+k}(Y_{T'})
%\end{CD}$$
%By defining $\tau''$ as $\tau'''\otimes\widetilde{{}^tdf}^*l$, we obtain the conclusion.

(3), (4), (7) follow from the explicit description of these maps. Let us verify (3). Let us notice that $(\widetilde{{}^tdf})^*[R_{T'}]\cong f^\natural[R_T]$.

Let $[\{\ca{F}_{(x,\lambda)}\}_{(x,\lambda)\in Y_Y}]\in K^{p^*\tau-\bb{P}R_T+k}_T(Y_T)$. We can compute each compositions as follows.
$$f^\natural([\{\ca{F}_{(x,\lambda)}\}_{(x,\lambda)\in Y_Y}]\otimes[R_T])=
f^\natural([\{\ca{F}_{(x,\lambda)}\otimes id + \epsilon\otimes \bb{C}_\lambda\}_{(x,\lambda)\in Y_Y}])$$
$$=[\{f^\natural(\ca{F}_{(x,\lambda)})\otimes id + \epsilon\otimes \bb{C}_{{}^tdf(\lambda)}\}_{(x,\lambda)\in Y_Y}].$$
$$f^\natural([\{\ca{F}_{(x,\lambda)}\}_{(x,\lambda)\in Y_T}])\otimes(\widetilde{{}^tdf})^*[R_{T'}]=
([\{f^\natural(\ca{F}_{(x,\lambda)})\}_{(x,\lambda)\in Y_T}])\otimes(\widetilde{{}^tdf})^*[R_{T'}]$$
$$=[\{f^\natural(\ca{F}_{(x,\lambda)})\otimes id + \epsilon\otimes \bb{C}_{{}^tdf(\lambda)}\}_{(x,\lambda)\in Y_T}].$$
Therefore, the both sides coincide.

If we notice that 
$$(\widetilde{{}^tdf})_!([\{\ca{F}_{(x,\lambda)}\}_{(x,\lambda)\in Y_T}])=[\{\bigoplus_{{}^tdf(\lambda)=\mu}\ca{F}_{(x,\lambda)}\}_{(x,\mu)\in Y_{T'}}],$$
(4) and (7) follow from the same argument. 

\end{proof}

%%指数写像とMackey分解との関係
\subsubsection{A relationship between index maps and Mackey decomposition}

\begin{lem}%[naturality of K\"unneth isomorphisms with Mackey decompositions]
\label{naturality with Kunneth isomorphisms}
Let $M$, $Z$ be manifolds which $T$ acts on trivially and $\tau$ be a $T$-equivariant twisting over $M$. We suppose that $K^m(Z)$ is free for any $m\in\bb{Z}$. Let $p:Y_T\to X$ be the covering space of $X$ associated with $(T,\tau)$. The following commutative diagram holds.
$$
\begin{CD}
K^{p_1^*\tau+*}_T(M\times Z) @>\Phi_{M\times Z}>> K^{\tau+*}_T(M)\otimes K^*(Z) \\
@V{Mackey_{M\times Z,T,p^*\tau}}VV @V{Mackey_{M,T,\tau}\otimes id}VV \\
K^{p_1^*\tau'+*}(Y_T\times Z) @>{\Phi_{Y_T\times Z}}>> K^{\tau'+*}(Y_T)\otimes K^*(Z)
\end{CD}$$
\end{lem}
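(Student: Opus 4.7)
Since all four arrows are isomorphisms (using freeness of $K^m(Z)$ on both rows), it is equivalent to check the diagram obtained by replacing each arrow by its inverse, where $\Phi^{-1}=p_1^*\otimes p_2^*$ and $Mackey^{-1}$ is the four-step composition from the construction: (i) regarding a non-equivariant class as $T$-equivariant via the trivial action, (ii) applying the twisting isomorphism $\tau'\cong p^*\tau-\bb{P}R$, (iii) tensoring with the formal class $[R]$, and (iv) pushing forward along $p$. I will verify that each of these four steps commutes with $p_1^*\otimes p_2^*$.

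The first observation is that the covering data for $(M\times Z,p_1^*\tau)$ is canonically the pullback of that for $(M,\tau)$ along the projection $\pi_1:Y_T\times Z\to Y_T$: because $T$ acts trivially on $Z$, the family of central extensions over $M\times Z$ is pulled back from $M$, so the associated covering space is $Y_T\times Z$ with non-equivariant twisting $p_1^*\tau'$, and by an immediate analogue of Lemma \ref{naturality of the classes R} the tautological formal class satisfies $[R_{M\times Z}]=\pi_1^*[R_M]$. With this identification, steps (i)--(iii) commute with the exterior product essentially tautologically: (i) because the trivial $T$-action on $Y_T\times Z$ extends the one on $Y_T$ coordinatewise, (ii) because the chosen twisting isomorphism on $M\times Z$ is literally the $p_1$-pullback of the one on $M$, and (iii) because tensoring with $\pi_1^*[R_M]$ only modifies the $Y_T$-factor of an exterior product.

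The main content is step (iv): the identity
\[
(p\times\mathrm{id}_Z)_!\bigl(\pi_1^*\xi\cdot\pi_2^*b\bigr)=p_1^*(p_!\xi)\cdot p_2^*(b)
\]
for $\xi\in K_T^{p^*\tau+*}(Y_T)$ and $b\in K^*(Z)$. Since $p:Y_T\to M$ has discrete fibers, so does $p\times\mathrm{id}_Z$ fiberwise, and Lemma \ref{explicit formula of push-forward with discrete fiber} applies. On representative Fredholm families $\xi=[\{F_y\}_y]$ and $b=[\{G_z\}_z]$, the identity becomes
\[
\Bigl[\Bigl\{\bigoplus_{y\in p^{-1}(x)}\bigl(F_y\otimes\mathrm{id}+\epsilon\otimes G_z\bigr)\Bigr\}_{(x,z)}\Bigr]=\Bigl[\Bigl\{\Bigl(\bigoplus_{y\in p^{-1}(x)}F_y\Bigr)\otimes\mathrm{id}+\epsilon\otimes G_z\Bigr\}_{(x,z)}\Bigr],
\]
which is just distributivity of $\oplus$ over the graded tensor product (together with Kuiper's theorem to absorb auxiliary $l^2$-factors). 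The main obstacle is purely bookkeeping, namely identifying the covering data on $M\times Z$ with the $\pi_1$-pullback of that on $M$ so that steps (i)--(iii) are automatic and step (iv) reduces to the explicit discrete-fiber push-forward formula.
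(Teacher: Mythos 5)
Your proposal is correct and follows essentially the same route as the paper: the paper likewise inverts both rows and checks that each of the four steps of the inverse Mackey decomposition (trivial-action inclusion, twisting isomorphism chosen as the $p_1$-pullback, tensoring with the formal class, and push-forward) commutes with $p_1^*\otimes p_2^*$. Your explicit verification of the push-forward square via the discrete-fiber formula, and your explicit identification of the covering data over $M\times Z$ with the pullback of that over $M$, merely spell out steps the paper dismisses as trivial.
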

\begin{proof}
We verify the following commutative diagram.
$$\begin{CD}
K^{\tau'+*}(Y_T)\otimes K^*(Z) @>>> K_T^{\tau'+*}(Y_T)\otimes K^*(Z) @>\cong>> K_T^{p^*\tau-\bb{P}R+*}(Y_T)\otimes K^*(Z) @>({\otimes[R]})\otimes id>> \\
@V{p_1^*\otimes p_2^*}VV @V{p_1^*\otimes p_2^*}VV @V{p_1^*\otimes p_2^*}VV \\
K^{p_1^*\tau'+*}(Y_T\times Z) @>>> K_T^{p_1^*\tau'+*}(Y_T\times Z) @>\cong>> K_T^{p_1^*(p^*\tau-\bb{P}R)+*}(Y_T\times Z) @>{\otimes p_1^*[R]}>> 
\end{CD}$$
$$\begin{CD}
@>({\otimes[R]})\otimes id>> K_T^{p^*\tau+*}(Y_T)\otimes K^*(Z) @>{p_!\otimes id}>> K_T^{\tau+*}(M)\otimes K(Z) \\
@. @V{p_1^*\otimes p_2^*}VV @V{p_1^*\otimes p_2^*}VV \\
@>{\otimes p_1^*[R]}>> K_T^{p^*p_1^*\tau+*}(Y_T\times Z) @>{(p\times id)_!}>> K^{p_1^*\tau+*}_T(M\times Z) 
\end{CD}$$
where $({\otimes[R]})\otimes id(x\otimes y)=(x\otimes[R])\otimes y$.

Commutativity of the first square and the fourth one are trivial.

The second one follows by taking the isomorphism between $p_1^*\tau'$ and $p_1^*(p^*\tau-\bb{P}R)$ as pull back of the one between $\tau'$ and $p^*\tau-\bb{P}R$ along $p_1$. 

The third one is clear from that pull backs preserves $\otimes$.
\end{proof}

Let us verify the following theorem. Let us recall that we assume that $K_G^{\tau+n_0+1}(M)=0$, $dim(Z)=n$, $Z$ has the trivial tangent bundle and $K^m(Z)$ is free for any $m$ in order to define a family index map.

\begin{thm}%[naturality of index map with Mackey decomposition]
\label{naturality of index map with Mackey decomposition}
The following commutative diagram holds.
$$\begin{CD}
K_T^{p_1^*\tau+n+n_0}(M\times Z) @>ind_{M\times Z\to M}>> K_T^{\tau+n_0}(M) \\
@VMackey_{M\times Z,T,p_1^*\tau}VV @VMackey_{M,T\tau}VV \\
K^{p_1^*\tau'+n+n_0}(Y_T\times Z) @>ind_{Y_T\times Z\to Y_T}>> K^{\tau'+n_0}(Y_T)
\end{CD}$$
\end{thm}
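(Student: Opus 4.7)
The plan is to reduce the statement to the compatibility of the Mackey decomposition with the Künneth isomorphism, which is exactly Lemma \ref{naturality with Kunneth isomorphisms}. Since by Definition \ref{family index map} (and its non-trivial-action extension in Definition \ref{generalization of family index map}) each family index map is the composition
$$ind_{M\times Z\to M} = (id_{K^{\tau+n_0}_T(M)}\otimes ind_Z)\circ \Phi_{M\times Z,T,\tau},$$
$$ind_{Y_T\times Z\to Y_T} = (id_{K^{\tau'+n_0}(Y_T)}\otimes ind_Z)\circ \Phi_{Y_T\times Z,1,\tau'},$$
the diagram to be verified factors as two squares stacked vertically: the upper one involves only the Künneth isomorphisms and the Mackey decompositions, while the lower one involves $id\otimes ind_Z$ on each side with a Mackey decomposition in the left factor.

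I would first write out this factorization explicitly:
$$\begin{CD}
K_T^{p_1^*\tau+n+n_0}(M\times Z) @>\Phi_{M\times Z,T,\tau}>> K_T^{\tau+n_0}(M)\otimes K^n(Z) @>id\otimes ind_Z>> K_T^{\tau+n_0}(M) \\
@VMackey_{M\times Z,T,p_1^*\tau}VV @VMackey_{M,T,\tau}\otimes idVV @VMackey_{M,T,\tau}VV \\
K^{p_1^*\tau'+n+n_0}(Y_T\times Z) @>\Phi_{Y_T\times Z,1,\tau'}>> K^{\tau'+n_0}(Y_T)\otimes K^n(Z) @>id\otimes ind_Z>> K^{\tau'+n_0}(Y_T)
\end{CD}$$
The composition of the horizontal arrows in each row recovers the corresponding $ind_{-\times Z\to -}$, so commutativity of the total rectangle follows from commutativity of the two inner squares.

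The left-hand square is precisely the content of Lemma \ref{naturality with Kunneth isomorphisms}, so nothing new is required there. The right-hand square commutes for the trivial reason that its horizontal arrows act only on the second tensor factor $K^n(Z)$, while its vertical arrows act only on the first factor; in other words it is the tensor product of the identity diagram for $ind_Z:K^n(Z)\to\mathbb{Z}$ with the arrow $Mackey_{M,T,\tau}$, so it commutes strictly by bilinearity of the tensor product (and the fact that $ind_Z$ is a group homomorphism into $\mathbb{Z}$, hence commutes past any abelian group homomorphism applied to the other factor).

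The only real content is therefore the left square, which has already been proved. The main obstacle, in a sense, has been handled upstream: one needs the Künneth isomorphisms to be defined and natural with respect to $Mackey_{-,T,-}$, which in turn depends on the explicit construction of $\Phi$ via the tensor product $p_1^*\otimes p_2^*$ and on the fact that the Mackey decomposition is built from operations (regarding a class as equivariant, twisting by an isomorphism of twistings, tensoring with the formal class $[R]$, and push-forward along the discrete-fibered covering $p:Y_T\to M$) each of which commutes with the operations occurring on the Künneth side, as verified in Lemma \ref{naturality with Kunneth isomorphisms}. Given that lemma, the present theorem is a formal consequence.
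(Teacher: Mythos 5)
Your proposal is correct and follows essentially the same route as the paper: factor each family index map as $(id\otimes ind_Z)\circ\Phi$, observe the left square is exactly Lemma \ref{naturality with Kunneth isomorphisms}, and note the right square commutes trivially since the two maps act on different tensor factors. No gaps.
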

\begin{proof}
The following commutative diagram holds.
$$\begin{CD}
K_T^{p_1^*\tau+n+n_0}(M\times Z) @>\Phi_{M\times Z}>> K^{\tau+n_0}_T(M)\otimes K^n(Z) @>id\otimes ind_Z>> K_T^{\tau+n_0}(M) \\
@VMackey_{M\times Z,T,p_1^*\tau}VV @VMackey_{M,T,\tau}\otimes idVV
@VMackey_{M,T,\tau}VV \\
K^{p_1^*\tau'+n+n_0}(Y_T\times Z) @>\Phi_{Y_T\times Z}>> K^{\tau'+n_0}(Y_T)\otimes K^{n}(Z)
@>id\otimes ind_Z>> K^{\tau'+n_0}(Y_T)
\end{CD}$$
The first square commutes from the above lemma.

The second one commutes clearly.

\end{proof}

%トーラス%
\subsection{For tori}\label{t.e.K for tori}
Let $T$ be a torus, and $\tau$ be a positive central extension of $LT$ and the associated $T$-equivariant twisting over $T$. In this section, we construct $f^\#$ for a local injection $f:S\to T$ so that it has a compatibility with $char(f)$ by use of the following lemma.

\begin{lem}\label{re-write of char(f)}%[re-write of $char(f)$]
We have a canonical isomorphism $char(T,\tau)\cong K(\Lambda_T^\tau/\kappa^\tau(\Pi_T))$.

Under this identification, if $T=S\times S'$, $f$ is the natural inclusion into the first factor, and $\tau=p_1^*\tau_1+p_2^*\tau_2$, $char(f)$ corresponds to the push-forward map $(({}^tdf)mod\Pi_T)_!$, where $\tau_1$ and $\tau_2$ are positive central extension of $LS$ and $LS^\perp$ respectively. $({}^tdf)mod\Pi_T$ is defined in Lemma \ref{realization of tdi_1 at covering space level}.
\end{lem}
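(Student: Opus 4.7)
The plan is to prove the two parts of the lemma separately: first establish that the quotient set $\Lambda_T^\tau/\kappa^\tau(\Pi_T)$ is finite (hence discrete) so the identification with $K$-theory of a point set is immediate, then check that in the product case the map ${}^tdf$ descends to orbits and realizes $char(i_1)$ as push-forward.

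For the first part, I would observe that since $\tau$ is positive, Proposition \ref{positivity of central extension} implies that $\innpro{\cdot}{\cdot}{\tau}$ is a positive definite bilinear form on $\fra{t}$. Together with Lemma \ref{property of kappa^tau}, this forces $\kappa^\tau : \Pi_T \to \Lambda_T \hookrightarrow \Lambda_T^\tau$ to be injective. Since $\Pi_T$ and $\Lambda_T$ have the same rank and $\Lambda_T^\tau$ is a $\Lambda_T$-torsor, the quotient $\Lambda_T^\tau/\kappa^\tau(\Pi_T)$ is a finite set. For any finite set $X$, $K(X)$ (compactly supported $K$-theory of the discrete space $X$) is canonically the free abelian group on $X$, so $K(\Lambda_T^\tau/\kappa^\tau(\Pi_T)) \cong \mathbb{Z}[\Lambda_T^\tau/\kappa^\tau(\Pi_T)] = char(T,\tau)$ by Definition \ref{def. of regular orbit} (noting that for tori every orbit is regular since $W(T)$ is trivial).

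For the second part, I first need to check that ${}^tdf : \Lambda_T^\tau \to \Lambda_S^{f^*\tau}$ descends to a well-defined map between the orbit sets, which will be our $({}^tdf) \bmod \Pi_T$. By Lemma \ref{product formula for kappa}, for $\tau = p_1^*\tau_1 + p_2^*\tau_2$ the homomorphism $\kappa^\tau$ is block-diagonal with blocks $\kappa^{\tau_1}, \kappa^{\tau_2}$. Hence for any $(n_1, n_2) \in \Pi_S \oplus \Pi_{S'} = \Pi_T$,
\[
{}^tdf(\kappa^\tau(n_1,n_2)) = \kappa^{\tau_1}(n_1) \in \kappa^{f^*\tau}(\Pi_S),
\]
so ${}^tdf$ sends $\kappa^\tau(\Pi_T)$-cosets to $\kappa^{f^*\tau}(\Pi_S)$-cosets, inducing a well-defined map of finite sets $\overline{{}^tdf} : \Lambda_T^\tau/\kappa^\tau(\Pi_T) \to \Lambda_S^{f^*\tau}/\kappa^{f^*\tau}(\Pi_S)$.

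Finally, I would invoke the general fact that for a map $g : X \to Y$ of finite discrete sets, the $K$-theoretic push-forward $g_! : K(X) \to K(Y)$ is given on the standard basis by $\delta_x \mapsto \delta_{g(x)}$, equivalently $g_!(\phi)(y) = \sum_{x \in g^{-1}(y)} \phi(x)$. Applying this to $g = \overline{{}^tdf}$ gives $\overline{{}^tdf}_!\bigl([\lambda]_T\bigr) = [{}^tdi_1(\lambda)]_S$, which by Lemma \ref{char formula for direct product} agrees with $char(i_1)([\lambda]_T)$. There is no real obstacle here; the only mildly delicate point is the bookkeeping identification of the push-forward in $K$-theory of a discrete finite set with the naive sum-over-fibers, which is standard and only needs to be stated cleanly.
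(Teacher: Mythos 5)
Your proposal is correct and follows essentially the same route as the paper: identify $char(T,\tau)$ with $K$ of the finite orbit set via the delta-class generators, note that ${}^tdi_1$ descends to the orbit sets (the paper phrases this via the product decomposition $\Lambda^\tau_T/\kappa^\tau(\Pi_T)\cong\Lambda^{\tau_1}_{S}/\kappa^{\tau_1}(\Pi_{S})\times\Lambda^{\tau_2}_{S'}/\kappa^{\tau_2}(\Pi_{S'})$, under which the descended map is the first projection), and then compute the push-forward of delta classes as sum-over-fibers. Your extra details on finiteness of the quotient and on the descent via Lemma \ref{product formula for kappa} are consistent with what the paper takes for granted.
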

\begin{proof}
The canonical isomorphism is defined by defining the values of the canonical generators as $char(T,\tau)\ni [\lambda]_T\mapsto \delta_{[\lambda ]_T}\in K(\Lambda_T^\tau/\kappa^\tau(\Pi_T))$, where $\delta_{[\lambda ]_T}\in K(\Lambda_T^\tau/\kappa^\tau(\Pi_T))$ is a trivial vector bundle whose rank is $1$ supported on the point $[\lambda]_T$.

As we explain in Lemma \ref{realization of tdi_1 at covering space level}, $\Lambda^\tau_T/\kappa^\tau(\Pi_T)\cong\Lambda^{\tau_1}_{S}/\kappa^{\tau_1}(\Pi_{S})\times\Lambda^{\tau_2}_{S'}/\kappa^{\tau_2}(\Pi_{S'})$ and $({}^tdf)mod\Pi_T$ is the natural projection onto the first factor.
By this correspondence and the definition of the push-forward map, the statement about induced homomorphism is clear.
\end{proof}
We use this identification and the functoriality of push-forward maps, pull backs, family index maps and Mackey decompositions in order to verify our theorem.

We will generalize the correspondence between $(({}^tdf)mod\Pi_T)_!$ and $char(f)$ in the following.

%直積%

\subsubsection{Direct product}\label{t.e.K for direct product of tori}
Let $T_1, T_2$ be tori, $T=T_1\times T_2$, $n_j$ be the dimension of $T_j$, $\tau_j$ be a positive central extension of $LT_j$, $i_j:T_j \hookrightarrow T$ be the natural inclusion into the $j$'th factor, $p_j:T \twoheadrightarrow T_j$ be the natural projection onto the $j$'th factor ($j=1,2$). We obtain a positive central extension $\tau=p_1^*\tau_1+p_2^*\tau_2$ of $LT$ from these data. Orientations of $T_1$ and $T_2$ naturally define the one of $T$.

We verify the following theorem.
\begin{thm}\label{compatibility of t.e.K(i_1) with char(i_1)}
If we define as
$$i_1^\#:=ind_{T_1\times T_2\to T_1}\circ i_1^\natural,$$
the following diagram commutes.
$$\begin{CD}
t.e.K(T,\tau) @>i_1^\#>> t.e.K(T_1,\tau_1) \\
@VM.d._TVV @VM.d._{T_1}VV \\
char(T,\tau) @>char(i_1)>> char(T_1,\tau_1)
\end{CD}$$
\end{thm}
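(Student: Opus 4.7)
The plan is to transport the square to $K$-theory of Mackey covering spaces, where all twistings become trivial (every connected component being contractible) and every map becomes an honest push-forward; commutativity then follows from functoriality of push-forward. By Corollary \ref{computation of K using a Mackey decomposition}, $M.d._T$ factors as $\pi_! \circ Mackey_{T,T,\tau}$ with $Y_T = \Lambda^\tau_T \times_{\Pi_T} \mathfrak{t}$ and $\pi: Y_T \to \Lambda^\tau_T/\kappa^\tau(\Pi_T)$ the bundle projection, and similarly for $M.d._{T_1}$. By Lemma \ref{product formula for kappa}, $\kappa^\tau$ is block-diagonal, hence
$$Y_T \cong Y_{T_1} \times Y_{T_2} \quad \text{and} \quad \Lambda^\tau_T/\kappa^\tau(\Pi_T) \cong \bigl(\Lambda^{\tau_1}_{T_1}/\kappa^{\tau_1}(\Pi_{T_1})\bigr) \times \bigl(\Lambda^{\tau_2}_{T_2}/\kappa^{\tau_2}(\Pi_{T_2})\bigr),$$
with $\pi = \pi_1 \times \pi_2$.

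Next, I would transport the top arrow $i_1^\# = ind_{T_1\times T_2 \to T_1}\circ i_1^\natural$. Theorem \ref{naturality for f^natural} applied to $i_1$ converts $i_1^\natural$, after Mackey, into push-forward along $\widetilde{{}^t d i_1}: Y_T \to Y_{T_1 \subset T}$, where $Y_{T_1 \subset T}$ is the Mackey cover of $T$ associated with the $T_1$-equivariant twisting $i_1^\natural \tau$. Since the restriction of $T^\tau$ to $T_1 \subset T$ at any point of $T_2$ reproduces $T_1^{\tau_1}$, there is a canonical identification $Y_{T_1 \subset T} \cong Y_{T_1} \times T_2$, under which $\widetilde{{}^t d i_1}$ becomes $id_{Y_{T_1}} \times \pi_{T_2}$ with $\pi_{T_2}: Y_{T_2} \to T_2$ the covering map. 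Theorem \ref{naturality of index map with Mackey decomposition} then converts $ind_{T_1 \times T_2 \to T_1}$, after Mackey, into push-forward along $\tilde p_1: Y_{T_1} \times T_2 \to Y_{T_1}$ (Thom integration over $T_2$).

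Finally, by Lemma \ref{re-write of char(f)}, $char(i_1)$ is the push-forward along the projection onto the first factor of $\bigl(\Lambda^{\tau_1}_{T_1}/\kappa^{\tau_1}(\Pi_{T_1})\bigr) \times \bigl(\Lambda^{\tau_2}_{T_2}/\kappa^{\tau_2}(\Pi_{T_2})\bigr)$. Combining everything, both $M.d._{T_1} \circ i_1^\#$ and $char(i_1) \circ M.d._T$ become, after Mackey transport, push-forward along the single composite
$$Y_{T_1} \times Y_{T_2} \twoheadrightarrow Y_{T_1} \xrightarrow{\pi_1} \Lambda^{\tau_1}_{T_1}/\kappa^{\tau_1}(\Pi_{T_1}),$$
computed in two different orders, so they agree by Lemma \ref{functoriality of push-forward}. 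The main obstacle will be the identification $Y_{T_1 \subset T} \cong Y_{T_1} \times T_2$ together with the compatible isomorphism $i_1^\natural \tau \cong p_1^* \tau_1$ of $T_1$-equivariant twistings on $T_1 \times T_2$ (which is required in order for the family index map to be applicable); both must be extracted from the tensor decomposition $LT^\tau \cong LT_1^{\tau_1}\otimes LT_2^{\tau_2}/U(1)$ underlying the positive central extension $\tau = p_1^*\tau_1 + p_2^*\tau_2$.
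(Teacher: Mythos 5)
Your proposal is correct and follows essentially the same route as the paper: transport both composites through the Mackey decomposition to the covering spaces $\Lambda^\tau_T\times_{\Pi_T}\mathfrak{t}$, identify $\widetilde{{}^tdi_1}$ and the family index map with explicit push-forwards there (the paper's Lemmas \ref{realization of tdi_1 at covering space level}--\ref{decomposition of char(i_1)}, using Theorems \ref{naturality for f^natural} and \ref{naturality of index map with Mackey decomposition}), and conclude by functoriality of push-forward together with Lemma \ref{re-write of char(f)}. The product splitting $Y_T\cong Y_{T_1}\times Y_{T_2}$ and the twisting identification you flag as the main obstacle are exactly what the paper supplies via Lemma \ref{product formula for kappa} and the factorization $\Lambda^{\tau_1}_{T_1}\times_{\Pi_T}\mathfrak{t}\cong(\Lambda^{\tau_1}_{T_1}\times_{\Pi_{T_1}}\mathfrak{t}_1)\times T_2$.
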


Let us start from the lift of $({}^tdi_1)mod\Pi_T$ to the covering space.
\begin{lem}\label{realization of tdi_1 at covering space level}%[realization of ${}^tdi_1$ at covering space level]
\label{realization of tdi_1 at space level}
The following commutative diagram holds.
$$
\begin{CD}
\Lambda_T^\tau\times_{\Pi_T}\mathfrak{t} @>({}^tdi_1\times dp_1)mod\Pi_T>> \Lambda_{T_1}^{\tau_1}\times_{\Pi_{T_1}}\mathfrak{t}_1 \\
@V\pi VV @V\pi_1VV \\
\Lambda_T^\tau/\kappa^\tau(\Pi_T) @>({}^tdi_1)mod\Pi_T>> \Lambda_{T_1}^{\tau_1}/\kappa^{\tau_1}(\Pi_{T_1})
\end{CD}
$$
where the horizontal arrows are defined by
$$({}^tdi_1)mod\Pi_T([\lambda]):=[{}^tdi_1(\lambda)]$$
$$({}^tdi_1\times dp_1)mod\Pi_T([(\lambda,v)]):=[({}^tdi_1(\lambda),dp_1(v))]$$
and the vertical arrows
$$\pi:\Lambda^\tau_T\times _{\Pi_T}\fra{t}\to \Lambda^\tau_T/\kappa^\tau(\Pi_T)$$
$$\pi_1:\Lambda^{\tau_1}_{T_1}\times_{\Pi_{T_1}}\fra{t}_1\to \Lambda^{\tau_1}_{T_1}/\kappa^{\tau_1}(\Pi_{T_1})$$ are the trivial vector bundles mentioned in the proof of Corollary \ref{computation of K using a Mackey decomposition}.
\end{lem}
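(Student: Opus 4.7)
The plan is to reduce everything to the block-diagonal form of $\kappa^\tau$ supplied by Lemma \ref{product formula for kappa}, which under the identifications $\Lambda_T \cong \Lambda_{T_1}\oplus\Lambda_{T_2}$ and $\Pi_T\cong\Pi_{T_1}\oplus\Pi_{T_2}$ says
\[
\kappa^\tau(n_1,n_2)=\bigl(\kappa^{\tau_1}(n_1),\,\kappa^{\tau_2}(n_2)\bigr).
\]
With this in hand, commutativity of the square is immediate from the defining formulas, so the real content is well-definedness of the two horizontal arrows. I would therefore split the argument into (i) well-definedness of $({}^tdi_1)\bmod \Pi_T$ on the base, (ii) well-definedness of $({}^tdi_1\times dp_1)\bmod\Pi_T$ on the total space, and (iii) commutativity.

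For (i), note that ${}^tdi_1:\Lambda_T\to\Lambda_{T_1}$ is the projection onto the first summand. The block-diagonal form above gives
\[
{}^tdi_1\bigl(\kappa^\tau(n)\bigr)=\kappa^{\tau_1}\bigl(dp_1(n)\bigr)\in \kappa^{\tau_1}(\Pi_{T_1})
\]
for every $n\in\Pi_T$, so the map descends to $\Lambda_T^\tau/\kappa^\tau(\Pi_T)\to\Lambda_{T_1}^{\tau_1}/\kappa^{\tau_1}(\Pi_{T_1})$.

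For (ii), recall that the $\Pi_T$-action on $\Lambda_T^\tau\times\mathfrak{t}$ defining the fiber product is $n\cdot(\lambda,v)=(\lambda+\kappa^\tau(n),\,v+n)$, and similarly for $\Pi_{T_1}$ on $\Lambda_{T_1}^{\tau_1}\times\mathfrak{t}_1$. Under $({}^tdi_1\times dp_1)$, the pair $n\cdot(\lambda,v)$ is sent to
\[
\bigl({}^tdi_1(\lambda)+\kappa^{\tau_1}(dp_1(n)),\,dp_1(v)+dp_1(n)\bigr)=dp_1(n)\cdot\bigl({}^tdi_1(\lambda),dp_1(v)\bigr),
\]
using (i) in the first coordinate, and $dp_1(n)\in\Pi_{T_1}$. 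Thus the top horizontal arrow is well-defined into $\Lambda_{T_1}^{\tau_1}\times_{\Pi_{T_1}}\mathfrak{t}_1$.

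Finally (iii): chasing $[(\lambda,v)]$ both ways gives $[{}^tdi_1(\lambda)]$, so the diagram commutes. I do not expect any genuine obstacle; the only subtlety is keeping track of the two different $\Pi$-actions (on the character side via $\kappa$ and on the Lie-algebra side by translation), and Lemma \ref{product formula for kappa} is precisely what reconciles them.
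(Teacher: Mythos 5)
Your proposal is correct and follows essentially the same route as the paper: the key point in both is the identity ${}^tdi_1\circ\kappa^\tau=\kappa^{\tau_1}\circ dp_1$ extracted from Lemma \ref{product formula for kappa}, which gives $\Pi_T$-to-$\Pi_{T_1}$ equivariance of ${}^tdi_1\times dp_1$ and hence well-definedness, with commutativity then immediate from the definitions. The only difference is that you also spell out well-definedness of the bottom arrow, which the paper treats as already established.
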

\begin{proof}
If we verify the well-definedness of $({}^tdi_1\times dp_1)mod\Pi_T$, commutativity is clear from the definition. Let $n\in\Pi_T$, $\lambda\in\Lambda_T^\tau$ and $v\in\mathfrak{t}$. We have to verify that ${}^tdi_1\times dp_1(\lambda+\kappa^\tau(n),v+n)$ is equivalent to ${}^tdi_1\times dp_1(\lambda,v)$ under the action of $\Pi_{T_1}$.
$${}^tdi_1\times dp_1(\lambda+\kappa^\tau(n),v+n)=({}^tdi_1(\lambda+\kappa^\tau(n)),dp_1(v+n))$$
$$=({}^tdi_1(\lambda+\kappa^\tau(di_1\circ dp_1(n)+di_2\circ dp_2(n))),dp_1(v)+dp_1(n))$$
$$=({}^tdi_1(\lambda)+{}^tdi_1\circ\kappa^\tau\circ di_1(dp_1(n))+{}^tdi_1\circ\kappa^\tau\circ di_2(dp_2(n)),dp_1(v)+dp_1(n))$$
$$=({}^tdi_1(\lambda)+\kappa^{\tau_1}(dp_1(n)),dp_1(v)+dp_1(n)),$$
where ${}^tdp_1\circ\kappa^\tau\circ di_2=0$ from Lemma \ref{product formula for kappa}.
\end{proof}
Lemma \ref{functoriality of push-forward} implies the following.
\begin{pro}
The following commutative diagram holds.
$$\begin{CD}
K^n(\Lambda_T^\tau\times_{\Pi_T}\mathfrak{t}) @>(({}^tdi_1\times dp_1)\mathit{mod}\Pi_T)_!>> K^{n_1}(\Lambda_{T_1}^{\tau_1}\times_{\Pi_{T_1}}\mathfrak{t}_1) \\
@V\pi_!VV @V\pi_{1!}VV \\
K(\Lambda_T^\tau/\kappa^\tau(\Pi_T)) @>(({}^tdi_1)mod\Pi_T)_!>> K(\Lambda_{T_1}^{\tau_1}/\kappa^{\tau_1}(\Pi_{T_1}))
\end{CD}$$

\end{pro}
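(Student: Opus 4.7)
The plan is to deduce this proposition from the commutativity of the square of spaces established in the preceding lemma, together with the functoriality of push-forward maps (Lemma \ref{functoriality of push-forward}). First I would check that each of the four arrows in the square is a proper smooth map admitting a push-forward. The vertical arrows $\pi,\pi_1$ are projections of trivial oriented vector bundles over discrete bases (the bases are the finite sets $\Lambda^\tau_T/\kappa^\tau(\Pi_T)$ and $\Lambda^{\tau_1}_{T_1}/\kappa^{\tau_1}(\Pi_{T_1})$, with fibers $\fra{t}$ and $\fra{t}_1$ respectively). The horizontal arrows cover a finite-to-one map of these discrete bases, being given fiberwise by the linear projection $dp_1:\fra{t}\to\fra{t}_1$, so properness is automatic. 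All twistings are trivial since each connected component of the total spaces is contractible, and the $\pi_!$'s become Thom isomorphisms once orientations of $T$ and $T_1$ are fixed (the orientation of $\fra{t}$ being induced by $T=T_1\times T_2$ via $\fra{t}=\fra{t}_1\oplus\fra{t}_2$).

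Next, by the preceding lemma the two compositions
$$\pi_1\circ\bigl(({}^tdi_1\times dp_1)\mathit{mod}\,\Pi_T\bigr)=\bigl(({}^tdi_1)\mathit{mod}\,\Pi_T\bigr)\circ\pi$$
agree as smooth maps. Applying the identity $(g\circ f)_!=g_!\circ f_!$ of Lemma \ref{functoriality of push-forward} to each side yields
$$\pi_{1!}\circ\bigl(({}^tdi_1\times dp_1)\mathit{mod}\,\Pi_T\bigr)_!=\bigl(({}^tdi_1)\mathit{mod}\,\Pi_T\bigr)_!\circ\pi_!,$$
which is exactly the required commutativity.

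I do not expect any serious obstacle. The only bookkeeping worth isolating is that the orientation conventions used on the two routes around the square agree: on each fiber the horizontal arrow restricts to the linear projection $dp_1$ onto the first oriented summand of $\fra{t}=\fra{t}_1\oplus\fra{t}_2$, and the $\fra{t}_2$ direction is the ``cotangent'' direction absorbed by the Thom isomorphism $\pi_!$. Consequently the $W_3$ correction in the general definition of the push-forward vanishes throughout, and the proposition reduces purely to functoriality of $(-)_!$ applied to a diagram of spaces that has already been shown to commute.
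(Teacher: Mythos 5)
Your proposal is correct and follows exactly the paper's (very terse) argument: the paper simply states that Lemma \ref{functoriality of push-forward} applied to the commuting square of spaces from Lemma \ref{realization of tdi_1 at covering space level} yields the result. Your additional bookkeeping on properness, triviality of twistings, and orientation conventions fills in details the paper leaves implicit but does not change the route.
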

The lift $({}^tdi_1\times dp_1)\mathit{mod}\Pi_T$ of $({}^tdi_1)mod\Pi_T$ can be written as the composition of the following sequence.
\begin{lem}\label{realization of tdi_1 times dp_1mod Pi}
$({}^tdi_1\times dp_1)mod\Pi_T$ can be written as the composition of the following sequence
$$\Lambda_T^\tau\times_{\Pi_T}\mathfrak{t}
\xrightarrow{({}^tdi_1\times id)mod\Pi_T}
\Lambda_{T_1}^{\tau_1}\times_{\Pi_T}\mathfrak{t}
\xrightarrow{(id\times dp_1)mod\Pi_T}
\Lambda_{T_1}^{\tau_1}\times_{\Pi_{T_1}}\mathfrak{t}_1,$$
where $\Pi_T$ acts on $\Lambda^{\tau_1}_{T_1}$ through $dp_1:\Pi_T\to \Pi_{T_1}$ and $\kappa^{\tau_1}$, and
$$({}^tdi_1\times id)mod\Pi_T([(\lambda,v)]):=[({}^tdi_1(\lambda),v)]$$
$$(id\times dp_1)mod\Pi_T([(\mu,v)]):=[(\mu,dp_1(v))].$$
\end{lem}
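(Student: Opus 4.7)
The plan is to verify the lemma by checking well-definedness of each of the two maps in the claimed factorization, and then observing that their composition agrees with $({}^tdi_1\times dp_1)\mathit{mod}\Pi_T$ by direct substitution.

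First I would unpack the $\Pi_T$-action on $\Lambda_{T_1}^{\tau_1}\times\mathfrak{t}$. By Lemma \ref{product formula for kappa}, the map $\kappa^\tau$ decomposes as ${}^tdp_1\circ\kappa^{\tau_1}\circ dp_1 + {}^tdp_2\circ\kappa^{\tau_2}\circ dp_2$, so composing with ${}^tdi_1$ and using ${}^tdi_1\circ{}^tdp_1=id$, ${}^tdi_1\circ{}^tdp_2=0$ yields the identity
\begin{equation*}
{}^tdi_1\circ\kappa^\tau = \kappa^{\tau_1}\circ dp_1.
\end{equation*}
This guarantees that the diagonal $\Pi_T$-action $n\cdot(\mu,v):=(\mu+\kappa^{\tau_1}(dp_1(n)),v+n)$ is genuinely an action and that the space $\Lambda_{T_1}^{\tau_1}\times_{\Pi_T}\mathfrak{t}$ is well-defined.

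Next I would check the two maps in sequence. For $({}^tdi_1\times id)\mathit{mod}\Pi_T$, acting by $n\in\Pi_T$ on a representative $(\lambda,v)$ and applying ${}^tdi_1\times id$ gives $({}^tdi_1(\lambda)+{}^tdi_1\circ\kappa^\tau(n),v+n)=({}^tdi_1(\lambda)+\kappa^{\tau_1}(dp_1(n)),v+n)$ by the identity above, which is precisely $n\cdot({}^tdi_1(\lambda),v)$ in the target; hence the map descends to the quotient. For $(id\times dp_1)\mathit{mod}\Pi_T$, acting by $n\in\Pi_T$ on $(\mu,v)$ gives $(\mu+\kappa^{\tau_1}(dp_1(n)),v+n)$, and applying $id\times dp_1$ yields $(\mu+\kappa^{\tau_1}(dp_1(n)),dp_1(v)+dp_1(n))$, which is related to $(\mu,dp_1(v))$ by the $\Pi_{T_1}$-action of $dp_1(n)$; hence well-definedness again holds.

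Finally, composing the two maps on a representative $(\lambda,v)$ directly gives $[({}^tdi_1(\lambda),v)]\mapsto[({}^tdi_1(\lambda),dp_1(v))]$, which matches the definition of $({}^tdi_1\times dp_1)\mathit{mod}\Pi_T$ from Lemma \ref{realization of tdi_1 at covering space level}, completing the proof. The only non-routine point is keeping straight which action is used on each intermediate space and invoking the product formula for $\kappa^\tau$ at the right moment; no real obstacle is present, since the statement is essentially a bookkeeping lemma setting up functoriality of push-forwards in the subsequent computations.
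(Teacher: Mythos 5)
Your proof is correct and follows essentially the same route as the paper: the paper likewise reduces well-definedness to the computation already done for $({}^tdi_1\times dp_1)\mathit{mod}\Pi_T$ (via the identity ${}^tdi_1\circ\kappa^\tau=\kappa^{\tau_1}\circ dp_1$ coming from Lemma \ref{product formula for kappa}) and then observes that the composition identity holds already on representatives since ${}^tdi_1\times dp_1=(id\times dp_1)\circ({}^tdi_1\times id)$. You have merely written out in full the well-definedness checks that the paper defers to the preceding lemma.
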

\begin{proof}
Well-definedness of these maps can be verified with the same way in the proof of Lemma \ref{realization of tdi_1 at covering space level}. The equality
$$({}^tdi_1 \times dp_1)mod\Pi_T=(id\times dp_1)mod\Pi_T\circ ({}^tdi_1\times id)mod\Pi_T$$
follows from that ${}^tdi_1 \times dp_1=(id\times dp_1)\circ({}^tdi_1\times id)$
\end{proof}

Through the diffeomorphism $\Lambda_{T_1}^{\tau_1}\times_{\Pi_T}\mathfrak{t}\cong
(\Lambda_{T_1}^{\tau_1}\times_{\Pi_{T_1}}\mathfrak{t}_1)\times T_2$, 
$(id\times dp_1)mod\Pi_T$ can be thought as the projection onto the first factor.

The following lemma is clear from Theorem \ref{naturality for f^natural}, \ref{naturality of index map with Mackey decomposition}. Let us recall that $K^{\tau_1+\dim(T_1)+1}_{T_1}(T_1)=0$ and the tangent bundle of $T_2$ is trivial. We can trivialize $TT_2$ via the left translation.
%\begin{lem}
%The push-forward map along ${}^tdi_1\times id\mathit{mod}\Pi$ corresponds to the restriction of group action, which follows from Theorem \ref{naturality for f^natural}.
%%\end{lem}
%\begin{lem}
%The second one corresponds to the family index, which follows from Theorem \ref{naturality of index map with Mackey decomposition}.
%\end{lem}

\begin{lem}\label{naturality of ind.}
The following commutative diagram holds.
$$\begin{CD}
K^{\tau+n}_{T}(T) @>i_1^\natural>> K^{p_1^*\tau_1+n}_{T_1}(T_1\times T_2)
@>ind_{T_1\times T_2\to T_1}>> K^{\tau_1+n_1}_{T_1}(T_1) \\
@VMackey_{T,T,\tau}VV @VMackey_{T_1\times T_2,T_1,p_1^*\tau}VV @VMackey_{T_1,T_1,\tau_1}VV \\
K^{n}(Y_{T}) @>(\widetilde{{}^tdi_1})_!>> K^{n}(Y_{T_1}\times T_2) @>ind_{Y_{T_1}\times T_2\to Y_{T_1}}>> K^{n_1}(Y_{T_1}) 
\end{CD}$$
where $n:=n_1+n_2$, $Y_T:=\Lambda^\tau_T\times_{\Pi_T}\fra{t}$ and $Y_{T_1}:=\Lambda^\tau_{T_1}\times_{\Pi_{T_1}}\fra{t}_1$.
\end{lem}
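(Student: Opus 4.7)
My plan is to decompose the diagram into its two constituent squares and identify each with one of the naturality results already established, namely Theorem \ref{naturality for f^natural} (naturality of Mackey under restriction of group action) on the left, and Theorem \ref{naturality of index map with Mackey decomposition} (naturality of the family index map under Mackey) on the right.

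For the right square, I would apply Theorem \ref{naturality of index map with Mackey decomposition} with $G=T_1$, $M=T_1$, $Z=T_2$, base twisting $\tau_1$, $n_0=n_1$ and fiber dimension $n=n_2$. The hypotheses hold: $T_2$ is a torus, so its tangent bundle is canonically trivialized via left translation and $K^*(T_2)$ is free abelian (it is an exterior algebra on $\dim T_2$ generators), and the required vanishing $K_{T_1}^{\tau_1+n_1+1}(T_1)=0$ is Corollary \ref{computation of K using a Mackey decomposition}. The one thing to notice is that the Mackey covering of $(T_1\times T_2,\,T_1,\,p_1^*\tau_1)$ is canonically identified with $Y_{T_1}\times T_2$, because the fiberwise central extension underlying $p_1^*\tau_1$ is $T_1^{\tau_1}$ at every point and is locally constant in the $T_2$-direction. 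With this identification the square is exactly the conclusion of Theorem \ref{naturality of index map with Mackey decomposition}.

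For the left square, I would apply Theorem \ref{naturality for f^natural} to $f=i_1:T_1\hookrightarrow T$, with $X=T=T_1\times T_2$ and twisting $\tau=p_1^*\tau_1+p_2^*\tau_2$. The key preliminary point is that $i_1^\natural\tau$ and $p_1^*\tau_1$ agree as $T_1$-equivariant twistings on $T_1\times T_2$: the difference $i_1^\natural p_2^*\tau_2=(p_2\circ i_1)^\natural\tau_2$ is trivially $T_1$-equivariant since $p_2\circ i_1$ is the constant map. Under this identification, the covering $Y_{T'}$ appearing on the bottom-left of Theorem \ref{naturality for f^natural} is the Mackey covering of $T_1\times T_2$ for $(T_1,\,i_1^\natural\tau)$, which by the same observation as in the right square is $Y_{T_1}\times T_2$, and the map $\widetilde{{}^tdi_1}$ is precisely the fiber map described in Lemma \ref{realization of tdi_1 at covering space level}, sending $[(\lambda,v)]\mapsto[({}^tdi_1(\lambda),dp_1(v))]$. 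The main obstacle is therefore bookkeeping: checking that the two descriptions of the covering $Y_{T_1}\times T_2$ — once as the Mackey covering of $T_1\times T_2$ relative to $T_1$ with twisting $p_1^*\tau_1\cong i_1^\natural\tau$, and once as the product of the covering $Y_{T_1}$ of $T_1$ with the factor $T_2$ — really match, and that the associated pushforwards $\widetilde{{}^tdi_1}_!$ in the two pictures coincide. Once this matching is recorded (which follows from the explicit local trivializations used in the proofs of Lemma \ref{naturality of covering spaces} and Lemma \ref{realization of tdi_1 at covering space level}), both squares commute on the nose and the lemma follows by pasting.
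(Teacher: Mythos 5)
Your proposal is correct and follows essentially the same route as the paper: the paper likewise splits the diagram into the two squares and cites Theorem \ref{naturality for f^natural} for the left one and Theorem \ref{naturality of index map with Mackey decomposition} for the right one, noting only that $K^{\tau_1+\dim(T_1)+1}_{T_1}(T_1)=0$ and that $TT_2$ is trivialized by left translation. Your additional bookkeeping on the identifications $i_1^\natural\tau\cong p_1^*\tau_1$ and of the Mackey covering of $(T_1\times T_2,T_1,p_1^*\tau_1)$ with $Y_{T_1}\times T_2$ simply makes explicit what the paper leaves implicit.
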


The decomposition of $({}^tdi_1\times dp_1)mod\Pi_T$ and Theorem \ref{explicit formula of push-forward with discrete fiber} imply the following lemma.
\begin{lem}\label{decomposition of char(i_1)}
The following commutative diagram holds.
$$\xymatrix{
K^n(Y_T) \ar[r]^{(\widetilde{{}^tdi_1})_!} \ar[d]^{\pi_!}  & K^{n}(Y_{T_1}\times T_2) \ar[rr]^{ind_{Y_{T_1}\times T_2\to Y_{T_1}}} && K^{n_1}(Y_{T_1}) \ar[d]^{\pi_{1!}} \\
K(\Lambda_T^\tau/\kappa^\tau(\Pi_T)) \ar[rrr]^{(({}^tdi_1)mod\Pi_T)_!} & && K(\Lambda_{T_1}^{\tau_1}/\kappa^{\tau_1}(\Pi_{T_1}))
}$$
\end{lem}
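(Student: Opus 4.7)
The approach is to route the top row of the diagram through the intermediate map $(({}^tdi_1\times dp_1)\mathit{mod}\Pi_T)_!:K^n(Y_T)\to K^{n_1}(Y_{T_1})$, because the bottom/right portion of the rectangle already commutes with this map by the Proposition established immediately after Lemma \ref{realization of tdi_1 at covering space level}. Thus the whole task reduces to showing
$$ind_{Y_{T_1}\times T_2\to Y_{T_1}}\circ(\widetilde{{}^tdi_1})_! \;=\; (({}^tdi_1\times dp_1)\mathit{mod}\Pi_T)_!.$$

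By Lemma \ref{realization of tdi_1 times dp_1mod Pi}, $({}^tdi_1\times dp_1)\mathit{mod}\Pi_T$ factors as $(id\times dp_1)\mathit{mod}\Pi_T\circ({}^tdi_1\times id)\mathit{mod}\Pi_T$ through the intermediate space $\Lambda_{T_1}^{\tau_1}\times_{\Pi_T}\mathfrak{t}\cong Y_{T_1}\times T_2$. Under this diffeomorphism, I would check that $({}^tdi_1\times id)\mathit{mod}\Pi_T$ coincides with the map $\widetilde{{}^tdi_1}$ of Theorem \ref{naturality for f^natural} (both send the class of $(\lambda,v)$ to the class of $({}^tdi_1(\lambda),v)$), and that $(id\times dp_1)\mathit{mod}\Pi_T$ becomes the first projection $q:Y_{T_1}\times T_2\to Y_{T_1}$. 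The remaining point is the identification $ind_{Y_{T_1}\times T_2\to Y_{T_1}}=q_!$: by Definition \ref{family index map} the index map is built as $id\otimes ind_{T_2}$ through the K\"unneth isomorphism, and $ind_{T_2}$ is by Lemma \ref{def. of ind.} the K-theoretic push-forward along $T_2\to\{pt\}$, so the assertion is a Fubini-type compatibility of push-forward along the trivial fibration $Y_{T_1}\times T_2\to Y_{T_1}$, with the product orientation on $T=T_1\times T_2$ induced by the chosen orientations on $T_1$ and $T_2$ and the trivializations of $TT_1$, $TT_2$. Once this is in hand, functoriality of push-forward (Lemma \ref{functoriality of push-forward}) yields
$$ind_{Y_{T_1}\times T_2\to Y_{T_1}}\circ(\widetilde{{}^tdi_1})_! \;=\; q_!\circ(({}^tdi_1\times id)\mathit{mod}\Pi_T)_! \;=\; (({}^tdi_1\times dp_1)\mathit{mod}\Pi_T)_!.$$

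The main obstacle I expect is the identification $ind_{Y_{T_1}\times T_2\to Y_{T_1}}=q_!$: the index map was introduced via the K\"unneth formula and the scalar index $ind_{T_2}$, whereas the K-theoretic push-forward $q_!$ is defined via embedding, Thom isomorphism and family index. Matching them amounts to checking that the K\"unneth decomposition is compatible with the multiplicative structure of the push-forward, which is essentially the product formula for the Dirac family index. All orientation conventions (the trivialization of $TT_2$ and the induced spin structure) must be tracked so that the sign of $ind_{T_2}$ matches the one coming from $q_!$.
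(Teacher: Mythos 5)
Your proposal is correct and follows essentially the same route as the paper: the paper's (very terse) proof likewise reduces the lemma to the already-established commutativity of $(({}^tdi_1\times dp_1)\mathit{mod}\Pi_T)_!$ with $\pi_!$ and $\pi_{1!}$, then identifies the top row with $(({}^tdi_1\times dp_1)\mathit{mod}\Pi_T)_!$ via the factorization of Lemma \ref{realization of tdi_1 times dp_1mod Pi} and the explicit push-forward formula for maps with discrete fibers. The one point you flag as delicate --- that $ind_{Y_{T_1}\times T_2\to Y_{T_1}}$ agrees with the push-forward along the projection --- is indeed left implicit in the paper, so your additional care there is a genuine improvement rather than a deviation.
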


Let us notice that the upper sequence in the above lemma is the lower one in Lemma \ref{naturality of ind.}.

Let us verify the theorem.\\\\
{\it Proof of Theorem \ref{compatibility of t.e.K(i_1) with char(i_1)}.} 
Let us recall Lemma \ref{re-write of char(f)}, that is, the following commutative diagram holds.
$$\begin{CD}
K(\Lambda_T^\tau/\kappa^\tau(\Pi_T)) @>{(({}^tdi_1)mod\Pi_T)_!}>> K(\Lambda_{T_1}^{\tau_1}/\kappa^{\tau_1}(\Pi_{T_1}))\\
@V\cong VV @V\cong VV \\
char(T,\tau) @>char(i_1)>> char(T_1,\tau_1)
\end{CD}$$
Combining with Lemma \ref{decomposition of char(i_1)}, the following commutative diagram holds.
$$\xymatrix{
K^n(Y_T) \ar[r]^{(\widetilde{{}^tdi_1})_!} \ar[d]^{\pi_!}  & K^{n}(Y_{T_1}\times T_2) \ar[rr]^{ind_{Y_{T_1}\times T_2\to Y_{T_1}}} && K^{n_1}(Y_{T_1}) \ar[d]^{\pi_{1!}} \\
K(\Lambda_T^\tau/\kappa^\tau(\Pi_T)) \ar[rrr]^{(({}^tdi_1)mod\Pi_T)_!} \ar[d]^\cong & && K(\Lambda_{T_1}^{\tau_1}/\kappa^{\tau_1}(\Pi_{T_1})) \ar[d]^\cong \\
char(T,\tau) \ar[rrr]^{char(i_1)} &&& char(T_1,\tau_1)
}$$

Let us consider the following diagram.
$$\xymatrix{
K_T^{\tau+n}(T) \ar[rr]^{i_1^\natural} \ar[d]_{Mackey_{T,T,\tau}} \ar@{}[rrd]|{(1)} && K^{p_1^*\tau_1+n}_{T_1}(T) \ar[rr]^{ind_{T_1\times T_2\to T_1}} \ar[d]|{Mackey_{T,T_1,p_1^*\tau_1}} \ar@{}[rrd]|{(2)} & & K^{\tau_1+n_1}_{T_1}(T_1) \ar[d]^{Mackey_{T_1,T_1,\tau_1}} \\
K^n(Y_T) \ar[rr]^{(\widetilde{{}^tdi_1})_!} \ar[d]_{\pi_!} \ar@{}[rrrrd]|{(3)} && K^n(Y_{T_1}\times T_2) \ar[rr]^{ind_{Y_{T_1}\times T_2\to Y_{T_1}}} && K^{n_1}(Y_{T_1}) \ar[d]^{\pi_{1!}} \\
char(T,\tau) \ar[rrrr]^{char(i_1)} &&&& char(T_1,\tau_1)
}$$
Let us recall that $M.d._T=\pi_!\circ Mackey_{T,T,\tau}$ and $M.d._{T_1}=\pi_{1!}\circ Mackey_{T_1,T_1,\tau_1}$.

 (1) commutes from Theorem \ref{naturality for f^natural}. (2) commutes from Theorem \ref{naturality of index map with Mackey decomposition}. (3) commutes from the above commutative diagram.
\begin{flushright} $\Box$ \end{flushright}

%有限被覆%

\subsubsection{Finite covering}\label{t.e.K for finite cover of tori}

Let $q:T'\to T$ be a finite covering of an $n$ dimensional torus and $\tau$ be a positive central extension of $LT$ and the associated $T$-equivariant twisting over $T$. In this section, we verify the following theorem.
\begin{thm}\label{main theorem for K for finite cover}
If we define
$$q^\#:=q^*\circ q^\natural$$
the following commutative diagram holds.
$$\begin{CD}
t.e.K(T,\tau) @>q^\#>> t.e.K(T',q^*\tau) \\
@VM.d._TVV @VM.d._{T'}VV \\
char(T,\tau) @>char(q)>> char(T',q^*\tau)
\end{CD}$$
\end{thm}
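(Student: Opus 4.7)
The plan is to transport $q^\#=q^*\circ q^\natural$ to the level of the Mackey covering spaces using the naturality theorems for Mackey decomposition (Theorems \ref{naturality for F^*} and \ref{naturality for f^natural}), then descend further to the discrete quotients via Thom isomorphism, and match the result with $char(q)$ via Lemma \ref{char formula for finite cover}. The overall structure mimics the direct product case (Theorem \ref{compatibility of t.e.K(i_1) with char(i_1)}).

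Set $n=dim(T)=dim(T')$, $Y_T=\Lambda_T^\tau\times_{\Pi_T}\mathfrak{t}$, and $Y_{T'}=\Lambda_{T'}^{q^*\tau}\times_{\Pi_{T'}}\mathfrak{t}'$, the covering spaces entering the Mackey decompositions of $K^{\tau+n}_T(T)$ and $K^{q^*\tau+n}_{T'}(T')$. Let $Y_{\mathrm{mid}}$ denote the covering space over $T$ associated with the $T'$-equivariant twisting $q^\natural\tau$; by Lemma \ref{pull back formula of kappa}, the monodromy of this covering is governed by ${}^tdq\circ\kappa^\tau:\Pi_T\to\Lambda_{T'}^{q^*\tau}$, so that $Y_{\mathrm{mid}}=\Lambda_{T'}^{q^*\tau}\times_{\Pi_T}\mathfrak{t}$. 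Applying Theorem \ref{naturality for f^natural} to $q$ yields a lift $\widetilde{{}^tdq}:Y_T\to Y_{\mathrm{mid}}$, $[\lambda,v]\mapsto[{}^tdq(\lambda),v]$, translating $q^\natural$ into $\widetilde{{}^tdq}_!$; applying Theorem \ref{naturality for F^*} to $q$ yields a lift $\widetilde{q}:Y_{T'}\to Y_{\mathrm{mid}}$ covering $q$, with $Y_{T'}\cong q^*Y_{\mathrm{mid}}$, translating $q^*$ into $\widetilde{q}^*$. Thus $q^\#$ corresponds under the Mackey isomorphisms to $\widetilde{q}^*\circ\widetilde{{}^tdq}_!$.

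Each of $Y_T, Y_{\mathrm{mid}}, Y_{T'}$ is a trivial vector bundle over the respective discrete set of connected components $\Lambda_T^\tau/\kappa^\tau(\Pi_T)$, $Q:=\Lambda_{T'}^{q^*\tau}/({}^tdq\circ\kappa^\tau)(\Pi_T)$, and $\Lambda_{T'}^{q^*\tau}/\kappa^{q^*\tau}(\Pi_{T'})$; write $\pi,\pi'',\pi'$ for the corresponding Thom projections. The injectivity of ${}^tdq$ and $\kappa^\tau$ (positivity) makes $\widetilde{{}^tdq}$ a closed inclusion of vector bundles with identity on fibers, covering an injection $[\lambda]_T\mapsto[{}^tdq(\lambda)]_Q$ on bases; and since $\kappa^{q^*\tau}={}^tdq\circ\kappa^\tau\circ dq$, the map $\widetilde{q}$ is, up to the fiberwise isomorphism $dq:\mathfrak{t}'\xrightarrow{\cong}\mathfrak{t}$, the pullback of vector bundles along the finite surjection $\pi_{\mathrm{fin}}:\Lambda_{T'}^{q^*\tau}/\kappa^{q^*\tau}(\Pi_{T'})\to Q$ whose fiber is $\Pi_T/dq(\Pi_{T'})$. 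Naturality of Thom isomorphism (push-forward along the proper fiber-preserving $\widetilde{{}^tdq}$, and base change for the pullback square of $\widetilde{q}$) gives
\[\pi'_!\circ\widetilde{q}^*\circ\widetilde{{}^tdq}_!=\pi_{\mathrm{fin}}^*\circ\pi''_!\circ\widetilde{{}^tdq}_!=\pi_{\mathrm{fin}}^*\circ({}^tdq)_{\mathrm{mod},*}\circ\pi_!.\]
On the generator $\delta_{[\lambda]_T}$, $({}^tdq)_{\mathrm{mod},*}$ delivers $\delta_{[{}^tdq(\lambda)]_Q}$, and $\pi_{\mathrm{fin}}^*$ sums over the finite fiber to yield $\sum_{m\in\Pi_T/dq(\Pi_{T'})}\delta_{[{}^tdq(\lambda+\kappa^\tau(m))]_{T'}}$, which by Lemma \ref{char formula for finite cover} equals $char(q)([\lambda]_T)$; combined with $M.d._T=\pi_!\circ Mackey_{T,T,\tau}$ and $M.d._{T'}=\pi'_!\circ Mackey_{T',T',q^*\tau}$, this yields the desired commutativity.

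The main obstacle is the careful identification of the intermediate covering space $Y_{\mathrm{mid}}$ and of the two lifts $\widetilde{{}^tdq},\widetilde{q}$: in particular, verifying that the monodromy action of $\Pi_T$ on $\Lambda_{T'}^{q^*\tau}$ is indeed given by ${}^tdq\circ\kappa^\tau$ (which ultimately rests on Lemma \ref{pull back formula of kappa}) and that the fiber map of $\widetilde{q}$ is the orientation-preserving isomorphism $dq:\mathfrak{t}'\to\mathfrak{t}$. The latter forces a compatible choice of orientation on $T'$, namely the one pulled back from the fixed orientation of $T$, so that the Thom isomorphisms entering $M.d._T$ and $M.d._{T'}$ are mutually compatible.
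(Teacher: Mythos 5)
Your proposal is correct and follows essentially the same route as the paper: it identifies the same intermediate covering space $\Lambda_{T'}^{q^*\tau}\times_{\Pi_T}\mathfrak{t}$ for the twisting $q^\natural\tau$, translates $q^\natural$ and $q^*$ into $\widetilde{{}^tdq}_!$ and $\widetilde{q}^*$ via Theorems \ref{naturality for f^natural} and \ref{naturality for F^*}, descends through the Thom isomorphisms (your $\pi_{\mathrm{fin}}$ is the paper's map $r$, and your base-change step is Proposition \ref{geometrical realization of r^*}), and matches the result with $char(q)$ exactly as in Lemma \ref{decomposition of char(q)}. No gaps.
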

\begin{rmk}
$q^\#$ is the composition of 
$$K^{\tau+\dim(T)}_T(T)\xrightarrow{q^\natural} K^{q^\natural\tau+\dim(T)}_{T'}(T) \xrightarrow{q^*} K^{q^*\tau+\dim(T')}_{T'}(T').$$
It is written as simply $q^*$ in \cite{FHT1}. In the terminology of groupoids, $q$ determines a functor $(q,q):T'//T' \to T//T$ and $q^*$ is the pull back along the functor $(q,q)$.
\end{rmk}
Let us compute $K^{q^\natural\tau+\dim(T)}_{T'}(T)$ by use of a Mackey decomposition.
\begin{thm}\label{computation of K^tau+k_T'(T')}
$$K^{q^\natural\tau+k}_{T'}(T)\cong \begin{cases}
\bb{Z}[\Lambda^{q^*\tau}_{T'}/{}^tdq(\kappa^\tau(\Pi_T))] & k=\dim(T) \mod2 \\
0 & k=\dim(T)+1 \mod2 \end{cases}$$
\end{thm}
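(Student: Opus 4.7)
The strategy parallels the proof of Corollary \ref{computation of K using a Mackey decomposition}: apply the Mackey decomposition of Theorem \ref{theorem Mackey decomposition} to the $T'$-equivariant twisting $q^\natural\tau$ on $T$, identify the associated covering space explicitly, and conclude via a Thom isomorphism together with Bott periodicity. The only genuinely new computation is the identification of the monodromy, and hence of the set $\Lambda^{q^*\tau}_{T'}/{}^tdq(\kappa^\tau(\Pi_T))$, indexing the components of the covering space.

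First, I will identify the $T'^{q^\natural\tau}$-family of central extensions over $T$. Since $T'$ acts on $T$ through $q$, and $T^\tau_x$ is obtained from the action of $T$ on the projective bundle over $T$, the pullback gives a canonical isomorphism $T'^{q^\natural\tau}_x \cong T' \times_T T^\tau_x$ at each $x\in T$. The fiber over $x$ of the covering space $Y_{T'}\to T$ in the Mackey setup is therefore $\Lambda^{q^*\tau}_{T'}$, canonically and independently of $x$.

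Next, I will compute the monodromy. For $n\in \Pi_T$, the geodesic loop $\phi_n$ in $T$ produces, by exactly the formula used to define $\widetilde{\kappa^\tau}$ in Definition \ref{morphism kappa}, the character $t'\mapsto \widetilde{\phi_n}\widetilde{q(t')}\widetilde{\phi_n}^{-1}\widetilde{q(t')}^{-1}$, since the action of $t'$ on the fiber is through $q(t')\in T$. Thus the monodromy is given by $\kappa^{q^\natural\tau} := {}^tdq\circ \kappa^\tau : \Pi_T\to \Lambda_{T'}$. Consequently
\[
Y_{T'} \;=\; \Lambda^{q^*\tau}_{T'}\times_{\Pi_T}\fra{t},
\]
where $\Pi_T$ acts on $\Lambda^{q^*\tau}_{T'}$ via ${}^tdq\circ\kappa^\tau$ and on $\fra{t}$ by translation. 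The key quick check is that this $\Pi_T$-action has trivial isotropy: ${}^tdq\circ\kappa^\tau$ is injective because both ${}^tdq$ (as $q$ is a finite covering, so $dq$ has full rank) and $\kappa^\tau$ (as $\tau$ is positive) are. Each connected component of $Y_{T'}$ is therefore a copy of $\fra{t}$, and the set of components is $\Lambda^{q^*\tau}_{T'}/{}^tdq(\kappa^\tau(\Pi_T))$. This set is finite, because ${}^tdq(\kappa^\tau(\Pi_T))$ contains ${}^tdq(\kappa^\tau(dq(\Pi_{T'})))=\kappa^{q^*\tau}(\Pi_{T'})$ as a subgroup of finite index (using Lemma \ref{pull back formula of kappa} and finiteness of the covering $q$), while $\Lambda^{q^*\tau}_{T'}/\kappa^{q^*\tau}(\Pi_{T'})$ is already known to be finite.

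Finally, since each component of $Y_{T'}$ is contractible, every twisting on $Y_{T'}$ is trivial, so the twisting $\tau'$ arising in the Mackey decomposition contributes nothing. Applying Theorem \ref{theorem Mackey decomposition} and then Bott periodicity (or equivalently the Thom isomorphism for the trivial $\fra{t}$-bundle $\pi:Y_{T'}\to \Lambda^{q^*\tau}_{T'}/{}^tdq(\kappa^\tau(\Pi_T))$) gives
\[
K^{q^\natural\tau+k}_{T'}(T)\;\cong\; K^{k}(Y_{T'})\;\cong\; K^{k-\dim(T)}\!\bigl(\Lambda^{q^*\tau}_{T'}/{}^tdq(\kappa^\tau(\Pi_T))\bigr),
\]
which yields $\bb{Z}[\Lambda^{q^*\tau}_{T'}/{}^tdq(\kappa^\tau(\Pi_T))]$ in even degrees (mod $\dim(T)$) and $0$ in odd degrees. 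The main obstacle is the second step: carefully verifying the monodromy formula $\kappa^{q^\natural\tau}={}^tdq\circ\kappa^\tau$ and the resulting identification of $Y_{T'}$; the rest is a direct application of the machinery already set up for the diagonal case $q=\mathrm{id}$.
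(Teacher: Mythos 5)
Your proposal is correct and follows essentially the same route as the paper: identify the Mackey covering space as $\Lambda^{q^*\tau}_{T'}\times_{\Pi_T}\fra{t}$ with $\Pi_T$ acting on $\Lambda^{q^*\tau}_{T'}$ through ${}^tdq\circ\kappa^\tau$, view it as a trivial $\fra{t}$-bundle over $\Lambda^{q^*\tau}_{T'}/{}^tdq(\kappa^\tau(\Pi_T))$, and conclude by the Thom isomorphism. The paper states the monodromy identification without proof, so your verification of $\kappa^{q^\natural\tau}={}^tdq\circ\kappa^\tau$, of the freeness of the $\Pi_T$-action, and of the finiteness of the orbit set simply fills in details the paper leaves implicit.
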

\begin{proof}
From the definition of $q^\natural\tau$, if one travels on $T$ along $n\in\Pi_T$, the character $\lambda\in \Lambda^{q^\natural\tau}_{T'}$ changes to $\lambda+{}^tdq(\kappa^\tau(n))$. Therefore, the covering space constructed in Section \ref{section about Mackey decomposition} is 
$$p^{(')}:\Lambda^{q^*\tau}_{T'}\times_{\Pi_T}\fra{t}\to T,$$
 where $\Pi_T$ acts on $\Lambda^{q^*\tau}_{T'}$ via ${}^tdq\circ\kappa^\tau$ and $\Lambda_{T'}\curvearrowright\Lambda^{q^*\tau}_{T'}$. It has a structure of a trivial vector bundle 
$$\pi^{(')}:\Lambda^{q^*\tau}_{T'}\times_{\Pi_T}\fra{t}\to \Lambda^{q^*\tau}_{T'}/{}^tdq(\kappa^\tau(\Pi_T)).$$
Through Thom isomorphism $\pi^{(')}_!$, we obtain the conclusion.
\end{proof}
\begin{lem}\label{decomposition of char(q)}
Let us consider the maps 
$$\Lambda_T^{\tau}/\kappa^{\tau}(\Pi_T)
\xrightarrow{({}^tdq)mod\Pi_T}
\Lambda_{T'}^{q^*\tau}/{}^tdq(\kappa^\tau(\Pi_T))
\xleftarrow{r}
\Lambda_{T'}^{q^*\tau}/\kappa^{q^*\tau}(\Pi_{T'}),$$
where 
$$({}^tdq)mod{\Pi_T}([\lambda]_T):=[{}^tdq(\lambda)]$$
$$r([\mu]_{T'}):=[\mu].$$
$[\mu]$ is the $\Pi_T$-orbit of $\mu$ in $\Lambda^{q^*\tau}_{T'}$.

Then $char(q)$ corresponds to $r^*\circ(({}^tdq)mod\Pi_T)_!$. That is, the following commutative diagram holds.
$$\xymatrix{
K(\Lambda^\tau_T/\kappa^\tau(\Pi_T)) \ar[rr]^{(({}^tdq)mod\Pi_T)_!} \ar[d]^\cong && K(\Lambda_{T'}^{q^*\tau}/{}^tdq(\kappa^\tau(\Pi_T)) \ar[r]^{r^*} & K(\Lambda^{q^*\tau}_{T'}/\kappa^{q^*\tau}(\Pi_{T'})) \ar[d]^\cong \\
char(T,\tau) \ar[rrr]^{char(q)} & & & char(T',q^*\tau)}$$
\end{lem}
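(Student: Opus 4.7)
The plan is to verify the diagram on generators. Under the identification in Lemma 4.11, $char(T,\tau) \cong K(\Lambda^\tau_T/\kappa^\tau(\Pi_T))$ is the free abelian group on the delta functions $\delta_{[\lambda]_T}$ supported at single orbits (and similarly for the right-hand side). Note that all three sets $\Lambda^\tau_T/\kappa^\tau(\Pi_T)$, $\Lambda^{q^*\tau}_{T'}/\kappa^{q^*\tau}(\Pi_{T'})$, and $\Lambda^{q^*\tau}_{T'}/{}^tdq(\kappa^\tau(\Pi_T))$ are finite, since $\kappa^\tau$ and $\kappa^{q^*\tau}$ are injective by positivity of $\tau$ and $q^*\tau$. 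So it suffices to compute the image of a generator $\delta_{[\lambda]_T}$ under both $char(q)$ and $r^* \circ (({}^tdq)\,\mathrm{mod}\,\Pi_T)_!$ and to check agreement.

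Since we are dealing with maps of finite discrete sets, $f_!$ and $f^*$ act on deltas in the elementary way. First, $({}^tdq)\,\mathrm{mod}\,\Pi_T$ is injective (if ${}^tdq(\lambda_1)-{}^tdq(\lambda_2)={}^tdq(\kappa^\tau(m))$ then $\lambda_1-\lambda_2=\kappa^\tau(m)$, using injectivity of ${}^tdq:\Lambda_T\to\Lambda_{T'}$, which holds because $dq(\Pi_{T'})$ has finite index in $\Pi_T$). Hence
$$(({}^tdq)\,\mathrm{mod}\,\Pi_T)_!(\delta_{[\lambda]_T}) = \delta_{[{}^tdq(\lambda)]}.$$
Pulling back along the surjection $r$ then yields
$$r^*(\delta_{[{}^tdq(\lambda)]}) = \sum_{[\mu]_{T'} \in r^{-1}([{}^tdq(\lambda)])} \delta_{[\mu]_{T'}}.$$
On the other hand, by Definition 3.5 applied to the local injection $q$, the class $char(q)([\lambda]_T)$ is the sum $\sum_{i} [\mu_i]_{T'}$ over the distinct $\Pi_{T'}$-orbits contained in $S := \{{}^tdq(\lambda + \kappa^\tau(m)) : m \in \Pi_T\}$. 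So the whole lemma reduces to exhibiting a bijection between $r^{-1}([{}^tdq(\lambda)])$ and the set of $\Pi_{T'}$-orbits contained in $S$.

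This bijection is immediate from the definition of $r$: a class $[\mu]_{T'}$ in $\Lambda^{q^*\tau}_{T'}/\kappa^{q^*\tau}(\Pi_{T'})$ satisfies $r([\mu]_{T'}) = [{}^tdq(\lambda)]$ iff $\mu \in {}^tdq(\lambda) + {}^tdq(\kappa^\tau(\Pi_T)) = S$, and then the $\Pi_{T'}$-orbits lying inside $S$ are precisely what is indexed by $r^{-1}([{}^tdq(\lambda)])$. The only point that requires a moment's care is distinctness: two representatives ${}^tdq(\lambda + \kappa^\tau(m))$ and ${}^tdq(\lambda + \kappa^\tau(m'))$ lie in the same $\Pi_{T'}$-orbit iff their difference lies in $\kappa^{q^*\tau}(\Pi_{T'}) = {}^tdq(\kappa^\tau(dq(\Pi_{T'})))$; by injectivity of ${}^tdq \circ \kappa^\tau$ this is equivalent to $m - m' \in dq(\Pi_{T'})$, so the fiber of $r$ is in natural bijection with $\Pi_T/dq(\Pi_{T'})$, matching exactly the index set appearing in Definition 3.5. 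The whole proof is bookkeeping built on Definition 3.5, the trivial description of $f_!$ and $f^*$ on delta functions over finite sets, and the product formula for $\kappa$ (Lemma 1.12); I do not foresee any real obstacle.
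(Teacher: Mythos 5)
Your proposal is correct and follows essentially the same route as the paper's proof: evaluate both compositions on the delta generators, use injectivity of $({}^tdq)\,\mathrm{mod}\,\Pi_T$ to see the push-forward sends $\delta_{[\lambda]_T}$ to $\delta_{[{}^tdq(\lambda)]}$, identify the fiber $r^{-1}([{}^tdq(\lambda)])$ with $\Pi_T/dq(\Pi_{T'})$, and match against the explicit formula for $char(q)$ from Lemma \ref{char formula for finite cover}. You supply slightly more detail (the injectivity argument and the distinctness of the orbits) than the paper does, but the underlying argument is the same.
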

\begin{proof}
Since $({}^tdq)mod\Pi_T$ is an injection, for $\lambda\in\Lambda^\tau_T$,
$$(({}^tdq)mod\Pi_T)_!(\delta_{[\lambda]_T})=\delta_{[{}^tdq(\lambda)]}.$$

On the other hand, for $\lambda'\in\Lambda^{q^*\tau}_{T'}$,
$$r^*(\delta_{[\lambda']})=
\sum_{n\in\Pi_T/dq(\Pi_{T'})}\delta_{[\lambda'+{}^tdq(\kappa^\tau(n))]_{T'}}.$$
If we notice that 
$$r^{-1}(\{[\lambda']\})=\{[\lambda'+{}^tdq(\kappa^\tau(n))]_{T'}|n\in \Pi_T/dq(\Pi_{T'})\},$$
the above computation is clear from the definition of pull back of vector bundles.

As we compute at Lemma \ref{char formula for finite cover}, the above commutative diagram holds.
\end{proof}
%From Theorem \ref{naturality for f^natural}, $({}^tdq)_!$ corresponds to $q^\natural$. That is the following diagram commutes:
%$$\begin{CD}
%K_T^{\tau+dim(T)}(T) @>Mackey_{T,T,\tau}>> K^{\tau'+dim(T)}(Y_T) \\
%@V({}^tdq)_!VV @Vq^\natural VV \\
%K_{T'}^{q^\natural+dim(T)}(T) @>Mackey_{T,T',q^\natural \tau}>> K^{\tau''+dim(T)}(Y_{T'})
%\end{CD}$$

Let us lift $r$ to the covering spaces. We can verify the following lemma with the same way in Lemma \ref{realization of tdi_1 at space level}.
\begin{lem}\label{geometrical realization of r}
The following commutative diagram holds.
$$\begin{CD}
\Lambda_{T'}^{q^*\tau}\times_{\Pi_T}\mathfrak{t} @<(id\times dq)\mathit{mod}\Pi_{T'}<< \Lambda_{T'}^{q^*\tau}\times_{\Pi_{T'}}\mathfrak{t}' \\
@V\pi^{(')}VV @V\pi'VV \\
\Lambda_{T'}^{q^*\tau}/{}^tdq(\kappa^\tau(\Pi_T))  @<r<< \Lambda_{T'}^{q^*\tau}/\kappa^{q^*\tau}(\Pi_{T'})
\end{CD}$$
where $\pi':\Lambda_{T'}^{q^*\tau}\times_{\Pi_{T'}}\mathfrak{t}' \to\Lambda_{T'}^{q^*\tau}/\kappa^{q^*\tau}(\Pi_{T'})$ is the trivial vector bundle defined in the proof of Theorem \ref{computation of K^tau+k_T'(T')}, and 
$(id\times dq)mod\Pi_{T'}([\lambda,v]):=[(\lambda,dq(v))].$

From the above diagram, the vector bundle $\Lambda^{q^*\tau}_{T'}\times_{\Pi_{T'}}\fra{t}'$ is the pull back of $\Lambda^{q^*\tau}_{T'}\times_{\Pi_T}\fra{t}$ along $r$.
\end{lem}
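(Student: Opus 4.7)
The plan is to check three things in turn: well-definedness of the horizontal map $(id\times dq)\mathit{mod}\Pi_{T'}$, commutativity of the square, and then the pullback property.

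First I would verify well-definedness. A point $[(\lambda,v)]\in \Lambda_{T'}^{q^*\tau}\times_{\Pi_{T'}}\mathfrak{t}'$ is defined modulo the $\Pi_{T'}$-action $n.(\lambda,v)=(\lambda+\kappa^{q^*\tau}(n),v+n)$, while the target is taken modulo the $\Pi_T$-action $m.(\mu,w)=(\mu+{}^tdq(\kappa^\tau(m)),w+m)$. Applying $(id\times dq)$ to $n.(\lambda,v)$ gives $(\lambda+\kappa^{q^*\tau}(n),\,dq(v)+dq(n))$; using the pull-back formula $\kappa^{q^*\tau}={}^tdq\circ\kappa^\tau\circ dq$ from Lemma \ref{pull back formula of kappa}, this equals $dq(n).(\lambda,dq(v))$ in the target, and $dq(n)\in\Pi_T$, so the classes agree.

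Next I would check commutativity. Going down-then-left sends $[(\lambda,v)]$ via $\pi'$ to $[\lambda]_{T'}$ and then via $r$ to the $\Pi_T$-orbit $[\lambda]$ in $\Lambda_{T'}^{q^*\tau}/{}^tdq(\kappa^\tau(\Pi_T))$. Going left-then-down sends $[(\lambda,v)]$ to $[(\lambda,dq(v))]$ and then via $\pi^{(')}$ to the same class $[\lambda]$. So the square commutes.

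For the pullback assertion, the key input is that $q$ is a finite covering, so $dq:\mathfrak{t}'\to\mathfrak{t}$ is a linear isomorphism. Both $\pi^{(')}$ and $\pi'$ are trivial vector bundles with discrete bases, with fibers $\mathfrak{t}$ and $\mathfrak{t}'$ respectively, and over each component the horizontal map restricts to $dq$. To conclude, I would construct the natural map from $\Lambda_{T'}^{q^*\tau}\times_{\Pi_{T'}}\mathfrak{t}'$ to the fiber product $(\Lambda_{T'}^{q^*\tau}/\kappa^{q^*\tau}(\Pi_{T'}))\times_{\Lambda_{T'}^{q^*\tau}/{}^tdq(\kappa^\tau(\Pi_T))}(\Lambda_{T'}^{q^*\tau}\times_{\Pi_T}\mathfrak{t})$ sending $[(\lambda,v)]$ to $([\lambda]_{T'},[(\lambda,dq(v))])$, and exhibit the inverse: given $([\lambda]_{T'},[(\mu,w)])$ with $r([\lambda]_{T'})=\pi^{(')}([(\mu,w)])$, choose the representative $(\mu,w)=(\lambda,w')$ (possible because the classes agree in $\Lambda_{T'}^{q^*\tau}/{}^tdq(\kappa^\tau(\Pi_T))$, so we can translate into the same $\lambda$-slice), and send it to $[(\lambda,dq^{-1}(w'))]$.

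The main obstacle I anticipate is verifying that this inverse is well-defined: one must show that the ambiguity in choosing the representative $(\lambda,w')$ — which amounts to shifting $\lambda$ by an element of $\kappa^{q^*\tau}(\Pi_{T'})$ and simultaneously adjusting $w'$ by a corresponding element of $dq(\Pi_{T'})\subseteq\Pi_T$ — is exactly what becomes trivial after quotienting the right-hand bundle by the full $\Pi_{T'}$-action. This is again handled by the identity $\kappa^{q^*\tau}(n)={}^tdq(\kappa^\tau(dq(n)))$ for $n\in\Pi_{T'}$, which ties the two ambiguities together. Once this is in place, the two maps are mutually inverse and fiberwise $dq$-linear, so the square is a pullback and $\Lambda_{T'}^{q^*\tau}\times_{\Pi_{T'}}\mathfrak{t}'\cong r^{*}(\Lambda_{T'}^{q^*\tau}\times_{\Pi_T}\mathfrak{t})$.
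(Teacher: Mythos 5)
Your proof is correct and follows essentially the same route as the paper: check well-definedness of $(id\times dq)\mathit{mod}\Pi_{T'}$ via the identity $\kappa^{q^*\tau}={}^tdq\circ\kappa^\tau\circ dq$, after which commutativity of the square is immediate from the definitions. Your explicit fiber-product verification of the pullback assertion (using that $dq$ is a linear isomorphism since $q$ is a finite covering) is more detailed than the paper, which treats that claim as an immediate consequence of the diagram, but it is the right argument and introduces nothing new in substance.
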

%\begin{proof}
%What to verify is only $\Pi'$ equivariance of $id\times dq:\Lambda'^{q^*\tau}\times\mathfrak{t}'\to\Lambda'^{q^*\tau}\times\mathfrak{t}$. One can prove 
%\end{proof}

Let us recall that push-forward is given by the inverse of the tensor product with Thom class. Combining the above lemma with naturality of Thom class, we obtain the following.
\begin{pro}\label{geometrical realization of r^*}
The following commutative diagram holds.
$$\begin{CD}
K^n(\Lambda_{T'}^{q^*\tau}\times_{\Pi_T}\mathfrak{t}) @>((id\times dq)\mathit{mod}\Pi_{T'})^*>> K^n(\Lambda_{T'}^{q^*\tau}\times_{\Pi_{T'}}\mathfrak{t}') \\
 @V{\pi^{(')}_!}VV @V{\pi'_!}VV \\
K(\Lambda_{T'}^{q^*\tau}/{}^tdq(\kappa^\tau(\Pi_T))) @>r^*>> K(\Lambda_{T'}^{q^*\tau}/\kappa^{q^*\tau}(\Pi_{T'}))
\end{CD}$$
where $(id\times dq)mod\Pi_{T'}([(\mu,w)]):=[(\mu,dq(w))]$.
\end{pro}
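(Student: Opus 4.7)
The plan is to recognize the square as expressing the naturality of the Thom isomorphism under pullback of oriented vector bundles. By Lemma \ref{geometrical realization of r}, we have a Cartesian diagram of vector bundles: $\pi':\Lambda^{q^*\tau}_{T'}\times_{\Pi_{T'}}\fra{t}'\to\Lambda^{q^*\tau}_{T'}/\kappa^{q^*\tau}(\Pi_{T'})$ is the pullback of $\pi^{(')}:\Lambda^{q^*\tau}_{T'}\times_{\Pi_T}\fra{t}\to\Lambda^{q^*\tau}_{T'}/{}^tdq(\kappa^\tau(\Pi_T))$ along $r$, and the upper horizontal arrow $(id\times dq)\mathit{mod}\Pi_{T'}$ is the corresponding map on total spaces realizing this pullback.

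First I would fix orientations. Since $q:T'\to T$ is a finite covering, $dq:\fra{t}'\to\fra{t}$ is a linear isomorphism, so the chosen orientation on $\fra{t}$ pulls back through $dq$ to an orientation on $\fra{t}'$. With these choices, both $\pi^{(')}$ and $\pi'$ are oriented trivial vector bundles of rank $n=\dim(T)$, and $(id\times dq)\mathit{mod}\Pi_{T'}$ is an orientation-preserving fibrewise linear isomorphism covering $r$.

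Next I would invoke the naturality of the Thom class. Let $U\in K^n(\Lambda^{q^*\tau}_{T'}\times_{\Pi_T}\fra{t})$ be the Thom class of $\pi^{(')}$, so that $\pi^{(')}_!$ is the inverse of the map $\alpha\mapsto\pi^{(')*}\alpha\cdot U$; similarly let $U'$ be the Thom class of $\pi'$. Naturality of the Thom class under pullback of oriented vector bundles gives $((id\times dq)\mathit{mod}\Pi_{T'})^*U=U'$. Now for any $\beta\in K^n(\Lambda^{q^*\tau}_{T'}\times_{\Pi_T}\fra{t})$, set $\alpha:=\pi^{(')}_!(\beta)$, so that $\beta=\pi^{(')*}\alpha\cdot U$. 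Pulling back along $(id\times dq)\mathit{mod}\Pi_{T'}$ and using the identity $r\circ\pi'=\pi^{(')}\circ(id\times dq)\mathit{mod}\Pi_{T'}$ from Lemma \ref{geometrical realization of r} together with multiplicativity of pullback, one obtains $((id\times dq)\mathit{mod}\Pi_{T'})^*\beta=\pi'^*(r^*\alpha)\cdot U'$. Applying $\pi'_!$ yields $r^*\alpha=r^*(\pi^{(')}_!(\beta))$, which is the commutativity to be proved.

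The main obstacle is nothing substantial: the result is a direct instance of the standard base-change formula for the Thom isomorphism, and the only genuine care is in fixing compatible orientations on $\fra{t}$ and $\fra{t}'$ via $dq$ so that the Thom classes are correctly preserved by the bundle map $(id\times dq)\mathit{mod}\Pi_{T'}$.
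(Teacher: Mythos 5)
Your proposal is correct and is essentially the paper's own argument: the paper likewise deduces the square from Lemma \ref{geometrical realization of r} (exhibiting $\pi'$ as the pullback of $\pi^{(')}$ along $r$) together with the fact that the push-forward along the bundle projection is the inverse of multiplication by the Thom class, whose naturality under the bundle map $(id\times dq)\mathit{mod}\Pi_{T'}$ gives commutativity. Your version simply spells out the orientation bookkeeping and the base-change computation that the paper leaves implicit.
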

Let us lift $({}^tdq)mod\Pi_T$ to the covering spaces.

The following lemma can be verified with the same way in Lemma \ref{realization of tdi_1 at space level}.
\begin{lem}\label{geometrical realization of tdqmod Pi}
The following diagram commutes.
$$\begin{CD}
\Lambda^\tau_T\times_{\Pi_T}\fra{t} @>{({}^tdq\times id)mod\Pi_T}>> \Lambda_{T'}^{q^*\tau}\times_{\Pi_T}\fra{t} \\
@V\pi VV @V\pi^{(')}VV \\
\Lambda^\tau_T/\kappa^\tau({\Pi_T}) @>{({}^tdq)mod\Pi_T}>> \Lambda_{T'}^{q^*\tau}/{}^tdq(\kappa^{\tau}(\Pi_T))
\end{CD}$$
\end{lem}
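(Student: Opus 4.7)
The plan is to mimic the proof of Lemma \ref{realization of tdi_1 at space level} essentially verbatim. First I would check that the horizontal top map $({}^tdq \times id)\mathit{mod}\,\Pi_T$ is well-defined as a map between the indicated quotients; once that is settled, commutativity of the square is immediate from comparing the two composites on a representative $(\lambda,v)$.

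For well-definedness, fix $n\in\Pi_T$, $\lambda\in\Lambda^\tau_T$, $v\in\fra{t}$. The left-hand quotient identifies $(\lambda+\kappa^\tau(n),v+n)$ with $(\lambda,v)$. On the right-hand side, recall from the proof of Theorem \ref{computation of K^tau+k_T'(T')} that $\Pi_T$ acts on $\Lambda^{q^*\tau}_{T'}$ through the composition ${}^tdq\circ \kappa^\tau$ (together with translation on the $\fra{t}$-factor). So I would compute
\[
({}^tdq\times id)(\lambda+\kappa^\tau(n),\,v+n)=\bigl({}^tdq(\lambda)+{}^tdq(\kappa^\tau(n)),\,v+n\bigr),
\]
and observe that this is precisely the result of acting by $n\in\Pi_T$ on $({}^tdq(\lambda),v)$ in $\Lambda^{q^*\tau}_{T'}\times_{\Pi_T}\fra{t}$. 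Hence the assignment descends to the quotients.

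For commutativity, I would simply chase an element $[(\lambda,v)]\in\Lambda^\tau_T\times_{\Pi_T}\fra{t}$ around both sides of the square. Going down then across, $\pi$ sends $[(\lambda,v)]$ to $[\lambda]_T\in\Lambda^\tau_T/\kappa^\tau(\Pi_T)$, which $({}^tdq)\mathit{mod}\,\Pi_T$ then sends to $[{}^tdq(\lambda)]$. Going across then down, $({}^tdq\times id)\mathit{mod}\,\Pi_T$ gives $[({}^tdq(\lambda),v)]$, whose image under $\pi^{(')}$ is again $[{}^tdq(\lambda)]$.

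There is no real obstacle; the only thing to be careful about is keeping track of which $\Pi_T$-action is in play on each side (the one on the left uses $\kappa^\tau$ directly, while the one on the right uses the pushed-forward action ${}^tdq\circ\kappa^\tau$ introduced in Theorem \ref{computation of K^tau+k_T'(T')}). Once that bookkeeping is in place, the lemma reduces to an unpacking of definitions exactly parallel to Lemma \ref{realization of tdi_1 at space level}.
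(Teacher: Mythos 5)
Your proposal is correct and matches the paper's intended argument: the paper itself only remarks that this lemma ``can be verified with the same way in Lemma \ref{realization of tdi_1 at space level}'', and your well-definedness check using the $\Pi_T$-action on $\Lambda^{q^*\tau}_{T'}$ via ${}^tdq\circ\kappa^\tau$, followed by the element chase, is exactly that argument.
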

From Lemma \ref{functoriality of push-forward}, we can verify the following.
\begin{pro}\label{q^natural at covering space level}
The following commutative diagram holds.
$$\begin{CD}
K^n(\Lambda^\tau_T\times_{\Pi_T}\fra{t}) @>{(({}^tdq\times id)mod\Pi_T)_!}>> K^n(\Lambda_{T'}^{q^*\tau}\times_{\Pi_T}\fra{t}) \\
@V\pi_!VV @V\pi^{(')}_!VV \\
K(\Lambda^\tau_T/\kappa^\tau({\Pi_T})) @>{(({}^tdq)mod\Pi_T)_!}>> K(\Lambda_{T'}^{q^*\tau}/{}^tdq(\kappa^{\tau}(\Pi_T)))
\end{CD}$$
\end{pro}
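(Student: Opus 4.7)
The plan is to deduce the K-theoretic square from the space-level commutative square in Lemma \ref{geometrical realization of tdqmod Pi} by invoking functoriality of push-forward (Lemma \ref{functoriality of push-forward}). The proposition is essentially the statement ``apply $(-)_!$ to a commutative square of spaces,'' so the content of the argument is merely to verify that every map in sight really does admit a well-defined push-forward landing in the indicated K-theory groups.

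First I would check each of the four maps. The vertical maps $\pi$ and $\pi^{(')}$ are trivial rank-$n$ vector bundle projections over countable discrete bases, so after fixing an orientation of $\fra{t}$ they induce Thom isomorphisms $K^n(\text{total}) \xrightarrow{\cong} K(\text{base})$. The horizontal map $({}^tdq)mod\Pi_T$ is an injection between discrete spaces (injectivity of ${}^tdq$ on lattices descends to the $\Pi_T$-quotients), so $(({}^tdq)mod\Pi_T)_!$ is simply zero extension, matching the explicit formula of Lemma \ref{explicit formula of push-forward with discrete fiber}. The map $({}^tdq\times id)mod\Pi_T$ is fiber-preserving: it covers $({}^tdq)mod\Pi_T$ on the base and is the identity on each $\fra{t}$-fiber, so it is a closed inclusion of trivial vector bundles whose push-forward again exists. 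No $W_3$-corrections arise because every normal bundle in sight is trivial, and the non-compactness of the total spaces causes no trouble because we work throughout with compactly supported $K$-theory.

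Second, Lemma \ref{geometrical realization of tdqmod Pi} gives the equality of smooth maps
$$({}^tdq)mod\Pi_T \;\circ\; \pi \;=\; \pi^{(')} \;\circ\; ({}^tdq\times id)mod\Pi_T$$
from $\Lambda^\tau_T\times_{\Pi_T}\fra{t}$ to $\Lambda_{T'}^{q^*\tau}/{}^tdq(\kappa^\tau(\Pi_T))$. Applying push-forward to both sides and invoking Lemma \ref{functoriality of push-forward} yields
$$(({}^tdq)mod\Pi_T)_! \;\circ\; \pi_! \;=\; \pi^{(')}_! \;\circ\; (({}^tdq\times id)mod\Pi_T)_!,$$
which is precisely the claimed commutativity. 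I do not anticipate a hard step: the whole proof is formal bookkeeping around functoriality of $(-)_!$. The only subtle point to watch is the orientation and degree matching for the two Thom isomorphisms, so that they cancel compatibly with the horizontal push-forwards; but since $\pi$ and $\pi^{(')}$ are identified fiberwise by $({}^tdq\times id)mod\Pi_T$ via the identity on $\fra{t}$, the same orientation on $\fra{t}$ serves both Thom classes and the bookkeeping is automatic.
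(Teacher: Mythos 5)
Your argument is exactly the paper's: it deduces the $K$-theoretic square from the space-level square of Lemma \ref{geometrical realization of tdqmod Pi} together with functoriality of push-forward (Lemma \ref{functoriality of push-forward}), which is all the paper invokes. Your additional checks on well-definedness of each push-forward and on orientation matching are sound and only make explicit what the paper leaves implicit.
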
\label{q^* at covering space level}
Let us verify that $(id\times dq)mod\Pi_{T'}$ is the lift of $q$, that is, if we use the notation in Theorem \ref{naturality for F^*}, $(id\times dq)mod\Pi_{T'}$ can be written as $\widetilde{q}$.

\begin{lem}\label{geometrical realization of tdq}
The following commutative diagram holds.
$$\begin{CD}
\Lambda^{q^*\tau}_{T'}\times \fra{t}' @>id\times dq>> \Lambda^{q^*\tau}_{T'}\times \fra{t}\\
@V\overline{p'}VV @V\overline{p^{(')}}VV \\
\Lambda_{T'}^{q^*\tau}\times_{\Pi_{T'}}\mathfrak{t}' @>(id\times dq)mod\Pi_{T'}>> \Lambda_{T'}^{q^*\tau}\times_{\Pi_T}\mathfrak{t} \\
@Vp'VV @Vp^{(')}VV \\
T' @>q>> T
\end{CD}$$
where $\overline{p'}$ and $\overline{p^{(')}}$ are the natural projections.
\end{lem}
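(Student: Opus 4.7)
The plan is to verify each of the two squares separately by unwinding definitions, with the only substantive point being the well-definedness of the induced quotient map.

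I would first describe the vertical maps explicitly. The bottom vertical arrow $p'$ projects $[(\lambda,v)] \in \Lambda_{T'}^{q^*\tau}\times_{\Pi_{T'}}\fra{t}'$ to $\exp_{T'}(v) \in T'$, and similarly $p^{(')}$ sends $[(\lambda,w)] \in \Lambda_{T'}^{q^*\tau}\times_{\Pi_T}\fra{t}$ to $\exp_T(w) \in T$. The upper vertical arrows $\overline{p'}$ and $\overline{p^{(')}}$ are just the defining quotient projections for the two respective bundles.

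Next I would settle well-definedness of $(id\times dq)\bmod\Pi_{T'}$. Take $n' \in \Pi_{T'}$, so that $(\lambda+\kappa^{q^*\tau}(n'),v+n') \sim (\lambda,v)$ in $\Lambda_{T'}^{q^*\tau}\times_{\Pi_{T'}}\fra{t}'$. Applying $id\times dq$ and using Lemma \ref{pull back formula of kappa}, namely $\kappa^{q^*\tau} = {}^tdq\circ\kappa^\tau\circ dq$, yields
$$(\lambda + {}^tdq(\kappa^\tau(dq(n'))),\, dq(v) + dq(n')),$$
which is precisely the action of $m := dq(n') \in \Pi_T$ on $(\lambda, dq(v))$ in $\Lambda_{T'}^{q^*\tau}\times_{\Pi_T}\fra{t}$. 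Hence the image is independent of the representative, giving a well-defined map on the quotients, and the upper square commutes by construction of this induced map.

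Finally, the lower square reduces to the naturality of the exponential with respect to $q$: going one way gives $q\circ p'([\lambda,v]) = q(\exp_{T'}(v)) = \exp_T(dq(v))$, while going the other gives $p^{(')}\circ((id\times dq)\bmod\Pi_{T'})([\lambda,v]) = p^{(')}([\lambda,dq(v)]) = \exp_T(dq(v))$. There is really no obstacle to speak of here; the only thing one must be careful about is distinguishing the two different $\Pi$-actions on $\Lambda_{T'}^{q^*\tau}$ appearing on the two sides of the middle row, and Lemma \ref{pull back formula of kappa} is exactly what makes them compatible via $dq$.
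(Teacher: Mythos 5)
Your proof is correct and follows essentially the same route as the paper: the top square commutes by construction of the induced quotient map (whose well-definedness rests on $\kappa^{q^*\tau}={}^tdq\circ\kappa^\tau\circ dq$, exactly as you argue), and the bottom square reduces to $q\circ\exp_{T'}=\exp_T\circ dq$, which is the "property of the tangent map" the paper invokes. Your version is merely more explicit — you verify the two squares directly, whereas the paper checks the outer rectangle and the top square and then deduces the bottom one — but the substance is identical.
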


\begin{proof}
That $p^{(')}\circ\overline{p^{(')}}\circ(id\times dq)=q\circ p'\circ\overline{p'}$ follows from the property of the tangent map $dq$, and that $\bar{p^{(')}}\circ(id\times dq)=(id\times dq)mod\Pi_T\circ\overline{p'}$ follows from the definition of $(id\times dq)mod\Pi_{T'}$. Therefore, that $p^{(')}\circ (id\times dq)mod\Pi_{T'}=q\circ p'$ holds.

\end{proof}
This lemma and Theorem \ref{naturality for F^*} imply the following. Let $n$ be the dimension of $T$.
\begin{pro}\label{realization of ((id times dq) mod Pi')^*}
$((id\times dq)\mathit{mod}\Pi_{T'})^*$ corresponds to $q^*$ under the Mackey decomposition. That is, the following commutative diagram holds.
$$\begin{CD}
K^{q^\natural\tau+n}_{T'}(T) @>q^*>> K^{q^*\tau+n}_{T'}(T') \\
@VMackey_{T,T',q^\natural\tau}VV @VVMackey_{T',T',q^*\tau}V \\
K^n(\Lambda^{q^*\tau}_{T'}\times_{\Pi_T}\fra{t}) @>((id\times dq)\mathit{mod}\Pi_{T'})^*>> K^n(\Lambda^{q^*\tau}_{T'}\times_{\Pi_{T'}}\fra{t}')
\end{CD}$$
\end{pro}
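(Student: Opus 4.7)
The plan is to apply Theorem \ref{naturality for F^*} directly, with the map $F = q : T' \to T$, the group $T'$, and the twisting $q^\natural \tau$. First I would observe that we are in the setting of that theorem: since $T$ and $T'$ are both abelian, the conjugation action of $T'$ on $T$ (factored through $q$) and on $T'$ itself is trivial, so the Mackey decomposition $Mackey_{T,T',q^\natural\tau}$ and $Mackey_{T',T',q^*\tau}$ are both defined as in Section \ref{section about Mackey decomposition}. Theorem \ref{naturality for F^*} then produces a commutative diagram relating $q^*$ on twisted equivariant $K$-theory to $\widetilde{q}^*$ on ordinary twisted $K$-theory of the associated covering spaces, where $\widetilde{q}$ is the canonical lift of $q$ from Lemma \ref{naturality of covering spaces} (applied to the current situation).

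Next I would identify this canonical lift with the concrete map $(id \times dq)\mathit{mod}\,\Pi_{T'}$. This is precisely the content of Lemma \ref{geometrical realization of tdq}: the diagram
\[
\begin{CD}
\Lambda^{q^*\tau}_{T'}\times_{\Pi_{T'}}\fra{t}' @>(id\times dq)\mathit{mod}\Pi_{T'}>> \Lambda^{q^*\tau}_{T'}\times_{\Pi_T}\fra{t} \\
@Vp'VV @Vp^{(')}VV \\
T' @>q>> T
\end{CD}
\]
commutes and is fiberwise a bijection of discrete fibers (both sides parametrize $\tau$-twisted characters of the local $T'^{q^\natural\tau}$-central extension over the respective base points). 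By the universal property of the fibre product used to construct the lift $\widetilde{q}$ in the proof of Lemma \ref{naturality of covering spaces}, this forces $\widetilde{q} = (id \times dq)\mathit{mod}\,\Pi_{T'}$ up to the canonical identification of $Y_{T'}$ with $q^*Y_T \cong \Lambda^{q^*\tau}_{T'}\times_{\Pi_{T'}}\fra{t}'$.

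Combining these two steps gives the required commutative square, after also checking that the twisting identifications match: the pullback twisting $\widetilde{q}^*\tau'$ in Theorem \ref{naturality for F^*} agrees with the twisting used on $\Lambda^{q^*\tau}_{T'}\times_{\Pi_{T'}}\fra{t}'$, which follows because connected components of both covering spaces are contractible and so any twisting on them is canonically trivial. The only genuine work is the identification of the lift, which is handled by Lemma \ref{geometrical realization of tdq}; everything else is invoking the general naturality theorem already established.
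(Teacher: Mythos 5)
Your proposal is correct and follows essentially the same route as the paper: the paper's proof consists precisely of combining Lemma \ref{geometrical realization of tdq} (identifying $(id\times dq)\mathit{mod}\,\Pi_{T'}$ as the canonical lift $\widetilde{q}$ of $q$ to the covering spaces) with Theorem \ref{naturality for F^*} applied to $F=q$, the acting torus $T'$, and the twisting $q^\natural\tau$. Your additional remarks on the universal property of the fibre product and on the triviality of twistings over the contractible components are consistent with, and slightly more explicit than, the paper's one-line argument.
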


Let us verify the theorem.\\\\
{\it Proof of Theorem \ref{main theorem for K for finite cover}.} Let us consider the following diagram.
$$\xymatrix{
K^{\tau+n}_T(T) \ar[rr]^{q^\natural} \ar[d]^{Mackey_{T,T,\tau}} \ar@{}[drr]|{(1)} &&
 K^{q^\natural\tau+n}_{T'}(T) \ar[rr]^{q^*} \ar[d]^{Mackey_{T,T',q^\natural\tau}} \ar@{}[drr]|{(2)}&&
 K^{q^*\tau+n}_{T'}(T') \ar[d]^{Mackey_{T',T',q^*\tau}} \\
K^n(\Lambda^{\tau}_{T}\times_{\Pi_T}\fra{t}) \ar[rr]^{(({}^tdq\times id)mod\Pi_T)_!} \ar[d]^{\pi_!} \ar@{}[drr]|{(3)}&&
K^n(\Lambda^{q^*\tau}_{T'}\times_{\Pi_T}\fra{t}) \ar[rr]^{((id\times dq)mod\Pi_{T'})^*} \ar[d]^{\pi^{(')}_!}  \ar@{}[drr]|{(4)}&&
K^n(\Lambda^{q^*\tau}_{T'}\times_{\Pi_{T'}}\fra{t}') \ar[d]^{\pi'_!} \\
K(\Lambda^\tau_T/\kappa^\tau(\Pi_T)) \ar[rr]^{(({}^tdq)mod\Pi_T)_!} &&
 K(\Lambda^{q^*\tau}_{T'}/{}^tdq(\kappa^\tau(\Pi_T))) \ar[rr]^{r^*} &&
 K(\Lambda^{q^*\tau}_{T'}/\kappa^{q^*\tau}(\Pi_{T'}))}$$

(1) commutes from Theorem \ref{naturality for f^natural}. (2) commutes from Lemma \ref{geometrical realization of tdq} and \ref{naturality for F^*}. (3) commutes from Proposition \ref{q^natural at covering space level}. (4) commutes from Proposition \ref{geometrical realization of r^*}.

From Lemma \ref{decomposition of char(q)}, we obtain the conclusion.

\begin{flushright} $\Box$ \end{flushright}

%一般の場合%
\subsubsection{Local injection}\label{t.e.K for local injection}
Let $S$ and $T$ be tori, $f:S\to T$ be a local injection and $\tau$ be a positive central extension. Let us recall that $t.e.K(T,\tau)$ is defined by $K_T^{\tau+dim(T)}(T)$.

From Theorem \ref{decomposition of local injection}, $f$ can be decomposed as follows.
$$S\xrightarrow{q} S/\ker(f) \xrightarrow{i_1} S/\ker(f)\times S^\perp \xrightarrow{f\cdot j} T,$$
where $q$ is the natural finite covering, $i_1$ is the natural inclusion into the first factor, and $j:S^\perp\to T$ is the natural inclusion.

Let us verify the following theorem.

\begin{thm}\label{main theorem for K for tori in Section 4}
If we define as
$$f^\#:=q^\#\circ i_1^\#\circ (f\cdot j)^\#,$$
the following commutative diagram holds.
$$\begin{CD}
t.e.K(T,\tau) @>f^\#>> t.e.K(S,f^*\tau) \\
@VM.d._TVV @VM.d._SVV \\
char(T,\tau) @>char(f)>> char(S,f^*\tau)
\end{CD}$$
\end{thm}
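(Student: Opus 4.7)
The plan is to verify this by stacking three commutative squares, one for each piece of the canonical decomposition of $f$ given in Theorem \ref{decomposition of local injection}, and invoking the "functoriality we need" for the quasi functor $char$. First I would observe that $f^\#$ has been defined precisely as the composition $q^\# \circ i_1^\# \circ (f\cdot j)^\#$, so the top row of the desired diagram factors through the intermediate objects $t.e.K(S/\ker(f)\times S^\perp,(f\cdot j)^*\tau)$ and $t.e.K(S/\ker(f),(i_1)^*(f\cdot j)^*\tau)$.

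Next I would match this factorisation on the $char$ side using Theorem \ref{partial functoriality}, which gives $char(f)=char(q)\circ char(i_1)\circ char(f\cdot j)$. The three resulting squares are then covered by already-proven special cases: the outer squares (for $q$ and for $f\cdot j$, both finite coverings of tori) are handled by Theorem \ref{main theorem for K for finite cover}, and the middle square (for the inclusion $i_1$ into a direct product) is handled by Theorem \ref{compatibility of t.e.K(i_1) with char(i_1)}. The hypothesis of the latter theorem is exactly what Lemma \ref{decompositin formula for kappa} provides, namely $(f\cdot j)^*\tau=p_1^*\tau_1+p_2^*\tau_2$ with each $\tau_i$ positive, so the middle square genuinely fits the direct-product framework.

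The only point needing care is bookkeeping of the twistings at every intermediate object: starting with $\tau$ on $T$, pulling back by $f\cdot j$ gives $(f\cdot j)^*\tau$ on $S/\ker(f)\times S^\perp$, restricting along $i_1$ picks out $\tau_1$ on $S/\ker(f)$, and pulling back along $q$ yields $q^*\tau_1=f^*\tau$ on $S$. Each vertical arrow $M.d._\bullet$ is then the isomorphism provided by Corollary \ref{computation of K using a Mackey decomposition} for the appropriate pair, so the diagram is well-typed. Assembling the three commutative squares along their shared vertical edges produces the desired commutative rectangle, which is the assertion of the theorem.

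The main obstacle has already been absorbed into the earlier work: the nontrivial content lies in Theorem \ref{partial functoriality} (the compatibility of $char$ with the orthogonal-completion decomposition), Theorem \ref{compatibility of t.e.K(i_1) with char(i_1)} (which rests on naturality of the Mackey decomposition with K\"unneth-type family index maps), and Theorem \ref{main theorem for K for finite cover} (which rests on the explicit lift of ${}^tdq$ to the covering space and on naturality of the Mackey decomposition with respect to $q^\natural$ and $q^*$). Given those three results, the proof of Theorem \ref{main theorem for K for tori in Section 4} reduces to diagram chasing and should require no further geometric input.
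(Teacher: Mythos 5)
Your proposal is correct and matches the paper's own proof essentially verbatim: the paper also stacks the two finite-covering squares (Theorem \ref{main theorem for K for finite cover}, applied to $q$ and to $f\cdot j$) with the direct-product square (Theorem \ref{compatibility of t.e.K(i_1) with char(i_1)}), and then concludes using $char(f)=char(q)\circ char(i_1)\circ char(f\cdot j)$ from Theorem \ref{partial functoriality} together with the definition $f^\#=q^\#\circ i_1^\#\circ (f\cdot j)^\#$. Your additional remarks on the twisting bookkeeping and on Lemma \ref{decompositin formula for kappa} supplying the hypothesis of the direct-product case are accurate and consistent with the paper.
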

\begin{proof}
The following commutative diagram holds from Theorem \ref{compatibility of t.e.K(i_1) with char(i_1)} and Theorem \ref{main theorem for K for finite cover}.
$$\begin{CD}
t.e.K(T,\tau) @>(f\cdot j)^\#>> t.e.K(S/\ker(f)\times S^\perp,(f\cdot j)^*\tau) @>i_1^\#>> \\
@VM.d._TVV @VM.d._{S/\ker(f)\times S^\perp}VV \\
char(T,\tau) @>char(f\cdot j)>> char(S/\ker(f)\times S^\perp,(f\cdot j)^*\tau) @>char(i_1)>>
\end{CD}$$
$$\begin{CD}
@>i_1^\#>> t.e.K(S/\ker(f),i_1^*(f\cdot j)^*\tau) @>q^\#>> t.e.K(S,f^*\tau) \\
@. @VM.d._{S/\ker(f)}VV @VM.d._SVV \\
@>char(i_1)>> char(S/\ker(f),i_1^*(f\cdot j)^*\tau) @>char(q)>> char(S,f^*\tau) 
\end{CD}$$
Form Theorem \ref{partial functoriality}, 
$$char(f)=char(q) \circ char(i_1) \circ char(f\cdot j),$$
and by the definition,
$$f^\#=q^\#\circ i_1^\#\circ (f\cdot j)^\#.$$
Therefore, we obtain the conclusion.
\end{proof}

%一般のG%

\subsection{For compact connected $G$ with torsion-free $\pi_1$}\label{section of t.e.K for G}
Let us extend the above construction to compact connected Lie groups with torsion-free $\pi_1$.

Let $G$ and $H$ be compact connected Lie groups with torsion-free $\pi_1$, $\tau$ be a positive central extension of $LG$ and the associated $G$-equivariant twisting over $G$, $f$ satisfy the decomposable condition, and $S$ and $T$ be chosen maximal tori of $H$ and $G$ respectively such that $f(S)\subseteq T$. That is, the following commutative diagram holds.
$$
\begin{CD}
H @>f>> G \\
@AiAA @AkAA \\
S @>f>> T
\end{CD}
$$
In this section, we use the same character for the restriction of homomorphisms to subgroup or induced map to the quotient group. For example, $f:H\to G$ determines group homomorphisms $f:H/\ker(f)\to G$ and $f:S\to T$.

Let us recall the result in \cite{FHT1}.

\begin{thm}[\cite{FHT1} Theorem 4.27]\label{naturality by FHT}
If $f$ is an injection and $rank(G)=rank(H)$, the following commutative diagram holds.
$$
\begin{CD}
K^{\tau+rank(G)}_G(G) @>f^*\circ f^\natural>> K_H^{f^*\tau+rank(H)} \\
@V{M.d._G} VV @V{M.d._H}VV \\
char(G,\tau) @>char(f)>> char(H,f^*\tau)
\end{CD}
$$
\end{thm}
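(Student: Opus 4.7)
The plan is to reduce the statement to the already-proven naturality for finite coverings of tori (Theorem \ref{main theorem for K for finite cover}) by passing through the maximal torus inclusions $k:T\hookrightarrow G$ and $i:S\hookrightarrow H$. The tools I would use are the reduction-to-maximal-tori naturality of $M.d.$ stated at the end of Section \ref{Modified twisted equivariant K-functor} before this theorem, the defining diagram of $char(f)$ in Definition \ref{def. of char(f)}, functoriality of the classical pull back, and injectivity of $char(i)$.

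First I would observe that, when $f$ is injective with $rank(G)=rank(H)$, the restriction $f|_S:S\to T$ is a finite covering: $f|_S$ is injective and $\dim S=\dim T$, so its image is an open subgroup of the torus $T$, hence equals $T$. Consequently Theorem \ref{main theorem for K for finite cover} applies directly to $f|_S$, giving $M.d._S\circ (f|_S)^*\circ (f|_S)^\natural = char(f|_S)\circ M.d._T$. Note that with equal ranks the degree shifts on the two sides of the $K$-theory map match, so $f^\#$ is literally the classical pull back $f^*\circ f^\natural$.

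Next I would run the diagram chase. Since $char(i)$ is injective, it suffices to verify
\[
char(i)\circ M.d._H\circ f^\# \;=\; char(i)\circ char(f)\circ M.d._G.
\]
Applying the reduction $char(i)\circ M.d._H=M.d._S\circ i^*\circ i^\natural$ rewrites the left-hand side as $M.d._S\circ i^*\circ i^\natural\circ f^*\circ f^\natural$. Since $f\circ i=k\circ f|_S$ as maps $S\to G$, functoriality of $(\cdot)^*$ and $(\cdot)^\natural$ turns this into $M.d._S\circ (f|_S)^*\circ (f|_S)^\natural\circ k^*\circ k^\natural$. Using Theorem \ref{main theorem for K for finite cover} for the finite covering $f|_S$, this becomes $char(f|_S)\circ M.d._T\circ k^*\circ k^\natural$, and a further application of $char(k)\circ M.d._G=M.d._T\circ k^*\circ k^\natural$ gives $char(f|_S)\circ char(k)\circ M.d._G$. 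On the other hand, the right-hand side equals $char(f|_S)\circ char(k)\circ M.d._G$ directly by the defining diagram of $char(f)$. The two sides agree, and injectivity of $char(i)$ concludes.

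The main obstacle is not the diagram chase but the reduction $M.d._S\circ i^*\circ i^\natural=char(i)\circ M.d._H$ itself: unlike for tori, where the Mackey decomposition exhibits $M.d._T$ concretely as a Thom isomorphism over a disjoint union of vector spaces, for general $G$ one has to know that the pull back to the maximal torus on the $K$-theory side agrees, after Mackey decomposition, with the combinatorial correspondence of Weyl-orbits to $\Pi_T$-orbits used to define $char(i)$. Once this ingredient (the essence of \cite{FHT3}) is available, the rest of the argument is purely formal functoriality of pull backs.
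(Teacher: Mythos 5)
Your argument is correct, but note first that the paper does not actually prove this statement: it is imported verbatim as \cite{FHT1} Theorem 4.27, so there is no internal proof to compare against. What you have done is show that the general equal-rank injective case is a formal consequence of the special case $H=T$ a maximal torus (the reduction theorem $M.d._T\circ k^*\circ k^\natural=char(k)\circ M.d._G$, which the paper also quotes from \cite{FHT3} without proof) together with the paper's own torus-level result for finite coverings. Each step checks out: the decomposable condition holds for an equal-rank injection (Example (a)), so $char(f)$ is defined by the square $char(i)\circ char(f)=char(f|_S)\circ char(k)$; $char(i)$ is injective; the identity $i^*\circ i^\natural\circ f^*\circ f^\natural=(f|_S)^*\circ(f|_S)^\natural\circ k^*\circ k^\natural$ is exactly the $3\times 3$ functoriality grid the paper itself uses in Section 4.3.3; and Theorem \ref{main theorem for K for finite cover} applies to $f|_S$. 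One small simplification you could make: under your hypotheses $f|_S$ is not merely a finite covering but an isomorphism of tori (injective, equal dimension, hence surjective), so the appeal to the finite-covering theorem is to its one-sheeted degenerate case and could be replaced by naturality of the Mackey decomposition under isomorphisms. Your closing remark correctly locates the genuine content: all of it sits in the maximal-torus reduction from \cite{FHT3}, and your diagram chase is the same style of argument the paper deploys to extend this theorem in Theorems \ref{main theorem for K for general group in the case of direct products} and \ref{main theorem for K for general group in the case of finite covers}.
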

This theorem tells us that $f^\#$ has been defined and holds naturality with $char(f)$. Let us extend the result for more general cases. That is, we verify the following theorem.
\begin{thm}\label{main theorem for K for general Lie group}
We can define $f^\#$ for $f$ satisfying the decomposable condition and the following commutative diagram holds.
$$\begin{CD}
t.e.K(G,\tau) @>f^\#>> t.e.K(H,f^*\tau) \\
@VM.d.GVV @VM.d._HVV \\
char(G,\tau) @>char(f)>> char(H,f^*\tau) \\
\end{CD}$$
\end{thm}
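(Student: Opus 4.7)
The plan is to reduce Theorem~\ref{main theorem for K for general Lie group} to the tori version, Theorem~\ref{main theorem for K for tori in Section 4}, by decomposing $f$ according to the decomposable condition and then exploiting the fact that both $M.d._G$ and $char(f)$ are defined by compatibility with the maximal torus restriction.

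First I would apply the decomposable condition to factor
$$H \xrightarrow{q} H/\ker(f) \xrightarrow{i_1} H/\ker(f)\times S^\perp \xrightarrow{\tilde{f}} G,$$
where $S^\perp\subseteq T$ is the orthogonal completion torus of $df(\mathfrak{s})$ under $\innpro{\cdot}{\cdot}{\tau}$, $\ker(f)$ is finite (since the local condition together with Lemma~\ref{pull back formula of kappa} forces $df$ to be injective), and $\tilde{f}(h,s):=\bar{f}(h)j(s)$ is a well-defined group homomorphism between compact connected Lie groups of equal rank $\mathrm{rank}(G)$, thanks to $[df(\mathfrak{h}),\mathfrak{s}^\perp]=0$. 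Restricting the whole factorization to maximal tori recovers the tori decomposition of Theorem~\ref{decomposition of local injection} for $f|_S:S\to T$. I would then define
$$f^\#:=q^\#\circ i_1^\#\circ \tilde{f}^\#,$$
with $q^\#:=q^*\circ q^\natural$, $\tilde{f}^\#:=\tilde{f}^*\circ \tilde{f}^\natural$ (preceded by a further finite-covering decomposition of $\tilde{f}$ if its kernel is non-trivial), and $i_1^\#:=ind_{H/\ker(f)\times S^\perp\to H/\ker(f)}\circ i_1^\natural$, the family index map being available in the non-abelian equivariant setting via Theorem~\ref{generalization of Kunneth formula} and Definition~\ref{generalization of family index map} with $Z=S^\perp$ (trivial tangent bundle, free $K^*(Z)$).

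Naturality with $char$ is then established piecewise. For the equal-rank injection $\tilde{f}$ the square is exactly Theorem~\ref{naturality by FHT}. For $q^\#$ and $i_1^\#$ the idea is to show they each commute with the maximal torus restriction $k^*\circ k^\natural$: formally for $q^\#$ by naturality of pullback and of the $\natural$-restriction, and for $i_1^\#$ by naturality of the K\"unneth isomorphism and of the family index map under $\natural$-restriction of the $H/\ker(f)$-conjugation action to its maximal torus. Combined with Theorems~\ref{main theorem for K for finite cover} and~\ref{compatibility of t.e.K(i_1) with char(i_1)} on the torus side, and with the characterizations of $M.d._G$ and of $char(f)$ (Definition~\ref{def. of char(f)}) as the unique maps making their respective maximal torus squares commute, this yields a commutative square for each of the three pieces; composing them gives the required diagram.

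The main obstacle is the maximal-torus compatibility for $i_1^\#$ in the non-abelian equivariant setting: one must reprove equivariant analogs of Lemma~\ref{naturality with Kunneth isomorphisms} and Proposition~\ref{naturality of ind. for j^natural} using the slice-theoretic K\"unneth decomposition, checking that restricting the conjugation action on $H/\ker(f)$ from $H/\ker(f)$ to its maximal torus intertwines the family index map along $S^\perp$ with its torus counterpart. Once this is in place, the remainder of the argument is routine diagram chasing together with the tori results and the reduction-to-maximal-torus characterization of all maps involved.
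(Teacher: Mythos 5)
Your proposal is correct and follows essentially the same route as the paper: decompose $f$ as $q\circ i_1\circ(f\cdot j)$, define $f^\#:=q^\#\circ i_1^\#\circ(f\cdot j)^\#$ with the finite-covering pieces given by $(\cdot)^*\circ(\cdot)^\natural$ and the direct-product piece by the family index map, and prove each square by showing it commutes with restriction to maximal tori, then invoking the torus results, Theorem \ref{naturality by FHT}, and the injectivity of $char(i)$. The only small caveat is that $f\cdot j$ is in general a finite covering rather than an injection, so its square is not literally Theorem \ref{naturality by FHT} but requires the same maximal-torus reduction you describe for $q^\#$ (this is exactly the paper's Theorem \ref{main theorem for K for general group in the case of finite covers}), which your parenthetical hedge already anticipates.
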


%局所単射の分解
\subsubsection{The decomposition of a homomorphism satisfying the decomposable condition}\label{decomposition of the local injection in the general case}
As we explain in the proof of Theorem \ref{well-def. of char(f) for general Lie group}, $f:H\to G$ can be written as the composition of the following series
$$H\xrightarrow{q}H/\ker(f)\xrightarrow{i_1}H/\ker(f)\times S^\perp\xrightarrow{f\cdot j}G$$
if $f$ satisfies the decomposable condition.
By restricting this sequence to maximal tori, we obtain the following commutative diagram
$$\xymatrix{
H \ar[r]^q & H/\ker(f) \ar[r]^{i_1} & H/\ker(f)\times S^\perp \ar[rr]^{f\cdot j} && G & \\
S\ar[r]^q \ar[u]^i & S/\ker(f) \ar[u]^i \ar[r]^{i_1} & S/\ker(f)\times S^\perp \ar[u]|{i\times id} \ar[rr]^{f\cdot j} && T \ar[u]^k & S^\perp \ar[l]_j \ar[lu]_j }$$

Moreover, by the definition of $S^\perp$, $(f\cdot j)^*\tau$ can be written as $$p_1^*\tau_{H/\ker(f)}+p_2^*\tau_{S^\perp},$$
 where $\tau_{H/\ker(f)}=((f \cdot j)\circ i_1)^*\tau$ and $\tau_{S^\perp}=j^*\tau$.

Let us notice that the restriction of $q$ and $f\cdot j$ to maximal tori are finite coverings, and the one of $i_1$ is the natural inclusion into the first factor of the direct product. By use of theorems of the previous section, we construct $f^\#$ for $f$.

%We recall the local condition at Definition \ref{local condition}. That is, the orthogonal completion $\mathfrak{s}^\perp$ and $df(\mathfrak{h})$ are commutative. This condition implies that $df(\mathfrak{h})\oplus\mathfrak{s}^\perp$ has a structure of a sub Lie algebra of $\mathfrak{g}$. Therefore, we can define a new Lie group $H\times S^\perp$ with the same way in Proposition \ref{decomposition of injection for tori}, we can verify that $S^\perp\subseteq T$ is a torus. Let $j:S^\perp\hookrightarrow G$ be the inclusion.
%\begin{lem}\label{decomposition of local injection for general G}
%We can decompose $f$ as follows;
%$$H\xrightarrow{q}H/\ker(f)\xrightarrow{i_1}H/\ker(f)\times S^\perp\xrightarrow{f\cdot j}G$$
%where $q$ is the natural finite cover, $f\cdot j$ is defined by 
%$f\cdot j(h,s):=f(h)\cdot j(s)$ and ``$\cdot$'' means the multiplication in $G$.
%\end{lem}
%This lemma means that if $f$ is an injection, we have the following commutative diagram;

%We generalize the argument in the case of direct product of tori or finite cover of a torus. If we can define $q^\#, (f\cdot j)^\#$ and $i_1^\#$ which has the compatibility with $char(q), char(f\cdot j)$ and $char(i_1)$ respectively the following is well-defined and the following theorem is clear.
%\begin{dfn}\label{def.  of RL(f) for general Lie group}
%$$f^\#:=(f\cdot j)^\#\circ i_1^\# \circ q^\#$$
%\end{dfn}

%\begin{thm}\label{main theorem for general Lie group}
%$f^\#$ has a compatibility with $char(f)$.
%\end{thm}

%%%%直積のケース
\subsubsection{``Direct product''}\label{t.e.K for direct product for general G}
Let $H$ be a compact connected Lie group with torsion-free $\pi_1$, $S$ be a maximal torus of $H$, $S^\perp$ be a torus, and $\tau_H$ and $\tau_{S^\perp}$ be positive central extensions of $LH$ and $LS^\perp$ respectively. $i_1:H\to H\times S^\perp$, $p_1:H\times S^\perp \to H$ and $p_2:H\times S^\perp \to S^\perp$ are as usual. Then, $\tau:=p_1^*\tau_H+p_2^*\tau_{S^\perp}$ is a positive central extension of $L(H\times S^\perp)$ and the associated $H\times S^\perp$-equivariant twisting over $H\times S^\perp$.

Let us define 
$$i_1^\#:t.e.K(H\times S^\perp,\tau)\to t.e.K(H,\tau_H)$$
 by use of the family index map like the case of tori.
\begin{thm}\label{main theorem for K for general group in the case of direct products}
If we define as 
$$i_1^\#:=ind_{H\times S^\perp\to H}\circ i_1^\natural,$$
the following commutative diagram holds.
$$\begin{CD}
t.e.K(H\times S^\perp,\tau) @>i_1^\#>> t.e.K(H,\tau_H) \\
@VM.d._{H\times S^\perp}VV @VM.d._HVV \\
char(H\times S^\perp\tau) @>char(i_1)>> char(H,\tau_H) 
\end{CD}$$
\end{thm}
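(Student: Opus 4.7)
The plan is to reduce the statement to the torus case already proved in Theorem \ref{compatibility of t.e.K(i_1) with char(i_1)}, by restricting everything to maximal tori. Let $k:S\hookrightarrow H$ be the chosen maximal torus inclusion, so that $\tilde{k}:=k\times\mathrm{id}_{S^\perp}:S\times S^\perp\hookrightarrow H\times S^\perp$ is a maximal torus inclusion, and let $\hat{i}_1:S\to S\times S^\perp$ denote the torus-level inclusion into the first factor. The square
$$\begin{CD}
S @>\hat{i}_1>> S\times S^\perp \\
@VkVV @VV\tilde{k}V \\
H @>i_1>> H\times S^\perp
\end{CD}$$
commutes on the nose.

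Since $char(k)$ is injective (the lemma following Definition \ref{def. of char(i)}), it suffices to verify the desired identity after applying $char(k)$ on the left. Using the maximal-torus compatibility of $M.d.$ stated just before Section 3.2, the defining compatibility $char(k)\circ char(i_1)=char(\hat{i}_1)\circ char(\tilde{k})$ built into Definition \ref{def. of char(f)}, and the torus case Theorem \ref{compatibility of t.e.K(i_1) with char(i_1)} applied to $\hat{i}_1$, this reduces the problem to the equality
$$k^*\circ k^\natural\circ i_1^\#=\hat{i}_1^\#\circ\tilde{k}^*\circ\tilde{k}^\natural,$$
i.e.\ to commutativity of the square
$$\begin{CD}
t.e.K(H\times S^\perp,\tau) @>i_1^\#>> t.e.K(H,\tau_H) \\
@V\tilde{k}^*\circ\tilde{k}^\natural VV @VVk^*\circ k^\natural V \\
t.e.K(S\times S^\perp,\tilde{k}^*\tau) @>\hat{i}_1^\#>> t.e.K(S,k^*\tau_H).
\end{CD}$$

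To prove this last commutativity I would unfold $i_1^\#=ind_{H\times S^\perp\to H}\circ i_1^\natural$ and $\hat{i}_1^\#=ind_{S\times S^\perp\to S}\circ\hat{i}_1^\natural$ and slide operations past the index map using the two naturalities already established: Proposition \ref{naturality of ind. for F^*} applied to $F=k$ gives $k^*\circ ind_{H\times S^\perp\to H}=ind_{S\times S^\perp\to S}\circ\tilde{k}^*$, while Proposition \ref{naturality of ind. for j^natural} gives $k^\natural\circ ind_{H\times S^\perp\to H}=ind_{H\times S^\perp\to H}\circ k^\natural$. The regradings combine via $k^\natural\circ i_1^\natural=(i_1\circ k)^\natural=(\tilde{k}\circ\hat{i}_1)^\natural=\hat{i}_1^\natural\circ\tilde{k}^\natural$, and the space-level pullback $\tilde{k}^*$ commutes with the group-level regrading $\hat{i}_1^\natural$. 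Chaining these identities transforms the LHS into the RHS.

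The main obstacle is bookkeeping rather than geometry: one must check that both $\tau_H$ and $k^*\tau_H$ are positive central extensions, so that Theorem \ref{isomorphism between t.e.K and char} furnishes the vanishings $K^{\tau_H+rank(H)+1}_H(H)=0$ and $K^{k^*\tau_H+rank(S)+1}_S(S)=0$ needed for the target-side family index map (Definition \ref{generalization of family index map}) to be defined; one must also be sure that $i_1^\natural\tau$ and $\hat{i}_1^\natural\tilde{k}^*\tau$ really do equal $p_1^*\tau_H$ and $p_1^*(k^*\tau_H)$ as equivariant twistings, so that all four groups in the reduction square match. Positivity of $\tau_H$ and $\tau_{S^\perp}$ follows from the direct-sum decomposition $\tau=p_1^*\tau_H+p_2^*\tau_{S^\perp}$ together with positivity of $\tau$, and positivity of $k^*\tau_H$ follows from the construction of the bilinear form in Lemma \ref{admissibility of f^*tau} restricted along the injection $dk$. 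Once these compatibilities are in place, the proof becomes a purely formal diagram chase built on the torus case.
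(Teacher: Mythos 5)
Your proposal is correct and follows essentially the same route as the paper: reduce to maximal tori using injectivity of $char$ of the maximal-torus inclusion, invoke the $M.d.$--compatibility of $k^*\circ k^\natural$ (Theorem \ref{naturality by FHT}), the torus case (Theorem \ref{compatibility of t.e.K(i_1) with char(i_1)}), and the commutation of $i_1^\#$ with restriction to maximal tori. The paper proves that last square by unfolding the family index map through the K\"unneth isomorphism, which is exactly what Propositions \ref{naturality of ind. for F^*} and \ref{naturality of ind. for j^natural} package, so your citation of those propositions is the same argument one level of abstraction higher.
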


Let us start from the reduction to maximal tori. $n:=rank(H)$ and $m:=\dim(S^\perp)$.
\begin{pro}
The following commutative diagram holds.
$$\begin{CD}
t.e.K(H\times S^\perp,\tau) @>(i\times id)^\#>> t.e.K(S\times S^\perp,(i\times id)^*\tau) \\
@Vi_1^\#VV @Vi_1^\#VV \\
t.e.K(H,\tau_H) @>i^\#>> t.e.K(S,i^*\tau_H)
\end{CD}$$
where $(i\times id)^\#$ and $i^\#$ have been defined in Theorem \ref{naturality by FHT}.
\end{pro}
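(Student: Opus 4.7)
The plan is to expand the four maps in the square and deduce commutativity from a short chain of swaps among the basic operations $ind$, $f^\natural$, $f^*$. Since both $i\colon S\hookrightarrow H$ and $i\times id\colon S\times S^\perp\hookrightarrow H\times S^\perp$ are inclusions of maximal tori, Theorem \ref{naturality by FHT} guarantees that $i^\# = i^*\circ i^\natural$ and $(i\times id)^\# = (i\times id)^*\circ(i\times id)^\natural$. By construction, the two vertical arrows unfold as $ind_{H\times S^\perp\to H}\circ i_1^\natural$ on the left and $ind_{S\times S^\perp\to S}\circ i_1^\natural$ on the right. Commutativity of the square therefore reduces to the identity
\[
ind_{S\times S^\perp\to S}\circ i_1^\natural\circ(i\times id)^*\circ(i\times id)^\natural \;=\; i^*\circ i^\natural\circ ind_{H\times S^\perp\to H}\circ i_1^\natural.
\]

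I would verify this by four commutation relations, applied in sequence starting from the right-hand side. First, Proposition \ref{naturality of ind. for j^natural} gives $i^\natural\circ ind = ind\circ i^\natural$, expressing the compatibility of the family index with restriction of the ambient group action. Second, the identity $i^\natural\circ i_1^\natural = i_1^\natural\circ(i\times id)^\natural$ holds on the nose, because both sides are the restriction functor along the single group homomorphism $S\to H\times S^\perp$ which factors in two equivalent ways as $i_1^H\circ i$ or as $(i\times id)\circ i_1^S$. Third, Proposition \ref{naturality of ind. for F^*} applied with $F = i\colon S\to H$ and $Z = S^\perp$ yields $i^*\circ ind_{H\times S^\perp\to H} = ind_{S\times S^\perp\to S}\circ(i\times id)^*$. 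Finally, the tautological identity $(i\times id)^*\circ i_1^\natural = i_1^\natural\circ(i\times id)^*$ holds because pulling back a family of Fredholm operators along a space map and reinterpreting the resulting family as equivariant for a subgroup are mutually independent operations. Concatenating these steps produces the chain
\[
i^*\circ i^\natural\circ ind\circ i_1^\natural \;=\; i^*\circ ind\circ i^\natural\circ i_1^\natural \;=\; i^*\circ ind\circ i_1^\natural\circ(i\times id)^\natural
\]
\[
\;=\; ind\circ(i\times id)^*\circ i_1^\natural\circ(i\times id)^\natural \;=\; ind\circ i_1^\natural\circ(i\times id)^*\circ(i\times id)^\natural,
\]
which is exactly what is required.

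The only genuine subtlety is that Propositions \ref{naturality of ind. for F^*} and \ref{naturality of ind. for j^natural} were originally formulated for a $G$-manifold $M$ with trivial $G$-action, whereas here $M = H$ (or $S$) carries the non-trivial conjugation action. This extension is routine and parallels the way Theorem \ref{generalization of Kunneth formula} generalizes the K\"unneth isomorphism: one applies the slice decomposition $G\times_{G_x}S_x$, checks the naturality identities on each slice (where the action is essentially trivial), and glues with Mayer--Vietoris. So the expected obstacle is not substantial mathematical content but careful bookkeeping — in particular, tracking that the twisting $\tau = p_1^*\tau_H + p_2^*\tau_{S^\perp}$ is compatible with all four maps and that the degree shift by $\dim(S^\perp)$ works out consistently across the square.
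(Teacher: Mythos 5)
Your proof is correct and is essentially the paper's argument: the paper proves the same square by assembling a $3\times 2$ grid whose six small squares are exactly your four commutation relations, with the two index-map swaps further unpacked into the K\"unneth isomorphism step (Lemmas \ref{naturality of Kunneth formula for j^natural for G} and \ref{naturality of Kunneth formula for F^* for G}) followed by $id\otimes ind_{S^\perp}$, which is precisely what Propositions \ref{naturality of ind. for F^*} and \ref{naturality of ind. for j^natural} package for you. The caveat in your last paragraph is unnecessary: those propositions are already stated and proved in the paper for arbitrary smooth $G$-actions on $M$ (they follow Theorem \ref{generalization of Kunneth formula}), so they apply directly to $M=H$ with the conjugation action.
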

\begin{proof}
{\small
$$\xymatrix{
K^{\tau+n+m}_{H\times S^\perp}(H\times S^\perp) \ar[rr]^{(i\times id)^\natural} \ar@{}[rrd]|{(1)} \ar[d]^{i_1^\natural} && K^{(i\times id)^\natural\tau+n+m}_{S\times S^\perp}(H\times S^\perp) \ar[rr]^{(i\times id)^*} \ar@{}[rrd]|{(2)} \ar[d]^{i_1^\natural} && K^{(i\times id)^*\tau+n+m}_{S\times S^\perp}(S\times S^\perp) \ar[d]^{i_1^\natural} \\
K^{p_1^*\tau_H+n+m}_H(H\times S^\perp) \ar[rr]^{i^\natural} \ar[d]^{\Phi_{H\times S^\perp,H,\tau_H}}
 \ar@{}[rrd]|{(3)} && K^{p_1^*i^\natural\tau_H+n+m}_S(H\times S^\perp) \ar[rr]^{(i\times id)^*} \ar[d]^{\Phi_{H\times S^\perp,S,i^\natural\tau_H}} \ar@{}[rrd]|{(4)} && K^{i^*\tau_H+n+m}_S(S\times S^\perp) \ar[d]^{\Phi_{S\times S^\perp,S,i^*\tau_H}} \\
K_H^{\tau_H+n}(H)\otimes K^m(S^\perp) \ar[rr]^{i^\natural\otimes id} \ar[d]^{id\otimes ind_{S^\perp}} \ar@{}[rrd]|{(5)} && K_S^{i^\natural\tau_H+n}(H)\otimes K^m(S^\perp) \ar[rr]^{i^*\otimes id} \ar[d]^{id\otimes ind_{S^\perp}} \ar@{}[rrd]|{(6)} && K_S^{i^*\tau_H+n}(S)\otimes K^m(S^\perp) \ar[d]^{id\otimes ind_{S^\perp}} \\
K^{\tau_H+n}_H(H) \ar[rr]^{i^\natural} && K^{i^\natural\tau_H+n}_S(H) \ar[rr]^{i^*} && K^{i^*\tau_H+n}_S(S)
}$$}

(1), (2), (5) and (6) clearly commute.

(3) and (4) commute from Lemma \ref{naturality of Kunneth formula for j^natural for G} and
\ref{naturality of Kunneth formula for F^* for G} respectively.
\end{proof}
{\it Proof of Theorem \ref{main theorem for K for general group in the case of direct products}.}
Let us consider the following diagram.

Let $G:=H\times S^\perp$, $T:=S\times S^\perp$ and $k:=i\times id:S\times S^\perp=T\hookrightarrow H\times S^\perp=G$.
$$
\xymatrix{
t.e.K(G,\tau)\ar[rrr]^{i_1^\#} \ar[ddd]^{k^\#} \ar[rd]|{M.d._G} \ar@{}[rrrd]|{(1)} \ar@{}[rddd]|{(2)} & & & t.e.K(H,\tau_H) \ar[dl]|{M.d._H} \ar[ddd]^{i^\#} \ar@{}[lddd]|{(4)} \\
& char(G,\tau) \ar[r]^{char(i_1)} \ar[d]|{char(k)} \ar@{}[rd]|{(3)}  &
char(H,\tau_H) \ar[d]|{char(i)} & \\
& char(T,k^*\tau) \ar[r]^{char(i_1)} & char(S,i^*\tau_H) & \\
t.e.K(T,k^*\tau)
\ar[rrr]^{i_1^\#} \ar[ur]|{M.d._T} \ar@{}[rrru]|{(5)} & & & t.e.K(S,i^*\tau_H) \ar[ul]|{M.d._S}}
$$
We want to verify
the commutativity of (1). Since $char(i)$ is injective, this is equivalent to 
$$char(i)\circ M.d._H\circ i_1^\#=char(i)\circ char(i_1)\circ M.d._G.$$

The followings have been verified.

(2) and (4) follow from Theorem \ref{naturality by FHT}.

(3) follows from the definition of $char(i_1)$.

(5) follows from Theorem \ref{compatibility of t.e.K(i_1) with char(i_1)}.

That $i^\#\circ i_1^\#=i_1^\#\circ k^\#$ follows from the above lemma (the biggest square).

The following computation implies the theorem.

$$char(i)\circ M.d._H\circ i_1^\#=M.d._S\circ i^\#\circ i_1^\#$$
$$=M.d._S\circ i_1^\#\circ k^\#=char(i_1)\circ M.d._T\circ k^\#$$
$$=char(i_1)\circ char(k) \circ M.d._{G}$$
$$=char(i)\circ char(i_1)\circ M.d._{G}.$$
\begin{flushright} $\Box$ \end{flushright}

%有限被覆
\subsubsection{``Finite covering''}\label{t.e.K for finite cover for general G}
Let $H$, $G$ be compact connected Lie group with torsion-free $\pi_1$, $\tau$ be a positive central extension of $LG$ and the associated $G$-equivariant twisting over $G$, and $f:H\to G$ be a smooth group homomorphism satisfying the decomposable condition such that the restriction of it to maximal torus is a finite covering. Let $S$ and $T$ be maximal tori of $H$ and $G$ respectively such that $f(S)\subseteq T$. That is, the following commutative diagram holds.
$$\begin{CD}
H @>f>> G \\
@AiAA @AkAA \\
S @>f>> T
\end{CD}
$$
We verify the following theorem.

\begin{thm}\label{main theorem for K for general group in the case of finite covers}
If we define as
$$f^\#:=f^*\circ f^\natural,$$
the following commutative diagram holds.
$$\begin{CD}
t.e.K(G,\tau) @>f^\#>> t.e.K(H,f^*\tau)\\
@VM.d._GVV @VM.d._HVV \\
char(G,\tau) @>char(f)>> char(H,f^*\tau)\\
\end{CD}$$

\end{thm}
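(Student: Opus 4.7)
The plan is to mimic the maximal-torus reduction used in Theorem \ref{main theorem for K for general group in the case of direct products}, but combined with the torus-level finite-cover case Theorem \ref{main theorem for K for finite cover} in place of the direct-product case. Let $i\colon S\hookrightarrow H$ and $k\colon T\hookrightarrow G$ be the chosen maximal torus inclusions, so that $k\circ f|_S=f\circ i$; since $f|_S$ is by hypothesis a finite covering, Theorem \ref{main theorem for K for finite cover} supplies the commutativity of the ``back face''
\[
\begin{CD}
t.e.K(T,k^*\tau) @>(f|_S)^\#>> t.e.K(S,i^*f^*\tau)\\
@VM.d._TVV @VM.d._SVV\\
char(T,k^*\tau) @>char(f|_S)>> char(S,i^*f^*\tau).
\end{CD}
\]

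I would then assemble the cube whose front face is the target square and whose back face is the square above. The two vertical $char$-sides commute by the very definition of $char(f)$ (Definition \ref{def. of char(f)}), the two vertical $M.d.$-sides commute by Theorem \ref{naturality by FHT} applied to the equal-rank injections $i$ and $k$, and the remaining ``top'' face is the $t.e.K$-identity
\[
i^\#\circ f^\# \;=\; (f|_S)^\#\circ k^\#.
\]
This last identity is pure functoriality of $f^\#=f^*\circ f^\natural$: contravariance of spatial pullback gives $(f\circ i)^*=i^*\circ f^*$, contravariance of restriction of equivariance group gives $(f\circ i)^\natural=i^\natural\circ f^\natural$, and spatial pullback commutes with change of equivariance group because they act on independent data, yielding $f^*\circ i^\natural=i^\natural\circ f^*$. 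Combined with $f\circ i=k\circ f|_S$ these identities force the top face to commute. Once all five faces commute, injectivity of $char(i)$ (the lemma following Definition \ref{def. of char(i)}) forces the front face to commute by exactly the diagram chase used at the end of the proof of Theorem \ref{main theorem for K for general group in the case of direct products}.

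The main obstacle I anticipate is the bookkeeping for the functoriality identity on the top face: one must identify the various twistings produced by applying $f^\natural, i^\natural, k^\natural, (f|_S)^\natural$ and $f^*, i^*, k^*, (f|_S)^*$ in different orders canonically, so that the equality $f^*\circ i^\natural=i^\natural\circ f^*$ (and its $k$-analogue) holds on the nose rather than merely up to an isomorphism of twistings. Once those canonical identifications are spelled out, the remainder of the argument is a direct assembly of results already established in the excerpt.
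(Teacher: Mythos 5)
Your proposal is correct and follows essentially the same route as the paper: reduction to the chosen maximal tori, commutativity of the torus-level square by Theorem \ref{main theorem for K for finite cover}, commutativity of the two $M.d.$ side faces by Theorem \ref{naturality by FHT}, the top-face identity $i^\#\circ f^\#=(f|_S)^\#\circ k^\#$ established by the functoriality of $(\cdot)^*$ and $(\cdot)^\natural$ (the paper writes this out as an explicit $3\times 3$ diagram of $K$-groups), and the final diagram chase using injectivity of $char(i)$. The twisting-identification bookkeeping you flag is handled in the paper only implicitly, so your caution there is reasonable but does not indicate a gap relative to the paper's own argument.
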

\begin{rmk}
In Theorem \ref{naturality by FHT}, the restriction of $f$ to maximal torus is supposed to be injective.
\end{rmk}
\begin{proof}
Let us start from the reduction to maximal tori. $n:=rank(H)=rank(G)$.
\begin{pro}
The following commutative diagram holds.
$$\begin{CD}
t.e.K(G,\tau) @>f^\#>> t.e.K(H,f^*\tau) \\
@Vk^\#VV @Vi^\#VV \\
t.e.K(T,k^*\tau) @>f^\#>> t.e.K(S,i^*f^*\tau)
\end{CD}$$
\end{pro}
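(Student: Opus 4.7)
The plan is to reduce the commutativity of this square to two elementary facts: functoriality of $\natural$ and of pullback $*$ under composition, together with the mutual commutation of these two operations. The geometric input is simply the identity $f\circ i=k\circ f|_S$, which holds both as smooth group homomorphisms and as smooth maps of underlying spaces.

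First I would establish the following compatibility lemma: for a group homomorphism $\phi:G_1\to G_2$ and a $G_2$-equivariant smooth proper map $\psi:X\to Y$ (which becomes $G_1$-equivariant through $\phi$), the two ``change of data'' operations commute,
$$\phi^\natural\circ\psi^*=\psi^*\circ\phi^\natural:K^{\sigma+k}_{G_2}(Y)\longrightarrow K^{\psi^*\phi^\natural\sigma+k}_{G_1}(X),$$
for any $G_2$-equivariant twisting $\sigma$ over $Y$. This should be essentially tautological from Definitions \ref{def. of pull back} and \ref{def. of restr. of group action}: $\psi^*$ acts on the underlying family of Fredholm operators by precomposition with $\psi$, whereas $\phi^\natural$ merely reinterprets an equivariant class as equivariant for the smaller group, so the two operations touch disjoint pieces of the datum representing a twisted equivariant $K$-class.

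With this lemma in hand, the main computation runs as follows. Starting from $i^\#\circ f^\#=i^*\circ i^\natural\circ f^*\circ f^\natural$, the compatibility lemma applied to $\phi=i$ and $\psi=f$ lets me swap the middle pair to obtain $i^*\circ f^*\circ i^\natural\circ f^\natural$, which by functoriality of $*$ and $\natural$ equals $(f\circ i)^*\circ(f\circ i)^\natural$. Substituting the identity $f\circ i=k\circ f|_S$ and expanding again by functoriality yields $(f|_S)^*\circ k^*\circ(f|_S)^\natural\circ k^\natural$; a second application of the compatibility lemma, now to $\phi=f|_S$ and $\psi=k$, swaps the middle pair to produce $(f|_S)^*\circ(f|_S)^\natural\circ k^*\circ k^\natural=f^\#\circ k^\#$, as required. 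Pictorially, this organises into a $3\times 3$ grid of squares whose corners are the four groups in the statement and whose small squares commute for the reasons just indicated.

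The main obstacle is the bookkeeping of intermediate twistings. At each intermediate node of the $3\times 3$ grid the twisting takes one of the forms $\phi^\natural\psi^*\tau$ or $\psi^*\phi^\natural\tau$, and one must verify these coincide as equivariant twistings on the relevant space. This is a routine but somewhat delicate trace through the identification of twistings with $G$-equivariant projective bundles in Section \ref{definitions and the classification of twistings}, and is essentially the content of the compatibility lemma itself; once it is proved in that generality, the rest of the argument is purely formal.
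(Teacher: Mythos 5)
Your proposal is correct and is essentially the paper's own argument: the paper proves this proposition by exhibiting exactly the $3\times 3$ grid you describe, whose four inner squares commute by functoriality of $*$ and of $\natural$ together with their mutual commutation and the identity $f\circ i=k\circ f|_S$. Your explicit compatibility lemma and sequential rewriting of $i^\#\circ f^\#$ into $f^\#\circ k^\#$ is just a linearized presentation of that same diagram chase.
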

\begin{proof}
It follows from the following commutative diagram.
$$\begin{CD}
K^{\tau+n}_G(G) @>f^\natural>> K^{f^\natural\tau+n}_{H}(G) @>f^*>> K^{f^*\tau+n}_{H}(H) \\
@Vk^\natural VV @Vi^\natural VV @Vi^\natural VV \\
K^{k^\natural\tau+n}_T(G) @>f^\natural>> K^{i^\natural f^\natural\tau+n}_{S}(G) @>f^*>> K^{i^\natural f^*\tau+n}_{S}(H) \\
@Vk^*VV @Vk^*VV @Vi^*VV \\
K^{k^*\tau+n}_T(T) @>f^\natural>> K^{f^\natural k^*\tau+n}_{S}(T) @>f^*>> K^{i^*f^*\tau+n}_{S}(S) 
\end{CD}$$
\end{proof}

%{\it Proof of Theorem \ref{main theorem for K for general group in the case of finite covers}.}
Let us consider the following diagram just like the proof of Theorem \ref{main theorem for K for general group in the case of direct products}.
$$\xymatrix{
t,e,K(G,\tau) \ar[rrr]^{f^\#} \ar[ddd]_{k^\#} \ar[dr]|{M.d._G} \ar@{}[dddr]|{\circlearrowleft} & & & 
t.e.K(H,f^*\tau) \ar[ddd]^{i^\#} \ar[dl]|{M.d._{H}} \ar@{}[dddl]|{\circlearrowleft}\\
& char(G,\tau) \ar[r]^{char(f)} \ar[d]|{char(k)} \ar@{}[dr]|{\circlearrowleft} & char(H,f^*\tau) \ar[d]|{char(i)} & \\
& char(T,k^*\tau) \ar[r]_{char(f)} & char(S,f^*k^*\tau ) \\
t.e.K(T,k^*\tau) \ar[rrr]_{f^\#} \ar[ur]|{M.d._T} & & & 
t.e.K(S,f^*k^*\tau) \ar[ul]|{M.d._{S}} \ar@{}[ulll]|{\circlearrowleft}
}
$$
Verified commutativity is represented by ``$\circlearrowleft$''. Moreover, we have known the commutativity of the biggest square already in the above lemma.

We want to verify that $M.d._H\circ f^\#=char(f)\circ M.d._G$. It can be verified with the same way in the case of Theorem \ref{main theorem for K for general group in the case of direct products}.
\end{proof}

\subsubsection{Proof of Theorem \ref{main theorem for K for general Lie group}.}
Let $f:H\to G$ be a smooth group homomorphism satisfying the decomposable condition, $\tau$ be a positive central extension of $LG$. $S$ and $T$ are chosen maximal tori such that $f(S)\subseteq T$.

Let us recall that we can define the orthogonal completion torus $S^\perp \subseteq T$ and decompose $f$ as the following sequence
$$H\xrightarrow{q}H/\ker(f)\xrightarrow{i_1}H/\ker(f)\times S^\perp\xrightarrow{f\cdot j}G$$
By restricting this sequence to maximal tori, we obtain the following commutative diagram
$$\xymatrix{
H \ar[r]^q & H/\ker(f) \ar[r]^{i_1} & H/\ker(f)\times S^\perp \ar[rr]^{f\cdot j} && G & \\
S\ar[r]^q \ar[u]^i & S/\ker(f) \ar[u]^i \ar[r]^{i_1} & S/\ker(f)\times S^\perp \ar[u]|{i\times id} \ar[rr]^{f\cdot j} && T \ar[u]^k & S^\perp \ar[l]_j \ar[lu]_j }.$$

Therefore, the following commutative diagram holds.
$$\begin{CD}
t.e.K(G,\tau) @>(f\cdot j)^\#>> t.e.K(H/\ker(f)\times S^\perp,(f\cdot j)^*\tau) @>i_1^\#>> \\
@VM.d._GVV @VM.d._{H/\ker(f)\times S^\perp}VV \\
char(G,\tau) @>char(f\cdot j)>> char(H/\ker(f)\times S^\perp,(f\cdot j)^*\tau) @>char(i_1)>> 
\end{CD}$$
$$\begin{CD}
@>i_1^\#>> t.e.K(H/\ker(f),i_1^*(f\cdot j)^*\tau) @>q^\#>> t.e.K(H,f^*\tau) \\
@. @VVM.d._{H/\ker(f)}V @VVM.d._HV \\
@>char(i_1)>> char(H/\ker(f),i_1^*(f\cdot j)^*\tau) @>char(q)>> char(H,f^*\tau) 
\end{CD}$$
The first and the third commutes from Theorem \ref{main theorem for K for general group in the case of finite covers}. The second commutes from Theorem \ref{main theorem for K for general group in the case of direct products}.

From $char(f)=char(q)\circ char(i_1)\circ char(f\cdot j)$, we obtain the conclusion.

%ループ群の表現論%
\section{The quasi functor $RL$}\label{section about RL for tori}

%一般論%
\subsection{Positive energy representations of $LT$}\label{PER of LT}

Representation theory for loop groups is well known (\cite{PS}). Especially we have an explicit description of irreducible positive energy representations of loop groups of tori.

Let us recall that $LT$ has a canonical decomposition $LT\cong T\times \Pi_T\times U$, where $T$ is the set of initial values of loops, $\Pi_T$ is the set of ``rotation numbers'' (naturally isomorphic to $\pi_1(T)\cong\pi_0(LT)$) and 
$$U:=\exp\biggl\{\beta :S^1\to\mathfrak{t} \bigg| \int_{S^1}\beta(s)ds=0\biggr\}$$
 is the set of derivatives of contractible loops whose initial values are $0$.

Let $\tau$ be a positive central extension of $LT$. The above decomposition is inherited partially, that is, $LT^{\tau}\cong (T\times\Pi_T)^{\tau}\otimes U^{\tau}$.

$U^\tau$ is called a Heisenberg group and has the unique irreducible positive energy representation $\rho_\mathcal{H}:U^\tau\to U(V_\mathcal{H}(U^\tau))$ at level $\tau$ up to equivalence. Concretely, $V_\mathcal{H}(U^\tau):=\widehat{S}((L\mathfrak{t}_\mathbb{C})_+)$ is a completion of the symmetric tensor algebra $S((L\mathfrak{t}_\mathbb{C})_+)$ of $(L\mathfrak{t}_\mathbb{C})_+$, which we call Heisenberg representation.
\begin{rmk}
The differential representation of it can be regarded as the ``completion'' of the Shr\"odinger representation of the infinite dimensional Heisenberg Lie algebra.
\end{rmk}

%The commutator in $(T\times\Pi)^\tau$ determines a homomorphism $\kappa^\tau:\Pi\to\Lambda $ as we explained at Definition \ref{morphism kappa}.
When we choose a character $\lambda\in\Lambda_T^\tau$, $U^\tau\otimes T^\tau$ acts on $V_\mathcal{H}(U^\tau)\otimes\mathbb{C}_\lambda$ by
$$\rho_{\ca{H}}\otimes \lambda(u\otimes t)(v\otimes z):=\rho_\ca{H}(u)(v)\otimes\lambda(t)z,$$
where $u\in U^\tau$, $t\in T^\tau$, $v\in V_\ca{H}(U^\tau)$ and $z\in \bb{C}_\lambda$.

Then $LT^\tau=U^\tau\otimes (T\times \Pi_T)^\tau$ acts on
$$V_{[\lambda]}:=\sum_{n\in\Pi_T}V_\mathcal{H}(U^\tau)\otimes\mathbb{C}_{\lambda+\kappa^\tau(n)}$$ where $\Pi_T$ permutes the components. The isomorphism classes of irreducible positive energy representations are in 1:1 correspondence with the points of $\Lambda_T^\tau/\kappa^\tau(\Pi_T)$ (\cite{PS} Proposition 9.5.11). Moreover, 
$$l.w._T(V_{[\lambda]})=[\lambda]_T.$$

%直積%
\subsection{Direct product}\label{RL for direct product}
Let $T_1$ and $T_2$ be tori, $\tau_1$ and $\tau_2$ be positive central extensions of $LT_1$ and $LT_2$ respectively, $i_1$ be the natural inclusion into the first factor of $T_1\times T_2$, and $p_j:T_1\times T_2\to T_j$ be the natural projection onto tha $j$' th factor ($j=1,2$). Then $T:=T_1\times  T_2$ is a torus and $\tau:=p_1^*\tau_1+p_2^*\tau_2$ is a positive central extension of $LT\cong LT_1\times LT_2$.

Using the above description, we will construct $i_1^!:R^\tau(LT)\to R^{i_1^*\tau}(LT_1)$ and verify that the following diagram commutes.
$$\begin{CD}
RL(T,\tau) @>i_1^!>> RL(T_1,i_1^*\tau) \\
@Vl.w._TVV @Vl.w._{T_1}VV \\
char(T,\tau) @>char(i_1)>> char(T_1,i_1^*\tau)
\end{CD}$$
In this situation, we have a natural isomorphisms $LT\cong LT_1\times LT_2$ and $LT^\tau\cong LT_1^{\tau_1}\otimes LT_2^{\tau_2}$, that is, for any $l_1$, $l_1'\in LT_1^{\tau_1}$ and $l_2$, $l_2'\in LT_2^{\tau_2}$, $l_1\otimes l_2\cdot l_1'\otimes l_2'=l_1l_1'\otimes l_2l_2'$.
Moreover, $LT^\tau\cong U_1^{\tau_1}\otimes U_2^{\tau_2}\otimes(T_1\times\Pi_1)^{\tau_1}\otimes(T_2\times\Pi_2)^{\tau_2}$, where $U_i$ is the group of contractible loops whose initial values are $0$ of $T_j$ ($j=1,2$).
%%%%%%%%%%%%%%%%%%%%%%%%%%%%%%%%%%%%%%%%%%%%%%%%%%%%%%%%%%%%%%%%%%%%%%%%%%%%%%%%%

From this commutativity, we can verify the following lemmas.
\begin{lem}\label{computation of char(i_1) in Section 5}
$$char(i_1)([\lambda]_T)= [{}^tdi_1(\lambda)]_{T_1}.$$
\end{lem}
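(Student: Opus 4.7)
The plan is to unpack the definition of $char(i_1)$ and show the decomposition into $\Pi_{T_1}$-orbits consists of a single orbit. Recall that by Definition \ref{def. of char(f)},
$$char(i_1)([\lambda]_T) = \sum_{i=1}^{N}[\mu_i]_{T_1},$$
where $\{{}^tdi_1(\lambda+\kappa^\tau(n)) : n\in\Pi_T\} = \coprod_{i=1}^{N}\{\mu_i + \kappa^{i_1^*\tau}(m) : m\in\Pi_{T_1}\}$. So it suffices to prove that this image set is itself a single $\Pi_{T_1}$-orbit and that this orbit is $[{}^tdi_1(\lambda)]_{T_1}$.

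The key computation uses the product formula for $\kappa^\tau$ (Lemma \ref{product formula for kappa}). Writing $\Pi_T = dp_1(\Pi_{T_1})\oplus dp_2(\Pi_{T_2})$, any $n\in\Pi_T$ decomposes as $n = di_1(n_1) + di_2(n_2)$ with $n_j\in\Pi_{T_j}$, and
$$\kappa^\tau(n) = {}^tdp_1\circ\kappa^{\tau_1}(n_1) + {}^tdp_2\circ\kappa^{\tau_2}(n_2).$$
Applying ${}^tdi_1$ and using $p_1\circ i_1 = id_{T_1}$ and $p_2\circ i_1 = \mathrm{const}$ (so ${}^tdi_1\circ{}^tdp_1 = id$ and ${}^tdi_1\circ{}^tdp_2 = 0$), we obtain
$${}^tdi_1(\lambda+\kappa^\tau(n)) = {}^tdi_1(\lambda) + \kappa^{\tau_1}(n_1).$$
Since $i_1^*\tau = \tau_1$, this says the image set equals $\{{}^tdi_1(\lambda)+\kappa^{i_1^*\tau}(n_1) : n_1\in\Pi_{T_1}\}$, which is exactly the single orbit $[{}^tdi_1(\lambda)]_{T_1}$. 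Hence $N=1$ and $\mu_1 = {}^tdi_1(\lambda)$, proving the lemma.

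No serious obstacle is anticipated: the argument reduces entirely to the block-diagonal structure of $\kappa^\tau$ furnished by Lemma \ref{product formula for kappa}, together with the elementary identities $p_1\circ i_1=id$ and $p_2\circ i_1$ being constant. The only point worth writing out carefully is the identification $i_1^*\tau = \tau_1$ (so that $\kappa^{i_1^*\tau} = \kappa^{\tau_1}$), which follows from $Li_1$ intertwining the decomposition $LT^\tau \cong LT_1^{\tau_1}\otimes LT_2^{\tau_2}$ with $LT_1^{\tau_1}$ as the first tensor factor.
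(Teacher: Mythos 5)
Your proof is correct and follows essentially the same route as the paper: both rest on the block-diagonal product formula for $\kappa^\tau$ (Lemma \ref{product formula for kappa}) together with the identities ${}^tdi_1\circ{}^tdp_1=id$ and ${}^tdi_1\circ{}^tdp_2=0$, the paper phrasing it as a bijection from the product of the two factor orbits onto the $\Pi_T$-orbit of $\lambda$ and then applying ${}^tdi_1$, while you compute ${}^tdi_1(\lambda+\kappa^\tau(n))$ directly. The only cosmetic slip is writing $\Pi_T=dp_1(\Pi_{T_1})\oplus dp_2(\Pi_{T_2})$ where you mean $di_1(\Pi_{T_1})\oplus di_2(\Pi_{T_2})$, which you then use correctly.
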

\begin{proof}
Let us recall Lemma \ref{product formula for kappa}. %linklinklink
That is, $\kappa^\tau={}^tdp_1\circ \kappa^{\tau_1}\circ dp_1+{}^tdp_2\circ \kappa^{\tau_2}\circ dp_2$ when $\tau=p_1^*\tau_1+p_2^*\tau_2$.
%\begin{lem}%[product formula for $\kappa^\tau$, again]
%The homomorphism $\kappa^\tau$ is the composition of the following series;
%$$\Pi\xrightarrow{dp_1\oplus dp_2}\Pi_1\oplus\Pi_2\xrightarrow{\kappa^{\tau_1}
%\oplus\kappa^{\tau_2}}\Lambda_1\oplus\Lambda_2\xrightarrow{{}^tdp_1\oplus {}^tdp_2}\Lambda$$
%\end{lem}

From this formula, we have a bijection for any $\lambda\in \Lambda_T^\tau$

$$\{{}^tdi_1(\lambda)+\kappa^{\tau_1}(n_1)|n_1\in\Pi_{T_1}\}
\times\{{}^tdi_2(\lambda)+\kappa^{\tau_2}(n_2)|n_2\in\Pi_{T_2}\}$$
$$\xrightarrow{{}^tdp_1+ {}^tdp_2}\{{}^tdp_1\circ{}^tdi_1(\lambda)+{}^tdp_2\circ{}^tdi_2(\lambda)+
{}^tdp_1\circ\kappa^{\tau_1}\circ dp_1(n)+{}^tdp_2\circ\kappa^{\tau_2}\circ dp_2(n)|n\in \Pi_T\}$$
$$=\{\lambda+\kappa^\tau(n)|n\in\Pi_T\}.$$

Since ${}^tdi_1\circ {}^tdp_2=0$, we obtain the conclusion.
\end{proof}

\begin{lem}%[product formula of Heisenberg representation]
\label{product formula of Heisenberg representation}
Let $V_\mathcal{H}(U^\tau)$ be the Heisenberg representation space of $U^\tau$. Then 
$$V_\mathcal{H}(U^\tau)\cong V_\mathcal{H}(U_1^{\tau_1})\otimes V_\mathcal{H}(U_2^{\tau_2})$$
as representation spaces.
\end{lem}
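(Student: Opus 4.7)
The plan is to exploit the direct product structure via the explicit description $V_\ca{H}(U^\tau)=\widehat{S}((L\fra{t}_\bb{C})_+)$. The decomposition $T=T_1\times T_2$ induces a splitting $L\fra{t}=L\fra{t}_1\oplus L\fra{t}_2$, and hence a splitting of positive-frequency subspaces $(L\fra{t}_\bb{C})_+=(L\fra{t}_{1,\bb{C}})_+\oplus(L\fra{t}_{2,\bb{C}})_+$. Applying the algebraic identity $S(V\oplus W)\cong S(V)\otimes S(W)$ and taking Hilbert completions with respect to the inner product induced by $\Innpro{\cdot}{\cdot}{\tau}$ produces a canonical unitary isomorphism
$$V_\ca{H}(U^\tau)\cong V_\ca{H}(U_1^{\tau_1})\otimes V_\ca{H}(U_2^{\tau_2})$$
at the level of Hilbert spaces.

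What then remains is to verify that this identification intertwines the $U^\tau$-actions. Because $\tau=p_1^*\tau_1+p_2^*\tau_2$, Lemma \ref{product formula for kappa}, combined with the compatibility of $\innpro{\cdot}{\cdot}{\tau}$ and $\Innpro{\cdot}{\cdot}{\tau}$ from Lemma \ref{property of kappa^tau}, shows that the bilinear form governing the Heisenberg commutators is block-diagonal with respect to the splitting $L\fra{t}_\bb{C}=L\fra{t}_{1,\bb{C}}\oplus L\fra{t}_{2,\bb{C}}$. Consequently, the creation and annihilation operators for modes in $L\fra{t}_{1,\bb{C}}$ commute with those coming from $L\fra{t}_{2,\bb{C}}$, and the resulting exponentiated actions of $U_1^{\tau_1}$ and $U_2^{\tau_2}$ amalgamate along their common central $U(1)$---each of which acts on its factor by scalar multiplication with weight one---so that the combined action descends to $U_1^{\tau_1}\otimes U_2^{\tau_2}=U^\tau$ as a $\tau$-twisted representation. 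A routine comparison of generators then identifies this action with the Shr\"odinger-type representation on $V_\ca{H}(U^\tau)$.

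An equivalent shortcut is to invoke the uniqueness of $V_\ca{H}(U^\tau)$ as the irreducible positive energy representation of $U^\tau$ at level $\tau$ (\cite{PS}), so that one needs only to check that the right-hand side is irreducible and positive energy at level $\tau$. The level and positive energy properties are immediate: $\bb{T}_{rot}$ acts diagonally and the energy operator splits as $E_1\otimes 1+1\otimes E_2$, which inherits discrete spectrum bounded below from each factor. The main obstacle is irreducibility; here one argues that the lowest-energy subspace is the one-dimensional line $\bb{C}\cdot(v_1\otimes v_2)$ spanned by the tensor product of the vacuum vectors, and that the creation operators from $L\fra{t}_{1,\bb{C}}$ and $L\fra{t}_{2,\bb{C}}$ jointly act cyclically on it, so that any nonzero closed invariant subspace must contain this vacuum line and hence the whole completed tensor product.
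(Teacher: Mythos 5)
Your proposal is correct and follows essentially the same route as the paper: the splitting $(L\fra{t}_\bb{C})_+=(L\fra{t}_{1,\bb{C}})_+\oplus(L\fra{t}_{2,\bb{C}})_+$, the symmetric-algebra identity $S(V\oplus W)\cong S(V)\otimes S(W)$, and the commutativity of $LT_1^{\tau_1}$ with $LT_2^{\tau_2}$ (equivalently, block-diagonality of the form) to get equivariance. The paper states this more tersely; your extra verification of the intertwining property and the alternative argument via uniqueness of the Heisenberg representation are consistent elaborations rather than a different method.
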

\begin{proof}
This lemma follows from $U^\tau\cong U_1^{\tau_1}\otimes U_2^{\tau_2}$ and the product formula of the symmetric algebra
$$S((L\mathfrak{t}_\mathbb{C})_+)\cong S((L(\mathfrak{t}_1\oplus\mathfrak{t}_2)_\mathbb{C})_+)\cong S((L\fra{t}_{1\bb{C}})_+\oplus(L\fra{t}_{2\bb{C}})_+)$$
$$\cong S((L\mathfrak{t}_{1\mathbb{C}})_+)\otimes S((L\mathfrak{t}_{2\mathbb{C}})_+).$$
Factorization as representation spaces follows from the commutativity between $LT_1^{\tau_1}$ and $LT_2^{\tau_2}$ described above.
\end{proof}

\begin{lem}%[product formula of representation for tori]
\label{product formula of representation for tori}
Let $\lambda\in\Lambda_T^\tau$ be a $\tau$-twisted character of $T^\tau\cong T_1^{\tau_1}\otimes T_2^{\tau_2}$, then $$\mathbb{C}_\lambda\cong\mathbb{C}_{{}^tdi_1(\lambda)}\otimes\mathbb{C}_{{}^tdi_2(\lambda)}$$
as representation spaces.
\end{lem}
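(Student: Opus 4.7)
The plan is to observe that both sides are one-dimensional, so it suffices to exhibit an intertwining linear isomorphism. I will take the obvious map $\mathbb{C}_\lambda \to \mathbb{C}_{{}^tdi_1(\lambda)} \otimes \mathbb{C}_{{}^tdi_2(\lambda)}$ sending $z \mapsto z \otimes 1$ and check that it respects the $T^\tau$-action, using the factorization $T^\tau \cong T_1^{\tau_1} \otimes T_2^{\tau_2}$ already used for $LT^\tau$.

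First I would write a general element of $T^\tau$ as $t_1 \otimes t_2$ with $t_j \in T_j^{\tau_j}$. Under the identification $T^\tau \cong T_1^{\tau_1}\otimes T_2^{\tau_2}$, this element equals the product $\widetilde{i_1}(t_1)\cdot\widetilde{i_2}(t_2)$ in $T^\tau$, where $\widetilde{i_j}:T_j^{\tau_j}\to T^\tau$ is the canonical lift of $i_j$. Since $\lambda$ is a group homomorphism, this gives
$$\lambda(t_1\otimes t_2)=\lambda(\widetilde{i_1}(t_1))\cdot\lambda(\widetilde{i_2}(t_2)).$$

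Next I would unwind the definition of ${}^tdi_j$ at the level of characters. By construction, ${}^tdi_j(\lambda)\in\Lambda_{T_j}^{\tau_j}$ is the character $\lambda\circ\widetilde{i_j}:T_j^{\tau_j}\to U(1)$ (this is how pullback of twisted characters is defined; at the tangent level it coincides with ${}^tdi_j$ as appears throughout Section~1). Therefore
$$\lambda(t_1\otimes t_2)={}^tdi_1(\lambda)(t_1)\cdot{}^tdi_2(\lambda)(t_2),$$
which is precisely the scalar by which $t_1\otimes t_2$ acts on $\mathbb{C}_{{}^tdi_1(\lambda)}\otimes\mathbb{C}_{{}^tdi_2(\lambda)}$. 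This shows the map $z\mapsto z\otimes 1$ is $T^\tau$-equivariant, hence an isomorphism of representations.

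There is no real obstacle here; the only point requiring care is to match the notation ${}^tdi_j$ (a tangent-map transpose acting on lattices) with the operation of precomposing a twisted character with $\widetilde{i_j}$, which is the same statement applied at the group level rather than the Lie-algebra level. Once this identification is in hand the verification is a one-line computation.
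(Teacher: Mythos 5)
Your proposal is correct and follows essentially the same route as the paper's own proof: write $t_1\otimes t_2=\widetilde{i_1}(t_1)\cdot\widetilde{i_2}(t_2)$, use that $\lambda$ is a homomorphism to split $\lambda(t)$ into $\lambda(\widetilde{i_1}(t_1))\lambda(\widetilde{i_2}(t_2))$, and identify $\lambda\circ\widetilde{i_j}=i_j^*\lambda$ with ${}^tdi_j(\lambda)$ under the inclusion $\Lambda^{\tau_j}_{T_j}\subseteq{\rm Hom}(\Pi_{T_j^{\tau_j}},\mathbb{Z})$. No differences worth noting.
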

\begin{proof}

$t=t_1\otimes t_2\in T^\tau$ is also written as $\widetilde{i}_1(t_1)\cdot \widetilde{i}_2(t_2)$, where $\widetilde{i_k}:T_k^{\tau_k}\to T^\tau$ is induced from $i_k:T_k\to T$ ($k=1,2$). The following computation implies the statement.
$$\lambda(t)=\lambda(\widetilde{i}_1(t_1)\cdot \widetilde{i}_2(t_2))=\lambda(\widetilde{i}_1(t_1))\lambda(\widetilde{i}_2(t_2))$$
$$=(i_1^*\lambda)(t_1)(i_2^*\lambda)(t_2)$$
When we regard the set of twisted character $\Lambda^{\tau_j}_{T_j}$ as a subset of ${\rm Hom}(\Pi_{T_j^{\tau_j}},\bb{Z})$, $i_1^*\lambda$ corresponds to ${}^tdi_1(\lambda)$.
\end{proof}

The above lemmas imply the following theorem.
\begin{thm}\label{product formula for PER}
We have an isomorphism
$$V_{[\lambda]}\cong V_{[{}^tdi_1(\lambda)]}\otimes V_{[{}^tdi_2(\lambda)]}$$
as representation spaces.
\end{thm}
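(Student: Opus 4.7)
The plan is to carry out a straightforward factorisation of the explicit model $V_{[\lambda]}=\bigoplus_{n\in\Pi_T}V_{\mathcal{H}}(U^\tau)\otimes\mathbb{C}_{\lambda+\kappa^\tau(n)}$ and then match it term-by-term with the tensor product of the two ``one-factor'' models, using the three ingredients already in hand: the Heisenberg factorisation (Lemma \ref{product formula of Heisenberg representation}), the character factorisation (Lemma \ref{product formula of representation for tori}), and the block-diagonal formula for $\kappa^\tau$ (Lemma \ref{product formula for kappa}).

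First, I will use the canonical splitting $\Pi_T=di_1(\Pi_{T_1})\oplus di_2(\Pi_{T_2})$ and re-index the direct sum defining $V_{[\lambda]}$ over $(n_1,n_2)\in\Pi_{T_1}\oplus\Pi_{T_2}$. Next, for such an $n=di_1(n_1)+di_2(n_2)$ I will compute
\[
{}^tdi_1(\lambda+\kappa^\tau(n))={}^tdi_1(\lambda)+\kappa^{\tau_1}(n_1),\qquad
{}^tdi_2(\lambda+\kappa^\tau(n))={}^tdi_2(\lambda)+\kappa^{\tau_2}(n_2),
\]
which is immediate from Lemma \ref{product formula for kappa} since the off-diagonal blocks ${}^tdi_1\circ\kappa^\tau\circ di_2$ and ${}^tdi_2\circ\kappa^\tau\circ di_1$ vanish. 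Combined with Lemma \ref{product formula of representation for tori}, this gives
\[
\mathbb{C}_{\lambda+\kappa^\tau(n)}\cong\mathbb{C}_{{}^tdi_1(\lambda)+\kappa^{\tau_1}(n_1)}\otimes\mathbb{C}_{{}^tdi_2(\lambda)+\kappa^{\tau_2}(n_2)},
\]
and Lemma \ref{product formula of Heisenberg representation} gives the parallel decomposition $V_{\mathcal{H}}(U^\tau)\cong V_{\mathcal{H}}(U_1^{\tau_1})\otimes V_{\mathcal{H}}(U_2^{\tau_2})$. Substituting and distributing the tensor product over the double sum produces the required isomorphism of vector spaces.

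Finally I will verify equivariance under $LT^\tau\cong U_1^{\tau_1}\otimes U_2^{\tau_2}\otimes(T_1\times\Pi_{T_1})^{\tau_1}\otimes(T_2\times\Pi_{T_2})^{\tau_2}$. The $U_j^{\tau_j}$ pieces act only on the $j$-th Heisenberg factor by construction; the $T_j^{\tau_j}$-part acts on the line $\mathbb{C}_{{}^tdi_j(\lambda)+\kappa^{\tau_j}(n_j)}$ by the displayed character; and the translation part $\Pi_{T_j}$ permutes the $n_j$-summands, which under the re-indexing $\Pi_T=\Pi_{T_1}\oplus\Pi_{T_2}$ exactly matches the combined $\Pi_T$-permutation on the left-hand side. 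All these compatibilities are formal consequences of the tensor-product structure of $LT^\tau$ together with the commutativity $[LT_1^{\tau_1},LT_2^{\tau_2}]\subset i(U(1))$.

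The only place demanding any care is the re-indexing step: one must check that the combinatorial identification $\Pi_T=\Pi_{T_1}\oplus\Pi_{T_2}$ under $di_1+di_2$ is \emph{compatible} with the twisted action given by $\kappa^\tau$ on characters, and this is precisely what the vanishing of the off-diagonal blocks of $\kappa^\tau$ guarantees. Everything else is formal, so I do not expect a serious obstacle; the statement is essentially the assertion that the Fock-space model of the Heisenberg representation, together with the lowest-weight line, is multiplicative with respect to direct products of tori.
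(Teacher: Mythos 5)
Your proposal is correct and follows essentially the same route as the paper: re-index the sum over $\Pi_T\cong\Pi_{T_1}\oplus\Pi_{T_2}$, factor each summand using Lemma \ref{product formula of Heisenberg representation} and Lemma \ref{product formula of representation for tori} together with the vanishing of the off-diagonal blocks of $\kappa^\tau$ from Lemma \ref{product formula for kappa}, and distribute the tensor product over the double sum. Your explicit check of equivariance is slightly more detailed than the paper's, which simply cites the commutativity of $LT_1^{\tau_1}$ and $LT_2^{\tau_2}$ in $LT^\tau$.
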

\begin{proof}
$$V_{[\lambda]}=\sum_{n\in\Pi_T}V_\mathcal{H}(U^\tau)\otimes\mathbb{C}_{\lambda+\kappa^\tau(n)}$$
$$\cong\sum_{n_1\in\Pi_{T_1},n_2\in\Pi_{T_2}}V_\mathcal{H}(U_1^{\tau_1})\otimes V_\mathcal{H}(U_2^{\tau_2})\otimes\mathbb{C}_{{}^tdi_1(\lambda)+\kappa^{\tau_1}(n_1)}\otimes\mathbb{C}_{{}^tdi_2(\lambda)+\kappa^{\tau_2}(n_2)}$$
$$\cong\sum_{n_1\in\Pi_{T_1},n_2\in\Pi_{T_2}}\bigl[V_\mathcal{H}(U_1^{\tau_1})\otimes\mathbb{C}_{{}^tdi_1(\lambda)+\kappa^{\tau_1}(n_1)}\bigr]\otimes
\bigl[V_\mathcal{H}(U_2^{\tau_2})\otimes\mathbb{C}_{{}^tdi_2(\lambda)+\kappa^{\tau_2}(n_2)}\bigr]$$
$$\cong\Bigl[\sum_{n_1\in\Pi_{T_1}}V_\mathcal{H}(U_1^{\tau_1})\otimes\mathbb{C}_{{}^tdi_1(\lambda)+\kappa^{\tau_1}(n_1)}\Bigr]
\otimes\Bigl[\sum_{n_2\in\Pi_{T_2}}V_\mathcal{H}(U_2^{\tau_2})\otimes\mathbb{C}_{{}^tdi_2(\lambda)+\kappa^{\tau_2}(n_2)}\Bigr]$$
$$=V_{[{}^tdi_1(\lambda)]}\otimes V_{[{}^tdi_2(\lambda)]}$$
Factorization as representation spaces follows from Lemma \ref{product formula of Heisenberg representation}, \ref{product formula of representation for tori} and the bijection in the proof of Lemma \ref{computation of char(i_1) in Section 5}.
\end{proof}
Since the dimension of $V_{[{}^tdi_2(\lambda)]}$ is infinite, the following follows immediately.
\begin{cor}\label{infinitely reducibility in Section 5}
When we regard an irreducible representation of $LT^\tau$ as a representation of $LT_1^{\tau_1}$, it is never finitely reducible if $\dim T_2\geq1$.
\end{cor}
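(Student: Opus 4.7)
The plan is to deduce this immediately from Theorem \ref{product formula for PER}. Specifically, by that theorem we have an isomorphism of representation spaces
$$V_{[\lambda]}\cong V_{[{}^tdi_1(\lambda)]}\otimes V_{[{}^tdi_2(\lambda)]},$$
and the factorization is compatible with the decomposition $LT^\tau \cong LT_1^{\tau_1}\otimes LT_2^{\tau_2}$, so the subgroup $\widetilde{Li_1}(LT_1^{\tau_1})\subseteq LT^\tau$ acts only on the first tensor factor.

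Consequently, when we pull back the $LT^\tau$-action along $Li_1:LT_1^{\tau_1}\to LT^\tau$, the second factor $V_{[{}^tdi_2(\lambda)]}$ carries only the trivial $LT_1^{\tau_1}$-action, and the restriction decomposes as
$$(Li_1)^*V_{[\lambda]}\cong V_{[{}^tdi_1(\lambda)]}^{\oplus \dim V_{[{}^tdi_2(\lambda)]}},$$
i.e.\ as a direct sum of copies of the irreducible $LT_1^{\tau_1}$-representation $V_{[{}^tdi_1(\lambda)]}$ indexed by any Hilbert basis of $V_{[{}^tdi_2(\lambda)]}$.

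It therefore suffices to observe that $V_{[{}^tdi_2(\lambda)]}$ is infinite dimensional whenever $\dim T_2\geq 1$. This is immediate from its construction: $V_{[{}^tdi_2(\lambda)]}$ contains $V_{\mathcal{H}}(U_2^{\tau_2})\otimes\mathbb{C}_{{}^tdi_2(\lambda)}$ as a summand, and the Heisenberg representation space $V_{\mathcal{H}}(U_2^{\tau_2})=\widehat{S}((L\mathfrak{t}_{2,\mathbb{C}})_+)$ is the completion of the symmetric algebra on the infinite dimensional space $(L\mathfrak{t}_{2,\mathbb{C}})_+$, and hence is itself infinite dimensional. There is no real obstacle here; the content of the corollary is entirely contained in Theorem \ref{product formula for PER}, and the only thing to check carefully is that the tensor product factorization respects the $LT_1^{\tau_1}$-action (which follows directly from $LT_1^{\tau_1}\otimes LT_2^{\tau_2}\cong LT^\tau$ and the commutation relation $l_1\otimes l_2\cdot l_1'\otimes l_2'=l_1 l_1'\otimes l_2 l_2'$ recalled at the start of Section \ref{RL for direct product}).
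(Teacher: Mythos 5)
Your proof is correct and follows exactly the paper's route: the paper derives the corollary as an immediate consequence of Theorem \ref{product formula for PER}, noting that $V_{[{}^tdi_2(\lambda)]}$ is infinite dimensional, so the restriction along $Li_1$ is an infinite multiple of the irreducible $V_{[{}^tdi_1(\lambda)]}$. Your added checks (that $LT_1^{\tau_1}$ acts only on the first tensor factor, and that the Heisenberg factor forces infinite dimensionality) are exactly the details the paper leaves implicit.
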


Motivated by this observation, we define the new ``induced'' representation $i_1^!V_{[\lambda]}$ so that it has the compatibility with $char(i_1)$.
\begin{dfn}\label{def. of i_1^!}
Let $V$ be a finitely reducible representation space of $LT^\tau$.
$$i_1^!V:=\sum_{[\lambda_2]\in\Lambda_{T_2}^{\tau_2}/\kappa^{\tau_2}(\Pi_{T_2})}{\rm Hom}_{LT_2^{\tau_2}}(V_{[\lambda_2]},i_2^*V)$$
$$\cong{\rm Hom}_{LT_2^{\tau_2}}(\sum_{[\lambda_2]\in\Lambda_{T_2}^{\tau_2}/\kappa^{\tau_2}(\Pi_{T_2})}V_{[\lambda_2]},i_2^*V)$$
where, ${\rm Hom}_{LT_2^{\tau_2}}(V_{[\lambda_2]},i_2^*V)$ is the set of bounded intertwining operators. The topology of the left hand side is defined below.
\end{dfn}
\begin{pro}
$i_1^!V$ is a representation space of $LT_1^{\tau_1}$. The action is defined by
$$(l.F)(v):=l.(F(v)),$$
where $F\in i_1^!V$, $l\in LT_1^{\tau_1}$ and $v\in \sum_{[\lambda_2]\in\Lambda_{T_2}^{\tau_2}/\kappa^{\tau_2}(\Pi_{T_2})}V_{[\lambda_2]}$.
\end{pro}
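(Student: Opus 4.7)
The plan is to verify three things: that $l.F$ still lies in the Hom-space (well-definedness), that the formula respects the group law (action axiom), and that the assignment $l\mapsto (F\mapsto l.F)$ is continuous with respect to the natural topology on $i_1^!V$ (which one should take to be the strong-operator topology on each bounded Hom-space, restricted to operators of uniformly bounded norm, then summed).

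First I would exploit the splitting $LT^\tau\cong LT_1^{\tau_1}\otimes LT_2^{\tau_2}$ stated earlier in the paper. Its key consequence is that the images of $LT_1^{\tau_1}$ and $LT_2^{\tau_2}$ in $LT^\tau$ commute modulo the central $U(1)$, and since central elements act on $V$ by scalars, the two subgroups act on $V$ by genuinely commuting unitary operators. For $F\in\operatorname{Hom}_{LT_2^{\tau_2}}(V_{[\lambda_2]},i_2^*V)$, $l\in LT_1^{\tau_1}$, $l_2\in LT_2^{\tau_2}$ and $v\in V_{[\lambda_2]}$ this gives
\[
(l.F)(l_2.v)=l.\bigl(F(l_2.v)\bigr)=l.\bigl(l_2.F(v)\bigr)=l_2.\bigl(l.F(v)\bigr)=l_2.\bigl((l.F)(v)\bigr),
\]
so $l.F$ is again an $LT_2^{\tau_2}$-intertwiner. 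Boundedness of $l.F$ is immediate because $l$ acts unitarily on $V$, hence $\|l.F\|=\|F\|$. The group axiom $(ll').F=l.(l'.F)$ and $e.F=F$ follow by a one-line unwinding of the definition. The central $U(1)\subseteq LT_1^{\tau_1}$ maps into the central $U(1)\subseteq LT^\tau$, so $i(e^{\sqrt{-1}\theta}).F=e^{\sqrt{-1}\theta}F$, confirming that the action is $\tau_1$-twisted in the sense of Definition~1.12.

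For continuity I would topologize each summand $\operatorname{Hom}_{LT_2^{\tau_2}}(V_{[\lambda_2]},i_2^*V)$ by the strong operator topology and then give $i_1^!V$ the product/direct-sum topology component-wise; continuity of $LT_1^{\tau_1}\times i_1^!V\to i_1^!V$ then reduces, component by component, to the continuity of the original action $LT^\tau\times V\to V$ evaluated on vectors in the image of $F$. Because each $V_{[\lambda_2]}$ is preserved by $LT_2^{\tau_2}$ and $LT_1^{\tau_1}$ and $LT_2^{\tau_2}$ commute, the $LT_1^{\tau_1}$-orbit of $F$ stays inside the same summand, so no cross-summand continuity issues arise.

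The step I expect to require the most care is the continuity verification, because $\operatorname{Hom}_{LT_2^{\tau_2}}(V_{[\lambda_2]},i_2^*V)$ is an infinite-dimensional space of bounded operators and one has to pick the right topology so that both (a) the $LT_1^{\tau_1}$-action is continuous and (b) the resulting representation is still amenable to the later positive-energy analysis in Section~5. The algebraic well-definedness is essentially free once one invokes the commutativity coming from $LT^\tau\cong LT_1^{\tau_1}\otimes LT_2^{\tau_2}$; everything else is bookkeeping against Definition~1.12.
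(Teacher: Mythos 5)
Your proposal is correct and its core — the chain $(l.F)(l_2.v)=l.(l_2.F(v))=l_2.(l.(F(v)))=l_2.((l.F)(v))$ resting on the commutativity of $LT_1^{\tau_1}$ and $LT_2^{\tau_2}$ inside $LT^\tau\cong LT_1^{\tau_1}\otimes LT_2^{\tau_2}$ — is exactly the paper's argument. The additional checks you flag (unitarity giving $\|l.F\|=\|F\|$, the group axiom, the twisted central character, and the choice of topology making the action continuous) are points the paper leaves implicit, so including them only strengthens the write-up.
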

\begin{proof}
It is sufficient to verify that $l.F$ is an intertwining operator. From the linearity of $l.F$, we can assume that $v\in V_{[\lambda_2]}$ for some $[\lambda_2]\in\Lambda_{T_2}^{\tau_2}/\kappa^{\tau_2}(\Pi_{T_2})$. Let $l\in LT_1^{\tau_1}$, $l'\in LT_2^{\tau_2}$ and $F\in {\rm Hom}_{LT_2^{\tau_2}}(V_{[\lambda_2]},i_2^*V)$.
$$(l.F)(l'.v)=l.(F(l'.v))=l.(l'.(F(v)))$$
$$=(ll').F(v)=(l'l).F(v)$$
$$=l'.(l.(F(v)))=l'.(l.F)(v).$$
The first and the last equality follow from the definition. The second one follows from that $F$ is an intertwining operator. The third and the fifth one follow from that a representation is a group homomorphism. The fourth one follows from the commutativity described above.

%\begin{pro}
%$i_1^!V_{[\lambda]}$ is a unitary representation space of $LT_1^{\tau_1}$.

%The inner product of ${\rm Hom}_{LT_2^{\tau_2}}(V_{[\lambda_2]},i_2^*V_{[\lambda]})$ is given by $(F_1,F_2):=(F_1(v),F_2(v))_{V_{[\lambda]}}$ where $F_1,F_2\in {\rm Hom}_{LT_2^{\tau_2}}(V_{[\lambda_2]},i_2^*V_{[\lambda]})$, $v$ is a chosen unit vector in $V_{[\lambda_2]}$. This definition is independent of the choice of $v$. The inner product of $i_1^!V_{[\lambda]}$ is given by the direct sum.

%The action of $LT_1^{\tau_1}$ is given by $(l_1\cdot F)(v):=l_1\cdot (F(v))$.
%\end{pro}
%\begin{rmk}
%Well-definedness of inner product is verified below.
%$LT_2^{\tau_2}$ equivariance of $l_1\cdot F$ is follows from the commutativity between $LT_1^{\tau_1}$ and %$LT_2^{\tau_2}$.
%\end{rmk}
\end{proof}
The following lemma is clear from the definition.
\begin{lem}
For any finitely reducible positive energy representation space $V_1$ and $V_2$ of $LT^\tau$ at level $\tau$,
$$i_1^!(V_1\oplus V_2)\cong i_1^!V_1\oplus i_1^!V_2.$$
\end{lem}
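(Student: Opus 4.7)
The plan is to reduce this to the standard fact that the contravariant $\mathrm{Hom}$ functor distributes over finite direct sums in its second slot. First I would note that the pull-back along $i_2:LT_2^{\tau_2}\hookrightarrow LT^{\tau}$ commutes with direct sums in the obvious way: $i_2^*(V_1\oplus V_2)\cong i_2^*V_1\oplus i_2^*V_2$ as $LT_2^{\tau_2}$-representations, because the $LT_2^{\tau_2}$-action on $V_1\oplus V_2$ is, by definition, the component-wise action.

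Next, for each orbit $[\lambda_2]\in\Lambda_{T_2}^{\tau_2}/\kappa^{\tau_2}(\Pi_{T_2})$, I would exhibit the natural isomorphism
$$\Phi_{[\lambda_2]}:\mathrm{Hom}_{LT_2^{\tau_2}}(V_{[\lambda_2]},i_2^*V_1)\oplus\mathrm{Hom}_{LT_2^{\tau_2}}(V_{[\lambda_2]},i_2^*V_2)\xrightarrow{\cong}\mathrm{Hom}_{LT_2^{\tau_2}}(V_{[\lambda_2]},i_2^*(V_1\oplus V_2))$$
sending $(F_1,F_2)$ to $v\mapsto (F_1(v),F_2(v))$. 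Its inverse sends a bounded intertwiner $F$ to $(p_1\circ F,p_2\circ F)$, where $p_j:i_2^*V_1\oplus i_2^*V_2\to i_2^*V_j$ are the canonical projections. Boundedness is preserved in both directions, and $LT_2^{\tau_2}$-equivariance is inherited from that of $F_1,F_2$ and the projections. Summing $\Phi_{[\lambda_2]}$ over all orbits $[\lambda_2]$ and rearranging the resulting direct sum produces the required isomorphism of vector spaces
$$i_1^!(V_1\oplus V_2)\cong i_1^!V_1\oplus i_1^!V_2.$$

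Finally, I would check that this vector-space isomorphism is in fact an isomorphism of $LT_1^{\tau_1}$-representations. This is immediate from the formula $(l.F)(v):=l.(F(v))$: for $F=(F_1,F_2)$ one has $(l.F)(v)=l.(F_1(v),F_2(v))=(l.F_1(v),l.F_2(v))=((l.F_1)(v),(l.F_2)(v))$, since the $LT_1^{\tau_1}$-action on $V_1\oplus V_2$ is again component-wise and commutes with the projections $p_j$.

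The only non-formal point is the topological one — checking that $\Phi_{[\lambda_2]}$ is a homeomorphism with respect to the topology on the space of bounded intertwiners promised in Definition \ref{def. of i_1^!}. This, however, is routine once that topology is spelled out, since the projections $p_j$ and the inclusions into $i_2^*V_1\oplus i_2^*V_2$ are continuous linear maps and composition with them induces continuous maps on the corresponding spaces of bounded operators; thus I expect no genuine obstacle, only bookkeeping.
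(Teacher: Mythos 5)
Your proof is correct and follows the only natural route: the paper itself dismisses this lemma with ``clear from the definition,'' and your argument simply spells out the standard fact that $\mathrm{Hom}_{LT_2^{\tau_2}}(V_{[\lambda_2]},-)$ distributes over the (finite) direct sum, together with the compatibility of the $LT_1^{\tau_1}$-action with the projections. Since the orbit set $\Lambda_{T_2}^{\tau_2}/\kappa^{\tau_2}(\Pi_{T_2})$ is finite and the direct sums involved are finite, there is no genuine topological subtlety, and your expansion matches the paper's intent exactly.
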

From this lemma, we can assume that $V$ is irreducible.
\begin{thm}\label{isom. between i_1^!V with V_lambda_2}
$$i_1^!V_{[\lambda]}\cong V_{[{}^tdi_1(\lambda)]},$$
therefore, $i_1^!$ has a compatibility with $char(i_1)$.

If we define an inner product of the left hand side can be defined by $(F_1,F_2):=(F_1(v),F_2(v))_{V_{[\lambda]}}$, where $v$ is a chosen unit vector in $V_{[{}^tdi_2(\lambda)]}$, this inner product does not depend on the choice of $v$, and the above isomorphism is isometric.
\end{thm}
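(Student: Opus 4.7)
The plan is to reduce the computation of $i_1^!V_{[\lambda]}$ to an application of Schur's lemma for positive energy representations, using the product formula established in Theorem \ref{product formula for PER}.

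First, I would use Theorem \ref{product formula for PER} to identify $V_{[\lambda]} \cong V_{[{}^tdi_1(\lambda)]} \otimes V_{[{}^tdi_2(\lambda)]}$ as representation spaces of $LT^\tau \cong LT_1^{\tau_1} \otimes LT_2^{\tau_2}$. The crucial observation is the behavior under $\widetilde{Li_2}: LT_2^{\tau_2} \to LT^\tau$: since $Li_2(l) = (e_{T_1}, l)$ for $l \in LT_2$, its lift acts on $V_{[{}^tdi_1(\lambda)]} \otimes V_{[{}^tdi_2(\lambda)]}$ as the identity on the first tensor factor and by the standard $LT_2^{\tau_2}$-action on the second. (One should verify this using the splitting described in the proof of Lemma \ref{product formula of representation for tori}, in particular that the central $U(1)$ lifts match correctly.) Consequently, as $LT_2^{\tau_2}$-modules, $i_2^*V_{[\lambda]} \cong V_{[{}^tdi_1(\lambda)]} \otimes V_{[{}^tdi_2(\lambda)]}$ with trivial action on the first factor.

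Next, for each $[\lambda_2] \in \Lambda^{\tau_2}_{T_2}/\kappa^{\tau_2}(\Pi_{T_2})$ I would compute
$$\mathrm{Hom}_{LT_2^{\tau_2}}(V_{[\lambda_2]}, V_{[{}^tdi_1(\lambda)]} \otimes V_{[{}^tdi_2(\lambda)]}) \cong V_{[{}^tdi_1(\lambda)]} \otimes \mathrm{Hom}_{LT_2^{\tau_2}}(V_{[\lambda_2]}, V_{[{}^tdi_2(\lambda)]}).$$
By Schur's lemma for irreducible positive energy representations (\cite{PS}), the right-hand Hom space is one-dimensional when $[\lambda_2] = [{}^tdi_2(\lambda)]$ and zero otherwise, with the nonzero case spanned by a unitary isomorphism (unique up to scalar). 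Summing over $[\lambda_2]$ collapses the direct sum to the single term indexed by $[{}^tdi_2(\lambda)]$, giving an isometric (after normalizing the chosen intertwiner) $LT_1^{\tau_1}$-equivariant isomorphism $i_1^!V_{[\lambda]} \cong V_{[{}^tdi_1(\lambda)]}$. Equivariance follows because for $l_1 \in LT_1^{\tau_1}$, the lift $\widetilde{Li_1}(l_1)$ acts only on the first tensor factor, so under the identification $F \leftrightarrow w_F$ (where $F(u) = w_F \otimes u$) one has $l_1.F \leftrightarrow l_1.w_F$. Compatibility with $char(i_1)$ then follows from Lemma \ref{computation of char(i_1) in Section 5}, since both sides compute $[{}^tdi_1(\lambda)]_{T_1}$ under $l.w.$.

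For the inner product statement, given $F_1,F_2 \in \mathrm{Hom}_{LT_2^{\tau_2}}(V_{[{}^tdi_2(\lambda)]}, V_{[\lambda]})$ with $F_j(u) = w_{F_j} \otimes u$ as above, a direct computation yields $(F_1(v),F_2(v))_{V_{[\lambda]}} = (w_{F_1},w_{F_2})_{V_{[{}^tdi_1(\lambda)]}} \cdot \|v\|^2 = (w_{F_1},w_{F_2})_{V_{[{}^tdi_1(\lambda)]}}$, which is manifestly independent of the chosen unit vector $v$ and shows the identification is isometric. The main technical obstacle I anticipate is justifying rigorously the Schur-type decomposition and the factorization $F(u) = w_F \otimes u$ at the level of \emph{bounded} intertwiners between these infinite-dimensional representation spaces (including confirming that the weighted Hilbert completions in Theorem \ref{product formula for PER} are compatible with the tensor product topology); once this functional-analytic point is settled via the positive-energy structure (which provides a well-behaved grading by finite-dimensional weight spaces), the rest of the argument is algebraic.
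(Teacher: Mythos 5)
Your proposal is correct and follows essentially the same route as the paper: both rest on the factorization $V_{[\lambda]}\cong V_{[{}^tdi_1(\lambda)]}\otimes V_{[{}^tdi_2(\lambda)]}$ from Theorem \ref{product formula for PER}, apply Schur's lemma to kill the orbits with $[\lambda_2]\neq[{}^tdi_2(\lambda)]$ and to force each component of an intertwiner to be a scalar, and then identify $F$ with the vector $w_F$ satisfying $F(v)=w_F\otimes v$ (the paper does this via a fixed orthonormal basis $\{w_j\}$ and the square-summable coefficients $c_j$, which is exactly the boundedness point you flag). The inner-product verification is the same in both.
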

\begin{proof}
Fix a completely orthonormal system $\{w_j\}_{j\in\mathbb{N}}$ of $V_{[{}^tdi_1(\lambda)]}$. From Theorem \ref{product formula for PER},
$V_{[\lambda]}\cong\sum_j\mathbb{C} w_j\otimes V_{[{}^tdi_2(\lambda)]}$ as a representation space of $LT_2^{\tau_2}$. So we can decompose $F\in {\rm Hom}_{LT_2^{\tau_2}}(V_{[\lambda_2]},i_2^*V_{[\lambda]})$ as 
$$F=\sum_jF_j,$$
where $F_j:V_{[{}^tdi_2(\lambda)]}\to\mathbb{C} w_j\otimes V_{[{}^tdi_2(\lambda)]}\cong V_{[{}^tdi_2(\lambda)]}$.

By Schur's lemma, $F_j=c_jid$ for some $c_j\in\mathbb{C}$.
Since $F$ determines an operator, for any $v\in V_{[{}^tdi_2(\lambda)]}$, $\sum_jF_j(v)=v\otimes\sum_jc_jw_j\in V_{[\lambda]}$. Therefore, $\sum_j|c_j|^2<\infty$.

Moreover, by Schur's lemma, if $[\lambda_2]\neq[{}^tdi_2(\lambda)]$, ${\rm Hom}_{LT_2^{\tau_2}}(V_{[\lambda_2]},i_2^*V_{[\lambda]})=0$. Therefore, we can define an isomorphism
$$\sum_{[\lambda]\in\Lambda_{T_2}^{\tau_2}/\kappa^{\tau_2}(\Pi_{T_2})}{\rm Hom}_{LT_2^{\tau_2}}(V_{[\lambda_2]},i_2^*V_{[\lambda]})={\rm Hom}_{LT_2^{\tau_2}}(V_{[{}^tdi_2(\lambda)]},i_2^*V_{[\lambda]})
\to V_{[i_1^*\lambda]}$$
by $$F\mapsto\sum_jc_jw_j.$$
Since $\sum_j|c_j|^2<\infty$, the infinite sum $\sum_jc_jw_j$ converges.

Moreover, since $\sum_jc_jw_j$ is determined by $F(v)=v\otimes\sum_jc_jw_j$, this isomorphism is independent of the choice of $v$ and completely orthonormal system.
\end{proof}
%\begin{dfn}\label{topology of i_1^!V}
%$i_1^!V_{[\lambda]}$ is topologized through the above isomorphism $i_1^!V_{[\lambda]}\cong V_{[{}^tdi_1(\lambda)]}$.
%\end{dfn}
From this theorem, we obtain the following conclusion.
\begin{cor}\label{main theorem for R in the case of direct product}
The following commutative diagram holds.
$$\begin{CD}
RL(T,\tau) @>i_1^!>> RL(T',i_1^*\tau) \\
@Vl.w._TVV @Vl.w._{T_1}VV \\
char(T,\tau) @>char(i_1)>> char(T_1,i_1^*\tau)
\end{CD}$$
\end{cor}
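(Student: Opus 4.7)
The statement is essentially an assembly of the ingredients already proved just before it, so the plan is short. First I would observe that all four maps in the square are homomorphisms of abelian groups, and that $RL(T,\tau)=R^\tau(LT)$ is the free $\mathbb{Z}$-module on the isomorphism classes of irreducible positive energy representations at level $\tau$. Moreover, Definition \ref{def. of i_1^!} and the preceding lemma show that $i_1^!$ is additive on direct sums, while $l.w._T$, $l.w._{T_1}$ and $char(i_1)$ are additive by construction. Hence it suffices to verify commutativity on a single generator $[V_{[\lambda]}]$ with $\lambda\in\Lambda_T^\tau$.

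Next I would chase the diagram on such a generator. Going down-then-right: by the description of $l.w._T$ recalled in Section \ref{PER of LT}, $l.w._T([V_{[\lambda]}])=[\lambda]_T$, and then Lemma \ref{computation of char(i_1) in Section 5} gives $char(i_1)([\lambda]_T)=[{}^tdi_1(\lambda)]_{T_1}$. Going right-then-down: Theorem \ref{isom. between i_1^!V with V_lambda_2} provides an isometric isomorphism $i_1^!V_{[\lambda]}\cong V_{[{}^tdi_1(\lambda)]}$ of positive energy representations of $LT_1^{\tau_1}$, so $i_1^!([V_{[\lambda]}])=[V_{[{}^tdi_1(\lambda)]}]$ in $RL(T_1,i_1^*\tau)$; applying $l.w._{T_1}$ yields $[{}^tdi_1(\lambda)]_{T_1}$. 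The two outputs agree, which completes the check on generators.

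There is essentially no obstacle left, since the two genuine inputs -- the factorization $V_{[\lambda]}\cong V_{[{}^tdi_1(\lambda)]}\otimes V_{[{}^tdi_2(\lambda)]}$ of Theorem \ref{product formula for PER} and the Schur-lemma computation of $i_1^!V_{[\lambda]}$ in Theorem \ref{isom. between i_1^!V with V_lambda_2} -- are already in place. The only minor point I would take care to spell out is that $i_1^!$ sends a finitely reducible representation to a finitely reducible one, so that the target actually lies in $RL(T_1,i_1^*\tau)$ rather than some completion of it; but this is immediate from the isomorphism with $V_{[{}^tdi_1(\lambda)]}$, which is irreducible, together with additivity on direct sums. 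Thus the corollary follows formally from Theorem \ref{isom. between i_1^!V with V_lambda_2} and Lemma \ref{computation of char(i_1) in Section 5}.
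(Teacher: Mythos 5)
Your proposal is correct and is exactly the paper's argument: the paper deduces the corollary directly from Theorem \ref{isom. between i_1^!V with V_lambda_2} together with Lemma \ref{computation of char(i_1) in Section 5} and additivity, just as you do. The diagram chase on the generators $[V_{[\lambda]}]$ and the remark that $i_1^!V_{[\lambda]}$ is again irreducible (hence finitely reducible) are the only points to check, and you have checked both.
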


%有限被覆%
\subsection{Finite covering}\label{RL for finite cover}

In this section, we study in the case of a finite covering.

Let $q:T'\to T$ be a finite covering. We can compute $char(q)$ from Lemma \ref{pull back formula of kappa} %link
 with the same way in Lemma \ref{computation of char(i_1) in Section 5}.%link
\begin{lem}\label{formula for char(q)}
$$char(q)([\lambda]_T)=\sum_{[m]\in\Pi_T/dq(\Pi_{T'})}[{}^tdq(\lambda+\kappa^\tau(m))]_{T'},$$
where $m$ is a chosen representative element of $[m]$.
\end{lem}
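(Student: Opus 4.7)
The plan is to unfold the definition of $char(q)$ and then decompose the $\Pi_T$-action on the image under ${}^tdq$ by factoring through the finite quotient $\Pi_T/dq(\Pi_{T'})$. By Definition \ref{def. of char(f)}, $char(q)([\lambda]_T)$ is obtained by partitioning $\{{}^tdq(\lambda+\kappa^\tau(n))\mid n\in\Pi_T\}$ into $\Pi_{T'}$-orbits, so the entire content is in computing this partition.

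First I would pick a set of coset representatives $\{m\}$ for $\Pi_T/dq(\Pi_{T'})$ and write any $n\in\Pi_T$ uniquely as $n=m+dq(n')$ with $n'\in\Pi_{T'}$. Then Lemma \ref{pull back formula of kappa} gives
$${}^tdq(\lambda+\kappa^\tau(n)) = {}^tdq(\lambda+\kappa^\tau(m)) + {}^tdq(\kappa^\tau(dq(n'))) = {}^tdq(\lambda+\kappa^\tau(m)) + \kappa^{q^*\tau}(n'),$$
so the image splits as the union, over $[m]\in\Pi_T/dq(\Pi_{T'})$, of the sets $[{}^tdq(\lambda+\kappa^\tau(m))]_{T'}$. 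Independence of the choice of representative $m$ within a coset follows by the same identity: replacing $m$ by $m+dq(k)$ shifts ${}^tdq(\lambda+\kappa^\tau(m))$ by $\kappa^{q^*\tau}(k)$, preserving the $\Pi_{T'}$-orbit.

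The main step is to check that distinct cosets $[m]\neq[m']$ produce distinct $\Pi_{T'}$-orbits, so that the displayed sum has no hidden multiplicities. If $[{}^tdq(\lambda+\kappa^\tau(m))]_{T'}=[{}^tdq(\lambda+\kappa^\tau(m'))]_{T'}$, then there is $k\in\Pi_{T'}$ with ${}^tdq(\kappa^\tau(m-m'))=\kappa^{q^*\tau}(k)={}^tdq(\kappa^\tau(dq(k)))$. Since $q$ is a finite covering, $dq:\fra{t}'\to\fra{t}$ is a linear isomorphism and hence ${}^tdq$ is injective on $\Lambda_T$; and since $\tau$ is positive, $\kappa^\tau$ is injective on $\Pi_T$ (Proposition \ref{positivity of central extension}). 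Combining these two injectivities yields $m-m'=dq(k)\in dq(\Pi_{T'})$, i.e.\ $[m]=[m']$, which is the needed disjointness. This is the only place where positivity of $\tau$ and the covering property of $q$ both enter, and I expect it to be the sole obstacle; everything else is a bookkeeping computation directly from the product/pullback formulas for $\kappa$.
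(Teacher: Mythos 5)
Your proof is correct and follows essentially the same route as the paper: write $n=m+dq(n')$ and apply the pullback formula $\kappa^{q^*\tau}={}^tdq\circ\kappa^\tau\circ dq$ to split the image into $\Pi_{T'}$-orbits indexed by $\Pi_T/dq(\Pi_{T'})$. The only difference is that you explicitly verify disjointness of the orbits for distinct cosets (via injectivity of ${}^tdq$ and of $\kappa^\tau$), a point the paper leaves implicit in its use of $\coprod$; this is a welcome addition rather than a deviation.
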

\begin{rmk}
$\Pi_{T'}$-orbit $[{}^tdq(\lambda)+\kappa^\tau(dq(m))]_{T'}$ is independent of a choice of representative element $m$. %$[m]\in\Pi_T/dq(\Pi_{T'})$ means that $m$ is a chosen representative element of $\Pi_T/dq(\Pi_{T'})$.
\end{rmk}
\begin{proof}
It follows from the following.
$$\{{}^tdq(\lambda+\kappa^\tau(n))|n\in\Pi_T\}=\{{}^tdq(\lambda)+{}^tdq(\kappa^\tau(n))|n\in\Pi_T\}$$
$$=\{{}^tdq(\lambda)+{}^tdq(\kappa^\tau(dq(n')+m))|n'\in\Pi_{T'},m\in\Pi_T/dq(\Pi_{T'})\}$$
$$=\coprod_{m\in\Pi_T/dq(\Pi_{T'})}\{{}^tdq(\lambda+\kappa^\tau(m))+\kappa^{q^*\tau}(n')|n'\in\Pi_{T'}\}.$$
\end{proof}
Let us define the induced homomorphism $q^!$.
\begin{dfn}\label{def. of q^!V}
Let $V$ be a finitely reducible representation space of $LT^\tau$.
$$q^!V:=q^*V,$$
where the action of $LT'^{q^*\tau}$ on $q^*V$ is defined through the homomorphism $Lq$.
\end{dfn}
The following lemma is clear from the definition.
\begin{lem}
For any finitely reducible positive energy representation spaces $V_1$ and $V_2$ of $LT^\tau$ at level $\tau$,
$$q^!(V_1\oplus V_2)\cong q^!V_1\oplus q^!V_2.$$
\end{lem}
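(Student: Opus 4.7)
The plan is to unpack Definition \ref{def. of q^!V} directly and observe that pullback of representations along the homomorphism $\widetilde{Lq}:LT'^{q^*\tau}\to LT^\tau$ (lifting $Lq:LT'\to LT$) commutes in a strict sense with direct sums: both sides of the claimed isomorphism have the same underlying Hilbert space, and the $LT'^{q^*\tau}$-actions coincide on the nose.

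Concretely, writing $\rho=\rho_1\oplus\rho_2$ for the direct-sum $LT^\tau$-action on $V_1\oplus V_2$, Definition \ref{def. of q^!V} gives $q^!(V_1\oplus V_2)=(V_1\oplus V_2,\,\rho\circ \widetilde{Lq})$, while $q^!V_i=(V_i,\,\rho_i\circ \widetilde{Lq})$. For any $l'\in LT'^{q^*\tau}$ and $(v_1,v_2)\in V_1\oplus V_2$,
$$(\rho\circ \widetilde{Lq})(l')(v_1,v_2)=\rho(\widetilde{Lq}(l'))(v_1,v_2)=\bigl(\rho_1(\widetilde{Lq}(l'))v_1,\;\rho_2(\widetilde{Lq}(l'))v_2\bigr),$$
which is precisely the direct-sum action on $q^!V_1\oplus q^!V_2$. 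The identity map on the underlying Hilbert space therefore provides an $LT'^{q^*\tau}$-equivariant unitary isomorphism.

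There is no real obstacle: the lemma is simply the formal compatibility of pullback, viewed as a functor on representations, with the categorical biproduct. The reason it deserves to be stated separately is that this additivity, combined with a companion check that $q^!V$ remains finitely reducible (which, unlike in the direct-product setting treated via $i_1^!$, causes no trouble here because $\dim T'=\dim T$ and the pulled-back energy operator retains discrete spectrum bounded below), is exactly what will allow $q^!$ to descend from representations to a well-defined additive homomorphism $q^!:RL(T,\tau)\to RL(T',q^*\tau)$ between Grothendieck groups in the subsequent treatment.
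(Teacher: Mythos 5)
Your proof is correct and matches the paper's intent: the paper simply states that this lemma is ``clear from the definition'' of $q^!V=q^*V$, and your argument just spells out that pullback along $\widetilde{Lq}$ acts identically on the underlying Hilbert space and therefore strictly commutes with direct sums. Nothing further is needed.
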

From this lemma, we can assume that $V$ is irreducible.

\begin{thm}\label{isom. between q^!V with}
$$q^!(V_{[\lambda]})\cong\sum_{m\in\Pi_T/dq(\Pi_{T'})}V_{[{}^tdq(\lambda+\kappa^\tau(m))]}.$$
Moreover, the following commutative diagram holds.
$$\begin{CD}
R^\tau(LT) @>q^!>> R^{q^*\tau}(LT') \\
@Vl.w._TVV @Vl.w._{T'}VV \\
char(T,\tau) @>char(q)>> char(T',q^*\tau)
\end{CD}$$
\end{thm}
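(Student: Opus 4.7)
The plan is to exploit the canonical factorization $LT^\tau\cong U^\tau\otimes(T\times\Pi_T)^\tau$ from Lemma \ref{factorization of LT}, together with the crucial feature that $q:T'\to T$ being a \emph{finite} covering forces its tangent map $dq:\fra{t}'\to\fra{t}$ to be an isomorphism. This implies that $Lq$ restricts to an isomorphism of the contractible-loop subgroups $U'\xrightarrow{\cong} U$, and hence pulls the Heisenberg representation of $U^\tau$ back isomorphically onto the Heisenberg representation of $U'^{q^*\tau}$ (both are the unique irreducible positive energy representation at the respective level by Section \ref{PER of LT}). Meanwhile on the $(T\times\Pi_T)^\tau$ factor, each $\tau$-twisted character $\mathbb{C}_{\lambda+\kappa^\tau(n)}$ pulls back tautologically to the $q^*\tau$-twisted character $\mathbb{C}_{{}^tdq(\lambda+\kappa^\tau(n))}$ of $T'^{q^*\tau}$.

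Applying this to the explicit model
$$V_{[\lambda]}=\sum_{n\in\Pi_T}V_\mathcal{H}(U^\tau)\otimes\mathbb{C}_{\lambda+\kappa^\tau(n)}$$
yields, after pullback,
$$q^!V_{[\lambda]}\cong\sum_{n\in\Pi_T}V_\mathcal{H}(U'^{q^*\tau})\otimes\mathbb{C}_{{}^tdq(\lambda+\kappa^\tau(n))}.$$
I then regroup the indexing set using the finite coset decomposition $\Pi_T=\coprod_{[m]\in\Pi_T/dq(\Pi_{T'})}\bigl(m+dq(\Pi_{T'})\bigr)$ and apply Lemma \ref{pull back formula of kappa}, which gives ${}^tdq\circ\kappa^\tau\circ dq=\kappa^{q^*\tau}$. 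Consequently
$${}^tdq\bigl(\lambda+\kappa^\tau(m+dq(n'))\bigr)={}^tdq(\lambda+\kappa^\tau(m))+\kappa^{q^*\tau}(n'),$$
and reassembling each $\Pi_{T'}$-orbit's worth of summands recovers exactly the model for the irreducible $V_{[{}^tdq(\lambda+\kappa^\tau(m))]}$. This produces the claimed decomposition
$$q^!V_{[\lambda]}\cong\sum_{[m]\in\Pi_T/dq(\Pi_{T'})}V_{[{}^tdq(\lambda+\kappa^\tau(m))]},$$
which is a \emph{finite} direct sum since $dq(\Pi_{T'})$ has finite index in $\Pi_T$, verifying that $q^!$ lands in $R^{q^*\tau}(LT')$.

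The main obstacle is bookkeeping at the level of group actions rather than vector spaces: the summands of $V_{[\lambda]}$ are permuted by $\Pi_T$ through a shift, and I must check that once restricted via $dq$ to a $\Pi_{T'}$-action, they split correctly into the $\Pi_{T'}$-orbits predicted by the coset decomposition, and that each resulting block is genuinely isomorphic to the irreducible representation $V_{[{}^tdq(\lambda+\kappa^\tau(m))]}$ as an $LT'^{q^*\tau}$-module. One must also confirm positive energy survives the pullback, which is automatic since $Lq$ intertwines the two $\mathbb{T}_{rot}$-actions (it is the identity on the rotation factor) and hence preserves the energy operator's spectral lower bound. Finally, the commutative diagram for $l.w.$ is immediate: applying $l.w._{T'}$ to the decomposition above gives $\sum_{[m]}[{}^tdq(\lambda+\kappa^\tau(m))]_{T'}$, which matches $char(q)([\lambda]_T)$ term-by-term via Lemma \ref{formula for char(q)}.
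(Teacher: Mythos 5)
Your proposal is correct and follows essentially the same route as the paper: identify $U^\tau\cong U^{q^*\tau}$ via the local isomorphism so the Heisenberg factor is unchanged, pull back the twisted characters, and regroup the $\Pi_T$-indexed sum over cosets of $dq(\Pi_{T'})$ using Lemma \ref{pull back formula of kappa}, after which the commutativity with $char(q)$ is immediate. Your additional remarks on finiteness and preservation of the positive energy condition are correct refinements that the paper leaves implicit.
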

\begin{proof}
Since $T$ and $T'$ are locally isomorphic, the set of the derivatives of contractible loops whose initial values are $0$ of $T$ corresponds to one of $T'$ by the natural way. That is, $LT^\tau\cong (T\times \Pi_T)^\tau\otimes U^\tau$, $LT'^{q^*\tau}\cong (T'\times \Pi_{T'})^{q^*\tau}\otimes U^{q^*\tau}$ and $U^\tau\cong U^{q^*\tau}$.
Therefore, the Heisenberg representations $V_\ca{H}$ of $U^\tau\subseteq LT^\tau$ coincide with one of $U^{q^*\tau}\subseteq LT'^{q^*\tau}$.
$$q^*(V_{[\lambda]})
=q^*(\sum_{n\in\Pi_T}V_\mathcal{H}\otimes\mathbb{C}_{\lambda+\kappa^\tau(n)})=\sum_{n\in\Pi_T}V_\mathcal{H}\otimes\mathbb{C}_{{}^tdq(\lambda+\kappa^\tau(n))}$$
$$=\sum_{m\in\Pi_T/dq(\Pi_{T'})}\sum_{n'\in\Pi_{T'}}V_\mathcal{H}\otimes
\mathbb{C}_{{}^tdq(\lambda+\kappa^\tau(m))+\kappa^{q^*\tau}(n')}$$
$$=\sum_{m\in\Pi_T/dq(\Pi_{T'})}V_{[{}^tdq(\lambda+\kappa^\tau(m))]}$$
Commutativity of the above diagram is clear from this description and Lemma \ref{char formula for finite cover}.
\end{proof}

%一般の場合%
\subsection{Local injection}\label{RL for local injection}

Let $f:T'\to T$ be a local injection.
From Theorem \ref{decomposition of local injection}, we can decompose $f$ as the following sequence
$$T'\stackrel{q}{\longrightarrow}T'/\ker(f)\stackrel{i_1}{\longrightarrow}T'/\ker(f)\times T^\perp\stackrel{f\cdot j}{\longrightarrow}T,
$$
where $q$ and $f\cdot j$ are finite coverings, $i_1$ is the natural inclusion into the first factor.

\begin{dfn}\label{def. of f^!}
$$f^!:=q^!\circ i_1^!\circ (f\cdot j)^!$$
\end{dfn}
The following theorem follows from Corollary \ref{main theorem for R in the case of direct product},
 Theorem \ref{isom. between q^!V with},
Theorem \ref{decomposition of local injection} 
and the same argument in the proof of Theorem \ref{main theorem for K for tori in Section 4}.
\begin{thm}\label{main theorem for R}
The following commutative diagram holds.
$$\begin{CD}
R^\tau(LT) @>f^!>> R^{f^*\tau}(LT')\\
@Vl.w._TVV @Vl.w._{T'}VV \\
char(T,\tau) @>char(f)>> char(T',f^*\tau)
\end{CD}$$
\end{thm}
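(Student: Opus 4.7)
The plan is to mirror the proof strategy used for Theorem \ref{main theorem for K for tori in Section 4}: reduce an arbitrary local injection to the two building blocks (finite coverings and inclusions into a direct-product factor) already handled in Sections \ref{RL for direct product} and \ref{RL for finite cover}, using the canonical decomposition of Theorem \ref{decomposition of local injection}.

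First I would invoke Theorem \ref{decomposition of local injection} to factor $f$ canonically as
$$T' \xrightarrow{\,q\,} T'/\ker(f) \xrightarrow{\,i_1\,} T'/\ker(f)\times (T'/\ker(f))^\perp \xrightarrow{\,f\cdot j\,} T,$$
where $q$ and $f\cdot j$ are finite coverings, $i_1$ is the inclusion into the first factor, and $(f\cdot j)^*\tau$ splits as $p_1^*\tau_1 + p_2^*\tau_2$ into a sum of positive central extensions by Lemma \ref{decompositin formula for kappa}. By Definition \ref{def. of f^!}, $f^! = q^! \circ i_1^! \circ (f\cdot j)^!$, and by Theorem \ref{partial functoriality}, $char(f) = char(q) \circ char(i_1) \circ char(f\cdot j)$. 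Hence the desired naturality square decomposes horizontally into three squares, one for each factor of the decomposition.

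Next I would fill in the three squares. The outer two, corresponding to $q$ and $f\cdot j$, commute by Theorem \ref{isom. between q^!V with}, whose explicit formula $q^!(V_{[\lambda]}) = \sum_{m\in\Pi_T/dq(\Pi_{T'})} V_{[{}^tdq(\lambda+\kappa^\tau(m))]}$ matches $char(q)$ on the nose via Lemma \ref{formula for char(q)}. The middle square, for $i_1$, commutes by Corollary \ref{main theorem for R in the case of direct product}; its essential content is the identification $i_1^! V_{[\lambda]} \cong V_{[{}^tdi_1(\lambda)]}$ from Theorem \ref{isom. between i_1^!V with V_lambda_2}, obtained by applying Schur's lemma to the tensor factorization $V_{[\lambda]} \cong V_{[{}^tdi_1(\lambda)]} \otimes V_{[{}^tdi_2(\lambda)]}$ of Theorem \ref{product formula for PER}. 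Pasting the three commutative squares and using the two factorizations of $f^!$ and $char(f)$ above yields the theorem.

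The main obstacle was already surmounted before this point, namely finding the correct definition of $i_1^!$ in the direct-product case: because the naive representation-theoretic pullback along $Li_1$ of an irreducible $V_{[\lambda]}$ is never finitely reducible (Corollary \ref{infinitely reducibility in Section 5}), one must instead take the space of $LT_2^{\tau_2}$-intertwiners from $\bigoplus_{[\lambda_2]} V_{[\lambda_2]}$ into $i_2^*V$, so that Schur's lemma isolates a single $LT_1^{\tau_1}$-irreducible summand matching $char(i_1)$. Once that structural definition is in place and the finite-covering case reduces to the honest pullback $q^! = q^*$, the assembly of the general case is purely formal and strictly parallel to the $K$-theoretic proof of Theorem \ref{main theorem for K for tori in Section 4}.
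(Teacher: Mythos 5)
Your proposal is correct and follows essentially the same route as the paper: decompose $f$ via Theorem \ref{decomposition of local injection}, use $f^!=q^!\circ i_1^!\circ(f\cdot j)^!$ and $char(f)=char(q)\circ char(i_1)\circ char(f\cdot j)$ (Theorem \ref{partial functoriality}), and paste the finite-covering squares (Theorem \ref{isom. between q^!V with}) with the direct-product square (Corollary \ref{main theorem for R in the case of direct product}). The paper's own proof is exactly this assembly, cited in one sentence, so nothing is missing.
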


\end{document}